\definecolor{mygreen}{rgb}{0.13,0.55,0.13}
\definecolor{LightGreen}{HTML}{34FF80}
\definecolor{LightCyan}{rgb}{0.88,1,1}
\definecolor{LightYellow}{HTML}{FFFF00}
\newcolumntype{a}{>{\columncolor{LightYellow}}c}
\newcolumntype{b}{>{\columncolor{LightCyan}}c}
\newcolumntype{d}{>{\columncolor{LightGreen}}c}
\def \AL#1{{\color{red}#1}}
\def \margEJ#1{\marginpar{\raggedright\parindent=0pt\tiny {\color{magenta}#1}}}
\newcommand{\cl}{\mathfrak{c}_1}
\newcommand{\cu}{\mathfrak{c}_2}
\definecolor{miazul}{RGB}{0,197,253}
\definecolor{miyellow}{RGB}{190,190,30}
\definecolor{salmon}{RGB}{235,126,78}
\definecolor{naranja}{RGB}{250,83,0}
\definecolor{migreen}{RGB}{49,156,85}
\newlength{\hatchspread}
\newlength{\hatchthickness}
\newlength{\hatchshift}
\newcommand{\hatchcolor}{}
\tikzset{hatchspread/.code={\setlength{\hatchspread}{#1}},
	hatchthickness/.code={\setlength{\hatchthickness}{#1}},
	hatchshift/.code={\setlength{\hatchshift}{#1}},% must be >= 0
	hatchcolor/.code={\renewcommand{\hatchcolor}{#1}}}
\tikzset{hatchspread=3pt,
	hatchthickness=0.4pt,
	hatchshift=0pt,% must be >= 0
	hatchcolor=black}
\tikzset{fontscale/.style = {font=\relsize{#1}}
}
\newcommand{\eps}{\varepsilon}
\newcommand{\ssup}[1] {{\scriptscriptstyle{({#1}})}}
\newcommand{\one}{{\mathsf 1}}
\newcommand{\R}{\mathbb R}
\newcommand{\1}{{\bf{1}}}
\newcommand{\E}{\mathbb E}
\renewcommand{\P}{\mathbb P}
\newcommand{\Vo}{\mathscr V}
\newcommand{\Md}{M^{\rm disc}}
\newcommand{\Npr}{N^{\rm{pr}}}
\newcommand{\Ncl}{N^{\rm{cl}}}
\newcommand{\Nsq}{N^{\rm{sq}}}
\newcommand{\Nss}{N^{\rm{ss}}}
\renewcommand{\phi}{\varphi}
\newcommand{\TL}{T_{loc}}
\newcommand{\N}{\mathbb N}
\renewcommand{\P}{\mathbb P}
\newcommand{\vertiii}[1]{{\left\vert\kern-0.25ex\left\vert\kern-0.25ex\left\vert #1 
    \right\vert\kern-0.25ex\right\vert\kern-0.25ex\right\vert}}
\newcommand{\de}{\mathrm{d}} % for a derivative or a distance
\newcommand{\F}{\mathfrak{F}}
\newcommand{\Tloc}{T^{\rm{loc}}}
\newcommand{\astr}{a_{\rm{str}}} %threshold for strong vertices
\newcommand{\asl}{a_{\rm{slow}}} %threshold for slow vertices
\newcommand{\assl}{a_{\rm{stsl}}} %threshold for strong slow vertices
\newcommand{\CMI}{ \mathfrak{c}}
\newcommand{\CmMI}{ \mathfrak{c}}
\newcommand{\rhoMI}{\rho} %MI for Master Inequality
\newcommand{\ratiost}{r} %the constant appearing in the ratio t/s, which was called \rho before.
\newcommand{\Tex} {T_{\rm ext}}
\newcommand{\tree}{\mathfrak{t}}
\newcommand{\U}{\mathcal U}
\newcommand{\Rec}{\mathcal R}
\newcommand{\I}{\mathcal I}
\newcommand{\St}{\mathscr S}
\newcommand{\Co}{\mathscr C}
\newcommand{\heap}[2]  {\genfrac{}{}{0pt}{}{#1}{#2}}
\newcommand{\sfrac}[2] {\mbox{$\frac{#1}{#2}$}}
\newtheorem{theorem}{Theorem}
\newtheorem{definition}{Definition}
\newtheorem{corollary}[theorem]{Corollary}
\newtheorem{lemma}{Lemma}
\newtheorem{proposition}{Proposition}
\newtheorem{remark}{Remark}
\title[The contact process on evolving scale-free networks]
{Metastability of the contact process\\ on slowly evolving scale-free networks} 
\author[Emmanuel Jacob, Amitai Linker and Peter M\"orters]{Emmanuel Jacob, Amitai Linker and Peter M\"orters}
\begin{document}
\maketitle

\vspace{-0.6cm}
%\centerline{\tiny Draft of \today}
%\medskip

\begin{quote}
{\small {\bf Abstract:} }
We investigate the contact process on scale-free networks evolving by a stationary dynamics whereby each vertex independently updates its connections with a rate depending on its 
power.  
This rate can be slowed down or speeded up by virtue of decreasing or increasing a parameter~$\eta$, with  $\eta\downarrow-\infty$  approaching the static and $\eta\uparrow\infty$  the mean-field case. We identify the regimes of slow, fast and ultra-fast extinction of the contact process. Slow extinction occurs in the form of metastability, when the contact process maintains a certain density of infected states for a 
time exponential in the network size. 
In our main result we identify the metastability exponents, which describe 
the decay of these densities as the infection rate goes to zero, in dependence on $\eta$ and the power-law exponent $\tau$. 
While the fast evolution cases 
have been treated in a companion paper, Jacob, Linker, M\"orters~(2019), the present paper looks at the significantly more difficult 
cases of slow network evolution. We describe various effects, like degradation, 
regeneration and depletion, which lead to a rich picture featuring numerous first-order 
phase transitions for the  metastable exponents. To capture these effects in our upper bounds we develop a new  martingale based proof technique combining a local and global analysis of the process. 
\end{quote}

\vspace{0.4cm}

\noindent
{\emph{MSc Classification:} Primary 05C82; Secondary 82C22.

\noindent\emph{Keywords:}  Phase transition, metastable density, metastability, evolving network, temporal network, stationary dynamics, vertex updating, inhomogeneous random graph, scale-free network, preferential attachment network, network dynamics, SIS infection.}

%\setcounter{tocdepth}{1} % could be set to 2

%\vspace{0.2cm}
%\tableofcontents

\pagebreak[3]

\section{Introduction}

The aim of the present paper and its companion~\cite{JLM19} %, JLM22} 
is to provide a paradigmatic study of the combined effects of temporal and spatial variability of a graph on the spread of diffusions on this graph, and to provide the mathematical techniques for such a study if the graph process is stationary and autonomous.
There has been considerable interest in the probability literature on the behaviour of diffusions on a variety of evolving graph models, some recent examples are \cite{JLM22, SS23, SV23, FMO24, CO24}. 
Our particular interest in this work is in the phase transitions that occur when we tune parameters controlling the speed and inhomogeneity of the graph evolution.
%process.
%\smallskip
%
For this study our choice is to look at 
\begin{itemize}[leftmargin=*]
\item finite graphs, which as their size $N$ goes to infinity have a heavy tailed asymptotic degree distribution. These are often called \emph{scale-free networks} in the literature \cite{BJR, vdH17, vdH23}. The tail exponent $\tau$ is the parameter controlling the spatial inhomogeneity of the graph.\pagebreak[3]\smallskip
\item a stationary dynamics based on \emph{updating vertices} at a rate possibly depending on the power of the vertex, see e.g.~\cite{JM15, ZMN}. Upon updating a vertex resamples all its connections independently with a probability that depends on its power
and the power of the potential connecting vertex.
This updating mechanism loosely speaking mimics movement of the individuals represented by the vertices. The rate at which a vertex of expected degree $k$ is updated is set to be proportional to $k^\eta$, where $\eta\in\R$ is the parameter controlling the speed of the graph process.\smallskip
 \item a diffusion modelled by the \emph{contact process}, see e.g.~\cite{DL88, DS88, L99, MV16}. In this epidemic process healthy vertices recover at rate one, and infected vertices infect their neighbours at rate~$\lambda$. Recovered vertices can be reinfected, so that the infection can use edges that remain in the graph long enough several times for an infection. This creates interactions between the contact process and the autonomous graph evolution, which are at the heart of this paper.%   
 \medskip%
 \end{itemize}
 
  We now heuristically describe the main effects behind the results of this paper. 
 \medskip
 
 In the case
 of a static scale-free network if an infected vertex of degree $k\gg \lambda^{-2}$ recovers, it typically has of order $\lambda k$ infected neighbours. The probability that none of these neighbours reinfects the vertex before it recovers is roughly \smash{$(1-\frac{\lambda}{\lambda+1})^{\lambda k}\sim e^{-k\lambda^2}$}. %, which is very small.
Hence the vertex can
hold the infection for a time which is exponential in the vertex degree  before a sustained recovery. 
In a static scale-free network there are sufficiently many well-connected vertices of high degree for this {local survival} mechanism to keep an established infection alive for a time exponential in the graph size, see~\cite{BB+, CD09, GMT05}.
%In this case we speak of \emph{slow extinction}.
\medskip
 
 If the graph is evolving in time, even when the degree of the powerful vertex remains high throughout, its capacity to use the neighbourhood for local survival is significantly reduced compared to the static case and we observe a substantial \emph{degradation} in the ability of the vertex to sustain an infection locally.
 Already for slow evolutions, if $\eta<0$, the degradation effect is significant. On recovery of a vertex of degree $k$ the number of infected neighbours is still of order \smash{$\lambda k$} but the probability that none of them reinfects the vertex before it updates is now roughly 
 $$\frac{k^\eta}{\lambda^2 k +k^\eta} \sim \lambda^{-2} k^{\eta-1}
 \quad \text{ if $k \gg \lambda^{-\frac2{1-\eta}}$.} $$
 Hence the time that a vertex can survive locally is
 of order $\lambda^{2}k^{1-\eta}$. Although this
is just polynomial in the vertex degree and hence much 
smaller than in the static case, this local survival mechanism 
together with a spreading strategy can keep an established 
infection alive for a time exponential in the graph size, we then speak of \emph{slow extinction}.
\medskip

Updating of vertices has a second effect, which is beneficial to the survival of the infection, namely that 
in the time up to a sustained recovery updates of a powerful vertex will regenerate its neighbourhood and the overall connectivity of the graph is improved allowing it to spread the infection more efficiently. When a vertex consistently has degree~$k$ over an infection period of length  $\lambda^{2}k^{1-\eta}$ we see updates at a rate $k^{\eta}$ and hence if $\lambda^2k\gg 1$ we have of order $\lambda^2 k$ updates before a sustained recovery. 
This effect of \emph{regeneration} gives the vertex an increased effective degree. % of $\lambda^2k^{2}$. 
More precisely, in a period of length $k^{-\eta}$ between two updates
the probability that one of the $\lambda k$ neighbours of this vertex passes the infection to
a second vertex of degree $k$ is of order \smash{$\lambda^2 k^{1-\eta}\frac{k}N$} where $\frac{k}N$ is the order of the probability that there is an edge between a given neighbour of the first and the second vertex. As there are 
$\lambda^2 k$ updates before a sustained recovery the overall probability that an infected vertex with
degree $k\gg \lambda^{-2}$ passes the infection to a vertex of the same degree via an intermediary vertex is \smash{$\lambda^4 k^{3-\eta}\frac{1}N$} and if the proportion
$a(k)$ of vertices with degree at least $k$ satisfies
 $a(k)\gg \lambda^{-4}k^{\eta-3}$
 % (aN) lambda^2 N(effdeg/N)(deg/N) >> 1
 the strategy of 
 \emph{delayed indirect spreading}, where the infection is passed
 from one powerful vertex to another via an intermediary vertex, yields slow 
 extinction. % of the infection.
 \pagebreak[3]
 \medskip%

 A similar argument suggests that, as an infected vertex of degree $k$ has of order $\lambda^2 k$ updates before a sustained recovery, if \smash{$\frac{p(k)}N$} denotes the probability that two vertices of degree~$k$ are connected by an edge, then the probability that by the time of its sustained recovery an infected vertex of degree $k$ directly  infects another vertex  of degree $k$ is \smash{$\lambda^{3}k^{1-\eta}\frac{p(k)}{N}$}. Hence 
 if the proportion $a(k)$ of vertices with degree at least $k$ satisfies
 $$a(k)\gg \lambda^{-3}k^{\eta-1} p(k)^{-1}$$
 the strategy of 
 \emph{delayed direct spreading}, where the infection is passed
 from one powerful vertex to another directly, should yield slow 
 extinction of the infection. However,	this is only correct if the update rate $k^\eta$ of a vertex of degree $k$  is at least of the order of the infection rate $\lambda$. If 	$k^\eta\ll\lambda$, then the dynamics of the subnetwork of powerful vertices is much slower than the spread of the infection. The infections passed from a powerful vertex will then typically reach vertices that are already infected, an effect we call \emph{depletion}.
	In those cases the infection will spread at rate $k^\eta$ instead of $\lambda$,  so that overall delayed direct spreading can only give slow extinction if  $a(k)\gg k^{-\eta}\lambda^{-2}k^{\eta-1} p(k)^{-1}$. Note that whether the delayed direct spreading mechanism is possible 
%and maybe even enables a larger metastable density 
depends not only on the tail exponent $\tau$ of the degree distribution but also on the finer network geometry through the quantity~$p(k)$.%	
	\medskip%
	
	When the graph is very densely connected both spreading mechanisms are equally effective and we speak of  \emph{delayed concurrent spreading}. In this case the infection
retains a density of infected vertices of the same order if either all edges between high degree vertices
were removed or all low degree neighbouring vertices of high degree vertices were leaves.% 
	 \medskip%

 The degradation and regeneration effects are even stronger for fast evolutions, i.e.\ when $\eta\ge0$. On recovery of a vertex of degree $k$ the number of infected neighbours is now of order \smash{$k \frac{\lambda}{\lambda+1+k^\eta} \sim \lambda k^{1-\eta}$} and hence the probability that none of them reinfects the vertex before it updates is roughly 
 $$\frac{k^\eta}{\lambda^2 k^{1-\eta} +k^\eta} \sim \lambda^{-2} k^{2\eta-1}
 \quad \text{ if $\lambda^{2}k^{1-\eta}\gg k^\eta$.} $$
  Now only if $\eta\leq\frac12$ the local survival mechanism persists and 
vertices with degree $k$ satisfying \smash{$k^{1-2\eta}\gg \lambda^{-2}$} can hold the infection 
for a time of order \smash{$\lambda^{2}k^{1-2\eta}$}. In this case we can also benefit from the effect of regeneration without depletion. As a result delayed indirect spreading is possible 
on the set of vertices with expected degree at least $k$ 
if \smash{$a(k)\gg\lambda^{-4} k^{2\eta-3}$} and 
delayed indirect spreading is possible if 
\smash{$a(k)\gg k^{2\eta-1}\lambda^{-3}p(k)^{-1}$}.\pagebreak[3]\medskip

Heuristically speaking, the four strategies above, %delayed concurrent, 
delayed indirect  and delayed direct spreading, together with the strategies of quick indirect and direct spreading, which spread the infection without using a local survival mechanism, compete for domination. Each strategy sustains the infection on a set of powerful vertices called the \emph{stars} and the strategy that operates with the smallest threshold degree $k$, or equivalently the largest proportion $a(k)$ of stars among the vertices, dominates. The infected neighbours of the stars form a set comprising an approximate  proportion $\lambda ka(k)$ of vertices which remain infected for a time exponential in the graph size. During that time the proportion of infected vertices remains essentially constant, this effect is called \emph{metastability}. The constant proportion of infected vertices is  called the \emph{metastable density}. \medskip

In this project we establish \emph{metastable exponents} describing the decay of the metastable densities as $\lambda\downarrow0$ for various network models. The exponents are a function of  the power-law exponent $\tau$ and the updating exponent $\eta$. They characterise the underlying survival strategies of the infection and thereby rigorously underpin the heuristics. Proofs of the upper and lower bounds require novel techniques, which are developed in this project:
\begin{itemize}[leftmargin=*]
\item \emph{Lower bounds} for metastable densities are based on the identification of the optimal survival strategies for the infection. To show that these strategies are successful a technically demanding coarse graining technique has to be used. This is done in \cite{JLM19} for fast network evolutions, but is getting much harder for slow evolutions as there is much less independence in the system and additional effects like depletion have to be handled. We develop the necessary novel techniques for slow evolutions in Section~\ref{sec:lower_bounds} of this paper.\smallskip

\item \emph{Upper bounds} for metastable densities \smallskip
\begin{itemize}
\item in the fast evolution case are based on model simplification in conjunction with supermartingale arguments, a basic version of this argument is developed in~\cite{JLM19}. The argument can be substantially refined 
    using a further distinction of vertices and leads to Theorem~\ref{teoupper_vertex_improved}, which is 
    %presented in Section~\ref{sec_upper_overall} and 
    proved in Section~\ref{sec_upper_new}.
\smallskip
\item upper bounds for very slow evolutions are based on local approximation and results for the contact process on trees. This is done in \cite{JLM22} for the same stationary graph model but running with a different stationary dynamics. Arguments from~\cite{JLM22} can be adapted to our case, see Section~\ref{sec_upper_overall}.\smallskip
\item As becomes apparent in Section~\ref{sec_upper_overall} the techniques above do not suffice to give matching upper bounds for the entire slow evolution domain.  In order to close this gap a completely new tool has to be developed combining global and local analysis into a  single argument. This is the main technical innovation of this paper. It  will be presented 
%in Section~\ref{sec_upper_overall} 
as Theorem~\ref{teoupper_optimal}, which is proved in Section~\ref{sec_upper_new}.\end{itemize}
\end{itemize}\pagebreak[3]
In the next section we give full details of the network models we consider and state our main result for the networks of primary interest based on the factor and preferential attachment kernel. Further results for more general networks are deferred to Sections~\ref{sec:lower_bounds} to~\ref{sec_upper_new}.
\pagebreak[3]
\medskip

\section{Main result}
\label{sec:main}

We now define a stationary evolving graph or network~\smash{$(\mathscr G^{\ssup N}_t \colon t\geq 0)_{N\in\N}$}. Take a function

$${\kappa\colon (0,1] \to (0,\infty)},$$
and a kernel
$$p\colon (0,1] \times (0,1] \to (0,\infty).$$
\smallskip

\noindent
The vertex set of the graph $\mathscr G^{\ssup N}_t$ is $\{1,\ldots,N\}$ for any $t\geq0$. The graph is evolving by \emph{vertex updating}: Each vertex~$i$ has an independent Poisson clock with rate~$\kappa_i:=\kappa(i/N)$.
When it strikes, the  vertex updates, which means:
\begin{itemize}
\item All adjacent edges are removed, and 
\item new edges $i\leftrightarrow j$ are formed with probability 
$$p_{i,j}:= \frac{1}{N} \,  
p\Big(\frac{i}{N}, \frac{j}{N}\Big)\wedge 1.$$
independently for every $j\in\{1,\dots,N\}\setminus\{i\}$.
\end{itemize}
We denote by $(\mathscr G^{\ssup N}_t \colon t\geq 0)$
the stationary graph process under this dynamics.
\medskip

For the kernel $p\colon (0,1]\times(0,1]\rightarrow (0,\infty)$ we make the following assumptions:
\begin{enumerate}
\item $p$ is symmetric, continuous and decreasing in both parameters,
\item there is some $\gamma\in(0,1)$ and constants $0<\cl<\cu$ such that for all $a\in(0,1)$,
\begin{equation}\label{condp}
\cl a^{-\gamma}\le p(a,1) \le \int_0^1 p(a,s) \, \mathrm ds<\cu a^{-\gamma}.
\end{equation}
\end{enumerate}
These properties are satisfied by the 
\begin{itemize}
\item  \emph{factor kernel}, defined by $p(x,y)=\beta x^{-\gamma} y^{-\gamma}$, or the 
\smallskip
\item \emph{preferential attachment kernel}, defined by 
$p(x,y)=\beta (x \wedge y)^{-\gamma} (x \vee y)^{\gamma-1},$ 
\end{itemize}
for~$\beta>0$. These kernels correspond to the connection probabilities of the Chung-Lu or configuration models in the former, and preferential attachment models in the latter case, when the vertices are ranked by decreasing power. While we state the main results below for these principal kernels, our results are by no means limited to these examples.
\medskip

 For a sequence $(i_N)$ of vertices $i_N\in \mathscr G_t^{\ssup N}$
such that $i_N\sim xN$ for some $x\in (0,1]$  the degree distribution of the vertex $i_N$ converges to a Poisson distribution with parameter \smash{$\int_0^1 p(x,y) dy$}, so its typical degree is of order $(i_N/N)^{-\gamma}$. Moreover, the empirical degree distribution of the network converges in probability to a limiting degree distribution $\mu$, which is a mixed Poisson distribution obtained by taking $x$ uniform in $(0,1)$, then a Poisson distribution with parameter \smash{$\int_0^1 p(x,y) dy$}, 
see~\cite[Theorem 3.4]{vdH23}. 
This distribution satisfies 
$$\mu(k)=k^{-\tau+o(1)}\qquad
\text{ as $k\to\infty$, with $\tau=1+\tfrac1\gamma$,}$$ 
i.e., at any time $t$ the network \smash{$(\mathscr G^{\ssup N}_t)_{N\in\N}$} is scale-free with
power-law exponent~$\tau>2$. 
\medskip
\pagebreak[3]

For fixed parameters \smash{$\eta\in\R$} and $\kappa_0>0$ we look at 
\[
\kappa(x)\;=\;\kappa_0x^{-\gamma\eta}
\qquad \mbox{ for } x\in(0,1].
\]
With this choice the update rate of a vertex is approximately proportional to its expected degree to the power $\eta$. 
%With the parameter  $\eta$ we tune the speed of the network evolution. For large $\eta>0$ powerful vertices update fast, for
%small  $\eta<0$ powerful vertices update slowly.
% When vertex~$i$ updates, 
%every unordered pair $\{i,j\}$, for $j\not= i$ forms an edge with probability~$p_{i,j}$,
%independently of its previous state and of all other edges. The state of the remaining pairs $\{k,l\}$ with $k, l\neq i$ remains unchanged. Observe that in this construction,
%for each $N$, the graph valued process~\smash{$(\mathscr G^{\ssup N}_t \colon t\geq 0)$} is stationary with stationary distribution given by~$\mathscr G^{\ssup N}$.%
The parameter $\eta$ determines the speed of the network evolution.  When $\eta=0$ network evolution and contact process operate on the same time-scale. 
If $\eta>0$ the network evolution is faster, but note that now not all vertices update at the same rate. We let powerful vertices update faster in order to `zoom into the window' where the qualitative behaviour of the contact process changes.
If $\eta<0$ the network evolution is slower and also here it is particularly slow for the more powerful vertices. 
\medskip

{The infection is now described by a process $(X_t(i), i\in \{1,\ldots,N\}  \colon t\geq 0)$ with values 
in~\smash{$\{0,1\}^N$}, such that $X_t(i)=1$ if $i$ is 
infected at time~$t$, and $X_t(i)=0$ if $i$ is healthy at time $t$.  The infection process %$X_t(i)$ 
associated to a starting set $A_0$ of infected vertices, is the c\`adl\`ag process with $X_0(i)=\one_{A_0}(i)$ 
evolving according to the following rules:
\begin{itemize}
	\item to each vertex $i$ we associate an independent Poisson process $\mathcal R^i$ with intensity one, which represents recovery times,  i.e.\  if $t\in \mathcal R^i$, then $X_t(i)=0$ whatever~$X_{t-}(i)$. \smallskip
	\item to every unordered pair $\{i,j\}$ of distinct vertices we associate an independent Poisson process $\mathcal I_0^{ij}$ with intensity
	$\lambda$. If $t  \in \mathcal I_0^{ij}$ and 
	$\{i,j\}$ is an edge in $\mathscr G^{\ssup N}_t$, then 
	$$
	(X_t(i),X_t(j))=
	\left\{
	\begin{array}{rl}
	(0,0) & \mbox{ if } (X_{t-}(i),X_{t-}(j))=(0,0). \\
	(1,1) & \mbox{ otherwise.} \\
	\end{array}
	\right.
	$$
\end{itemize}
The process $({\mathscr G}^{\ssup N}_t, X_t \colon t\geq 0)$ is a Markov process describing the simultaneous evolution of the network and of the infection. We denote by  $(\F_t\colon t\ge 0)$ its canonical filtration.}
\medskip

We start the process with the stationary distribution of the graph and all vertices infected. Just like in the
static case there is a finite, random extinction time~$T_{\rm ext}$ and we say that there is
\begin{itemize}
\item \emph{ultra-fast extinction}, if there exists $\lambda_c>0$ such that for all infection rates $0< \lambda< \lambda_c$ the expected extinction time is bounded by 
{a subpolynomial function of $N$;}\smallskip
\item \emph{fast extinction}, if there exists $\lambda_c>0$ such that for all infection rates $0< \lambda< \lambda_c$ the expected extinction time is bounded by 
%some power 
a polynomial function of $N$;\smallskip
\item \emph{slow extinction} if, for all $\lambda>0$, there exists some $\eps>0$ such that
$T_{\rm ext}\geq  e^{\eps N}$ with high probability.\smallskip
\end{itemize}
Our first interest is in characterising phases of ultra-fast, fast or slow extinction. Slow extinction is indicative of metastable behaviour of the process, and in this case our interest focuses on the exponent of decay of the metastable density when $\lambda\downarrow0$. More precisely, just like in \cite{JLM19}, we let
$$I_N(t)=\frac1N \, \E\Big[ \sum_{i=1}^N X_t(i)\Big] 
= \frac1N \sum_{i=1}^N \P_i \big( T_{\rm ext}>t\big),$$
where $\P_i$ refers to the process started with only vertex~$i$ infected and the last equality holds by the
self-duality of the process. We say the contact process features
\begin{itemize}
\item \emph{metastability} if there there exists $\eps>0$ such that
\smallskip
\begin{itemize}[leftmargin=*]
\item whenever $t_N$ is going to infinity slower than~$e^{\eps N}$, we have
$\displaystyle\liminf_{N\to\infty} I_N(t_N)>0.$
\item whenever $(s_N)$ and $(t_N)$ are two sequences going to infinity slower than~$e^{\eps N}$, we have 
$\displaystyle \lim_{N\to\infty} I_N(s_N)-I_N(t_N) = 0.$
\end{itemize}\smallskip
In that case, we can unambiguously define the \emph{lower metastable density} $\displaystyle\rho^-(\lambda)=\liminf_{N\to\infty} I_N(t_N)>0$ and the \emph{upper metastable density} $\displaystyle\rho^+(\lambda)  =\limsup_{N\to\infty} I_N(t_N)$.\\[-2mm]
\item a \emph{metastable exponent} $\xi$ if, for sufficiently small $\lambda>0$ there is metastability and 
$$\xi= \lim_{\lambda\downarrow 0}\frac{\log \rho^-(\lambda)}{\log \lambda} = \lim_{\lambda\downarrow 0}\frac{\log \rho^+(\lambda)}{\log \lambda}.$$
\end{itemize}

We are now ready to state our main result for the featured network kernels, the factor and preferential attachment kernels defined after~\eqref{condp}.

\pagebreak[3]

\begin{theorem}
\label{teofinal}%\ \\[-4mm]
Consider the Markov process $({\mathscr G}^{\ssup N}_t, X_t \colon t\geq 0)$ describing the simultaneous evolution of a network $({\mathscr G}^{\ssup N}_t \colon t\geq 0)$ with power law exponent $\tau>2$ and update speed $\eta\in\R$ and of the contact process $(X_t \colon t\geq 0)$ on it. 
\begin{itemize}

\item[(a)]
Consider the \textbf{factor kernel}. 
\begin{itemize}
\item[(i)] If $\eta > \frac 12$ and $\tau>3$, there is ultra-fast extinction.
\item[(ii)] If $\eta\le 0$ and $\tau>4-\eta$, or if $0\le \eta\le \frac12$ and $\tau>4- 2\eta$,  there is fast extinction.
\item[(iii)]
If $\eta\le 0$ and $\tau<4-\eta$, or if $0\le \eta\le \frac12$ and $\tau<4- 2\eta$, or if $\eta\ge \frac 12$ and $\tau<3$, there is slow extinction and metastability. Moreover, the metastability exponent satisfies
\begin{equation}
\label{dens1}
\xi \:=\;\left\{\begin{array}{ccl}\frac1 {3-\tau} &\mbox{ if }&\; 
%\eta\le 0 \mbox{ and }\tau\le\frac 5 2,\mbox{ or }0\le \eta\ge \frac12\mbox{ and }\tau\le\frac 5 2+\eta\, \mbox{ or }\eta\ge \frac 12\mbox{ and }\tau<3,
\left\{\begin{array}{rl}
&\eta\le 0 \mbox{ and }\tau\le\frac 5 2,\\
\mbox{ or }&0\le \eta\le \frac12\mbox{ and }\tau\le\frac 5 2+\eta, \\
\mbox{ or }&\eta\ge \frac 12\mbox{ and }\tau<3,
\end{array}\right.
\\[-2mm]
\\2\tau - 3&\mbox{ if }&\;
\left\{\begin{array}{rl}
&\eta\le \frac 5 2 - \tau \mbox{ and }\frac 5 2\le \tau \le 3,\\
\mbox{ or }& \eta\le 2-\tau \mbox{ and }\tau\ge 3,
\end{array}\right.
\\[-2mm]
\\\frac {2\tau-2-\eta}{4-\eta-\tau} &\mbox{ if }&\;\eta\le 0 \mbox{ and }\frac 52-\eta\le \tau<4+2\eta,\\
[-2mm]\\\frac {2\tau-2-2\eta}{4-2\eta-\tau} &\mbox{ if }&\;0\le \eta\le \frac 12 \mbox{ and } \frac 52+\eta\le \tau<4-2\eta
,\\[-2mm]
\\
\frac \tau {4-\tau} &\mbox{ if }&\; 3 \vee (4+2\eta) \le \tau \le 4+\eta,
\\[-2mm]
\\
\frac {3\tau - 4 -\eta}{4-\tau-\eta} &\mbox{ if }&\;(2-\eta)\vee (4+\eta)\le \tau <4-\eta.
\end{array}\right.
\end{equation}\end{itemize}
\medskip

\pagebreak[3]

\item[(b)] 
Consider the \textbf{preferential attachment kernel}.
\begin{itemize}
\item[(i)] 
If $\eta\ge \frac12$ and $\tau>3$, there is ultra-fast extinction.
\item[(ii)]
If $\eta<\frac12$, or if $\eta\ge \frac12$ and $\tau<3$, there is slow extinction and metastability, and the metastability exponent satisfies
\pagebreak[3]

\begin{equation}
\label{dens2}
\xi\:=\;\left\{\begin{array}{ccl}
2\tau-3&\mbox{ if }&
\left\{\begin{array}{rl}
&\eta\le -\frac 12,\\
\mbox{ or }&-\frac 12 \le \eta\le 0 \mbox{ and }\tau\le 2-\eta,
\end{array}\right.\\
[-2mm]\\\frac{3\tau-4-\eta}{4-\eta-\tau}&\mbox{ if }& -\frac12<\eta\le0\ \mbox{ and }\ 2-\eta\le \tau\le \frac 83+\frac \eta 3, \\
[-2mm]\\ \frac{3\tau-5-\eta}{1-\eta}&\mbox{ if }& -\frac12<\eta\le0\ \mbox{ and }\ \tau\ge \frac 83+\frac \eta 3, \\
[-2mm]\\ \frac{3\tau-4-2\eta}{4-2\eta-\tau}&\mbox{ if }& 0\le \eta\le\frac 12\ \mbox{ and }\ 2+2\eta\le \tau\le \frac 83+\frac {2\eta} 3, \\
[-2mm]\\ \frac{3\tau-5-2\eta}{1-2\eta}&\mbox{ if }& 0\le \eta\le\frac 12\ \mbox{ and }\ \tau\ge \frac 83+\frac {2\eta} 3, \\
[-2mm]\\ \frac {\tau-1} {3-\tau} &\mbox{ if }& 
\tau<3  \mbox{ and }\eta>\frac \tau 2 - 1.
 \end{array}\right.
\end{equation}
\end{itemize}
\end{itemize}
\end{theorem}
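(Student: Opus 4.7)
The plan is to derive the theorem by establishing matching upper and lower bounds for the metastable density in each of the parameter regions appearing in \eqref{dens1} and \eqref{dens2}. For each piece I will identify which of the infection-sustaining strategies sketched in the introduction---local survival combined with delayed indirect, delayed direct (with or without depletion), quick indirect or quick direct spreading---is optimal, and then invoke the corresponding upper and lower bound theorems developed later in this paper (Sections~\ref{sec:lower_bounds}, \ref{sec_upper_overall} and \ref{sec_upper_new}) and in the companion paper~\cite{JLM19}.

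For the lower bounds, I will specialise the general results of Section~\ref{sec:lower_bounds} (and, for $\eta>0$, of \cite{JLM19}) to the two featured kernels, using that for the factor kernel two vertices of expected degree $k$ are connected with probability of order $k^{2}/N$, while for the preferential attachment kernel this probability is of order $k^{\tau-1}/N$, and that in both cases the proportion of vertices with expected degree at least $k$ is $a(k)\sim k^{1-\tau}$. For each candidate strategy one reads off from the heuristic the minimal threshold degree $k^{*}(\lambda)$ at which the strategy sustains a positive density of infected stars, and the resulting lower bound on the metastable density is of order $\lambda k^{*} a(k^{*})$. The optimal strategy is the one yielding the largest such density; the six pieces of \eqref{dens1} and the six pieces of \eqref{dens2} correspond exactly to the regions of $(\eta,\tau)$-space where each strategy dominates. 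The ultra-fast and fast extinction claims will follow by verifying that outside the slow-extinction region none of these strategies admits a valid $k^{*}\leq N$.

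For the upper bounds three techniques will be combined. The fast evolution case is treated by the refined supermartingale argument of Theorem~\ref{teoupper_vertex_improved}, proved in Section~\ref{sec_upper_new}. The very slow case, where the graph is essentially static on the infection timescale, is covered by the local approximation from \cite{JLM22} adapted in Section~\ref{sec_upper_overall}. The intermediate slow regime, where neither of these tools alone suffices, will be handled by Theorem~\ref{teoupper_optimal}, the main technical innovation of the paper, which couples local and global analysis into a single martingale. For each row of \eqref{dens1} and \eqref{dens2} one verifies that at least one of these three results produces the matching upper bound.

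The hard part will be the intermediate slow evolution regime: on a timescale comparable to the infection the graph is neither nearly static nor fast enough to provide the independence exploited in \cite{JLM19}, so both the methods of \cite{JLM19} and of \cite{JLM22} fail. The delicate accounting of the depletion effect, which forces the effective spreading rate to drop from $\lambda$ to $k^{\eta}$ whenever $k^{\eta}\ll\lambda$, is what produces the case splits at $\eta=0$ and along the curves $\tau=4-\eta$, $\tau=4-2\eta$ and $\tau=\tfrac{8}{3}+\tfrac{\eta}{3}$ (and their analogues) in the formulas for $\xi$. Matching these splits in the upper bound is precisely what requires the new combined local-global martingale of Theorem~\ref{teoupper_optimal}; once the optimal strategy has been identified for a given $(\eta,\tau)$ and the corresponding threshold for $a(k)$ has been computed, the upper and lower bounds agree and the claimed value of $\xi$ follows.
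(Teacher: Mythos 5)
Your overall route is the same as the paper's: lower bounds by identifying the optimal survival strategy region by region and maximising the star threshold $a$ subject to the conditions of Proposition~\ref{teolower}, Theorem~\ref{theoslow} (and, for $\eta\ge 0$, the results of \cite{JLM19}), with density of order $\lambda a\,p(a,1)\asymp\lambda k^{*}a(k^{*})$; upper bounds by local approximation (Proposition~\ref{static_upperbound}), the supermartingale Theorem~\ref{teoupper_vertex_improved}, and the hybrid Theorem~\ref{teoupper_optimal}, verified row by row in \eqref{dens1} and \eqref{dens2}. One mild mis-attribution: Theorem~\ref{teoupper_vertex_improved} is not only a fast-evolution tool; in the paper it is the workhorse for $\eta<0$ as well (all of Proposition~\ref{supermartingale_upperbound}(b) and parts (a)(i),(ii)), while Theorem~\ref{teoupper_optimal} is needed specifically for the factor kernel in the depleted region $3\vee(4+2\eta)\le\tau<4-\eta$.

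There is, however, a genuine gap in your treatment of the extinction phases, parts (a)(i),(ii) and (b)(i). You propose to deduce ultra-fast and fast extinction ``by verifying that outside the slow-extinction region none of these strategies admits a valid $k^{*}\le N$.'' This is logically insufficient: the failure of every known survival strategy only shows that the lower-bound machinery does not apply; it proves nothing about the extinction time. In the paper these statements are genuine upper-bound results: fast extinction for the factor kernel with $\tau>4-\eta$ comes from applying Theorem~\ref{teoupper_vertex_improved} with $a=0$ (checking that $D_0\lesssim\lambda$) and the hypothesis $(H)_\delta$ with $\delta\ge\frac{1-\eta}{2-\eta}$, giving $\E[T_{\rm ext}]\le\omega N^{\delta}$; ultra-fast extinction for $\eta\ge\frac12$, $\tau>3$ uses that $(H)_0$ holds exactly when $\eta\ge\frac12$, giving $\E[T_{\rm ext}]=O(\log N)$ (Proposition~\ref{supermartingale_upperbound2}). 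Moreover, to place the ultra-fast/fast boundary correctly one also needs lower bounds on the extinction time in the fast-extinction regime, which is the content of Lemma~\ref{lastlemma} for $\eta<0$ (extinction time at least of order $\lambda N^{\gamma(1-\eta)}/\log N$) and of the corresponding results in \cite{JLM19} for $\eta\ge0$. Without these ingredients your plan does not establish items (a)(i), (a)(ii) or (b)(i), nor the distinction between fast and ultra-fast extinction.
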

\medskip

\begin{figure}[h!]
{\includegraphics[height=6.5cm]{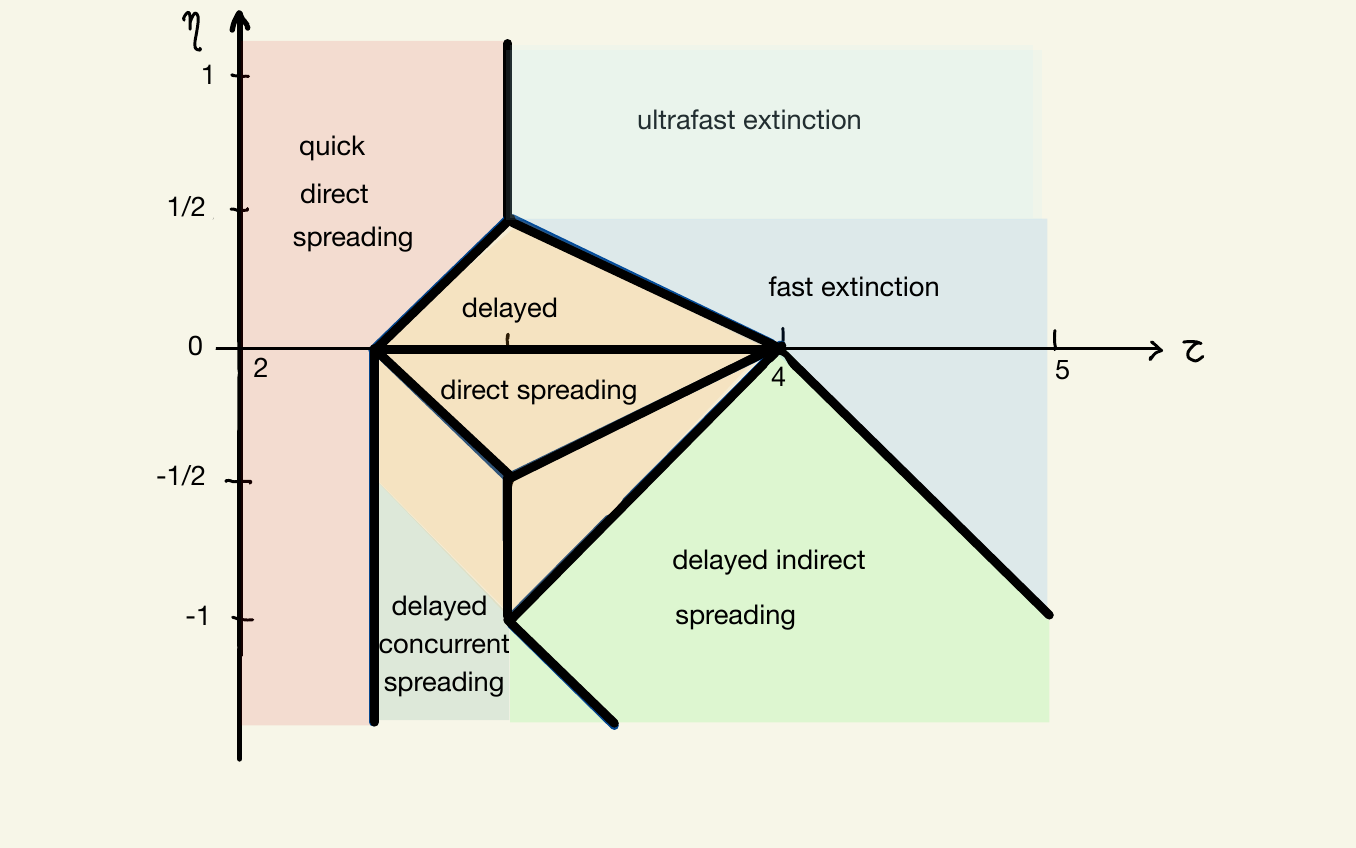}}
    {\includegraphics[height=6.5cm]{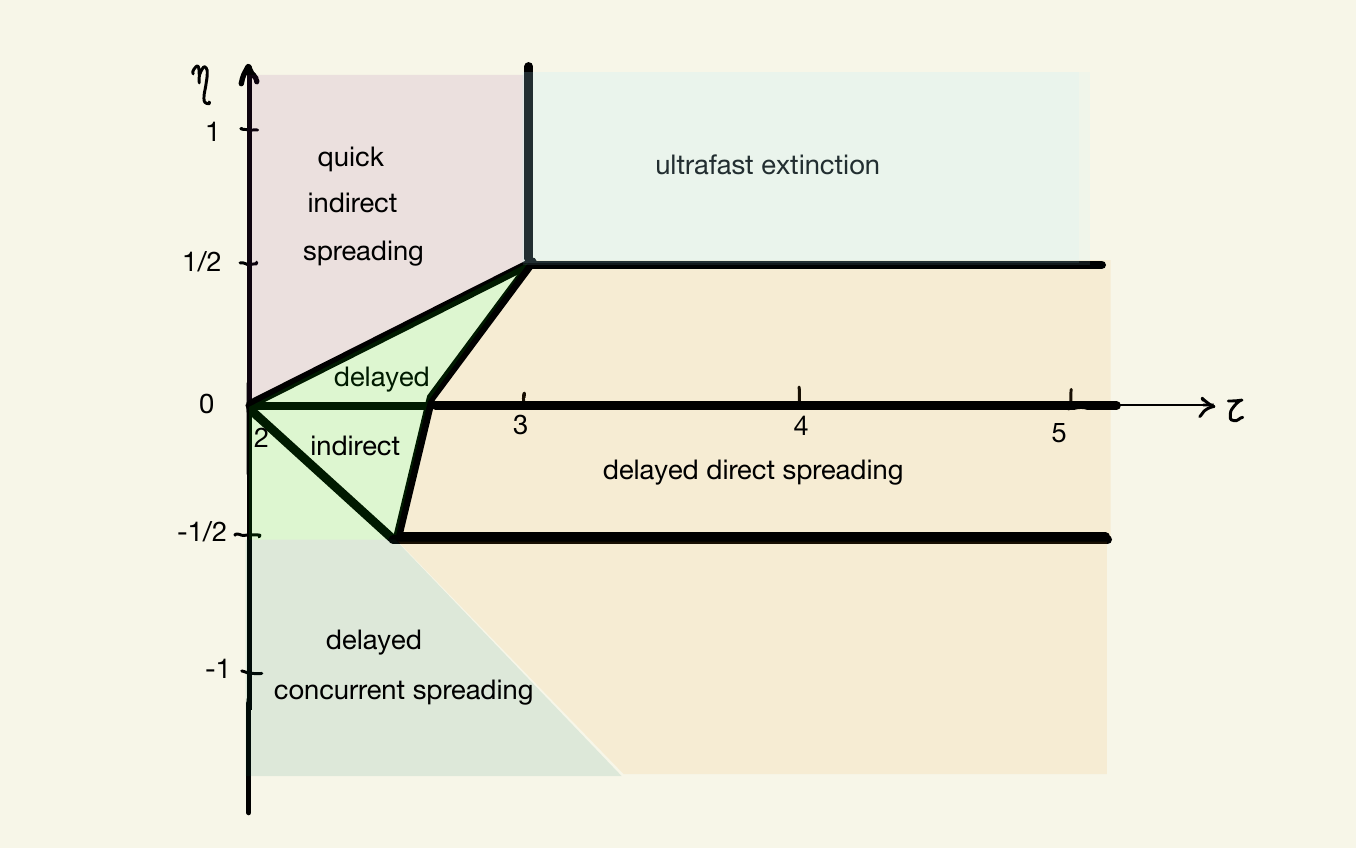}}
\caption{Phase diagram summarizing Theorem~\ref{teofinal} for factor kernel (top) and preferential attachment kernel (bottom). Bold lines indicate first order phase transitions, i.e. parameters where the derivative of $\xi$ is discontinuous. The colour code refers to the dominating strategy. Note that a crossover between strategies can occur without a phase transition, and a phase transition without a crossover between strategies.}
\label{fig1}
\end{figure}
\pagebreak[3]

\begin{remark}
The restriction of this theorem to $\eta\ge 0$ is the content of Theorem 3 of \cite{JLM19}, although the distinction of fast and ultra-fast extinction has not been made explicit there. 
\end{remark}

\begin{remark}
Theorem~\ref{teofinal} is illustrated in Figure~\ref{fig1} showing the phase boundaries where the metastable exponents are not differentiable by bold lines. In both models they divide the slow extinction region into six different phases. The parametrisation with $(\tau, \eta)$ leads to phase boundaries which are line segments, which is not the case if $(\gamma, \eta)$ is used as a parametrisation as in \cite[Figure~1]{JLM19}. Figure~\ref{fig1} also uses colours to show for which of the survival strategies the upper bounds for the metastable exponent matches the lower bounds. It is interesting to see that a crossover between strategies can occur without a phase transition, and a phase transition without a crossover between strategies.
\end{remark}

\begin{remark}
Theorem~\ref{teofinal} shows that our model interpolates nontrivially between the static case, when $\eta\to-\infty$ and the mean-field case, when $\eta\to+\infty$. Metastable exponents have been found for the static case for a model with factor kernel in~\cite{MVY13} and with preferential attachment kernel in~\cite{VHC17}, and in the mean-field case implicitly in~\cite{PV01}. 
\end{remark}

The rest of this paper is structured as follows. The most innovative results, which provide bounds under general assumptions on the kernel~$p$, are stated as Theorems~\ref{theoslow}, \ref{teoupper_vertex_improved}~and~\ref{teoupper_optimal}.
In Section~\ref{sec:lower_bounds} we prove the lower, and in Section~\ref{sec_upper_overall}
the upper bounds. The full proof of Theorem~\ref{teoupper_optimal}, which introduces a new `hybrid' proof technique, is given in Section~\ref{sec_upper_new}.
We state the major intermediate steps in the proofs and 
results which are similar to results in the papers~\cite{JLM19, JLM22} as propositions. In the latter case we  keep the arguments brief.
\pagebreak[3]

\subsubsection*{Notation}
This work involves many constants 
whose precise value is not too important to us, but with nontrivial dependencies between each other. For them 
%with nontrivial dependencies, for which 
we often use the following notational convention. The constant is written with a small $c$ (resp. a capital $C$) if it is a ``small positive constant'' (resp. a ``large constant''), typically introduced with a condition requiring it to be small (resp. large). %, while it is on the contrary written with a capital $C$ if it is a ``large constant''. 
Then the index indicates the number of the equation or theorem where the constant is introduced, so $c_{\eqref{lemma1r}}$ is introduced in $\eqref{lemma1r}$ and  $C_{\ref{theoslow}}$ in Theorem~\ref{theoslow}.
\pagebreak[3]

\section{Survival and lower bounds for the metastable density}\label{sec:lower_bounds}

{Our aim in this section is to provide sufficient conditions for the survival of the contact process, while at the same time giving lower bounds on the metastable density.}
% In order to do this we follow a similar scheme to the one used in \cite{JLM22}, where the contact process was studied when running on top of the same network, which underwent \emph{edge updating dynamics} instead of \emph{vertex updating} ones. More specifically in that model every unordered pair  $\{i,j\}$ updated independently with rate 
%	\[
%	\kappa_{i,j}\;=\;\kappa_i+\kappa_j=\kappa_0\left(\frac{N}{i}\right)^{\gamma\eta}+\kappa_0\left(\frac{N}{j}\right)^{\gamma\eta},
%	\]
%	with $\eta\in\R$ and $\kappa_0>0$ as in the present work. It will be of use to observe that in the current model each edge $\{i,j\}$ does update at rate $\kappa_{i,j}$ however differently from the previous work, edges do not update independently.}\\
	As in~\cite{JLM19,JLM22} the proofs for survival and the lower metastable densities rely on a graphical construction of the process which can be found in \cite{JLM19} but which we summarize here for the sake of completeness. The evolving network $({\mathscr G}^{\ssup N}_t \colon t\geq 0, N\in \N)$ is represented 
with the help of the following independent random variables;
\begin{enumerate}
	\item[(1)] For each vertex $i\in\{1,\ldots,N\}$, a Poisson point process \smash{$\mathcal U^i:=(U^i_n)_{n\ge 1}$} of intensity $\kappa_i$, describing the updating times of the vertex~$i$.
	\smallskip
	\item[(2)] For each $\{i,j\}\subset\{1,\ldots,N\}$, $i\neq j$, a sequence $\mathcal{C}^{ij}:=(C^{ij}_n)_{n\ge0}$ of i.i.d. Bernoulli random variables with parameter $p_{ij}$, 
	where \smash{$C^{ij}_n=1$} if $\{i,j\}$ is an edge of the network between the $n$-th and $(n+1)$-th events in $\U^{ij}:=\U^i\cup \U^j$. %\smallskip
\end{enumerate}
Given the network we represent the infection by means of the following set of
independent random variables,
\begin{enumerate}
	\item[(3)] For each $i\in \{1,\ldots,N\}$, a Poisson point process $\mathcal R^i=(R^i_n)_{n\ge 1}$ of intensity one describing the recovery times of~$i$.\smallskip
	\item[(4)] For each $\{i,j\}\subset\{1,\ldots,N\}$ with $i\not=j$, a Poisson point process $\mathcal {\mathcal I}_0^{ij}$ with intensity~$\lambda$ describing the infection  times along the edge $\{i,j\}$. Only the trace $\mathcal I^{ij}$ of this process on the~set
	\[\bigcup_{n=0}^\infty \{[U^{ij}_n, U^{ij}_{n+1}) \colon  C^{ij}_n=1\} \subset [0,\infty)\]
	can actually cause infections. If just before an event in $\mathcal I^{ij}$ one of the involved vertices is infected and the other is healthy, then the healthy one becomes infected. Otherwise, nothing happens.\smallskip
\end{enumerate}

Throughout this section we slightly change both the network and the parameters of the model as was done in {Section 3.1 in~\cite{JLM22}} in order to both simplify computations and formulate some of the results which will be used here.
	\smallskip
	
	Given a fixed $a=a(\lambda)\in (0,1/2)$ we reduce the vertex set to~${\!\St}\cup{\Co}^0\cup{\Co}^1\cup\Vo^{odd}$  where 
\begin{align*}
{\!\St}&:=\,\{i\in\{\lceil\sfrac{aN}{2}\rceil+1,\ldots, \lceil aN\rceil\}\colon\,i\text{ is even }\},\\[2pt]{\Co^0}&:=\,\{j\in\{\lceil \sfrac{N}{2}\rceil+1,\ldots, N\}\colon\,j=4k\text{ for some }k\in\N \},\\[2pt]{\Co^1}&:=\,\{j\in\{\{\lceil \sfrac{N}{2}\rceil+1,\ldots, N\}\colon\,j=4k+2\text{ for some }k\in\N \},
\\[2pt]{\Vo^{odd}}&:=\,\{i\in\{1,\ldots, N\}\colon\,i\text{ is odd }\}.
\end{align*}%
The vertices in $\St$ correspond to \emph{stars}, which are the key ingredients in the survival strategies. The vertices in $\Co=\Co^0\cup\Co^1$ correspond to \emph{connectors} (low power vertices), which are partitioned into $\Co^0$ and $\Co^1$ depending on whether we use them to survive locally or spread the infection, and vertices in $\Vo^{odd}$ will be used to provide lower bounds for the metastable density. 
{Observe that by assigning specific roles to vertices as above we neglect certain infection events and therefore obtain lower bounds for the probability of increasing events with respect to the process $(X_t \colon t\ge0)$. As for the edges of the dynamical network we replace the connection probabilities $p_{ij}$ by $\tilde{p}_{ij}$ where
\[
\widetilde{p}_{ij}\;=\left\{\begin{array}{cl}p_{\lfloor aN\rfloor,\lfloor aN\rfloor}&\text{ if }i,j\in{\St}\\p_{\lfloor aN\rfloor,N}&\text{ if }i\in{\St},j\in{\Co}
\text{ or }i\in{\Co},j\in{\St}\\0&\text{ if }i,j\in{\Co}.\end{array}\right.
\]
The idea behind simplifying the connection probabilities is to work on a model that uses only vertex quantity and not identity. Even though it still remains that the updating  rates~$\kappa$ are different for each edge, this idea becomes heuristically correct since $\kappa_i=\Theta(a^{-\gamma\eta})$ uniformly over all stars~$i$, and $\kappa_j=\Theta(1)$ uniformly over all connectors~$j$. Observe that we can construct both \smash{$\widetilde{{\mathscr G}}_t$} and  \smash{${\mathscr G}_t$} so that  \smash{$\widetilde{{\mathscr G}}_t\subseteq{\mathscr G}_t$} and hence the original process dominates the one running on the subgraph. In order to ease notation and computations we will henceforth replace $\tilde{p}_{ij}$ by $\frac{1}{N}p(a,1)$ or $\frac{1}{N}p(a,a)$ depending on the case. % and abbreviate~$\kappa(a):=\kappa_{\lceil aN\rceil}$.
\medskip

Following the structure given in \cite{JLM22} we state the following lemma which provides a lower bound for the lower metastable density whenever metastability holds. We do not provide the proof since it is the same as in our previous work.
\begin{lemma}[\cite{JLM22}, Lemma 3.1]\label{lemmalower}
	Define  
	\smash{$(\mathfrak{F}^{_{0,1}}_{t})_{t\geq0}$} as the filtration given by all the $\Rec^{i}$, $\I_0^{ij}$, and $\U^{ij}$ up to time $t$ where $i,j\notin\Vo^{odd}$, as well as all the connections between such vertices up to $t$. For any $r>0$ and $t>0$  there is $c_{\eqref{lemma1r}}>0$ (independent of $\lambda$, $a$, $N$) such that
	\begin{equation}
	\label{lemma1r}
	\E\Big[\sum_{j\in\Vo^{odd}} X_{t+1}(j) \,\Big|\,\mathfrak{F}_t^{0,1}\, \Big] \;\geq\; c_{\eqref{lemma1r}} \,\Big(\int_0^1(\lambda a p(a,x)\wedge1) \, \mathrm d x\Big) N,
	\end{equation}
	on any event $A\in \mathfrak{F}_t^{0,1}$ implying $\big|\{i\in \St \colon X_t(i)=1\}\big|\,\geq\,raN$. 
\end{lemma}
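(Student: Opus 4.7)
The plan is to use linearity of expectation over $j\in\Vo^{odd}$ and, for each such $j$, to exhibit an explicit scenario—relying only on randomness orthogonal to $\mathfrak{F}^{0,1}_t$—that forces $X_{t+1}(j)=1$. The crucial observation is that the update clock $\mathcal U^j$, the recovery clock $\Rec^j$, the infection clocks $\mathcal I_0^{ij}$ for $i\in\St$, and all the Bernoulli sequences $(C^{ij}_n)_n$ for $i\in\St$ are jointly independent of $\mathfrak{F}^{0,1}_t$. Consequently, the neighbourhood of $j$ in $\widetilde{\mathscr G}$ and the dynamics of $j$ itself are ``fresh'' given $\mathfrak{F}^{0,1}_t$, while $I_t:=\{i\in\St:X_t(i)=1\}$ is $\mathfrak{F}^{0,1}_t$-measurable with $|I_t|\geq raN$ on $A$.

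Fix $j\in\Vo^{odd}$ with $x_j:=j/N$. For each $i\in I_t$, introduce the event that on some time $s\in[t,t+1]$ an atom of $\mathcal I_0^{ij}$ occurs at which (a) the edge $\{i,j\}$ is present in $\widetilde{\mathscr G}_s$, (b) $i$ has had no recovery in $[t,s]$, and (c) $j$ has no recovery in $[s,t+1]$. Conditioning on the first atom of $\mathcal I_0^{ij}$ in $[t,t+1]$ (which has probability $1-e^{-\lambda}\geq\lambda/2$ for small $\lambda$) and exploiting that $\mathcal I_0^{ij}$ is independent of the triple $(\mathcal U^i,\mathcal U^j,(C^{ij}_n)_n)$, the stationarity of the edge process yields that (a) occurs with probability exactly $\widetilde p_{ij}=p(a,x_j)/N$; the recovery events (b) and (c) each contribute a factor bounded below by $e^{-1}$. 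Hence each infected star contributes a transmission probability of order $\lambda\,p(a,x_j)/N$. Since the Poisson processes $\mathcal I_0^{ij}$ are independent across $i$, a one-minus-product bound over the $\Theta(aN)$ infected stars gives, on $A$,
\[
\P\bigl(X_{t+1}(j)=1\,\big|\,\mathfrak F^{0,1}_t\bigr)\;\geq\; c\,\bigl(\lambda\,a\,p(a,x_j)\wedge 1\bigr).
\]
Summing over $j\in\Vo^{odd}$ (which contains $\Theta(N)$ vertices essentially uniformly distributed in $\{1,\dots,N\}$) and comparing the resulting sum to its integral using that $x\mapsto p(a,x)$ is continuous and decreasing, so the integrand is monotone in $x$, yields the claimed bound $c_{\eqref{lemma1r}}\,N\int_0^1(\lambda a p(a,x)\wedge 1)\,\mathrm dx$.

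The main technical obstacle is step (a): controlling the conditional probability that the edge $\{i,j\}$ is present at the random infection time $s$, uniformly over the possibly extreme update rates $\kappa_i$ of the star. When $\kappa_i$ is very large (the slow-evolution case $\eta<0$ being the delicate one), many updates occur in a unit time, and one must avoid double-counting by using that between two consecutive updates of either endpoint the indicator $C^{ij}_n$ is a fresh Bernoulli$(\widetilde p_{ij})$; Palm calculus for the infection Poisson process provides the right framework to turn this into the clean marginal $\widetilde p_{ij}$. A secondary subtlety is that the events indexed by different stars $i,i'\in I_t$ share the update clock $\mathcal U^j$ and the recovery clock $\Rec^j$, creating positive correlations; these go in the right direction once we first restrict to the (positive-probability) event that $j$ has a single update in $[t,t+1]$ and no recovery in between. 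Since the lemma and its proof are identical to Lemma~3.1 in \cite{JLM22}, I would, as the authors do, simply invoke that reference.
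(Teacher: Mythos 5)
Your argument is correct and is essentially the proof of Lemma~3.1 in \cite{JLM22}, which this paper deliberately does not reproduce: conditionally on $\mathfrak{F}^{0,1}_t$ the clocks and connection variables attached to each $j\in\Vo^{odd}$ are fresh, each of the at least $raN$ infected stars transmits to $j$ with probability of order $\lambda p(a,j/N)/N$, and the one-minus-product bound followed by a sum--integral comparison gives \eqref{lemma1r}. Your handling of the shared clocks $\U^j,\Rec^j$ by first restricting to a positive-probability event for $j$ matches the standard treatment, so invoking the reference, as you suggest, is exactly what the authors do.
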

%\medskip

The following result gives sufficient conditions for  \textit{quick direct spreading} and \textit{quick indirect spreading} to hold, leading to slow extinction. These survival strategies do not rely on local survival of the infection on powerful vertices. They were already studied in \cite{JLM19,JLM22} and the proof in
 \cite{JLM22} can be adapted from  the model with edge updating to the current setting the only relevant difference being  that the definition of \emph{$T_0$-stable} vertices should now include the requirement that vertices do not update within~$T_0$. 
\pagebreak[2]

\begin{proposition}[\cite{JLM22}, Theorem 3.2]\label{teolower} 
	There exist positive $M_{(i)}$ and $M_{(ii)}$ 
	(depending on~$\kappa_0$) such that slow extinction and metastability hold 
	for the contact process on the network if, for all $\lambda \in (0,1)$,
	there is $a=a(\lambda) \in (0,1/2)$ satisfying at least one of the following conditions:
	\medskip
		\begin{itemize}
		\item[(i)] {\bf (Quick direct spreading)}
		$\displaystyle \lambda ap(a,a)>M_{(i)}.$\\[-2mm]
		\item[(ii)] {\bf (Quick indirect spreading)}
		$\displaystyle \lambda^2a p^2(a,1)>M_{(ii)}$.\\[-2mm]
	\end{itemize}
	Moreover, in each of these cases we have 
	\begin{equation}
	\label{lowdensityetapos}
	\rho^-(\lambda)\;\geq\;c_{\eqref{lowdensityetapos}} \lambda a\int_0^1p(a,s)\, \mathrm ds,
	\end{equation}
	where $c_{\eqref{lowdensityetapos}}>0$ is a constant independent of $a$ and $\lambda$.
\end{proposition}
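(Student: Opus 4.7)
The plan is to adapt the block renormalisation argument of \cite{JLM22} to the vertex-updating setting. The goal is to show that, under either condition (i) or (ii), the set $\{i\in\St\colon X_t(i)=1\}$ has size at least $raN$ for some small $r>0$ during a time window of length $e^{\eps N}$, after which Lemma~\ref{lemmalower} immediately delivers the metastable density bound \eqref{lowdensityetapos}.

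First I would fix a short deterministic horizon $T_0$ of order one and, at any time $t$, call an infected star $i\in\St$ \emph{$T_0$-stable} if $i$ neither recovers nor updates during $[t,t+T_0]$. Since recoveries occur at rate $1$ and updates at rate $\kappa_i=\Theta(a^{-\gamma\eta})$, stability has positive probability once $T_0$ is chosen small enough (here is the only place the proof differs from \cite{JLM22}: stability must include no-update, which is why we need the choice of $M_{(i)}, M_{(ii)}$ to depend on $\kappa_0$). The point of forbidding updates on $[t,t+T_0]$ is that the neighbourhood of $i$ within $\St$ and $\Co^1$ is frozen, so the potential transmission channels from $i$ are independent Bernoulli variables with the prescribed connection probabilities $\frac1N p(a,a)$ and $\frac1N p(a,1)$.

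The second step is to show that a $T_0$-stable infected star produces, with probability bounded away from zero, at least (say) two new infections among other stars before time $t+T_0$. In regime (i) the direct mechanism gives a number of freshly infected stars stochastically dominating $\mathrm{Bin}(|\St|,\, c\,\lambda p(a,a)/N)$ minus those that were already infected, with mean proportional to $\lambda a p(a,a)$, which is $\geq M_{(i)}$. In regime (ii) the two-step passage through a connector $k\in\Co^1$ (staying healthy and receiving the infection from $i$, then transmitting it on to another star before its next update or recovery) yields mean of order $\lambda^2 a p(a,1)^2\geq M_{(ii)}$. Choosing $M_{(i)},M_{(ii)}$ large makes the resulting offspring mean exceed any prescribed constant, so one obtains a supercritical branching mechanism among infected stars at the space-time lattice of scale $T_0$.

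The remaining step is the standard block comparison: coupling the process $(|\{i\in\St:X_t(i)=1\}|)_{t\geq 0}$ observed at times $kT_0$ with supercritical oriented percolation via a Peierls argument, exactly as in \cite[Section~3]{JLM22}. This gives, with probability $1-e^{-\eps N}$, that at least $raN$ stars remain infected throughout $[0,e^{\eps N}]$, and Lemma~\ref{lemmalower} then closes the argument. The main obstacle I expect is keeping the blocks essentially independent: infected stars in consecutive blocks share connectors in $\Co^1$, so a reused connector could destroy the lower bound on offspring. I would resolve this as in \cite{JLM22} by partitioning $\Co^1$ into disjoint batches across time blocks and using for each star only its own batch, and by handling the (rare) collision events by a union bound absorbed into the large branching mean.
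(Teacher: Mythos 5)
Your proposal is correct and follows essentially the same route the paper takes: the paper does not reprove this proposition but explicitly cites \cite[Theorem~3.2]{JLM22} and notes that the only relevant modification is that $T_0$-stable vertices must additionally not update within $T_0$ — exactly the adjustment you identify, with the $\kappa_0$-dependence of $M_{(i)},M_{(ii)}$ entering there, and with Lemma~\ref{lemmalower} closing the density bound as you describe.
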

\begin{remark}\label{rem:direct-mechanims}
	We have included in the statement that under the conditions of the theorem metastability holds, which is not part of the original result but which follows from the arguments in Section~3.5 of \cite{JLM22} and the observation that on the slow extinction event the proportion of infected stars never goes below some $\rho(\lambda)>0$. Moreover, by condition \eqref{condp} we have \smash{$\int_0^1p(a,s)\, \mathrm ds\geq {\mathfrak c}_1 a^{-\gamma}$} so the lower bound in \eqref{lowdensityetapos} is always of the form 
	$c\lambda a^{1-\gamma}$.
\end{remark}\pagebreak[3]

We now turn to \emph{delayed direct spreading} and \emph{delayed indirect spreading}, which spreads as quick direct (resp.\ indirect) spreading while also relying on local survival of the infection around powerful vertices. This mechanism was also studied in \cite{JLM19}, but as discussed in the introduction, local survival in the case $\eta<0$ features a weaker degradation effect as compared to the case $\eta>0$, while the spread of the infection is subject to the additional depletion effect. The main purpose of this section is to deal correctly with these new features so as to obtain the following theorem.

\begin{theorem}\label{theoslow}
	Fix $\eta<0$, and define 	\begin{equation}\label{deftloc}\TL\;:=\; \TL(a,\lambda)\;:=\;
\lambda^2p(a,1)\kappa(a)^{-1}.
	\end{equation}
	There are constants $C_{\ref{theoslow}},M_{(iii)},M_{(iv)}>0$ large and {$c_{\ref{theoslow}}>0$} small, {depending on $\gamma$, $\eta$, $\kappa_0$, $\cl$ and $\cu$ alone}, 
	such that slow extinction and metastability hold for the contact process if, for all  $\lambda\in (0,c_{\ref{theoslow}})$, there is $a=a(\lambda)\in(0,1)$ satisfying the following two technical conditions
	\begin{itemize}
		\item[(i)] $p(a,1)\leq e^{1/\lambda}$
		\item[(ii)] $\lambda^2p(a,1)>-C_{\ref{theoslow}}\log(a\lambda)$
	\end{itemize}
	as well as at least one of the following two conditions:
	\begin{itemize}
		\item[(iii)]\ {\bf (Delayed direct spreading)}\ 
		$\displaystyle (\kappa(a)\wedge\lambda) ap(a,a)\TL>M_{(iii)}.$
		
		\item[(iv)]\ {\bf (Delayed indirect spreading)}\ 
		$\displaystyle\lambda^2 a p^2(a,1)\TL>M_{(iv)}.$
	\end{itemize}
	Moreover, in each of these cases we have %there is some  $c'>0$ such that
	\begin{equation}
		\label{lowdensityetaposdelayed}
		\rho^-(\lambda)\;\geq\;c_{\eqref{lowdensityetaposdelayed}}\lambda a \int_0^1p(a,s)\, \mathrm ds,,
	\end{equation}
	where $c_{\eqref{lowdensityetaposdelayed}}>0$ is a constant independent of $a$ and $\lambda$.
\end{theorem}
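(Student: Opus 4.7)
The plan is to extend the coarse graining technique of Theorem~4 in \cite{JLM19} to incorporate the two effects specific to the $\eta<0$ regime, namely the weaker but still nontrivial \emph{degradation} of local survival, and the \emph{depletion} effect which throttles the direct infection rate along slowly updating star-to-star edges. I work on the reduced subgraph with vertex set $\St\cup\Co^0\cup\Co^1\cup\Vo^{odd}$ and the graphical construction recalled just above. The vertices in $\Co^0$ sustain the local infection around each star, those in $\Co^1$ are used for indirect spreading, and $\Vo^{odd}$ enters only at the end for the density lower bound via Lemma~\ref{lemmalower}.

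First I would establish \emph{local survival}: conditionally on a star $i\in\St$ being infected with of order $\lambda a^{-\gamma}$ infected $\Co^0$-neighbours, I show that with probability at least $\tfrac12$ the star is still infected at a time of order $\TL$ later. The argument tracks the number of $\Co^0$-neighbours of $i$ which are currently connected to $i$ and infected, together with the state of $i$, as a Markov chain, and analyses the probability of \emph{sustained recovery}, meaning all infected $\Co^0$-neighbours recover before any of them reinfects $i$ and before $i$ itself updates. This probability per recovery event of $i$ is of order $\kappa(a)/(\kappa(a)+\lambda^2 p(a,1))\sim \TL^{-1}$, while recovery events of $i$ arrive at rate~$O(1)$, so the expected lifetime is of order $\TL$. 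Assumptions~(i)--(ii) are exactly what is needed to justify this: (i) bounds the typical degree of~$i$, so that the Poisson approximation for the number of infected $\Co^0$-neighbours is accurate and reinfections happen on the right time scale, while (ii) provides the Bernstein/Chernoff input needed to concentrate the number of sustained-recovery trials over a window of length $\TL$, the logarithmic form of the right-hand side being tuned to such a concentration estimate.

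With local survival in hand, I analyse the two spreading mechanisms on a time scale of order $\TL$. For \emph{delayed indirect spreading}~(iv), a source star $i$ infects fresh $\Co^1$-vertices $j$ at rate of order $\lambda p(a,1)$; conditionally on $j$ being infected and connected to a further star $i'$, an event of probability of order $p(a,1)/N$, the rate at which $j$ infects $i'$ is $\lambda$ before $j$ recovers. Summing over $\St$ and integrating over $\TL$ yields of order $\lambda^2 a p(a,1)^2\TL$ expected successful star-to-star infections per source star, larger than $M_{(iv)}$ by assumption. For \emph{delayed direct spreading}~(iii) the subtle point is the depletion effect: the edge $\{i,i'\}$ between two stars is resampled only at rate of order $\kappa(a)$, so when $\kappa(a)\ll\lambda$ the edge status does not refresh on the natural infection time scale and the bottleneck becomes the update rate rather than the infection rate. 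I would formalise this by writing the first infection time along $\{i,i'\}$ as the minimum of a Poisson($\lambda$) process restricted to edge-on intervals and the update process of rate $\kappa(a)$, yielding an effective rate $\kappa(a)\wedge\lambda$, and hence $(\kappa(a)\wedge\lambda)\,a\,p(a,a)\,\TL>M_{(iii)}$ expected direct infections per source star.

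Next I would coarse grain time into blocks of length $\TL$ and compare the process of infected stars to a supercritical branching process on $\St$, using near-independence across disjoint blocks provided by a Markov renewal argument after each local survival window and a standard chaining to show that a constant fraction of stars remains infected for an exponentially long time. This yields slow extinction and, combined with the stationarity/averaging argument from Section~3.5 of~\cite{JLM22}, metastability. The lower bound \eqref{lowdensityetaposdelayed} then follows by applying Lemma~\ref{lemmalower} on the event $\{|\{i\in\St\colon X_t(i)=1\}|\ge r a N\}$ in exactly the same way as in the proof of Proposition~\ref{teolower}. The main obstacle will be the joint handling of degradation and depletion in the direct spreading regime: one must simultaneously maintain local survival of each source star under slow updates while tracking infection attempts along edges whose on/off status is essentially frozen on the infection time scale. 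This forbids a naive decoupling of the edge-refresh, local-recovery and infection-clock dynamics, and forces the coarse graining to operate at the scale $\TL$ rather than at the scale $1/\lambda$ used in \cite{JLM19} for $\eta\ge 0$.
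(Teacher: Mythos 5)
Your overall architecture---local survival on the time scale $\TL$, spreading with an effective rate $\kappa(a)\wedge\lambda$ for direct transmission, a coarse-grained iteration over blocks of length $\asymp\TL$, and Lemma~\ref{lemmalower} for the final density bound---matches the paper's, and the heuristic computations behind conditions (iii) and (iv) are correct. There is, however, a genuine gap in the local survival step that propagates into both spreading arguments. You extract from local survival only the marginal statement that a star infected at time $t$ is, with probability at least $\tfrac12$, still infected at a single later time $t+O(\TL)$. This is not enough. For delayed direct spreading the edge $\{i,i'\}$ is freshly sampled only at updates of the endpoints, occurring at rate $\asymp\kappa(a)$; to convert a sampled edge into a transmission you need the source star to be infected for a positive fraction of the specific window of length $\kappa(a)^{-1}$ following such an update. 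For delayed indirect spreading you need many source stars to be \emph{simultaneously} infected during common short windows, so that a positive density of $\Co^1$-connectors can be infected and then relay to a fixed target star. Both require an occupation-time, structural version of local survival---in the paper this is the $[t,T]$-infection event built from the events $\mathcal{W}_i(t)$ (the star is infected at least half of each unit interval and keeps $\gtrsim\lambda p(a,1)$ infected stable neighbours), established separately for short and long inter-update gaps. Your Markov-chain/sustained-recovery argument could in principle be upgraded to yield such occupation-time bounds, but as stated "the star survives to time $t+O(\TL)$" does not imply it is available to transmit during the relevant windows, and the spreading estimates do not follow.

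Two smaller points. First, a comparison with a supercritical branching process only controls the regime where the infected stars form a vanishing fraction of $\St$; once a positive fraction is infected, collisions are no longer negligible. The paper instead runs a two-case density recursion on $|\tilde{\St}_m|$: if $|\tilde{\St}_m|>4c\,aN$ a constant fraction simply survives the next block, while if $c\,aN<|\tilde{\St}_m|\le 4c\,aN$ each of the $\ge aN/4$ currently uninfected stars is recruited independently with probability bounded below, giving the $1-e^{-cN}$ recursion directly. Second, conditions (i) and (ii) do not play the roles you assign them: (i) suppresses a $\log p(a,1)$ correction in the local survival time (cf.\ Remark~\ref{re:conditions}), and (ii) is calibrated so that failure probabilities of order $\kappa(a)^{-1}e^{-c\lambda^2 p(a,1)}$ are beaten by the $\asymp\lambda^2p(a,1)$ update events occurring in a window of length $\TL$.
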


\begin{remark} Again we do not address here the metastability, which follows from  a simple adaptation of the arguments given in Section~3.5 of \cite{JLM22}, and again the lower bound in~\eqref{lowdensityetaposdelayed} is always of the form $c \lambda a^{1-\gamma}$.
\end{remark}

\begin{remark}
	The condition
	\[\lambda^2 ap^2(a,1)\TL>M_{(iv)}\]
	required for delayed indirect spreading is similar to the one for quick indirect spreading, except for the $\TL$ factor which allows for the infection to `wait' around stars until spreading is possible. For delayed direct spreading, on the other hand, the condition reads
	\[(\lambda\wedge \kappa(a)) ap(a,a)\TL>M_{(iii)}\]
	so the same would be true if not for the factor $\lambda\wedge \kappa(a)$ replacing $\lambda$. This is due to the \emph{depletion} effect described in the introduction. In~\cite{JLM19} we have similar conditions for delayed direct/indirect spreading in the case $\eta\geq0$, but without the depletion effect, and with a different expression for $\TL$.
\end{remark}

\begin{remark}
	In order to address the right order of the local survival time we need to differentiate between short and long updating times. The short updating times will be handled with the use of Proposition~\ref{survshort} where we obtain lower bounds containing a logarithmic term, while long updating times will be handled with the use of Proposition \ref{survlong} which gives lower bounds without any such term. For $\eta<0$ most updating times are long, and Proposition~\ref{survlong} then gives rise to the definition of $\TL$ without logarithmic correction. We also discuss the case $\eta\geq0$ where we obtain a local survival time with a logarithmic correction (coming from typical updating times being short), which can be shown to replace the $\log^2(\lambda a)$ correction appearing in the local survival time in \cite{JLM19}, thus improving it.
\end{remark}

\begin{remark}
	In our proofs the spreading mechanisms are similar to the ones used in the fast evolving networks, which may come as a surprise since on these proofs the high rate of change in the graph played an important role in order to provide some degree of independence between infection events, while in this case, the slow evolving networks are closer in spirit to the static ones, for which there is a high level of dependency. The answer to this apparent contradiction is that even though updating times become large when $a\downarrow 0$, the local survival around stars become much larger, thus allowing for vertices to update many times on each unit interval, giving enough independence for our methods to work.
\end{remark}

The rest of this section is divided as follows: In Section \ref{localsurvival} we give a technical definition of the local survival event and show that upon reaching a given star its probability is bounded from below. In Sections \ref{DDS} and \ref{DIS} we show how the conditions given in Theorem~\ref{theoslow} give slow extinction of the process.
\pagebreak[3]

\subsection{Local survival}\label{localsurvival}

For static networks, the local survival mechanism has been studied many times, see for example~\cite{BB+, CD09, GMT05}, 
 and is generally understood as the event in which a highly connected vertex infects neighbours which in turn infect it back, thus creating a persistent loop which eventually breaks down after some event of small probability. In this subsection we look 
 at the local survival mechanism for evolving networks, if no assumption is specified results hold for arbitrary update parameters~$\eta\in\R$.\smallskip

 Since the local survival mechanism will be used  for different stars  within the proof of slow extinction, we want to avoid the undesirable dependencies amongst them which arise from sharing common neighbours. To do so fix a sequence $(J_{k})_{k\ge 1}$ of time intervals with $J_0=[0,4)$, $J_1=[0,5)$ and for $k\geq 2$, $J_{k}\,:=\,[(k-2),(k+4))$ and define for each $k\in\N$ the set of stable connectors
\[\Co_{k}:=\,\{j\in\Co^0 \colon (\Rec^j\cup\U^j)\cap J_k=\emptyset\}\]
which can be seen as the set of useful connectors for local survival across all stars. Notice that, as mentioned before, we construct $\Co_k$ using only connectors in $\Co^0$ since the ones in~$\Co^1$ will be used to spread the infection. Now, for any given star $i\in\St$ and $t\geq0$ define
\[\Co_{t,i}:=\,\{j\in\Co_{\lceil t\rceil} \colon \{i,j\}\in\mathscr G_t\}\]
which by definition of $\Co_{\lceil t\rceil}$ are neighbours of $i$ that do not update nor recover on $[t,t+4]$ (this will be relevant throughout the following sections). Using these sets we now define the processes $\bar{X}^i_t$ as in \cite{JLM22}.
\begin{definition}\label{modx}
	Fix $i\in\St$. For any realisation of the graphical construction, the process $(\bar{X}^i_t \colon t\ge0)$, is defined analogously to $(X_t\colon t\ge0)$ with the only exception that
	\begin{itemize}
		\item $\bar{X}^i_0= X_0$ on $\Co_{0,i}\cup\{i\}$ and $\bar{X}^i_0=0$ everywhere else.
		\item An infection event $s\in\I^{hj}$ is only valid if
		\[{h=i\text{ and }}j\in\Co_{r,i}\text{ for some }r\in[s-4,s+1].\]
	\end{itemize}
\end{definition}
Observe that for a given $i\in\St$ the process $(\bar{X}^i_t)$ defined above depends only on the processes \smash{$\{\I^{ij}\}_{j\in\Co^0}$, $\Rec^i$, $\U^i$, $\{\Rec^{j}\}_{j\in\Co^0}$}, and \smash{$\{\U^{j}\}_{j\in\Co^0}$}, as well as the respective connections $\{\mathcal{C}^{ij}\}_{j\in\Co^0}$. Since both the infections and the connections are defined on the edges of the graph, it follows that we can make local survival events independent across different stars as soon as we work on a fixed realisation of the $\{\Rec^{j}\}_{j\in\Co^0}$ and $\{\U^{j}\}_{j\in\Co^0}$ processes. {With this in mind observe that for any fixed $k$, connectors in $\Co^0$ belong to $\Co_k$ independently with probability $e^{-(\kappa(j)+1)|J_k|}$ which is lower bounded by a positive function of $\kappa_0$ and $\eta$}. It follows then from a large deviation argument that there is a constant $c_{\eqref{localstable}}>0$ {depending on $\kappa_0$ and $\eta$ alone} such that {for any $N$ large}
\begin{equation}\label{localstable}
\P(|\Co_{k}|> c_{\eqref{localstable}}N \, \mbox{for all } k\in\{0,\ldots,\lfloor e^{c_{\eqref{localstable}} N}\rfloor\})\,\geq\,1-e^{-c_{\eqref{localstable}} N},
\end{equation}
where we call $\mathcal{A}_0$ the event on the left. Since the lower bound on the right is already of the form required for slow extinction we will work on a fixed realisation of the \smash{$\{\Rec^{j}\}_{j\in\Co^0}$} and \smash{$\{\U^{j}\}_{j\in\Co^0}$} processes such that $\mathcal{A}_0$ holds. Now that we have addressed independence we can start constructing the local survival event for a given star $i\in\St$, which consists of `surviving' the updating events in $\U^i$ for a long time. Fix $s\geq0$ and let $U(s)$ be the first event in $\U^i\cap(s,\infty)$. Define the event
\[\mathcal{S}^{short}_s\;:=\;\{U(s)-s\leq 4\text{ and }\bar{X}^i_{U(s)}(i)=1\},\]
which can be thought as surviving an updating event that arrives quickly after time $s$. Our aim is to show that if at time $s$ the central node $i$ is infected and it has sufficiently many stable neighbours, then such survival is very likely to occur. To do so define for each $s\geq0$ and $i\in\St$ the $\sigma$-algebra $\mathfrak{F}^{i}_{s}$ given by the graphical construction throughout $[0,s]$ restricted to the subgraph generated by $\{i\}\cup\Co^0$. Also denote by $\mathfrak{F}$ the $\sigma$-algebra generated by the processes $\{\Rec^{j},\U^{j}\}_{j\in\Co^0}$.
\begin{proposition}\label{survshort}
	Fix $i\in\St$, $s\geq0$, and {$\lambda\leq\frac{1}{4}$}. %, and let $\mathfrak{F}^i_s$ and $\mathfrak{F}$ as above, 
	Then $\mathcal{S}^{short}_s$ is 
	\mbox{$\mathfrak{F}^i_{U(s)}$-measurable}, and for any $0<c<1$,
	\begin{equation}\label{propshort}\P(\mathcal{S}^{short}_s\,\big|\,\mathfrak{F}^i_s,\mathfrak{F})\geq1-\frac{C_{\eqref{propshort}}\kappa(a)}{c\lambda^2p(a,1)}\log\left(1+\frac{\lambda^2cp(a,1)}{(1+\kappa(a))^2}\right)-e^{-4\kappa(a)}\end{equation}
	on the event
	$\mathcal{A}_0\cap\{|\Co_{s,i}|>cp(a,1)\mbox{ and } \bar{X}^i_s(i)=1\}$, where {$C_{\eqref{propshort}}=20(1+\kappa_0+\kappa_0^{-1})^2$}. 
\end{proposition}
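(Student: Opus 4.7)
The event $\{U(s)-s\le 4\}$ is $\U^i\subset\mathfrak{F}^i_{U(s)}$-measurable, and by Definition~\ref{modx} the value $\bar X^i_{U(s)}(i)$ is a functional of the graphical construction restricted to edges with both endpoints in $\{i\}\cup\Co^0$, so $\mathcal S^{short}_s\in\mathfrak{F}^i_{U(s)}$.

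\textbf{Reduction to an integrated estimate.} Write $B(t):=\bar X^i_t(i)$. Since $U(s)-s\sim\mathrm{Exp}(\kappa(a))$ is independent of the infection dynamics on $[s,\infty)$ given $\mathfrak F^i_s$, integrating over its density gives
\begin{align*}
\P(\mathcal S^{short}_s\mid \mathfrak F^i_s,\mathfrak F)
&=\int_0^4\kappa(a)\euler^{-\kappa(a)u}\,\P(B(s+u)=1\mid \mathfrak F^i_s,\mathfrak F)\,\de u\\
&=(1-\euler^{-4\kappa(a)})-\int_0^4\kappa(a)\euler^{-\kappa(a)u}\,\P(B(s+u)=0\mid \mathfrak F^i_s,\mathfrak F)\,\de u.
\end{align*}
It therefore suffices to show that the last integral is bounded by $\frac{C_{\eqref{propshort}}\kappa(a)}{c\lambda^2 p(a,1)}\log\bigl(1+\frac{\lambda^2 cp(a,1)}{(1+\kappa(a))^2}\bigr)$, i.e.\ to control the expected healthy time of the star on $[s,s+4]$ weighted by the exponential density of the updating clock.

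\textbf{Markov chain and the key estimate.} On $[s,s+4]$ I track $(B(t),M(t))$, where $M(t)$ counts the infected vertices of $\Co_{s,i}$. By definition of $\Co_{\lceil s\rceil}$ the stable neighbors neither recover nor update on $J_{\lceil s\rceil}\supset[s,s+4]$, so $M$ is non-decreasing, and $(B,M)$ is a Markov chain with rates: from $(1,M)$, recovery at rate $1$ to $(0,M)$ and new infection at rate $\lambda(n-M)$ to $(1,M+1)$, with $n:=\lfloor cp(a,1)\rfloor$; from $(0,M)$, reinfection at rate $\lambda M$ to $(1,M)$; an update clock at rate $\kappa(a)$ acts as absorption throughout. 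Conditional on $M=m$, the marginal $B$ is a two-state chain with stationary $\pi_0(m)=(1+\lambda m)^{-1}$ and exponential relaxation at rate $1+\lambda m$. Two complementary bounds on $\P(B(s+u)=0)$ are available: the trivial one $\P(B(s+u)=0)\le 1-\euler^{-u}$, valid for $u\lesssim 1$, and the quasi-stationary one $\P(B(s+u)=0)\lesssim (1+\lambda M(s+u))^{-1}$ after the two-state mixing, combined with a first-passage lower bound $M(s+u)\gtrsim \lambda p(a,1)\,u/(1+\kappa(a))$ obtained by coupling $M$ from below with an explicit pure-birth chain built on the same driving Poisson processes. Inserting the minimum of these bounds into the weighted integral and performing the change of variables $v=(1+\kappa(a))^2+c\lambda^2 p(a,1) u$ produces exactly the claimed logarithmic expression, with the denominator $(1+\kappa(a))^2$ in the argument of the logarithm appearing at the crossover point between the two regimes.

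\textbf{Main obstacle.} The technical heart is to prove the first-passage estimate for $M$ with the correct dependence on $\kappa(a)$, uniformly across the regimes $\kappa(a)\downarrow 0$ (where failure is dominated by the $1/(\lambda n)$-long typical healthy episode) and $\kappa(a)\uparrow\infty$ (where failure reduces to the $\sim 1/\kappa(a)$ probability that the star recovers before its short update time). I would do this by constructing a Doob supermartingale of the form $V(t):=\phi(M(t))-\int_0^t\psi(M(r))\,\de r$ for suitable $\phi,\psi$ killed at the update time, applying optional stopping, and handling the initial transient $u\lesssim(1+\kappa(a))^{-1}$ via the trivial bound $\P(B(s+u)=0)\le 1-\euler^{-u}$; the residual constants then collect into $C_{\eqref{propshort}}=20(1+\kappa_0+\kappa_0^{-1})^2$.
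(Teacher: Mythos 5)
Your measurability remark and the reduction to bounding $\int_0^4\kappa(a)e^{-\kappa(a)u}\,\P\big(\bar X^i_{s+u}(i)=0\mid\text{no update in }(s,s+u]\big)\,\de u$ are fine, and this conditioning-on-the-update-time route is genuinely different from the paper's argument (which works with the ordered points of $\Rec^i\cup\U^i$, asks for a single stable neighbour to be infected during the first inter-event gap $T_l$ and to reinfect $i$ during the last gap $T_r$ before the update, and extracts the exact form of \eqref{propshort} from the identity $\E[e^{-tYY'}]\le\log(1+t)/t$ in \eqref{auxY} for the two $\mathrm{Exp}(1+\kappa(a))$ gaps, plus the decomposition of $\{M\ge2,\,U(s)-s\le4\}$ in the case $\eta<0$). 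However, your version has a genuine gap at its quantitative heart. The claimed first-passage bound $M(s+u)\gtrsim \lambda p(a,1)\,u/(1+\kappa(a))$ cannot hold by a coupling with a pure-birth chain: starting from $\bar X^i_s(i)=1$ with possibly no infected stable neighbour, the chain $(B,M)$ is absorbed at $(0,0)$ when the first recovery of $i$ precedes its first infection of a connector, an event of probability about $(1+\lambda c p(a,1))^{-1}$. On this event $M$ never grows and $\P(B(s+u)=0)$ does not decay in $u$, so the "minimum of the two bounds" computation that you say "produces exactly the claimed logarithmic expression" is not valid as written; the early-extinction contribution is in fact one of the dominant terms and must be estimated separately (and, when $\kappa(a)$ is large, with the update clock competing, or the bound \eqref{propshort} is not recovered in the regime $\lambda^2cp(a,1)\ll(1+\kappa(a))^2$, which the proposition does not exclude).

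Two further points. First, the inequality "$\P(B(s+u)=0)\lesssim(1+\lambda M(s+u))^{-1}$" mixes a probability with a random variable and, even read as intended, requires an argument controlling the post-recovery transient at the deterministic time $u$; more importantly, the factor $(1+\kappa(a))$ you insert into the growth rate of $M$ has no source in your own setup, since after conditioning on the update time the evolution of $M$ on $[s,s+u]$ does not involve $\kappa(a)$ at all, and the $(1+\kappa(a))^2$ inside the logarithm of \eqref{propshort} is precisely the feature that in the paper comes from the law of the two boundary gaps $T_l,T_r$. Second, the step you identify as the "main obstacle" is exactly the step you do not carry out: the supermartingale/optional-stopping construction is only announced, so the proposal does not yet contain a proof of the stated bound, only a plausible programme whose stated intermediate claims need correction before it could be completed.
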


\begin{proof}
Abusing notation for the sake of clarity, we will assume that $\P$ stands for the law of the process conditioned on $\mathfrak{F}^i_s$ and $\mathfrak{F}$. The fact that \smash{$\mathcal{S}^{short}_s$} is \smash{$\mathfrak{F}^i_{U(s)}$}-measurable follows directly from the definition of \smash{$\mathcal{S}^{short}_s$} so we only prove the inequality. \smallskip

Let $(E_n)_{n\ge 1}$ be the ordered elements of $(\Rec^i\cup\U^i)\cap(s,\infty)$ so the sequence $(E_{n+1}-E_n)_{n\ge 1}$ are i.i.d. exponential random variables with rate $1+\kappa(a)$, and each $E_n$ is independently assigned to be a recovery event with probability $1/(1+\kappa(a))$ or an updating event otherwise. Note that the first updating event occurs at $U(s)=E_M$ where $M$ is a geometric random variable with parameter $\kappa(a)/(1+\kappa(a))$ independent of $(E_n)_{n\ge 1}$. Our analysis of $\P(\mathcal{S}^{short}_s)$ will depend on $M$ as follows: On the event $\{M=1\}$ there are no recovery events between~$s$ and $U(s)$ so that 
\begin{equation}\label{firstS}\P(\mathcal{S}^{short}_s\cap \{M=1\})=\P(U(s)-s\leq4\,\mbox{ and } M=1).\end{equation}
On the event $\{M\ge 2\}$, we write $T_l=E_1-s$ and $T_r=E_M - E_{M-1}$ to denote the length of the intervals before the first recovery and after the last. Now, since $|\Co_{s,i}|>c p(a,1)$ we know that $i$ has at least $c p(a,1)$ neighbours in $\Co^0$ which do not update or recover throughout $[s,s+4]$ so on the event $\{U(s)-s\leq 4\}$ it is enough for 
\smash{$\bar{X}^i_{U(s)}(i)=1$} to occur, to find some such neighbour that gets infected in $[s,E_1]$ and infects $i$ back in $[E_{M-1},E_M]$. Conditionally on $(E_n)_{n\ge 1}$ this occurs with probability
\[
1-\big(1-(1-e^{-\lambda T_l}) (1-e^{-\lambda T_r})\big) ^{|\Co_{s,i}|}\geq 1-(1-\lambda^2 T_l T_r/4)^{|\Co_{s,i}|} \ge 1-e^{-\lambda^2 c p(a,1) T_l T_r/4}
\]
on the event $\{U(s)-s\leq4 \mbox{ and } M\ge 2\}$, since these events are independent for different neighbours, and where we have used that both $T_l$ and $T_r$ are bounded by $4$ and $\lambda\leq\frac{1}{4}$ to apply the inequality $1-e^{-x}\geq x/2$. Taking expectations with respect to $(E_n)_{n\ge 1}$ and adding the result to \eqref{firstS} we obtain
\begin{equation} \label{secondS}
\P(\mathcal{S}^{short}_s)\ge \P(U(s)-s\leq 4)-\E[ e^{-\lambda^2 c p(a,1) T_l T_r/4} \1_{\{U(s)-s\leq4\mbox{ and } M\ge 2\} }].
\end{equation}
If $\eta\geq0$, then the event $\{U(s)-s\leq4\}$ is likely to occur and hence we do not lose much by further bounding \eqref{secondS} as
\[\P(\mathcal{S}^{short}_s)\ge \P(U(s)-s\leq 4)-\E[ e^{-\lambda^2 c p(a,1) T_l T_r/4} \1_{\{M\ge 2\} }]\]
but conditionally on $\{M\geq2\}$ the random variables $T_l$ and $T_r$ are just independent exponential random variables with parameter $1+\kappa_a$. Since for i.i.d.\ exponential random variables $Y,Y'$ with rate $1$ and $t\geq0$ we have
\begin{align}\label{auxY}
\E[e^{-t Y Y'}]&= \E[\frac 1 {1+ t Y}]= \int_0^{+\infty} \frac {e^{-s}}{1+st} \mathrm d s =\frac {e^{1/t}}t \int_{1/t}^{+\infty} \frac {e^{-s}}s \mathrm ds\le \frac {\log(1+t)}t
\end{align}
we conclude that for $\eta\geq0$ we have
\begin{align*}
\P(\mathcal{S}^{short}_s)&\ge \P(U(s)-s\leq 4)-\P(M\ge 2)\tfrac {4 (1+\kappa(a))^2}{\lambda^2 c p(a,1)} \log\left( 1 +\tfrac {\lambda^2 c p(a,1)}{4(1+\kappa(a))^2}\right)\\[1ex]&\ge 1-e^{-4\kappa(a)}-\tfrac {4 (1+\kappa(a))}{\lambda^2 c p(a,1)} \log\left( 1 +\tfrac {\lambda^2 c p(a,1)}{4(1+\kappa(a))^2}\right),
\end{align*}
where we used that \smash{$\P(M\ge2)=\frac{1}{1+\kappa(a)}$}. Inequality \eqref{propshort} follows as {$\eta\geq0$ and $a<1$ imply $4(1+\kappa(a))\leq 4(1+\kappa_0^{-1})\kappa(a)\leq C_{\eqref{propshort}}\kappa(a)$.} 
\pagebreak[3]\smallskip

Suppose now that $\eta<0$ so that $\{U(s)-s\leq 4\}$ is no longer a likely event. In this scenario we observe that
\[\{M\ge 2 \mbox{ and } U(s)-s\leq 4\}\subseteq\{M= 2\}\cup \{M\ge3 \mbox{ and } E_{M-1}-E_1\leq 4\}\]
and that both events $\{M=2\}$ and $\{M\ge 3\}\cap\{E_{M-1}- E_1\le 4\}$ are independent from $T_l, T_r$ conditionally on $M\ge 2$. To compute the probability of this event observe that $E_{M-1}-E_1$ corresponds to the sum of $M-2$ exponential random variables with rate $1+\kappa(a)$, and since conditionally on $\{M\geq3\}$, the random variable $M-2$ is  geometric with rate $\kappa(a)/(1+\kappa(a))$, it follows that $E_{M-1}-E_1$ is exponential with rate $\kappa(a)$ and hence
\[\P(M= 2 \mbox{ or } (M\ge3 \mbox{ and } E_{M-1}-E_1\leq 4))\;\le\;\tfrac{\kappa(a)}{1+\kappa(a)}+1-e^{-4\kappa(a)}\;\leq\;5\kappa(a).\]
Using the independence between the previous event and $T_l,T_r$ we obtain from \eqref{secondS} and~\eqref{auxY},
\begin{align*}
\P(\mathcal{S}^{short}_s)&\ge \P(U(s)-s\leq 4)-\E[ e^{-\lambda^2 c p(a,1) T_l T_r/4} \1_{\{M\ge 2\}\cup \{M\ge3\,\wedge\,E_{M-1}-E_1\leq 4\}}]\\[1ex]&\ge \P(U(s)-s\leq 4)-5\kappa(a)\tfrac {4 (1+\kappa(a))^2}{\lambda^2 c p(a,1)} \log\left( 1 +\tfrac {\lambda^2 c p(a,1)}{4(1+\kappa(a))^2}\right)\\[1ex]&\ge 1-\tfrac {C_{\eqref{propshort}} \kappa(a)}{\lambda^2 c p(a,1)} \log\left( 1 +\tfrac {\lambda^2 c p(a,1)}{4(1+\kappa(a))^2}\right)-e^{-4\kappa(a)},
\end{align*}
where in the last line we have used that if $\eta<0$ then {$20(1+\kappa(a))^2\leq20(1+\kappa_0)^2\leq C_{\eqref{propshort}}$}.
\end{proof}
Recall that $\mathcal{S}^{short}_s$ was thought as surviving an updating event that arrives quickly after time $s$. To address the case where updating events take a long time to occur we first define the set of \textit{infected stable neighbours} of $i$ as
\[\Co'_{t,i}\,:=\,\{y\in\Co_{t,i} \colon \bar{X}^i_{t}(y)=1\},\]
which are the stable connectors of $i$ that are infected at time $t$. {Similarly to} \cite{JLM22}, for any fixed $i\in\St$ and $t\geq0$ we introduce the `good' event $\mathcal{W}_i(t)$ as
	\begin{equation}\label{defW}\mathcal{W}_i(t)\,=\,\Big\{|\Co'_{t,i}|\geq {\tfrac{c_{\eqref{localstable}}}{80}}\lambda p(a,1)\,\text{ and  }\int_{t}^{t+1}\bar{X}^i_s(i)\, ds\geq \tfrac{1}{2}\Big\}.\end{equation}
In words, the event $\mathcal{W}_i(t)$ implies that on $[t,t+1]$ the central vertex is both infected a large proportion of the time, and has many infected neighbours. Again, for any $s\geq0$ call $U(s)$ the first event in $\U^i\cap(s,\infty)$ and define the random variable $\mathfrak{n}=\lfloor U(s)-s\rfloor$ so that $s+\mathfrak{n}\leq U(s)<s+\mathfrak{n}+1$. We can now define 
\smash{$\mathcal{S}^{long}_s$} as the event in which

\begin{enumerate}
	\item $U(s)-s\geq 4$,
	\item for each $2\leq j\leq \mathfrak{n}-2$, the event $\mathcal{W}_i(s+j)$ holds,
	\item $\bar{X}^i_{U(s)}(i)=1$.
\end{enumerate}
The following proposition shows that the event $\mathcal{S}^{long}_s$ is very likely to occur.

\begin{proposition}\label{survlong}	Fix  $i\in\St$, $s\geq0$, and $\eta<0$, and let $\mathfrak{F}^i_s$ and $\mathfrak{F}$ as defined before Proposition~\ref{survshort}. There are $0<c_{\lambda}<1/e$ small and $C_{\ref{survlong}}>0$ large depending on $c_{\eqref{localstable}}$, $\gamma$, $\eta$, $\kappa_0$, $\cl$ and $\cu$ alone, such that if $\lambda\in (0,c_\lambda)$ and $a\in(0,1)$ are chosen satisfying
	\begin{itemize}
		\item[(i)] $p(a,1)\leq e^{1/\lambda}$
		\item[(ii)] $\lambda^2p(a,1)>-C_{\ref{survlong}}\log(a\lambda)$
	\end{itemize}
	then $\mathcal{S}^{long}_s$ is $\mathfrak{F}^i_{U(s)}$-measurable and there is  $C_{\eqref{proplong}}
	>0$ depending on $c_\lambda$,  $c_{\eqref{localstable}}$, and the parameters  $\gamma$, $\eta$, $\kappa_0$, $\cl$ and $\cu$, such that, {for all $N$ sufficiently large},
		\begin{equation}\label{proplong}
	\P(\mathcal{S}^{long}_s\,\big|\,\mathfrak{F}^i_s,\mathfrak{F})\geq e^{-4\kappa(a)}-\frac{C_{\eqref{proplong}}}{\lambda^2p(a,1)}
	\end{equation}	
	on the event 	
	\begin{equation}\label{prop3}
	\mathcal{A}_1\;:=\;\mathcal{A}_0\cap\{|\Co_{s,i}|> \tfrac{c_{\eqref{localstable}}}{5} p(a,1)\mbox{ and } \bar{X}^i_s(i)=1\},
	\end{equation}
	for $\mathcal{A}_0$ as defined in \eqref{localstable}.\end{proposition}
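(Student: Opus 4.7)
The plan is to decouple two sources of randomness in $\mathcal{S}^{long}_s$: the length $T := U(s) - s$ of the no-update window for the star $i$, which conditional on $\mathfrak{F}^i_s \vee \mathfrak{F}$ is exponential with rate $\kappa(a)$; and the success of the local loop-survival inside that window. Measurability of $\mathcal{S}^{long}_s$ is immediate because the three clauses of its definition depend only on $\U^i \cap [0,U(s)]$, on $\bar X^i$, and on the $\mathfrak{F}$-measurable stability sets $\Co_{\cdot,i}$. Since $\P(T \geq 4 \mid \mathfrak{F}^i_s, \mathfrak{F}) = e^{-4\kappa(a)}$ and $\P(\mathcal S^{long}_s)\ge\P(T\ge 4)-\P(\text{loop fails})$, inequality~\eqref{proplong} reduces to showing that
\[
\P\bigl(\text{some }\mathcal W_i(s+j)\text{ fails for }2\le j\le \mathfrak n-2,\text{ or }\bar X^i_{U(s)}(i)=0 \,\big|\, \mathfrak{F}^i_s,\mathfrak{F}\bigr) \;\le\; \frac{C}{\lambda^2 p(a,1)}.
\]

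The heart of the argument is a one-step contraction estimate. Setting $N_j := |\Co'_{s+j,i}|$ and $F_j := \int_{s+j}^{s+j+1} \bar X^i_u(i) \, du$, so that $\mathcal{W}_i(s+j) = \{N_j \geq \tfrac{c_{\eqref{localstable}}}{80} \lambda p(a,1),\, F_j \geq \tfrac12\}$, I would prove that on $\mathcal{A}_1 \cap \mathcal{W}_i(s+j-1)$ the conditional probability of $\mathcal{W}_i(s+j)^c$ is at most $C' \kappa(a)/(\lambda^2 p(a,1))$. This splits into \emph{(a) star re-infection:} among the $N_{j-1}\gtrsim\lambda p(a,1)$ infected stable neighbours, all protected from update and recovery throughout $[s+j-1,s+j+4]$ by construction of $\Co_{\cdot,i}$, each recovery of $i$ is followed by an exponential waiting time of rate $\Theta(\lambda^2 p(a,1))$ until re-infection; by hypothesis~(ii) this rate dwarfs the unit recovery rate by a factor of at least $-\log(a\lambda)$, so $F_j<\tfrac12$ occurs with probability only $O(\kappa(a)/(\lambda^2 p(a,1)))$; and \emph{(b) neighbour regeneration:} conditional on $F_{j-1}\geq \tfrac12$ and on $|\Co_{s+j,i}|\geq \tfrac{c_{\eqref{localstable}}}{5}p(a,1)$ (guaranteed by $\mathcal{A}_0$), each stable neighbour is contacted at rate $\lambda$ over a time-set of measure at least $\tfrac12$, and a Chernoff bound yields $N_j\geq\tfrac{c_{\eqref{localstable}}}{80}\lambda p(a,1)$ except on an event of probability $e^{-c\lambda p(a,1)}$, which hypothesis~(ii) forces to be much smaller than $\kappa(a)/(\lambda^2 p(a,1))$.

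The conclusion follows by summation: conditional on $T=t$ the number of intermediate unit intervals is at most $t$, so iterating the one-step estimate and using $\E[T]=\kappa(a)^{-1}$ gives total loop-failure probability $O(1/(\lambda^2 p(a,1)))$. The endpoint $\bar X^i_{U(s)}(i) = 1$ is handled by reapplying the star re-infection argument to the final sub-unit fragment $[s+\mathfrak{n}, U(s)]$, with hypothesis~(i) ensuring that the logarithmic quantity $\log p(a,1)\le 1/\lambda$ stays compatible with the fast re-infection rate $\lambda^2 p(a,1)$ demanded by hypothesis~(ii). The main obstacle I anticipate is robustness of the per-interval estimate against the loss of Markov structure induced by conditioning at integer times: I would resolve this by coupling the local process on the neighbourhood of $i$ to a stationary loop-survival process whose invariant measure concentrates near $\{F=1,\,N\asymp\lambda p(a,1)\}$ under~(ii), reading off the $\kappa(a)/(\lambda^2 p(a,1))$ error from the coupling time; a secondary technical issue will be ensuring that all constants depend only on the parameters listed in the statement.
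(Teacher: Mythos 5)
Your overall architecture (condition on the window length $T=U(s)-s$, which is exponential of rate $\kappa(a)$; control the loop-survival inside the window interval by interval; handle the endpoint separately) matches the paper's. But the key estimate is wrong as stated, and the way you sum it depends on exactly the part that fails. You claim that on $\mathcal A_1\cap\mathcal W_i(s+j-1)$ the event $\mathcal W_i(s+j)^c$ has conditional probability $O(\kappa(a)/(\lambda^2 p(a,1)))$, and in particular that $\{F_j<\tfrac12\}$ has probability $O(\kappa(a)/(\lambda^2p(a,1)))$. There is no source for the factor $\kappa(a)$ here: the star $i$ does not update at all on $[s,U(s)]$, and the relevant connectors update at rate $\Theta(1)$ (they have index $>N/2$), so nothing in the local dynamics of a unit interval occurs at rate $\kappa(a)$. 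The Markov-type reasoning you sketch (recoveries at rate one, re-infection at rate $\Theta(\lambda^2 p(a,1))$) yields only $\P(F_j<\tfrac12)=O(1/(\lambda^2 p(a,1)))$ per interval. Summing such a bound over the $\sim\kappa(a)^{-1}$ unit intervals in the window produces $O(\kappa(a)^{-1}/(\lambda^2 p(a,1)))$, which diverges as $a\downarrow 0$ because $\kappa(a)=\kappa_0 a^{-\gamma\eta}\to0$ for $\eta<0$. So the iteration step, as you have set it up, does not close.

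The repair — and the route the paper takes, importing Propositions 3.10 and 3.11 of \cite{JLM22} — is to prove that each per-interval failure is \emph{exponentially} small, of order $e^{-c\lambda^2 p(a,1)}$ for the events $\mathcal W_i(s+k)^c$ (and $e^{-cp(a,1)}$ for the graph event that $i$ retains $\gtrsim p(a,1)$ stable neighbours at each integer time, which is \emph{not} guaranteed by $\mathcal A_0$ as you assert, only made likely by it). Hypothesis (ii) then converts $e^{-c\lambda^2 p(a,1)}$ into $(a\lambda)^{cC_{\ref{survlong}}}$, which for $C_{\ref{survlong}}$ large dominates the factor $\kappa(a)^{-1}$ coming from $\E[U(s)-s]$; this is precisely the role of condition (ii), not the rate comparison you invoke. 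With that correction the only polynomially small error terms are the initial build-up of $\gtrsim\lambda p(a,1)$ infected stable neighbours at time $s+2$ and the final re-infection on $[s+\mathfrak n-1,U(s)]$, each contributing $O(1/(\lambda^2 p(a,1)))$, which is where the right-hand side of \eqref{proplong} actually comes from — not from an accumulation of per-interval failures.
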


\begin{proof}
	As in the previous result we abuse notation for the sake of clarity, and assume that $\P$ stands for the conditional law given \smash{$\mathfrak{F}^i_s$} and $\mathfrak{F}$. The fact that \smash{$\mathcal{S}^{long}_s$} is \smash{$\mathfrak{F}^i_{U(s)}$}-measurable follows from the definition of \smash{$\mathcal{S}^{long}_s$} so we only need to prove \eqref{proplong}.% 
	\smallskip
	
	We first condition on $U(s)$ and assume that $\{U(s)-s\geq4\}$. Since $\bar{X}$ is defined on the star graph formed by $i$ and \smash{$\Co^0$}, and $i$ does not update throughout $[s,U(s)]$, we can think of the updating clocks placed on each connector $j$ as placed on the edge $\{i,j\}$ instead, and hence on this interval the model is equivalent to the one studied in~\cite{JLM22} (with the parameter $\mathfrak{t}$ being equal to $1$) so we can use some of the results therein. As in said work we will assume that the realization of the dynamical network around $i$ is typical throughout $[s,U(s)]$, meaning that the event
		\[\mathcal{A}_2:=\big\{|\Co_{s+k,i}|\geq \tfrac{c_{\eqref{localstable}}}{5} p(a,1) \;\forall 1\leq k\le \mathfrak{n}\big\}\]
		holds. As the initial distribution is the stationary measure, at any given time $t$ each connector \smash{$j\in\Co_{\lceil s\rceil}$} belongs to $\Co_{t,i}$ independently with probability $\frac{1}{N}p(a,1)$, and because on $\mathcal{A}_0$ we have $|\Co_{\lceil t\rceil}|>c_{\eqref{localstable}}N$, a Chernoff bound for binomial random variables gives
	\begin{equation}\label{ldevctx}\P\big(|\Co_{t,i}|\leq \tfrac{c_{\eqref{localstable}}}{5} p(a,1)\,\big|\,\mathcal{A}_0\big)\leq e^{-c_{\eqref{localstable}} p(a,1)/3}.\end{equation}
	It follows that 
	\begin{equation}\label{auxA1}\P\big(\mathcal{A}_2\,\big|\;\mathcal{A}_1,\,U(s)\big)\geq 1-(U(s)-s)e^{-c_{\eqref{localstable}} p(a,1)/3}.\end{equation}
	Fix now a realization of the updating events of the connectors on $[s,U(s)]$, and also on the connections between them and $i$ such that both $\mathcal{A}_1$ and $\mathcal{A}_2$ hold. On this event we have to bound from below the probability of $\mathcal{S}^{long}_s$, so in particular we need to prove that the star $i$ manages to infect several connectors. This is exactly the content of 
	Proposition~3.11 in \cite{JLM22}, which (adapted to this context) states that there is $C_{\eqref{eq1prev}}$ depending on $c_{\eqref{localstable}}$ alone, such that if $\lambda^2p(a,1)\geq C_{\eqref{eq1prev}}$ then	\begin{align}\label{eq1prev}\P\bigg(|\Co'_{s+2,i}|\geq \tfrac{c_{\eqref{localstable}}\lambda p(a,1)}{80}\;\bigg|\;U(s),\,\mathcal{A}_1,\,\mathcal{A}_2\bigg)&\geq\,1-\sfrac{C_{\eqref{eq1prev}}(\lambda\log(\lambda p(a,1))+ 1)}{\lambda^2 p(a,1)}\;\geq\,1-\sfrac{2C_{\eqref{eq1prev}}}{\lambda^2 p(a,1)},\end{align}
	where the second inequality follows from the hypothesis $p(a,1)\leq e^{1/\lambda}$, and by taking $c_\lambda<1$. Observe that we can use this result by simply imposing that $C_{\ref{survlong}}>C_{\eqref{eq1prev}}$. Once there are sufficiently many infected neighbours of $i$, local survival around this star follows directly from Proposition 3.10 in \cite{JLM22}, which (once again adapted to this context) states that 
	by choosing $c_{\lambda}$ small depending on $c_{\eqref{localstable}}$, there is a universal $C_{\eqref{ineq:localsurvival}}>0$ such that
	{\small\begin{equation}\label{ineq:localsurvival}
		\P\bigg(\bigcap_{k=2}^{\mathfrak{n}-2}\mathcal{W}_i(s+k)\,\bigg|\,U(s),\mathcal{A}_1,\mathcal{A}_2,|\Co'_{s+2,i}|\geq \tfrac{c_{\eqref{localstable}}\lambda p(a,1)}{80}\,\bigg)\;\geq\;1-(U(s)-s)C_{\eqref{ineq:localsurvival}}e^{-\tfrac{c_{\eqref{localstable}}\lambda^2p(a,1)}{200}}	
	\end{equation}}
$\!\!$on the event $\{U(s)-s\geq 4\}$, where the term $U(s)-s$ (not appearing in the original statement) follows from a union bound over the probability of failing to satisfy at least one of the $\mathcal{W}_i(s+k)$ events. Putting \eqref{auxA1}, \eqref{eq1prev} and \eqref{ineq:localsurvival} together we obtain that on the event $\{U(s)-s\geq 4\}$, we have
\[\P\bigg(\bigcap_{k=2}^{\mathfrak{n}-2}\mathcal{W}_i(s+k)\,\bigg|\,U(s),\mathcal{A}_1\bigg)\;\geq\;1-\sfrac{2C_{\eqref{eq1prev}}}{\lambda^2 p(a,1)}-(U(s)-s)\bigg[e^{-\frac{c_{\eqref{localstable}} p(a,1)}{3}}+C_{\eqref{ineq:localsurvival}}e^{-\tfrac{c_{\eqref{localstable}}\lambda^2p(a,1)}{200}}\bigg].\]
Assuming that $c_\lambda<1$ is enough to guarantee that $C_{\eqref{ineq:localsurvival}}e^{-c_{\eqref{localstable}}\lambda^2p(a,1)/200}\geq e^{-c_{\eqref{localstable}} p(a,1)/3}$ and hence we arrive at
	\begin{equation}\label{lbtT}
\P\bigg(\bigcap_{k=2}^{\mathfrak{n}-2}\mathcal{W}_i(s+k)\;\bigg|\;U(s),\,\mathcal{A}_1\bigg)\geq 1-\sfrac{2C_{\eqref{eq1prev}}}{\lambda^2 p(a,1)}-2(U(s)-s)C_{\eqref{ineq:localsurvival}}e^{-c_{\eqref{localstable}}\lambda^2p(a,1)/200},
	\end{equation}
which addresses condition $(2)$ in the definition of $\mathcal{S}_s^{long}$. In order to address condition $(3)$ within that definition we now prove that
	\begin{equation}\label{step4}\P\big(\bar{X}_{U(s)}^i(i)=1\,\big|\,\mathcal{W}_i(s+\mathfrak{n}-2),\,U(s),\,\mathcal{A}_1\big)\geq1-\tfrac{100}{c_{\eqref{localstable}}\lambda^2p(a,1)}.\end{equation}
	Observe that if $\mathcal{W}_i(s+\mathfrak{n}-2)$ holds then $\big|\Co'_{s+\mathfrak{n}-2,i}\big|>c_{\eqref{localstable}}\lambda p(a,1)/80$ and since stable connectors do not update nor recover in at least $4$ units of time, these represent sources of infection throughout the interval $[s+\mathfrak{n}-1,U(s)]$. Now, let
	\[\mathcal{Z}\,:=\,\bigcup_{j}\I_0^{ij}\,\cup\,\Rec^i,\]
	where $j$ ranges over all the connectors in $\Co'_{s+\mathfrak{n}-2,i}$, and which from our assumption on the number of stable connectors, is a Poisson point process with intensity bounded from below by
	$(c_{\eqref{localstable}}\lambda^2p(a,1)/80)+1$. It is sufficient for $i$ to be infected at time $U(s)$ that
	\begin{itemize}
		\item $\mathcal{Z}\cap[s+\mathfrak{n}-1,U(s)]\neq\emptyset$ (observe that $[s+\mathfrak{n}-1,U(s)]$ has length at least $1$), and
		\item  the last element of $\mathcal{Z}$ in this set does not belong to $\Rec^i$,
	\end{itemize}
	so computing the probability of these two events gives
	the lower bound
	\begin{align*}\big(1-e^{-\tfrac{c_{\eqref{localstable}}\lambda^2p(a,1)}{80}-1}\big)\big(1-\tfrac{80}{c_{\eqref{localstable}}\lambda^2p(a,1)+80}\big)&\geq\;1-e^{-\tfrac{c_{\eqref{localstable}}\lambda^2p(a,1)}{80}}-\tfrac{90}{c_{\eqref{localstable}}\lambda^2p(a,1)}\\[1pt]&\geq\;1-\tfrac{100}{c_{\eqref{localstable}}\lambda^2p(a,1)},\end{align*}
	where both inequalities follow from the hypothesis $\lambda^2p(a,1)\geq C_{\ref{survlong}}$ with $C_{\ref{survlong}}$ sufficiently large (depending on $c_{\eqref{localstable}}$). Combining \eqref{step4} with \eqref{lbtT}, and assuming that $C_{\eqref{eq1prev}}\geq 50$ gives
	\[\P\left(\mathcal{S}_s^{long}\,\big|\,U(s),\,\mathcal{A}_1\right)\geq1-\sfrac{4C_{\eqref{eq1prev}}}{\lambda^2 p(a,1)}-2(U(s)-s)C_{\eqref{ineq:localsurvival}}e^{-c_{\eqref{localstable}}\lambda^2p(a,1)/200}\]
	on the event $\{U(s)-s\geq4\}$. Taking expectations with respect to $U(s)-s$ (which is exponentially distributed with rate $\kappa(a)$) we obtain
	\[\P\left(\mathcal{S}_s^{long}\,\big|\,\mathcal{A}_1\right)\geq e^{-4\kappa(a)}-\sfrac{4C_{\eqref{eq1prev}}}{\lambda^2 p(a,1)}-2\kappa(a)^{-1}C_{\eqref{ineq:localsurvival}}e^{-c_{\eqref{localstable}}\lambda^2p(a,1)/200},\]
	so in order to conclude \eqref{proplong} it is enough to show that by properly choosing the constants $c_{\lambda}$ and $C_{\ref{survlong}}$ we have
	\[2\kappa(a)^{-1}C_{\eqref{ineq:localsurvival}}e^{-c_{\eqref{localstable}}\lambda^2p(a,1)/200}\leq\sfrac{4C_{\eqref{eq1prev}}}{\lambda^2 p(a,1)}.\]
	Recall from \eqref{condp} that $p(a,1) \le\cu a^{-\gamma}$, so using condition (ii) from the hypothesis the previous inequality follows by showing that
	\[2C_{\eqref{ineq:localsurvival}}(a\lambda)^{\frac{c_{\eqref{localstable}}C_{\ref{survlong}}}{200}}\leq\sfrac{4C_{\eqref{eq1prev}}\kappa_0a^{\gamma(1-\eta)}}{\lambda^2 \cu}\]
	Since $a,\lambda<1$ it is enough to choose $\frac{c_{\eqref{localstable}}C_{\ref{survlong}}}{200}\geq \gamma(1-\eta)$ and $c_\lambda$ so small that \smash{$c_\lambda^2\leq\frac{4C_{\eqref{eq1prev}}\kappa_0}{2C_{\eqref{ineq:localsurvival}}\cu}$}. The result follows by fixing $C_{\eqref{proplong}}=8C_{\eqref{eq1prev}}$.
\end{proof}

\medskip

\begin{remark}\label{re:smalla}
	Observe that the previous result holds even if $(i)$ is not satisfied. In this case the lower bound is slightly worse;
	\[	\P(\mathcal{S}^{long}_s\,\big|\,\mathfrak{F}^i_s,\mathfrak{F})\geq e^{-4\kappa(a)}-\frac{C_{\eqref{proplong}}\log( p(a,1))}{\lambda p(a,1)}\]
\end{remark}

\pagebreak[3]

Now define $\mathcal{S}_s:=\mathcal{S}_s^{long}\cup\mathcal{S}_s^{short}$ and observe that under the conditions of Proposition~\ref{survshort} and Proposition~\ref{survlong} we~have
\[\P(\mathcal{S}_s\,\big|\,\mathfrak{F}^i_s,\mathfrak{F})\geq 1-\frac{5C_{\eqref{propshort}}\kappa(a)}{c_{\eqref{localstable}}\lambda^2p(a,1)}\log\left(1+\frac{c_{\eqref{localstable}}\lambda^2p(a,1)}{5(1+\kappa(a))^2}\right)-\frac{C_{\eqref{proplong}}}{\lambda^2p(a,1)}\]
on the event
$\mathcal{A}_1$ if we choose $\lambda$ and $a$ satisfying the conditions of Proposition \ref{survlong}. Observe however that by definition of $\kappa(a)$ and \eqref{condp} we have
\[\kappa(a)\log\left(1+\frac{c_{\eqref{localstable}}\lambda^2p(a,1)}{5(1+\kappa(a))^2}\right)\leq \kappa_0a^{-\gamma\eta}\log\left(1+c_{\eqref{localstable}}\cu a^{-\gamma}\right)\]
which is bounded since $\eta<0$, and where the bound depends on the parameters appearing in the inequality above. It follows that we can further bound
\begin{equation}\label{totS}\P(\mathcal{S}_s\,\big|\,\mathfrak{F}^i_s,\mathfrak{F})\geq \;1-\frac{C_{\eqref{totS}}}{\lambda^2p(a,1)}\end{equation}
where $C_{\eqref{totS}}$ again only depends on $c_{\eqref{localstable}}$, $\gamma$, $\eta$, $\kappa_0$, $\cl$ and $\cu$.
\smallskip

The event $\mathcal{S}_s$ can be seen as surviving the next updating event after $s$, but in a specific way if the event takes a long time to occur. The main result of this section is Proposition~\ref{survivalvertex}, which states that once infected the process can survive in this manner until a given time horizon. Let $T>0$ represent this time horizon and $0\leq t\leq T$ be the time at which $i$ gets infected (the reader might think of taking the value $t=0$ for all practical purposes). Also, let $u_1< u_2<\cdots$ be the ordered elements of $\U^i\cap[t,T]$ to which we also add $u_0=t$. We use these times and events to give a technical definition of local survival. 

\pagebreak[3]

\begin{definition}\label{definfected}
	Let $0\leq t<T$ we say that $i\in\St$ is \emph{$[t,T]$-infected} if the following conditions are satisfied:\smallskip
	\begin{enumerate}
		\item $\frac{T-t}{3}\kappa(\tfrac{a}{2})\leq |\U^i\cap[t,T]|\leq 3(T-t)\kappa(a)$.\\[-3pt]
		%\item There are at most $144(T-t)\kappa^2(a)$ events $u_k$ in $\U^i\cap[t,T]$ such that $u_{k+1}-u_k\leq 4$.\\[-3pt]
		\item At every $u_k$ with $k<|\U^i\cap[t,T]|$ both $\mathcal{S}_{u_k}$ and $\{|\Co_{u_k,i}|>\tfrac{c_{\eqref{localstable}}}{5} p(a,1)\}$ hold.\\[-3pt]
		\item $\bar{X}^i_{T}(i)=1$.
	\end{enumerate}
\end{definition}

The next result states that given the conditions of Theorem \ref{theoslow}, the probability of being $[t,T]$-infected is bounded away from zero for certain choices of $t$ and $T$.
\smallskip

\begin{proposition}[\textbf{Local survival}]\label{survivalvertex}
	Recall the definition of $T_{loc}$ in \eqref{deftloc} and the constants $c_{\lambda}$, $C_{\ref{survlong}}$ from Proposition \ref{survlong}. Fix $i\in\St$ and assume that $\eta<0$. For any $\varepsilon>0$ there are $C_{\ref{survivalvertex}}>C_{\ref{survlong}}$, $0<\tilde{c}_\lambda\leq c_\lambda$ and $0<\mathfrak{c}_0<1$ independent of $a$, $\lambda$ and $N$, such that if $\lambda\in (0,\tilde{c}_\lambda)$ and $a\in(0,1)$ are chosen satisfying
		\begin{itemize}
			\item[(i)] $p(a,1)\leq e^{1/\lambda}$
			\item[(ii)] $\lambda^2p(a,1)>-C_{\ref{survivalvertex}}\log(a\lambda)$
		\end{itemize}
		then for any $0\leq t\leq T\leq e^{c_{\eqref{localstable}}N}$ with $\frac{\mathfrak{c}_0\TL}{2}\leq T-t\leq\mathfrak{c}_0\TL$,
	\[\P(i\text{ is }[t,T]\text{-infected}\,\big|\,\mathfrak{F}^i_t,\mathfrak{F})\,\geq\, 1-\varepsilon,\]
	on the event 
	\[\{\bar{X}_t^i(i)=1\}\cap\mathcal{A}_0\]
	where $\mathfrak{F}^i_t$ and $\mathfrak{F}$ are as defined before  Proposition~\ref{survshort}. \end{proposition}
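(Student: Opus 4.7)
My approach is to chain the local-survival bound \eqref{totS} across the successive update times $u_1<u_2<\cdots<u_K$ of the central vertex $i$, after separately controlling the number $K:=|\U^i\cap[t,T]|$ of these updates and the size of the sets $\Co_{u_k,i}$. All estimates will be done conditionally on $\mathfrak{F}^i_t$ and on the realisation of $\mathfrak{F}$ that is fixed throughout, so the conditional bounds of Propositions~\ref{survshort} and~\ref{survlong} are directly available at each step.

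First I would observe that $K$ is Poisson with mean $\kappa(a)(T-t)$, which by the definition \eqref{deftloc} of $\TL$ and the constraint on $T-t$ lies in the interval $[\tfrac{\mathfrak{c}_0}{2}\lambda^2p(a,1),\mathfrak{c}_0\lambda^2p(a,1)]$. Condition~(ii) makes this mean arbitrarily large, so a Chernoff bound (together with $\kappa(a/2)\le\kappa(a)$ when $\eta<0$) yields condition~(1) of Definition~\ref{definfected} with probability at least $1-\varepsilon/4$, after enlarging $C_{\ref{survivalvertex}}$ if needed. Next, \eqref{ldevctx} and a union bound over the at most $3\mathfrak{c}_0\lambda^2p(a,1)$ update times deliver
\[
\P\Big(|\Co_{u_k,i}|>\tfrac{c_{\eqref{localstable}}}{5}p(a,1)\text{ for all }0\le k\le K\,\Big|\,K,\mathcal{A}_0\Big)\;\ge\;1-3\mathfrak{c}_0\lambda^2p(a,1)\,e^{-c_{\eqref{localstable}}p(a,1)/3},
\]
which is also at least $1-\varepsilon/4$ by condition~(ii).

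On this good event I would chain the survival estimate inductively in $k=0,1,\dots,K-1$. Since $\mathcal{S}_{u_{k-1}}$ (whether realised as $\mathcal{S}^{short}_{u_{k-1}}$ or $\mathcal{S}^{long}_{u_{k-1}}$) implies $\bar X^i_{u_k}(i)=1$, the hypotheses behind \eqref{totS} are met at $u_k$, so $\mathcal{S}_{u_k}$ occurs with conditional probability at least $1-C_{\eqref{totS}}/(\lambda^2p(a,1))$. A union bound then gives
\[
\P\Big(\bigcap_{k=0}^{K-1}\mathcal{S}_{u_k}\Big)\;\ge\;1-K\,\frac{C_{\eqref{totS}}}{\lambda^2p(a,1)}\;\ge\;1-3\mathfrak{c}_0 C_{\eqref{totS}},
\]
and choosing $\mathfrak{c}_0$ small enough (depending on $\varepsilon$ and $C_{\eqref{totS}}$) makes this at least $1-\varepsilon/4$, so that conditions~(2) and the intermediate $\bar X^i_{u_K}(i)=1$ of Definition~\ref{definfected} hold. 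The remaining requirement $\bar X^i_T(i)=1$ on the update-free interval $(u_K,T]$ follows by rerunning the internal estimates of Proposition~\ref{survlong}, namely \eqref{eq1prev} combined with \eqref{ineq:localsurvival} and \eqref{step4}, applied on $(u_K,T]$ in place of $(s,U(s))$; the resulting error is $O(1/(\lambda^2p(a,1)))$ and is absorbed into $\varepsilon/4$ by enlarging $C_{\ref{survivalvertex}}$.

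The main obstacle is the chaining in the third step: the error accumulates linearly in $K$, and it is precisely the hypothesis $T-t\le\mathfrak{c}_0\TL$ that keeps the product $K\cdot C_{\eqref{totS}}/(\lambda^2p(a,1))$ of order $\mathfrak{c}_0$ and hence tuneable. A secondary subtlety is the terminal step: one cannot invoke $\mathcal{S}^{long}_{u_K}$ directly because there is no terminal update in $(u_K,T]$, so only its constituent estimates are available, and one must also check that the concentration of $K$ in Step~1 prevents $T-u_K$ from being anomalously small, so the spreading and local-survival windows used in \eqref{eq1prev} and \eqref{ineq:localsurvival} have room to operate.
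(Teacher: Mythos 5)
Your proposal is correct and follows essentially the same route as the paper: a Chernoff bound for the number of updates, a union bound over the update times for the stable-connector counts, chaining of the combined bound \eqref{totS} with the linear-in-$K$ error controlled by taking $\mathfrak{c}_0$ small, and a terminal step obtained by rerunning the arguments of Propositions~\ref{survshort} and~\ref{survlong} with $T$ in place of $U(s)$. The subtlety you flag about $T-u_K$ being small is resolved exactly as you suggest, since the short-update analogue covers that case.
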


\begin{remark}\label{re:conditions}
	The condition $p(a,1)\leq e^{\frac{1}{\lambda}}$ is assumed here to avoid a logarithmic factor appearing as in Proposition 3.11 in \cite{JLM22}, and it will be satisfied for all the cases treated in this paper since $a$ is always a power of $\lambda$ (maybe with some logarithmic correction). However the result still holds redefining $\TL$ as
	\[\TL=\frac{\lambda p(a,1)}{\kappa(a)\log(p(a,1))},\] which follows from the proof of Proposition \ref{survivalvertex} and Remark \ref{re:smalla}. The condition $\lambda^2p(a,1)>-C_{\ref{survivalvertex}}\log(a\lambda)$ on the other hand is slightly stronger than the usual condition $\lambda^2k\gg1$ required for local survival of a vertex of degree $k$ in the literature. Even though we could derive local survival under this weaker condition, we do not prove this and instead we ensure $[t,T]$-infection which is a stronger event that will be sufficient for the spreading mechanisms described in Sections \ref{DDS} and \ref{DIS}.
\end{remark}

\begin{proof}
	As in the previous results we abuse notation and assume that $\P$ stands for the law of the process conditioned on $\mathfrak{F}^i_t$ and $\mathfrak{F}$. Since $|\U^i\cap[t,T]|$ is Poisson distributed with rate between $(T-t)\kappa(\frac{a}{2})$ and 
	 $(T-t)\kappa(a)$, both of them bounded from below by $2^{\gamma\eta-1}\mathfrak{c}_0\lambda^2p(a,1)\geq C$, with $C=2^{\gamma\eta-1}\mathfrak{c}_0C_{\ref{survivalvertex}}$.
	It follows from standard Chernoff bounds that
	\[\P\big(|\U^i\cap[t,T]|\geq 3(T-t)\kappa(a)\big)\,\leq\,e^{-C}\]and \[\P\big(|\U^i\cap[t,T]|\leq \tfrac{(T-t)}{3}\kappa(\tfrac{a}{2})\big)\,\leq\,e^{-0.3C},\]
	so the probability of not satisfying condition $(1)$ in Definition~\ref{definfected} is bounded by $2e^{-0.3C}$. %To address condition $(2)$ we repeat the same argument and the fact that for each $k$ we have 
	%$\E(u_{k+1}-u_k)\leq 1-e^{-4\kappa(a)}\leq4\kappa(a)$ to obtain
	%\begin{align*}\P(i\text{ does }&\text{ not satisfy }(2))\\
	%& \leq\,\P(i\text{ does not satisfy (2) and } |\U^i\cap[t,T]|\leq 3(T-t)\kappa(a))\,+\,e^{-C}\\&\leq\;\frac{12\mathfrak{c}_0\TL\kappa^2(a)}{72\mathfrak{c}_0\TL\kappa^2(a)}\,+\,e^{-C}\;\leq\;\frac{1}{6}+e^{-C},\end{align*}
	%where we have used Markov's inequality.
	To address condition $(2)$ observe that, as in \eqref{ldevctx},
	\[\P\big(|\Co_{t,i}|> \tfrac{c_{\eqref{localstable}}}{5} p(a,1)\,\big|\,\mathcal{A}_0\big)\geq 1-e^{-c_{\eqref{localstable}} p(a,1)/3},\]
	and since $C_{\ref{survivalvertex}}>C_{\ref{survlong}}$, $\tilde{c}_\lambda<c_\lambda$, and we are assuming $\bar{X}_t^i(i)=1$, then \eqref{totS} gives that \smash{$\P(\mathcal{S}_s)\geq 1-\frac{C_{\eqref{totS}}}{\lambda^2p(a,1)}$}. The event is $\mathfrak{F}^i_{u_1}$-measurable so we can use the strong Markov property to restart the process at $u_1$, where the event $\mathcal{S}_s$ now gives $\bar{X}_{u_1}^i(i)=1$. Since on $u_1$ there is an updating time we can again use \eqref{ldevctx} to deduce 
	\smash{$\P\big(|\Co_{u_1,i}|> \tfrac{c_{\eqref{localstable}}}{5} p(a,1)\,\big|\,\mathcal{A}_0\big)\geq 1-e^{-c_{\eqref{localstable}} p(a,1)/3}$},
	which in turn allows us to bound the probability of $\mathcal{S}_{u_1}$ from below as before. Proceeding repeatedly we get,
	\begin{align*}\P(i\text{ does not satisfy }(2))&\leq\,\P(i\text{ does not satisfy }(3)\text{ and } |\U^i\cap[t,T]|\leq 3(T-t)t\kappa(a))\,+\,e^{-C}\\&\leq\;3\mathfrak{c}_0\TL\kappa(a)\left(\frac{C_{\eqref{totS}}}{\lambda^2p(a,1)}+e^{-c_{\eqref{localstable}} p(a,1)/3}\right)\,+\,e^{-C}\\&\leq\;3\mathfrak{c}_0\TL\kappa(a)\left(\frac{C_{\eqref{totS}}}{\lambda^2p(a,1)}+\lambda^{\frac{c_{\eqref{localstable}} }{3\lambda^2C_{\ref{survivalvertex}}}}\right)\,+\,e^{-C}\\&\leq\;3C_{\eqref{totS}}\mathfrak{c}_0\,+\,3\mathfrak{c}_0\cu\lambda^{\frac{c_{\eqref{localstable}} }{3\lambda^2C_{\ref{survivalvertex}}}-2-\gamma}\,+\,e^{-C},\end{align*}
	where in the third inequality we have used $(ii)$ from our assumptions, and in the last line we used the definition of $\TL$ and \eqref{condp}.
	 Finally, to address condition~$(3)$ we can repeat the proofs of Propositions \ref{survshort} and \ref{survlong} but replacing the event $\bar{X}^i_{U(s)}(i)=1$ with $\bar{X}^i_{T}(i)=1$, thus obtaining from \eqref{totS}
	\begin{align*}\P\big(i\text{ satisfies }(3)\,\big|\,u_l,\,\bar{X}_{u_l}^i(i)=1\big)&\geq 1-\frac{C_{\eqref{totS}}}{\lambda^2p(a,1)}\geq1-\frac{C_{\eqref{totS}}}{C_{\ref{survivalvertex}}}.\end{align*}
	Putting together all the bounds obtained thus far gives
	\[\P(i\text{ is not  }[t,T]\text{-infected}\,\big|\,\mathfrak{F}^i_t,\mathfrak{F})\,\leq\, 4e^{-0.3C}+3C_{\eqref{totS}}\mathfrak{c}_0+3\mathfrak{c}_0\cu\lambda ^{\frac{c_{\eqref{localstable}} }{3\lambda^2C_{\ref{survivalvertex}}}-2-\gamma}+\frac{C_{\eqref{totS}}}{C_{\ref{survivalvertex}}}\]
	Now fix the constants $C_{\ref{survivalvertex}}$, $\tilde{c}_\lambda$, and $\mathfrak{c}_0$ as follows; first choose $\mathfrak{c}_0$ small such that \smash{$3C_{\eqref{totS}}\mathfrak{c}_0\leq\frac{\varepsilon}{4}$}, then fix $C_{\ref{survivalvertex}}$ sufficiently large such that ${C_{\eqref{totS}}}/{C_{\ref{survivalvertex}}}\leq\frac{\varepsilon}{4}$ and $4e^{-0.3C}=$ \smash{$4e^{-0.32^{\gamma\eta-1}\mathfrak{c}_0C_{\ref{survivalvertex}}}\leq\frac{\varepsilon}{4}$}. Finally, fixing $\tilde{c}_\lambda$ sufficiently small so that $$3\mathfrak{c}_0\cu\lambda ^{\frac{c_{\eqref{localstable}} }{3\lambda^2C_{\ref{survivalvertex}}}-2-\gamma}\leq\frac{\varepsilon}{4}$$ we obtain the desired bound.
\end{proof}

\subsection{Delayed direct spreading}\label{DDS}

In this section we prove that under the main conditions of Theorem \ref{theoslow}  together with
\[(\kappa(a)\wedge\lambda) ap(a,a)\TL>M_{(iii)},\]
for sufficiently large $M_{(iii)}$, there is slow extinction and we get the corresponding lower bounds for the metastable density.\smallskip

To begin the proof recall the definitions of \smash{$\TL$, $J_k$, $\Co_{t,i}$, $\Co'_{t,i}$, $\mathcal{A}_0$}, the $\mathcal{S}$ events, and the $\mathcal{W}_i(t)$ events pertaining to a given star $i\in\St$, together with the definition of $i$ being $[t,T]$-infected. Also recall the constants $\tilde{c}_\lambda$, $C_{\ref{survivalvertex}}$ and $0<\mathfrak{c}_0<1$ appearing in Proposition \ref{survivalvertex}, we will assume throughout this section that $C_{\ref{survivalvertex}}>C_{\ref{theoslow}}$ and $c_{\ref{theoslow}}\leq\tilde{c}_\lambda$. Fix a realization of the $\Rec^{j}$ and $\U^j$ processes with $j\in\Co^0$ such that $\mathcal{A}_0$ holds, and define the sequences $\{\tilde{\St}_m\}_{m\in\N}$ and $\{\St_m'\}_{m\in\N}$ as
\begin{align*}
\tilde{\St}_m&\,:=\,\big\{i\in\St \colon \bar{X}_{\mathfrak{c}_0\TL m}^i(i)=1\big\} \\[1ex]\displaystyle\St_m'&\displaystyle\,:=\,\big\{i\in\tilde{\St}_m \colon i\text{ is }[\mathfrak{c}_0\TL m,\mathfrak{c}_0\TL(m+1)]\text{-infected}\big\}
\end{align*}
Since at time $0$ all vertices are infected we have $|\tilde{\St}_0|=aN/2$. Our goal is to show that there are small constants \smash{$c_{\eqref{recursivevertex}},c_{\eqref{recursivevertex}}'>0$} 
with \smash{$c_{\eqref{recursivevertex}}$} independent of $a$, $\lambda$ and $N$, and \smash{$c_{\eqref{recursivevertex}}'$} independent of $N$ such that
\begin{equation}\label{recursivevertex}
\P\big(|\tilde{\St}_{m+1}|>c_{\eqref{recursivevertex}}a N\,\big|\,|\tilde{\St}_{m}|>c_{\eqref{recursivevertex}}a N\big)\,\geq\,1-e^{-c_{\eqref{recursivevertex}}' N},
\end{equation}
which together with a union bound gives slow extinction. Define $\mathcal{F}_t$ as the $\sigma$-algebra generated by the graphical construction up to time $t$, our proof relies on the fact from Proposition \ref{survivalvertex} that, for any $i\in\St$ and $\mathfrak{c}_0\TL m\leq t\leq \mathfrak{c}_0\TL(m+1/2)$, taking $\varepsilon=\frac{1}{2}$,
\begin{equation}\label{keydd}\P\big(i\text{ is }[t,\mathfrak{c}_0\TL(m+1)]\text{-infected}\,\big|\,\mathcal{F}_{t}\big)\,\geq\,1/2\end{equation}
on the event $\{\bar{X}_{t}^i(i)=1\}$ as soon as $\lambda$ and $a$ are sufficiently small (satisfying the conditions of the proposition). Now, it follows from the definition of $[t,\mathfrak{c}_0\TL(m+1)]$-infection that any star satisfying the event above must be infected at time $\mathfrak{c}_0\TL(m+1)$ so in the particular case $t=\mathfrak{c}_0\TL m$ we conclude that $\St'_m\subseteq \tilde{\St}_{m+1}$.  On the other hand, from \eqref{keydd} we deduce that each $x\in\tilde{\St}_m$ belongs to $\St'_m$ independently with probability at least $1/6$ so from a large deviation bound we obtain
\begin{equation}\label{enoughvertex}\P\Big(|\tilde{\St}_{m+1}|>\sfrac{1}{4}|\tilde{\St}_m|\,\Big|\,\mathcal{F}_{\mathfrak{c}_0\TL m}\Big)\,\geq\,\big(1-e^{-c_{\eqref{recursivevertex}}a N/10}\big)\one_{\{|\tilde{\St}_m|>c_{\eqref{recursivevertex}}aN\}}.\end{equation}
From this bound it follows that \eqref{recursivevertex} is automatically satisfied in the case $|\tilde{\St}_m|>4c_{\eqref{recursivevertex}}aN$ therefore we assume onwards that $c_{\eqref{recursivevertex}}aN<|\tilde{\St}_m|\leq4c_{\eqref{recursivevertex}}aN$, and further that $c_{\eqref{recursivevertex}}<1/16$ so that $4c_{\eqref{recursivevertex}}aN\leq\frac{aN}{4}$. Divide $[\mathfrak{c}_0\TL m,\mathfrak{c}_0\TL(m+\sfrac{1}{2})]$ into $\lfloor\mathfrak{c}_0\TL\kappa(a)/2\rfloor$ intervals 
\[K_k=[\mathfrak{c}_0\TL m+k\kappa(a)^{-1},\,\mathfrak{c}_0\TL m+(k+1)\kappa(a)^{-1}]\]
of length $\kappa(a)^{-1}$ (recall from our assumptions that  $\TL\kappa(a)=\lambda^2p(a,1)$ is large) and divide these intervals  further into their first and second half, $K_k^{-}$ and $K_k^{+}$, respectively. Using the $K_k$ we define a sequence of i.i.d. Bernoulli random variables $U(i,k)$ with $i\in\St'_m$ and $0\leq k\leq\lfloor\mathfrak{c}_0\TL\kappa(a)/2\rfloor-1$ such that
\begin{equation}\label{defu}U(i,k)=1\;\Longleftrightarrow\;\U^i\cap K_k^{-}\neq\emptyset\text{ and }\U^i\cap K_k^+=\emptyset,\end{equation}
that is, $U(i,k)$ indicates the event in which $i$ updates on the first half of $K_k$ and on the second half it does not update. Since the $K_k$ have length $\kappa(a)^{-1}$ it follows that $\P(U(i,k)=1)\geq \varphi$ for some constant $\varphi$ depending on $\kappa_0$ and $\eta$ alone. Using these random variables we construct the set
\[\St_m''\,:=\,\Big\{i\in\St'_m \colon\sum_{k=0}^{\lfloor\mathfrak{c}_0\TL\kappa(a)/2\rfloor-1}U(i,k)\,\geq\,\tfrac{\varphi\mathfrak{c}_0\TL\kappa(a)}{4}\Big\},\]
which contains the stars that satisfy the condition appearing in \eqref{defu} for sufficiently many intervals $K_k$. Noticing that the random variables $\{U(i,k)\}_{i\in\St'_m,k\geq0}$ are independent and that $\mathfrak{c}_0\TL\kappa(a)\geq\mathfrak{c}_0C_{\ref{theoslow}}$ can be taken to be large, we deduce from a large deviation argument together with \eqref{keydd} that
\[\P\big(i\in\St''_m\,\big|\,\mathcal{F}_{\mathfrak{c}_0\TL m}\big)\,\geq\,\frac{1}{8}\]
on the event $\{\bar{X}_{\mathfrak{c}_0\TL m}^i(i)=1\}$, and hence for $N$ sufficiently large yet another large deviation argument gives
\[\P\Big(|\St''_{m}|>\sfrac{c_{\eqref{recursivevertex}}aN}{16}\,\Big|\,\mathcal{F}_{\TL m}\Big)\,\geq\,\left(1-e^{-c_{\eqref{recursivevertex}} N/30}\right)\one_{\{|\tilde{\St}_m|>c_{\eqref{recursivevertex}}aN\}}.\]
Let $\mathcal{A}_m=\{|\St''_{m}|>\sfrac{c_{\eqref{recursivevertex}}aN}{16}\}$ the event within the probability above, which we will assume holds for a given $m$ since the lower bound is already of the form $1-e^{-cN}$. Let $i\in\St\setminus\tilde{\St}_m$ be a star not infected at time $\mathfrak{c}_0\TL m$; our main goal is to show that the probability that at some point in $[\mathfrak{c}_0\TL m,\mathfrak{c}_0\TL(m+\sfrac{1}{2})]$ $i$ becomes infected by some $i'\in\St''_m$, is bounded from below by some constant independent of $a$, $\lambda$ and $N$. To do so fix $i'\in\St''_m$ and observe:
\begin{itemize}[leftmargin=*]
	\item By definition $U(i',k)=1$ for at least $\varphi\mathfrak{c}_0\TL\kappa(a)/4$ many intervals $K_k$.
	\item For any fixed such $k$ there are no updating events of $i'$ on $K_k^{+}$ which means that the time difference between the last updating event $u_l$ in $K_k^{-}$ to the next one is greater than $4$.
	\item Since $i'$ is $[\mathfrak{c}_0 m,\mathfrak{c}_0\TL(m+1)]$-infected we know that $\mathcal{S}_{u_l}^{long}$ holds and hence the event $\mathcal{W}_{i'}(u_l+\ell)$ holds 
	for every $\ell\in\N$ with $u_l+\ell\in K_k^+$. In particular, we have
	\[\int_{K_k^+}\bar{X}_{s}^{i'}(i')\, \mathrm ds\,\geq\,\kappa(a)^{-1}/8\]
	\item Since there is at least one updating event of $i'$ in $K_k^-$, the edge $\{i,i'\}$ is present throughout $K_k^+$ with probability \smash{$\frac{p(a,a)}{N}$}.
\end{itemize}
As a result of these observations the probability of $i'$ infecting $i$ at some $t\in K_k$ is at least \[\big(1-e^{-\lambda\kappa(a)^{-1}/8}\big)\frac{p(a,a)}{N}\;\geq\;\frac{(1\wedge\lambda\kappa(a)^{-1})p(a,a)}{20N}\]
and since $i'\in\St''_m$, there are at least $\varphi\mathfrak{c}_0\TL\kappa(a)/4$ intervals $K_k$ of length $\kappa(a)^{-1}$ such that $U(i',k)=1$. When considering all the pairs $(i',K_k)$ with $i'\in\St_m''$ and $K_k$ as before we conclude that
\begin{equation}\label{getsinfected}\P\big(\exists i'\in\St''_m\text{ that infects }i\text{ in }[\mathfrak{c}_0\TL m,\mathfrak{c}_0\TL(m+\sfrac{1}{2})]\,\big|\,|\St''_{m}|>\sfrac{c_{\eqref{recursivevertex}}aN}{16}\big)\end{equation}
is bounded from below by
\[1-\left(1-\tfrac{(1\wedge\lambda\kappa(a)^{-1})p(a,a)}{20N}\right)^{\frac{c_{\eqref{recursivevertex}}\varphi\mathfrak{c}_0\TL\kappa(a)aN}{32}}\,\geq\,1-\exp\left(-\tfrac{c_{\eqref{recursivevertex}}\mathfrak{c}_0\varphi(\kappa(a)\wedge\lambda)ap(a,a)\TL}{1500}\right)\,\geq\,\frac{2}{3},\]
where the last inequality follows from our assumptions on the parameters, given that $M_{(iii)}$ is taken large enough. Assume now that the event above holds and let $\tau_i$ be the first time in $[\mathfrak{c}_0\TL m,\mathfrak{c}_0\TL(m+\sfrac{1}{2})]$ that $\bar{X}_{\tau_i}^i(i)=1$. The idea at this point is to ask whether $i$ is $[\tau_i,\mathfrak{c}_0\TL(m+1)]$-infected which in particular implies that $i$ is infected at time $\TL(m+1)$ and hence $i\in\tilde{\St}_{m+1}$. Now, these events rely on the $\I^{i,j}$, $\mathcal{C}^{i,j}$ and $\U^i$ events on $[\tau_i,\mathfrak{c}_0\TL(m+1)]$ and hence are independent of the event in \eqref{getsinfected}. Also, since $\varphi$ is independent of $a$, $\lambda$ and~$N$ we can apply \eqref{keydd} with $t=\tau_i$ to conclude that 
\[\P\big(i\in\tilde{\St}_{m+1}\,\big|\,\exists i'\in\St''_m\text{ that infects }i\text{ in }[\mathfrak{c}_0\TL m,\mathfrak{c}_0\TL(m+\sfrac{1}{2})]\big)\,\geq\,\frac{1}{12}.\]
Finally, observe that from our assumption \smash{$c_{\eqref{recursivevertex}}<\sfrac{1}{48}$} we have \smash{$|\St\setminus\tilde{\St}_m|\geq \frac{aN}{4}$} in the event \smash{$\{c_{\eqref{recursivevertex}}aN<|\tilde{\St}_m|\leq4c_{\eqref{recursivevertex}}aN\}$} and since the events $\{i\in\tilde{\St}_{m+1}\}$ are independent we can use a large deviation bound to conclude
\[\P\big(|\tilde{\St}_{m+1}|\geq \sfrac{aN}{320} \,\big|\,c_{\eqref{recursivevertex}}'aN<|\tilde{\St}_m|\leq4c_{\eqref{recursivevertex}}aN\big)\,\geq\,1-e^{-c_{\eqref{recursivevertex}}'N},\]
for some $c_{\eqref{recursivevertex}}'>0$ independent of $N$. By taking $c_{\eqref{recursivevertex}}<\sfrac{1}{320}$ we finally conclude \eqref{recursivevertex}.\smallskip

To obtain the lower bound on the metastable density given by \eqref{lowdensityetaposdelayed} fix $\mathfrak{c}_0\TL<t<e^{c_{\eqref{recursivevertex}}'N}$ (we can take $t>\mathfrak{c}_0\TL$ w.l.o.g. since $\TL$ does not depend on $N$) and observe that from the last proof we have deduced in particular that
\[\P\big(|\tilde{\St}_{m-1}|>c_{\eqref{recursivevertex}}a N\big)\,\geq\,1-e^{-c_{\eqref{recursivevertex}}' N},\]
where $m$ is such that $t\in[\mathfrak{c}_0\TL m,\mathfrak{c}_0\TL(m+1)]$.
 We can use a slightly extended proof of 
Lemma~\ref{survivalvertex} including intervals of length up to $2\mathfrak{c}_0\TL$ to conclude that for every $i\in\tilde{\St}_{m-1}$ we have
\[\P\big(i\text{ is }[\mathfrak{c}_0\TL(m-1),t]\text{- infected}\,\big|\,|\tilde{\St}_{m-1}|>c_{\eqref{recursivevertex}}a N\big)\,\geq\,c'\]
for some $c'>0$ independent of $a$, $\lambda$ and $N$. Using a large deviation bound we conclude that the total number of stars satisfying the event above is at least \smash{$ac_{\eqref{recursivevertex}}c'N/2$} with high probability. As these stars are infected at time $t$ the result follows from Lemma~\ref{lemmalower}.
%\medskip

\subsection{Delayed indirect spreading}\label{DIS}

In this section we prove that under the main conditions of Theorem \ref{theoslow}  together with
\[\lambda^2a p^2(a,1)\TL>M_{(iv)},\]
for sufficiently large $M_{(iv)}$, there is slow extinction and we get the corresponding lower bounds for the metastable density. We follow a similar structure to the one used in the previous section but incorporating the ideas used for \textit{delayed indirect spreading} in \cite{JLM19}. As before, recall from Section~\ref{localsurvival} the definitions of $\TL$, $J_k$, $\Co_{t,i}$, $\Co'_{t,i}$, $\mathcal{A}_0$, the $\mathcal{S}$ events, and the $\mathcal{W}_i(t)$ events pertaining to a given star $i\in\St$, together with the definition of $[t,T]$- infection. As before recall the constants $\tilde{c}_\lambda$, $C_{\ref{survivalvertex}}$ and $0<\mathfrak{c}_0<1$ appearing in 
Proposition~\ref{survivalvertex}, and assume that $C_{\ref{survivalvertex}}>C_{\ref{theoslow}}$ and $c_{\ref{theoslow}}\leq\tilde{c}_\lambda$. Further, assume that $\mathcal{A}_0$ holds and defin e the sequences $\{\tilde{\St}_m\}_{m\in\N}$ and $\{\St_m'\}_{m\in\N}$ as in Section \ref{DDS}. Once again, the main goal is to show that there are small constants \smash{$c_{\eqref{recursivevertex2}},c_{\eqref{recursivevertex2}}'>0$} to be fixed later, with $c_{\eqref{recursivevertex2}}$ independent of $a$, $\lambda$ and $N$, and $c_{\eqref{recursivevertex2}}'$ independent of $N$ such that
\begin{equation}\label{recursivevertex2}
\P\big(|\tilde{\St}_{m+1}|>c_{\eqref{recursivevertex2}}a N\,\big|\,|\tilde{\St}_{m}|>c_{\eqref{recursivevertex2}}a N\big)\,\geq\,1-e^{-c_{\eqref{recursivevertex2}}' N},
\end{equation}
which together with a union bound gives slow extinction. Following the same reasoning as in the previous section we obtain
\[\P\Big(|\tilde{\St}_{m+1}|>\sfrac{1}{4}|\tilde{\St}_m|\,\Big|\,\mathcal{F}_{\mathfrak{c}_0\TL m}\Big)\,\geq\,\big(1-e^{-c_{\eqref{recursivevertex2}} N/10}\big)\one_{\{|\tilde{\St}_m|>c_{\eqref{recursivevertex2}}aN\}},\]
and \eqref{recursivevertex2} is automatically satisfied in the case $|\tilde{\St}_m|\ge 4c_{\eqref{recursivevertex2}}aN$. Henceforth we may assume that \smash{$c_{\eqref{recursivevertex2}}aN<|\tilde{\St}_m|\leq4c_{\eqref{recursivevertex2}}aN$} (once again choosing \smash{$c_{\eqref{recursivevertex2}}\leq\frac{1}{16}$}). It is at this point that the proof differs from the previous one. Indeed, while in the case of \emph{direct} spreading the proof relied on showing that most stars update in a `normal' fashion, in this case the main idea is to show that the local survival of stars is sufficiently `synchronous', meaning that during a fraction of the time we will always observe sufficiently many infected stars that can transmit their infections to connectors in $\Co^1$. We begin by introducing a set of indices 
\[\mathcal{J}:=5\N\cap[\mathfrak{c}_0\TL m,\mathfrak{c}_0\TL(m+\sfrac{1}{2})]\]
which are used to  define time intervals of the form $[k,k+5]$ with $k\in\mathcal{J}$. Using these indices we now define a new sequence of i.i.d. Bernoulli random variables $\bar{U}(i,k)$ with $i\in\St'_m$ and $k\in \mathcal{J}$ such that
\begin{equation}\label{defv}\bar{U}(i,k)=1\;\Longleftrightarrow\;\mathcal{R}^i\cap[k,k+5]=\emptyset,\end{equation}
and use these random variables to define the set of \textit{synchronous} stars $\St^s_m$ as
\[\St^s_m\,:=\,\Big\{i\in\St'_m \colon \sum_{k\in\mathcal{J}}\bar{U}(i,k)>\sfrac{\mathfrak{c}_0\TL}{60e^5}\Big\}.\]
The definition of $\St^s_m$ given here is similar to the one introduced in Definition 5 in \cite{JLM19}. The main idea is that intervals $[k,k+5]$ with sufficiently many stars satisfying $\bar{U}(i,k)=1$ serve as time windows in which connectors can become infected. These connectors, however, need to be able to hold the infection in order to further infect more vertices. We thus define for every $k\in \mathcal{J}$ the set
\[\Co^1_k\,:=\,\left\{j\in\Co^1 \colon \U^j\cap[k+3,k+4]\neq\emptyset,\,(\U^j\cup\Rec^j)\cap[k+4,k+5]=\emptyset\right\}\]
of \textit{stable} connectors, which neither update nor recover throughout $[k+4,k+5]$ but updated before. We claim that for any fixed $k\in\mathcal{J}$, a stable connector $j\in\Co_k^1$ is infected throughout $[k+4.5,k+5]$ as soon as there is some $i\in\St'_m$ for which:

\begin{itemize}
	\item[(i)] $\Rec^i\cap[k,k+5]=\emptyset$,
	\item[(ii)] $i$ is a neighbour of $j$ at time $k+4$, and
	\item[(iii)] $\I_0^{ij}\cap[k+4,k+4.5]\neq\emptyset$.
\end{itemize}
Indeed, suppose that $(i)$ holds; because $i\in\St'_m$, by definition it must be $[\mathfrak{c}_0\TL m,\mathfrak{c}_0\TL(m+1)]$-infected and hence it is either the case that there is an updating event $u\in[k,k+4]$ at which $\mathcal{S}_u$ holds so $i$ is infected, or there are no updating events in this interval, in which case at the last updating event $u$ we had $\mathcal{S}_u^{long}$ and hence $\mathcal{W}_i(u+l)$ holds for some $u+l\in[k,k+3]$ which again means that $i$ is infected at some time in $[k,k+4]$. Either way, because $\mathcal{R}^i\cap[k,k+5]=\emptyset$ the infection of $i$ is sustained throughout $[k+4,k+5]$ and hence it follows directly from $(ii)$, $(iii)$ that there is a valid infection event between $i$ and $j$ in $[k+4,k+4.5]$. By definition of $\Co^1_k$, $j$ does not recover throughout $[k+4,k+5]$, thus proving the claim.

\smallskip

Observe that a given $i\in\St'_m$ belongs to $\St^s_m$ with probability at least  $1-e^{-0.3\mathfrak{c}_0\TL/20e^5}\geq\frac{1}{8}$, where the bound follows by choosing $C_{\ref{survivalvertex}}$ large (depending on $\mathfrak{c}_0$). From this bound and Proposition~\ref{survivalvertex} (fixing $\varepsilon=\frac{1}{2}$) we thus obtain that
\[\P\big(i\in\St^s_m\,\big|\,\mathcal{F}_{\mathfrak{c}_0\TL m}\big)\,\geq\,\frac{1}{16}\]
on the event $\{\bar{X}_{\mathfrak{c}_0\TL m}^i(i)=1\}$ and hence a large deviation argument gives 
\begin{equation}\label{DISp1}\P\big(|\St^s_m|\geq \sfrac{c_{\eqref{recursivevertex2}}}{48}aN\,\big|\,\mathcal{F}_{\mathfrak{c}_0\TL m}\big)\,\geq\,\left(1-e^{-c_{\eqref{recursivevertex2}}N/80}\right)\one_{\{|\tilde{\St}_m|>c_{\eqref{recursivevertex2}}aN\}},
\end{equation}
which is already a bound of the right form for slow extinction so we may as well assume that 
\smash{$|\St^s_m|\geq \sfrac{c_{\eqref{recursivevertex2}}}{48}aN$} as soon as \smash{$|\tilde{\St}_m|>c_{\eqref{recursivevertex2}}aN$}. Similarly, observe that since the connectors have index at least $N/2$ we can safely say that each $j\in\Co^1$ belongs to $\Co^1_k$ independently with some probability $\varphi$ depending on $\kappa_0$ alone. Hence it follows from a large deviation argument that
\begin{equation}\label{C1}\P(|\Co_k^1|\geq\varphi N/3)\,\geq\,1-e^{-0.3\varphi N}\end{equation}
which is again of the right form for slow extinction so we again assume that $|\Co_k^1|\geq\varphi N/3$ for all $k\in\mathcal{J}$. Write $\mathcal{B}$ for the event $\{|\St^s_m|\geq \sfrac{c_{\eqref{recursivevertex2}}}{48}aN\}\cap\{(|\Co_k^1|\geq\varphi N/3),\;\forall k\in\mathcal{J}\}$ in which there are many synchronous stars to spread the infection, and stable connectors to receive it. 
Define now the index set
\[\mathcal{K}:= \left\{k\in\mathcal{J} \colon \big|\{i \in \St^s_m, \bar{U}(i,k)=1\}\big| \ge \sfrac{|\St^s_m|}{12e^5}\right\},\]
which corresponds to the set of intervals at which we can find enough stars in $\St^s_m$ that do not recover. Using the previous claim and the definitions of $\mathcal{B}$ and $\mathcal{K}$ we deduce that there is some constant $c$ independent of $\lambda$, $a$ and $N$ such that
\begin{equation}\label{desc1}
\P\big(|\{j\in\Co^1_k \colon X_{k+4.5}(j)=1\}| \geq \sfrac{c\varphi\lambda ap(a,1)N}{9}\,\big|\,\mathcal{F}_{\mathfrak{c}_0\TL m},\mathcal{B},k\in\mathcal{K}\big)\,\geq\,1-e^{-0.1\varphi c\lambda ap(a,1)N}
\end{equation}
on the event $\{|\tilde{\St}_m|>c_{\eqref{recursivevertex2}}aN\}$. Indeed, for any $j\in\Co^1_k$ and $i\in\St_m^s$ satisfying $\mathcal{R}^i\cap[k,k+5]=\emptyset$ the probability of satisfying $(ii)$ and $(iii)$ is equal to
\[\tfrac{p(a,1)}{N}(1-e^{-\lambda/2})\,\geq\,\tfrac{\lambda p(a,1)}{4N}\]
because by definition $j$ updated during $[k+3,k+4]$. Since $\mathcal{B}$ holds there are many synchronous stars, and hence using the definition of $\mathcal{K}$ we conclude from a large deviation argument and the previous claim that any given $j\in\Co^1_k$ is infected at time $k+4.5$ with probability at least
\[1-\left(1-\tfrac{\lambda p(a,1)}{4N}\right)^{\frac{c_{\eqref{recursivevertex2}}}{600e^5}aN}\,\geq\,1-\exp\left(-\tfrac{c_{\eqref{recursivevertex2}}\lambda ap(a,1)}{2400e^5}\right)\,\geq\,c\lambda ap(a,1),\]
where $c=1/4800e^5$. Hence \eqref{desc1} follows from the bound on $|\Co_k^1|$ and yet another large deviation argument. 
\pagebreak[3]\smallskip

So far we have seen that for any given $k\in\mathcal{K}$ with high probability there are of order $\lambda ap(a,1)N$ infected connectors at time $k+4.5$. However, in order to spread the infection we need to consider all these sources of infections at different times $k\in\mathcal{K}$ simultaneously. For that purpose we introduce the set $\mathscr{P}_m$ of pairs $(k,j)$ as follows
\[\mathscr{P}_m\,:=\,\left\{(k,j) \colon k\in \mathcal{K},\,j\in\Co^1_k,\,X_{k+4.5}(j)=1\right\}.\]
 To lower bound the number of synchronous times in $\mathcal{K}$ we use a double count to deduce
\[\sfrac {|\St^s_m| \mathfrak{c}_0\TL}{60e^5} \le \sum_{i\in \St^s_m} \sum_{k\in\mathcal{J}}\bar{U}(i,k) \le |\mathcal{K}| |\St^s_m| + (\sfrac{\mathfrak{c}_0\TL}{10}-|\mathcal{K}|) \sfrac {|\St^s_m|}{12e^5},\]
where the left inequality follows from the definition of 
$\St^s_m$, and the right one from the fact that there are at most $\sfrac{\mathfrak{c}_0\TL}{10}$ intervals in $[\mathfrak{c}_0\TL m,\mathfrak{c}_0\TL(m+\sfrac{1}{2})]$, and the definition of $\mathcal{K}$. We conclude that $|\mathcal{K}|\ge \mathfrak{c}_0\TL/240e^5$. Putting this together with \eqref{DISp1}, \eqref{C1}, and \eqref{desc1} gives
\begin{equation}\label{enoughpairs}
\P\big(|\mathscr{P}_m|\geq\sfrac{c\varphi\lambda ap(a,1)\mathfrak{c}_0\TL N}{720e^5}\,\big|\,\mathcal{F}_{\mathfrak{c}_0\TL m}\big)\,\geq\,1-e^{-c_{\eqref{enoughpairs}}N},
\end{equation}
for some $c_{\eqref{enoughpairs}}>0$ independent of $N$, on the event $\{|\tilde{\St}_m|>c_{\eqref{recursivevertex2}}aN\}$. In order to conclude \eqref{recursivevertex2} we need to show that stars in \smash{$\St\setminus\tilde{\St}_m$} have a sufficiently large chance of getting infected by the sources of infections represented by the pairs time-connector in $\mathscr{P}_m$. Fix $i\in \St\setminus\tilde{\St}_m$ and observe that in order for $i$ to get infected in $[\mathfrak{c}_0\TL m,\mathfrak{c}_0\TL(m+\sfrac{1}{2})]$ it is enough to find $(k,j)\in\mathscr{P}_m$ such that
\begin{itemize}
	\item[(i)] $j$ is a neighbour of $i$ at time $k+4$, and
	\item[(ii)] $\I_0^{i,j}\cap[k+4.5,k+5]\neq\emptyset$. 
\end{itemize}
Indeed, by definition of $\Co^1_k$ the connector $j$ does not recover not update throughout $[k+4.5,k+5]$ so any infection towards $i$ in this interval is valid. Once again the probability of satisfying these conditions for a given pair $(k,j)$ is at least 
\smash{$\frac{\lambda p(a,1)}{5N}$} and since the state of the edges $\{i,j\}$ at times $k+4.5$ are independent across the $j$ and $k$ (since by definition the connector updated in the interval $[k+3,k+4]$), a large deviation argument yields
\begin{align*}&\P\Big(\exists t\in[\mathfrak{c}_0\TL m,\mathfrak{c}_0\TL(m+\sfrac{1}{2})] \mbox{ with }X_t(i)=1\, \Big| \,\mathcal{F}_{\mathfrak{c}_0\TL m},\,|\mathscr{P}_m|\geq\sfrac{c\varphi\lambda ap(a,1)\mathfrak{c}_0\TL N}{720e^5}\,\Big)\\&\qquad \geq\,1-\left(1-\tfrac{\lambda p(a,1)}{4N}\right)^{\sfrac{c\varphi\lambda ap(a,1)\mathfrak{c}_0\TL N}{720e^5}}\,\geq1-\exp\left(\sfrac{c\varphi\lambda^2 ap^2(a,1)\mathfrak{c}_0\TL}{7000e^5}\right)\end{align*}
on the event $\{|\tilde{\St}_m|>c_{\eqref{recursivevertex2}}aN\ \mbox{ and }i\in\St\setminus\tilde{\St}_m\}$. From the hypothesis of the Theorem we know that $\lambda^2a p^2(a,1)\TL>M_{(iv)}$, and the constants $\mathfrak{c}_0$, $c$ and $\varphi$ do not depend on $a$, $\lambda$ 
or~$N$, so taking  $M_{(iv)}$ large enough we finally obtain
\[P\Big(\exists t\in[\mathfrak{c}_0\TL m,\mathfrak{c}_0\TL(m+\sfrac{1}{2})],\;X_t(i)=1\,\Big|\,\mathcal{F}_{\mathfrak{c}_0\TL m},\,|\mathscr{P}_m|\geq\sfrac{c\varphi\lambda ap(a,1)\mathfrak{c}_0\TL N}{720e^5}\Big)\,\geq\,\frac{2}{3}\]
on the event \smash{$\{|\tilde{\St}_m|>c_{\eqref{recursivevertex2}}aN \mbox{ and }i\in\St\setminus\tilde{\St}_m\}$}. From this point onwards the proof is the same as the one in the previous section;  defining $\tau_i$ as the first time in $[\mathfrak{c}_0\TL m,\mathfrak{c}_0\TL(m+\sfrac{1}{2})]$ that $X_{\tau_i}^i(i)=1$ we flip a coin to ask whether $i$ is $[\tau_i,\mathfrak{c}_0\TL(m+1)]$-infected which in particular implies that $i$ is infected at time $\mathfrak{c}_0\TL(m+1)$ and hence \smash{$i\in\tilde{\St}_{m+1}$}. Using \eqref{keydd} with $t=\tau_i$ we conclude that
\[\P\big(i\in\St''_{m+1}\,\Big|\,\mathcal{F}_{\mathfrak{c}_0\TL m},\,|\mathscr{P}_m|\geq\sfrac{c\varphi\lambda ap(a,1)\mathfrak{c}_0\TL N}{720e^5}\big)\,\geq\,\tfrac{1}{36}\cdot\one_{\{c_{\eqref{recursivevertex2}}aN<|\tilde{\St}_m|\,\mbox{ and } i\in\St\setminus\tilde{\St}_m\}.}\]
Finally, on the event $\{c_{\eqref{recursivevertex2}}aN<|\tilde{\St}_m|\leq4c_{\eqref{recursivevertex2}}aN\}$ we have $|\St\setminus\tilde{\St}_m|\geq \frac{aN}{4}$ and it follows from a large deviation bound together with \eqref{enoughpairs} that
\[\P\big(|\St''_{m+1}|\geq \sfrac{aN}{320} \,\big|\,\mathcal{F}_{\mathfrak{c}_0\TL m}\big)\,\geq\,\big(1-e^{-c'N}\big)\one_{\{c_{\eqref{recursivevertex2}}aN<|\tilde{\St}_m|\leq4c_{\eqref{recursivevertex2}}aN\}}\]
for some $c'>0$ independent of $N$. The result then follows by taking $c_{\eqref{recursivevertex2}}<\sfrac{1}{320}$. The corresponding lower bound for the metastable density follows the exact same proof as in the previous section so we omit it. 

\subsection{Slow extinction and optimal strategies}
\label{sec_3.4}

In this section we prove Theorem \ref{teofinal} 
in the case $\eta<0$, that is, we show how to use the results from the previous sections to deduce slow extinction and find an upper bound for the metastability exponent
for both the factor and preferential attachment kernels. More precisely,
for each kernel and survival strategy we describe the parameters $\eta, \tau$ for which the strategy is available, with the union of these parameter sets constituting the region of slow extinction. As for any strategy the metastable density is bounded from lower by $\lambda ap(a,1)$, which is increasing in~$a$, we only need to obtain the maximal value of $a$ for which the strategy succeeds; this gives the best possible lower bound for the strategy and comparing all strategies we find the optimal strategy and the corresponding upper bound on the metastability exponent. In all cases, the maximal $a$ is a power of $\lambda$ and hence the condition $a\leq e^{{1}/{\lambda}}$ appearing in Theorem \ref{theoslow} is automatically satisfied so we disregard it throughout this section.
%\smallskip

\subsubsection{The factor kernel}\label{fkoptimal}

Recall that for the factor kernel $p(a,a)=a^{-2\gamma}$ and $p(a,1)=a^{-\gamma}$ which we use to analyse the survival mechanisms separately:
\begin{itemize}[leftmargin=*]
	\item \textbf{Quick direct spreading:} The condition ensuring the mechanism works is that $\lambda a^{1-2\gamma}>M_{(i)}$, which is possible if $\gamma>\frac{1}{2}$ with $a$ of order $a_{qds}=\lambda^{\frac{1}{2\gamma-1}+o(1)}$.\medskip
	 
	\item \textbf{Quick indirect spreading:}  The condition ensuring the mechanism works is that $\lambda^2 a^{1-2\gamma}>M_{(ii)}$, which is possible if $\gamma>\frac{1}{2}$ with $a$ of order $a_{qis}=\lambda^{\frac{2}{2\gamma-1}+o(1)}$.\medskip
	
	\item \textbf{Delayed direct spreading:} For this mechanism two conditions need to be met, 
		\begin{equation}\label{condDDS}		
\lambda^2a^{-\gamma}\geq C_{\ref{theoslow}}\log(\tfrac{1}{a\lambda})\quad\text{and}\quad
		(1\wedge\lambda a^{\gamma\eta}) a^{1-3\gamma}		\lambda^2>M_{(iii)}.\end{equation}
	%As we only care about the metastability exponent we can replace the first condition by $	\lambda^2a^{-\gamma}\geq C$. 
	The mechanism is available as soon as $\gamma>\frac{1}{3}$.
	% since taking $a=\lambda^c$ with $c$ a very large exponent yields very large values of $\lambda a^{-\gamma}$ and $\lambda a^{\gamma\eta}$, so the first condition in \eqref{condDDS} is met, and the second condition becomes $\lambda^2 a^{1-3\gamma}=\Omega(1)$ which is again met. 
	The metastability exponent analysis needs to be handled by dividing it into different subcases.\medskip
	\begin{itemize}[leftmargin=*]
		\item Suppose the first condition in \eqref{condDDS} is the most restrictive one. Maximizing $a$ subject to this condition gives $a$ of order \smash{$a_{ls}=\lambda^{{2}/{\gamma}+o(1)}$}.
		Here the actual spreading mechanism does not affect the exponent. 
		To check that the former succeeds we still require the second condition in \eqref{condDDS} to hold. Replacing $a_{ls}$ in this condition gives
		\[(1\wedge\lambda^{1+2\eta-o(1)})\lambda^{\frac{2}{\gamma}-4-o(1)}=\lambda^{(0\vee(1+2\eta))+\frac{2}{\gamma}-4-o(1)}>M_{(iii)} \]
		We get the restrictions $\gamma\geq\frac{1}{2}$  and $\gamma\geq\frac{2}{3-2\eta}$.\pagebreak[3]\medskip
		
		\item Suppose now that the second condition in \eqref{condDDS} is the most restrictive one while at the same time $\lambda a^{\gamma\eta}\leq1$ holds. The maximal $a$ is of order		\[a_{dds}=\lambda^{\frac{3}{3\gamma-1-\gamma\eta}+o(1)}\]
		Since we now require that both $\lambda^2a^{-\gamma}\ge C_{\ref{theoslow}}$ and $\lambda a^{\gamma\eta}\leq 1$ hold, substituting the expression for $a_{dds}$ gives
		%\[\lambda^{\frac{3\gamma-2-2\gamma\eta}{3\gamma-1-\gamma\eta}-o(1)}=\Omega(1)\qquad\text{ and }\qquad\lambda^{\frac{3\gamma-1+2\gamma\eta}{3\gamma-1-\gamma\eta}-o(1)}\leq1\]
		%from which we deduce 
		the restrictions $\gamma\leq\frac{2}{3-2\eta}$ and $\gamma>\frac{1}{3+2\eta}$.
		\medskip
		
		\item Suppose that the second condition in \eqref{condDDS} is the most restrictive one while $\lambda a^{\gamma\eta}\geq1$ holds, which is the case when depletion occurs. The maximal $a$ in the condition $\lambda^2 a^{1-3\gamma}>M_{(iii)}$ is of order
		\[a_{ddds}=\lambda^{\frac{2}{3\gamma-1}+o(1)}\]
		%(where the second "D" in the name stands for "depleted"). 
		We now require $\lambda^2a^{-\gamma}\ge C_{\ref{theoslow}}$ and $\lambda a^{\gamma\eta}\geq 1$. Substituting the expression for $a_{ddds}$ gives
		%\[\lambda^{\frac{4\gamma-2}{3\gamma-1}-o(1)}=\Omega(1)\qquad\text{ and }\qquad\lambda^{\frac{3\gamma-1+2\gamma\eta}{3\gamma-1}-o(1)}\geq1\]
		%from which we deduce 
		the restrictions $\gamma\leq\frac{1}{2}$ and $\gamma\leq\frac{1}{3+2\eta}$.
		\bigskip
	\end{itemize}
	\item \textbf{Delayed indirect spreading:} As in the previous mechanism there are two conditions to be met, which after performing the same simplification as before are of the form
	\begin{equation}\label{condDIS}\lambda^2a^{-\gamma}\ge C_{\ref{theoslow}}\quad\text{ and }\quad\lambda^4 a^{1-3\gamma+\gamma\eta}\ge M_{(iv)}.\end{equation}
	In this case the condition under which slow extinction holds is \smash{$\frac{1}{3-\eta}<\gamma$} and for the metastability exponent we again treat separate scenarios:\medskip
	\begin{itemize} [leftmargin=*]
		\item Suppose that the first condition in \eqref{condDIS} is the most restrictive one, so that the maximal $a$ is of order $a_{ls}$.  Substituting this in the second condition of \eqref{condDIS} gives 
		%$\lambda^{\frac{2}{\gamma}-2+2\eta-o(1)}=\Omega(1)$ and hence 
		the restriction $\gamma\geq\frac{1}{1-\eta}$.
		Recall that delayed concurrent spreading		%prevails if $a_{ls}$ is maximal and 
		is possible if both the direct and indirect spreading mechanisms succeed, which is the case if
		\smash{$\gamma\geq\frac12$} and \smash{$\gamma\ge\frac{1}{1-\eta}$}. \smallskip
		\item Suppose now that the second condition in \eqref{condDIS} is the most restrictive one, so that the maximal $a$ is of order		\[a_{dis}=\lambda^{\frac{4}{3\gamma-1-\gamma\eta}+o(1)}\]
		Substituting $a_{dis}$ in the first condition of \eqref{condDIS} gives 
		%$\lambda^{\frac{2\gamma-2-2\gamma\eta}{3\gamma-1-\gamma\eta}-o(1)}=\Omega(1)$ and hence 
		the restriction $\gamma\leq\frac{1}{1-\eta}$.\medskip
	\end{itemize}
\end{itemize}
 
 For each one of the densities above we compute the corresponding lower bound for the metastable density $\lambda ap(a,1)=\lambda a^{1-\gamma}$ and express both the result and the restrictions on the parameters in terms of $\eta$ and $\tau=\frac{1}{\gamma}+1$. The information is summarized in Table \ref{Table1}. The only thing left to do in order to conclude the first part of Theorem \ref{teofinal} is to compute which is the largest bound for the metastable density (or equivalent, which is the smallest exponent) in the region of slow extinction. This is an elementary exercise.
 
 \renewcommand{\arraystretch}{1.6}
 \begin{table}[h!]
 	\caption{Density and restrictions per mechanism for the factor kernel}
 	\label{Table1}
 	\begin{tabular}{|c|c|c|}
 		\hline
 		Mechanism & Density & Region \\ \hhline{|=|=|=|}
 		 Quick direct spreading & $\lambda^{\frac{1}{3-\tau}+o(1)}$ & $R_1:=\{\tau<3\}$ \\ \hline
 		Quick indirect spreading & $\lambda^{\frac{\tau-1}{3-\tau}+o(1)}$ & $R_1=\{\tau<3\}$ \\ \hline
 		Local survival & $\lambda^{2\tau-3+o(1)}$ & $R_2:=
 		\{\tau\leq(\tfrac{5}{2}-\eta)\wedge3\}\cup
 		%\{\tau\leq3\}\cap
 		\{\tau\leq2-\eta\}$ \\ \hline
 		Delayed direct spreading & $\lambda^{\frac{2\tau-2-\eta}{4-\tau-\eta}+o(1)}$ & $R_3:=\{\tfrac{5}{2}-\eta<\tau\leq4+2\eta\}$  \\ \hline
 		Delayed depleted direct spreading & $\lambda^{\frac{\tau}{4-\tau}+o(1)}$& $R_4:=\{(4+2\eta)\vee3\leq\tau<4\}$ \\ \hline
 		Delayed indirect spreading & $\lambda^{\frac{3\tau-4-\eta}{4-\tau-\eta}+o(1)}$  &  $R_5:=\{2-\eta\leq\tau<4-\eta\}$\\ \hline
 	\end{tabular}
 \end{table}
\renewcommand{\arraystretch}{1}

%This is facilitated by the following observations:
%\begin{itemize}
%	\item Within $R_1$ it is always the case that $\frac{1}{3-\tau}<\frac{\tau-1}{3-\tau}$
%	\item Since $\lambda^2a^{-\gamma}=\Omega(1)$ is a condition imposed for all delayed mechanisms, it follows that $\lambda^{2\tau-3}$ is always the larger density amongst them.
%	\item $R_3\cap R_4=\emptyset$ and hence it is not necessary to compare the two Delayed Direct mechanisms.
%	\item Since $\tau\geq 2$ the lower bound given by Delayed Direct Spreading is larger than the one given by the depleted version.
%\end{itemize}
%Indeed, it follows from these observations that we only need make the comparison between Q.D.S. and L.S. and between D.D.D.S and D.I.S. For the first one imposing $\lambda^{\frac{\tau-1}{3-\tau}}\geq \lambda^{2\tau-3}$ yields $\tau\leq \frac{5}{2}$ so that Q.D.S. is optimal within $R_1':=\{\tau\leq \frac{5}{2}\}$, and L.S. is optimal within $R_2':=\{\frac{5}{2}\leq\tau<(\tfrac{5}{2}-\eta)\wedge3\}\cup\{\frac{5}{2}\leq\tau<2-\eta\}$. Next, we compare D.D.D.S. and D.I.S. by imposing $\lambda^{\frac{\tau}{4-\tau}}\geq\lambda^{\frac{3\tau-4-\eta}{4-\tau-\eta}}$ which yields $\tau\leq 4+\eta$. It follows that D.D.S. is optimal within $R_3$, while D.D.D.S. and D.I.S. are optimal within $R_4':=\{(4+2\eta)\vee3\leq\tau<4+\eta\}$ and $R_5':=\{(2-\eta)\vee(4+\eta)\leq\tau<4-\eta\}$ respectively.

\subsubsection{The preferential attachment kernel}\label{paoptimal} Recall that for the preferential attachment kernel $p(a,a)=a^{-1}$ and $p(a,1)=a^{-\gamma}$. Again we analyse the survival mechanisms separately.\medskip

\begin{itemize}[leftmargin=*]
	\item \textbf{Quick direct spreading:} In this case the condition ensuring the mechanism is $\lambda >M_{(i)}$, which fails for $\lambda$ small for any choice of the parameters.\smallskip
	\item \textbf{Quick indirect spreading:}  In this case the condition ensuring the mechanism is \smash{$\lambda^2a^{1-2\gamma}>M_{(ii)}$} which was already analysed for the factor kernel. It follows that the mechanism holds as soon as $\gamma>\frac{1}{2}$ and the maximal $a$ is of order $a_{qis}$ as before.% 
	\smallskip
	\item \textbf{Delayed direct spreading:} For this mechanism the simplified conditions are
	\begin{equation}\label{condDDSpa}\lambda^2a^{-\gamma}\ge C_{\ref{theoslow}}\quad\text{and}\quad(1\wedge\lambda a^{\gamma\eta})\lambda^2 a^{-\gamma}> M_{(iii)}.\end{equation}
	%since this time around $ap(a,a)=1$. 
	Surprisingly, for this kernel the mechanism is available for all values $\gamma$ and $\eta$, and we analyse the metastability exponent by considering two cases.
	\smallskip
	
	\begin{itemize}[leftmargin=*]
		\item Suppose that the first condition in \eqref{condDDSpa} is the most restrictive one, which shows that the maximal $a$ is of order $a_{ls}$. Substituting $a_{ls}$ into the second condition in \eqref{condDDSpa} %becomes $(1\wedge\lambda^{1+2\eta})\lambda^{-o(1)}=\Omega(1)$, which yields 
		    gives the restriction $\eta\leq-\frac{1}{2}$.\medskip
		\item Suppose now that the second condition in \eqref{condDDSpa} is the most restrictive one and further assume that $\lambda a^{\gamma\eta}\leq1$ since otherwise we fall into the previous case. Maximizing $a$ in the condition $\lambda^3 a^{-\gamma+\gamma\eta}> M_{(iii)}$ gives the order
		\[a_{dds}=\lambda^{\frac{3}{\gamma-\gamma\eta}+o(1)}\]
		Observe that the condition $\lambda^2a^{-\gamma}\ge C_{\ref{theoslow}}$ is trivially satisfied, while replacing $a_{dis}$ into the assumption gives % $\lambda^{\frac{1+2\eta}{1-\eta}}\leq 1$ and hence we obtain 
		the restriction $\eta\geq-\frac{1}{2}$. \medskip
	\end{itemize}
	\item \textbf{Delayed indirect spreading:} Since the kernel appears in the conditions only through $p(a,1)=a^{-\gamma}$ the analysis of this mechanism is the same as for the factor kernel and hence we have that the mechanism holds as soon as $\frac{1}{3-\eta}<\gamma$. 
	\begin{itemize}[leftmargin=*]
		\item If $\gamma\geq\frac{1}{1-\eta}$ then the maximal $a$ is of order $a_{ls}=\lambda^{\frac{2}{\gamma}+o(1)}$. Hence delayed concurrent spreading is possible if $\eta\leq-\frac12$ and $\gamma\geq\frac{1}{1-\eta}$.		
		\item If $\gamma\leq\frac{1}{1-\eta}$ then the maximal $a$ is of order $a_{dis}=\lambda^{\frac{4}{3\gamma-1-\gamma\eta}+o(1)}$.
	\end{itemize}
\end{itemize}
As in the previous section we compute the corresponding lower bound for the metastable density $\lambda ap(a,1)=\lambda a^{1-\gamma}$ for each mechanism and express both the result and the restrictions on the parameters in terms of $\eta$ and $\tau=\frac{1}{\gamma}+1$, which are summarized in Table~\ref{Table2}. 
\renewcommand{\arraystretch}{1.6}
\begin{table}[h!]
	\caption{Densities and restrictions per mechanism}
	\label{Table2}
	\begin{tabular}{|c|c|c|}
		\hline
		Mechanism & Density & Region \\ \hhline{|=|=|=|}
		Quick indirect spreading & $\lambda^{\frac{\tau-1}{3-\tau}+o(1)}$ & $R_1:=\{\tau\leq 3\}$ \\ \hline
		Local survival & $\lambda^{2\tau-3+o(1)}$ & 
		$R_2:= \{\tau\leq2-\eta\}\cup
		\{\eta\leq-\frac{1}{2}\}$ \\ \hline
		Delayed direct spreading & $\lambda^{\frac{3\tau-5-\eta}{1-\eta}+o(1)}$ & $R_3:=\{\eta\geq-\frac{1}{2}\}$  \\ \hline
		Delayed indirect spreading & $\lambda^{\frac{3\tau-4-\eta}{4-\tau-\eta}+o(1)}$  &  $R_4:=\{2-\eta\leq\tau<4-\eta\}$\\ \hline
	\end{tabular}
\end{table}
\renewcommand{\arraystretch}{1}

%As in the previous section w
We observe that among the delayed mechanisms the density associated to local survival is the largest and hence dominates in the domain $R_2$ where it is feasible.
It is easy to calculate that delayed indirect spreading dominates 
delayed direct spreading if $\tau\leq \frac{8}{3}+\frac{\eta}{3}$ and quick indirect spreading only prevails if no other strategy is available. This completes the proof of Theorem \ref{teofinal} in the case $\eta<0$.

%In order to compare Delayed Direct Spreading and Delayed Indirect Spreading we impose $\lambda^{\frac{3\tau-5-\eta}{1-\eta}}\geq\lambda^{\frac{3\tau-4-\eta}{4-\tau-\eta}}$ which yields the condition $\tau\geq \frac{8}{3}+\frac{\eta}{3}$. Setting aside Q.I.S. we conclude that Local Survival is optimal within $R_2$, D.I.S. is optimal within $R_4':=\{2-\eta\leq\tau\leq \frac{8}{3}+\frac{\eta}{3}\}$ and that D.D.S. is optimal within $R'_3:=\{\eta\geq-\frac{1}{2}\}\cap\{\tau\geq \frac{8}{3}+\frac{\eta}{3}\}$. Finally, a simple computation reveals that within $R_2$, $R_3'$ and $R_4'$ Q.I.S. is always suboptimal, which completes our analysis.
%
%\medskip

\subsection{Fast extinction phase}

Recall the distinction between fast extinction, where the expected extinction time grows as a power of $N$, and ultra-fast extinction, where it grows subpolynomially. 
%According to Theorem \ref{teofinal} b
Both for the factor kernel and the preferential attachment kernel ultra-fast extinction occurs if $\eta\geq\frac{1}{2}$ and $\tau>3$, which follows directly from Theorem 2 in \cite{JLM19}, which also gives a polynomial upper bound for the extinction time expectation in the fast extinction regimes for $\eta\geq0$. For $\eta<0$ an appropriate upper bound will follow from Theorem \ref{teoupper_vertex_improved} in the next section. In order to conclude that no ultra-fast extinction occurs unless 
$\eta\geq\frac{1}{2}$ and $\tau>3$, we need a lower bound for the expected extinction time, which we provide here in the case $\eta<0$.  The case $\eta\geq0$ follows from analogous ideas together with the results found in Sections 4.3 and 4.4 in \cite{JLM19}. 

\begin{lemma}\label{lastlemma}
	Let $p$ be any kernel satisfying \eqref{condp} and %take $\gamma$ and 
	$\eta<0$. Then for any $\varepsilon>0$ there are constants $\tilde{c}_\lambda$ and $c_{\ref{lastlemma}}$ such that for any $\lambda<\tilde{c}_\lambda$ and all $N$ large,
	\[\P\big(T_{\rm{ext}}\,\geq\,\sfrac{\lambda c_{\ref{lastlemma}} N^{\gamma(1-\eta)}}{\log N}\,\big|\,X_0\equiv1\big)=1-o(1)\]
\end{lemma}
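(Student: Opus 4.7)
The plan is to rule out ultra-fast extinction by exhibiting local survival on a handful of the most powerful vertices of the network. For $\eta<0$ a vertex of expected degree of order $N^\gamma$ has update rate only of order $\kappa_0 N^{\gamma\eta}$, and the heuristics of the introduction predict that local survival between updates sustains the infection for a time of order $\lambda N^{\gamma(1-\eta)}/\log N$. The natural tool is Proposition~\ref{survivalvertex}, applied with $a=a_N=K_N/N$ for a slowly diverging sequence $K_N$, so that the star set $\St$ contains of order $K_N$ vertices, each of expected degree at least $\asymp(N/K_N)^\gamma$ and update rate $\asymp(N/K_N)^{\gamma\eta}$.

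Since $a$ tends to zero, condition~(i) of Proposition~\ref{survivalvertex} ($p(a,1)\le e^{1/\lambda}$) fails for $N$ large, so I would invoke the variant in Remark~\ref{re:conditions} with the modified local survival time $\TL=\lambda p(a,1)/(\kappa(a)\log p(a,1))\gtrsim\lambda N^{\gamma(1-\eta)}/(K_N^{\gamma(1-\eta)}\log N)$. Condition~(ii) $\lambda^2 p(a,1)>-C\log(a\lambda)$ is then automatic since $p(a,1)$ is polynomially large. Because we start from $X_0\equiv 1$, the requirement $\bar X^i_0(i)=1$ is immediate, and the lower bound $|\Co_{0,i}|>\tfrac{c_{\eqref{localstable}}}{5}p(a,1)$ follows for every $i\in\St$ from a standard Chernoff estimate on the stationary distribution of the network together with~\eqref{localstable}.

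Applying Proposition~\ref{survivalvertex} with $\varepsilon=1/2$ to each star $i\in\St$, I observe that the events $\{i\text{ is }[0,\mathfrak c_0\TL]\text{-infected}\}$ for different stars depend on disjoint collections of graphical ingredients, namely the clocks $\U^i,\Rec^i$, and the infection and connection variables incident to $i$ --- once we condition on the sigma-algebra $\mathfrak F$ that freezes the connector dynamics on $\Co^0$, these events are therefore independent. A Chernoff bound then gives $\P(\text{no star is }[0,\mathfrak c_0\TL]\text{-infected})\le 2^{-|\St|}\le 2^{-cK_N}=o(1)$, and on the complementary event $T_{\rm ext}\ge \mathfrak c_0\TL/2$, delivering the desired polynomial lower bound.

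The delicate point is the tension between the two roles of $K_N$: it must diverge for the Chernoff bound to produce $1-o(1)$ probability, while the loss $K_N^{-\gamma(1-\eta)}$ in $\TL$ should not destroy the announced $(\log N)^{-1}$ prefactor. Choosing $K_N$ growing as an iterated logarithm makes this polylogarithmic loss slower than any negative power of $\log N$; absorbing the residual correction into the constant $c_{\ref{lastlemma}}$ (possibly after a further Markov-restart iteration to squeeze the exact $\log N$ denominator) concludes the proof. The initial verification that $|\Co_{0,i}|$ and the network are typical in the sense of $\mathcal A_0$ remains a routine but necessary input.
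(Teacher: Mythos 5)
Your overall strategy is the right one and is essentially the paper's: start from $X_0\equiv 1$, apply Proposition~\ref{survivalvertex} in the regime where condition~(i) fails, i.e.\ via Remark~\ref{re:conditions} with the modified $\TL=\lambda p(a,1)/(\kappa(a)\log p(a,1))$, and read off a polynomial lower bound on $T_{\rm ext}$. The paper, however, does this with a \emph{single} vertex, $a=1/N$ (the most powerful vertex $i=1$), which gives exactly $\TL\asymp \lambda N^{\gamma(1-\eta)}/\log N$ and survival probability $\ge 1-\varepsilon-e^{-c_{\eqref{localstable}}N}$; the ``for any $\varepsilon>0$'' in the statement is there precisely because only a $1-\varepsilon$ bound is obtained, and no boosting over many stars is attempted. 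Your identification of the conditional independence of the local-survival events given $\mathfrak F$ is correct and is exactly the mechanism the paper sets up in Section~\ref{localsurvival}, so the multi-star variant is in principle available.

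The genuine gap is the one you flag yourself and then wave away. With $a=K_N/N$ and $K_N\to\infty$ you get $p(a,1)\asymp (N/K_N)^\gamma$, hence the survival time guaranteed by Remark~\ref{re:conditions} is of order $\lambda N^{\gamma(1-\eta)}K_N^{-\gamma(1-\eta)}/\log N$. The factor $K_N^{-\gamma(1-\eta)}$ tends to $0$ for \emph{every} diverging $K_N$, including an iterated logarithm, so it cannot be ``absorbed into the constant $c_{\ref{lastlemma}}$'': for large $N$ your bound is strictly weaker than $\lambda c\, N^{\gamma(1-\eta)}/\log N$ for any fixed $c>0$. The ``further Markov-restart iteration'' you invoke to recover the exact denominator is not an argument --- restarting Proposition~\ref{survivalvertex} on consecutive intervals of length $\mathfrak c_0\TL$ only succeeds with probability bounded away from $1$ per interval, and you would need $K_N^{\gamma(1-\eta)}$ consecutive successes, which destroys the $1-o(1)$ you were after. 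The clean repair is to take $K_N=K$ a fixed large constant (or just $K=1$ as the paper does): then the loss is a constant factor, genuinely absorbed into $c_{\ref{lastlemma}}$, and the failure probability is $2^{-cK}+e^{-cN}\le\varepsilon$, which is what the lemma actually delivers given its ``for any $\varepsilon>0$'' phrasing. As written, your proof proves a different (weaker-in-time, stronger-in-probability) statement than the one claimed.
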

\begin{remark}
	Observe that the exponent given does not necessarily match the one from Theorem~\ref{teoupper_vertex_improved} below. Finding the exact exponent is an interesting question which could shed light onto the behaviour in the fast extinction regime. We do not pursue this here.
\end{remark}\pagebreak[3]
\begin{proof}
Fix $i=\frac{1}{N}$ which is the most powerful vertex in the network, and observe that it is initially infected since $X_0\equiv1$. For this choice of $a$ we have from \eqref{condp} that $p(a,1)\geq \cl a^{-\gamma}=\cl N^{\gamma}$ and $\kappa(a)=\kappa_0N^{\gamma\eta}$ it then follows from Proposition \ref{survivalvertex} together with Remark \ref{re:conditions} that for any $\varepsilon>0$ there is a choice of $\tilde{c}_\lambda$ and $\mathfrak{c}_0$ such that
\[\P(i\text{ is }[0,\mathfrak{c}_0\TL]\text{-infected}\,\big|\,X_0\equiv 1,\mathfrak{F})\,\geq\, 1-\varepsilon,\]
on the event $\mathcal{A}_0$. Now, in this particular case
\[\TL=\frac{\lambda p(a,1)}{\kappa(a)\log(p(a,1))}=\frac{c\lambda N^{\gamma(1-\eta)}}{\log(N)},\]
for some constant $c$. Taking $c_{\ref{lastlemma}}=c\mathfrak{c}_0$ and observing that from \eqref{localstable} we already have that $\P(\mathcal{A}_0)>1-e^{-c_{\eqref{localstable}}N}$, the result then follows from the observation that by time $\mathfrak{c}_0\TL$ the vertex $i$ is infected and hence $T_{ext}>\mathfrak{c}_0\TL$. 
\end{proof}

\section{Upper bounds}\label{sec_upper_overall}

\subsection{Upper bound by local approximation}

Proposition~8 of~\cite{JLM22} can be easily adapted to our situation and yields the following result. 

\begin{proposition}\label{static_upperbound}
	Suppose $\eta\le 0$. There is a constant $C_{\eqref{static_upperdens}}>0$ % depending on the various parameters but not on $\lambda$ 
	such that, for all $\lambda>0$,  the upper metastable density $\rho^+(\lambda)$ satisfies
	\begin{equation}\label{static_upperdens}
	\rho^+(\lambda)\le \left\{ \!\!
	\begin{array}{ll}
	C_{\eqref{static_upperdens}} \lambda^{\frac 1 {3-\tau}}& \!\!\mbox{in the case of the factor kernel with }\tau\le \frac 5 2,\\[2mm]
	C_{\eqref{static_upperdens}}\lambda^{2\tau-3}& \!\!\mbox{in the case of either}  \left\{\begin{array}{l}
	\mbox{the preferential attachment kernel,}\\
	\mbox{the factor kernel with }\tau> \frac 5 2.\end{array}\right.
	\end{array}
	\right.
	\end{equation}
\end{proposition}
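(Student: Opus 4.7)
The plan is to adapt the local approximation argument of Proposition~8 of \cite{JLM22} to the vertex-updating setting. The crucial structural fact for $\eta\le 0$ is that the vertex update rate $\kappa(x)=\kappa_0 x^{-\gamma\eta}$ is uniformly bounded above by $\kappa_0$, so on any fixed time window the number of updates at a given vertex is stochastically dominated by a Poisson random variable with bounded mean. This is what makes local approximation by a static graph feasible. Using self-duality, I would rewrite
\[
I_N(t)\;=\;\frac1N\sum_{i=1}^N \P_i(T_{\rm ext}>t),
\]
so it suffices to control the survival probability $\P_i(T_{\rm ext}>t)$ for a typical starting vertex~$i$ and then integrate against the type distribution.

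The first step would be to build a static super-graph $\bar{\mathscr G}^{\ssup{N},T}$ containing every edge $\{i,j\}$ that is present at some moment in $[0,T]$, for a time horizon $T=T(\lambda)$ chosen polynomially or logarithmically in $\lambda^{-1}$ (large enough for the sub-critical contact process to die but short enough that the super-graph is only a bounded multiplicative perturbation of a static snapshot). Since update rates are bounded by $\kappa_0$, edge $\{i,j\}$ belongs to $\bar{\mathscr G}^{\ssup{N},T}$ with probability of order $(1+T)p_{ij}$, which preserves the scale-free structure. A standard graphical coupling dominates $(X_t)_{t\le T}$ by the contact process $(\bar X_t)_{t\le T}$ on $\bar{\mathscr G}^{\ssup{N},T}$, so $\P_i(T_{\rm ext}>T)\le\P_i(\bar T_{\rm ext}>T)$.

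The second step is to bound $\P_i(\bar T_{\rm ext}>T)$ by a local tree approximation. The ball of bounded graph-distance radius around $i$ in $\bar{\mathscr G}^{\ssup{N},T}$ converges in the local weak sense to a multi-type Galton--Watson tree whose offspring intensities are derived from the kernel $p$ evaluated at $x=i/N$. On this tree the classical contact-process criterion (cf. \cite{BB+,MVY13,VHC17}) shows that local survival requires a vertex of expected degree at least of order $\lambda^{-2}$, i.e.\ $x\lesssim \lambda^{2(\tau-1)}$. A supermartingale/killing argument on the tree then yields a bound of the form $\P_i(\bar T_{\rm ext}>T)\le C\lambda$ for such vertices (and a much smaller bound otherwise). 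Integrating over types gives
\[
\rho^+(\lambda)\;\le\;C\int_0^{\lambda^{2(\tau-1)}}\!\!\lambda\,x^{-\gamma}\,\mathrm dx\;\asymp\;\lambda^{2\tau-3},
\]
which handles both the preferential attachment kernel and the factor kernel for $\tau>\frac52$. For the factor kernel with $\tau\le\frac52$, the dominant contribution comes instead from short-time spreading at the quick-direct-spreading threshold $\lambda a p(a,a)\asymp 1$, giving $a\asymp \lambda^{1/(2\gamma-1)}$ and contribution $\lambda a^{1-\gamma}\asymp\lambda^{1/(3-\tau)}$, which dominates when $\tau\le \frac52$.

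The main obstacle I expect is the difference between edge- and vertex-updating: in the vertex-updating model all edges at an updating vertex are resampled simultaneously, introducing correlations between the statuses of the incident edges that are absent in \cite{JLM22}. I would handle this by keeping the horizon $T$ bounded by a large constant (independent of~$\lambda$ and~$N$) so that the typical vertex undergoes $O(1)$ updates and the super-graph construction above introduces only a constant-factor inflation of connection probabilities. This preserves the local Galton--Watson approximation with the same tail exponent~$\tau$, so the static estimates of \cite{MVY13,VHC17} transfer with only adjustments to constants, yielding the asserted bounds with an appropriately enlarged $C_{\eqref{static_upperdens}}$.
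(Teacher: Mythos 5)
Your overall strategy --- dominate the evolving process over a time window by a static super-graph, pass to the local tree limit, and invoke static tree estimates in the spirit of \cite{MVY13, VHC17} --- is exactly the route the paper takes (its proof consists entirely of citing the adaptation of Proposition~8 of \cite{JLM22}), and your two thresholds $a\asymp\lambda^{2/\gamma}$ and $a\asymp\lambda^{1/(2\gamma-1)}$ together with the resulting exponents $2\tau-3$ and $1/(3-\tau)$ are the right ones. There is, however, a genuine gap in the coupling step. If the horizon $T$ is a constant independent of $\lambda$, as you ultimately propose, then the chain $\P_i(T_{\rm ext}>t)\le\P_i(T_{\rm ext}>T)\le\P_i(\bar T_{\rm ext}>T)$ can never yield anything smaller than $e^{-T}$ (the starting vertex alone survives untouched for time $T$ with at least this probability), so the resulting bound on $\rho^+(\lambda)$ is a constant rather than a power of $\lambda$. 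Conversely, if $T$ grows with $\lambda^{-1}$ fast enough for the dual process to have died or hit a star with the required probability (the natural Markov-inequality bound $\P_i(\mbox{survive to }T)\le C s_i/T$ forces $T$ polynomial in $\lambda^{-1}$), the super-graph degrees are inflated by the factor $1+\kappa_0 T$, which destroys the degree bound $\lambda^{-2}$ on which the static tree estimates rest. The standard resolution is not a single global super-graph but a progressive exploration in which each vertex's neighbourhood is revealed and tracked only for the $O(1)$ duration of its own infection episodes, so the per-vertex inflation stays bounded while the global horizon is the random time $T_{\rm hit}\wedge T_{\rm ext}$, controlled afterwards by optional stopping; compare the exploration constructions of Section~\ref{sec_upper_new}. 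As written, your argument does not contain this ingredient.

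A secondary point: the claim ``$\P_i(\bar T_{\rm ext}>T)\le C\lambda$ for such vertices'' cannot be right. For vertices of expected degree $\gtrsim\lambda^{-2}$ the survival probability is close to $1$, while a uniform bound $C\lambda$ over weak starting vertices, integrated over types, gives only $\rho^+\lesssim\lambda$, which is weaker than $\lambda^{2\tau-3}$ for every $\tau>2$. What the duality argument actually requires, and what your integral $\int_0^{a}\lambda x^{-\gamma}\,\mathrm dx$ implicitly encodes, is a bound on the probability that the dual process started from a weak vertex $i$ ever reaches the star set $\{x<a\}$, of the form $C\lambda\,(i/N)^{-\gamma}a^{1-\gamma}$ (or $s_i/s(a)$ for a suitable score function); only after integrating this over the starting type $i$ does one obtain $\lambda a^{1-\gamma}$, i.e.\ $\lambda^{2\tau-3}$ resp.\ $\lambda^{1/(3-\tau)}$. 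Relatedly, for the factor kernel with $\tau\le\frac52$ the sentence ``the dominant contribution comes from quick direct spreading'' is lower-bound reasoning; for the upper bound you must show that the process restricted to $\{x>\lambda^{1/(2\gamma-1)}\}$ is subcritical in the above sense, which is the actual content to be verified at that threshold.
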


This gives the right upper bound appearing in Theorem~\ref{teofinal} only in the regions $R_2$ introduced in Section~\ref{sec_3.4} for both kernels. The remaining regions  %necessary to complete the proof of Theorem~\ref{teofinal}  
	are covered in next section.

\medskip

\subsection{Upper bound by the supermartingale technique}
\label{sec: supermart}
We complete the proof of Theorem~\ref{teofinal} by showing the following two propositions.

\begin{proposition} 
	\label{supermartingale_upperbound}
	Suppose $\eta< 0$. There is a constant $C_{\ref{supermartingale_upperbound}}>0$ 
	such that the following holds:
	\begin{itemize}
		\item[(a)]
		For the factor kernel,
		\begin{itemize}
			\item[(i)] if $\tau>4-\eta$, there is fast extinction;
			\item[(ii)] if $\frac 5 2 - \eta\le \tau<4+2\eta$, the upper metastable density satisfies
			\[
			\rho^+(\lambda)\le C_{\ref{supermartingale_upperbound}} \lambda^{\frac {2\tau-2-\eta}{4-\eta-\tau}}
			\]
			\item[(iii)] If $ 3 \wedge (4+2\eta) \le \tau \le 4+\eta$, the upper metastable density satisfies
			\[
			\rho^+(\lambda)\le C_{\ref{supermartingale_upperbound}}  \lambda^{\frac \tau {4-\tau}}
			\]
			\item[(iv)] If $(2-\eta)\wedge (4+2\eta)\le \tau <4-\eta$, the upper metastable density satisfies
			\[
			\rho^+(\lambda)\le C_{\ref{supermartingale_upperbound}}  \lambda^{\frac {3\tau - 2 -\eta}{4-\tau-\eta}}
			\] 
		\end{itemize}
		
		\item[(b)]
		For the preferential attachment kernel and $-\frac 12< \eta<0$ the upper metastable density satisfies:
		\[
		\rho^+(\lambda)\le 
		\left\{
		\begin{array}{ll}
		C_{\ref{supermartingale_upperbound}}  \lambda^{\frac {3\tau-4-\eta}{4-\eta-\tau}} &\mbox{if }2-\eta\le \tau \le \frac 8 3+\frac \eta 3,\\
		C_{\ref{supermartingale_upperbound}}  \lambda^{\frac {3\tau-5-\eta}{1-\eta}} &\mbox{if }\tau\ge \frac 8 3+\frac \eta 3.
		\end{array}
		\right.!
		\]
	\end{itemize}
\end{proposition}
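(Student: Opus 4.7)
The proposition is obtained by specialising the two general upper-bound theorems (\ref{teoupper_vertex_improved} and \ref{teoupper_optimal}), which provide parametric estimates on $\rho^+(\lambda)$ in terms of a free star-threshold parameter $a\in(0,1)$. The key observation is that for each parameter regime listed in the statement, the lower-bound analysis of Section \ref{sec_3.4} already identifies the dominant survival strategy and the critical value $a=a(\lambda)$ at which the corresponding condition in Theorem \ref{theoslow} (or Proposition \ref{teolower}) becomes tight. Substituting this critical $a$ into the upper bound $\rho^+(\lambda)\le C\lambda a^{1-\gamma}$ delivered by the general theorem, and simplifying via $\gamma=1/(\tau-1)$, should yield the claimed exponents in each case, so that upper and lower bounds match.

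\textbf{Case distribution.} For part (a)(i), when $\tau>4-\eta$ none of the conditions of Theorem \ref{theoslow} or Proposition \ref{teolower} can be satisfied for any $a\in(0,1)$, so Theorem \ref{teoupper_vertex_improved} applied uniformly in $a$ gives a polynomial bound on the expected extinction time, i.e.\ fast extinction. For (a)(ii), non-depleted delayed direct spreading dominates with critical $a_{\mathrm{dds}}=\lambda^{3/(3\gamma-1-\gamma\eta)+o(1)}$, so $\lambda a_{\mathrm{dds}}^{1-\gamma}=\lambda^{(2\tau-2-\eta)/(4-\eta-\tau)+o(1)}$. For (a)(iii) the depleted variant dominates with $a_{\mathrm{ddds}}=\lambda^{2/(3\gamma-1)+o(1)}$, producing exponent $\tau/(4-\tau)$. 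For (a)(iv) delayed indirect spreading dominates with $a_{\mathrm{dis}}=\lambda^{4/(3\gamma-1-\gamma\eta)+o(1)}$, producing $(3\tau-4-\eta)/(4-\tau-\eta)$. Part (b) is handled identically using the PA-kernel critical thresholds from Section \ref{paoptimal}; the two sub-regimes correspond to the crossover at $\tau=\frac{8}{3}+\frac{\eta}{3}$ where delayed direct spreading overtakes delayed indirect spreading. In each regime I would verify that the sub-regime constraints ensure $a(\lambda)$ lies in the hypothesis range of the selected upper-bound theorem.

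\textbf{Main obstacle.} The substantive work is not this substitution but the derivation of Theorems \ref{teoupper_vertex_improved} and \ref{teoupper_optimal} themselves, deferred to Section \ref{sec_upper_new}. Theorem \ref{teoupper_vertex_improved} refines the supermartingale argument of \cite{JLM19} by a finer partition of vertex classes and handles (i), (ii), (iv) together with the fast-evolution parts of (b). The deep difficulty is the depleted regime (a)(iii) and its PA counterpart: when $\lambda\gg\kappa(a)$ the subnetwork of stars is quasi-static on the infection timescale, breaking the near-independence that underpins the fast-evolution supermartingale. Theorem \ref{teoupper_optimal} resolves this with its hybrid global/local technique by constructing a weighted supermartingale whose drift incorporates a local survival estimate on each star's neighbourhood, correctly capturing the depleted effective spreading rate; I expect this to be the principal technical innovation carrying the proof.
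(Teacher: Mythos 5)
There is a genuine gap: you have inverted the logic of how the general theorems are applied. Theorems~\ref{teoupper_vertex_improved} and~\ref{teoupper_optimal} do not ``deliver'' a bound $\rho^+(\lambda)\le C\lambda a^{1-\gamma}$ for any $a$; that expression is the form of the \emph{lower} bound in Theorem~\ref{theoslow} and Proposition~\ref{teolower}. What the upper-bound theorems deliver is $\rho^+(\lambda)\lesssim a+\frac{1}{s(a)}\int_a^1 s(y)\,\mathrm dy$, \emph{conditional} on exhibiting a non-increasing score function $s$ satisfying the master inequalities \eqref{IMI_quick}--\eqref{IMI_slow}, respectively \eqref{OMIweak}--\eqref{OMIstrongslow} together with (H1) and (H2). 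The substantive content of the paper's proof of Proposition~\ref{supermartingale_upperbound} is precisely this construction: choosing $s(x)=T^{\rm loc}(x)x^{-\gamma}$ (factor kernel, Theorem~\ref{teoupper_vertex_improved}), a monomial $f(x)=s(x)/T^{\rm loc}(x)=x^\alpha$ with an optimally tuned $\alpha$ (preferential attachment), or the three-piece score with the auxiliary parameter $\rho$ and the extra term $\lambda^{1-\alpha}a_{\rm str}^{-\gamma}T^{\rm loc}(x)$ needed to rescue (H2) when $\tau\ge 4+\eta$ (Theorem~\ref{teoupper_optimal}); the admissible $a$ is then the smallest one making the resulting requests (e.g.\ $D_a\le 1$, or $A_1,A_2,A_3\lesssim 1$) hold. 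That this $a$ coincides in order with the critical $a$ of the lower-bound strategies is the \emph{conclusion} one is aiming for, not an input: the two analyses optimise in opposite directions (largest $a$ for which stars survive versus smallest $a$ for which the process restricted above level $a$ is subcritical), and none of the verification work — in particular the choice of the free exponent $\alpha$, the case analysis of which request is binding, and the hypotheses (H1)/(H2) — appears in your proposal. The same objection applies to (a)(i): fast extinction is not deduced from the failure of the survival conditions but from checking the master inequality with $a=0$ and hypothesis $(H)_\delta$.

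Two further misattributions: part (b) for the preferential attachment kernel is proved entirely with Theorem~\ref{teoupper_vertex_improved} (checking \eqref{IMI_quick} for all $x$), not with Theorem~\ref{teoupper_optimal}; and the reason Theorem~\ref{teoupper_optimal} is needed for (a)(iii)--(iv) is not depletion — which is already built into Theorem~\ref{teoupper_vertex_improved} via \eqref{IMI_slow} — but the fact that $T^{\rm loc}$ badly overestimates the local survival capability of \emph{weak} vertices of degree below $\lambda^{-2}$, which is why the hybrid argument treats those vertices locally as on a static tree. Finally, note that your exponent $\frac{3\tau-4-\eta}{4-\tau-\eta}$ for (a)(iv) is the one actually derived in the paper (the $\frac{3\tau-2-\eta}{4-\tau-\eta}$ in the proposition statement appears to be a typo), so that discrepancy is not held against you.
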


\begin{proposition} 
	\label{supermartingale_upperbound2}
	Suppose $\eta\ge 1/2$ and $\tau>3$. Then for the factor or preferential attachment kernel, there is ultra-fast extinction.
\end{proposition}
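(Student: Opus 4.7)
The plan is to deduce ultra-fast extinction from the refined supermartingale technique stated in Theorem~\ref{teoupper_vertex_improved}, which is applicable since $\eta\ge\tfrac12\ge 0$ places us in the fast evolution regime. The idea is to apply this theorem with a weight function tuned so that the resulting supermartingale decays at a rate $c>0$ independent of~$N$, giving extinction in time $O(\log N)$ and hence subpolynomially in $N$, which is exactly what ultra-fast extinction requires.

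More concretely, I would introduce the weighted occupancy
\[
M_t \;=\; \sum_{i=1}^{N} w_i\, X_t(i),
\qquad w_i \;=\; (i/N)^{-\alpha},
\]
with exponent $\alpha\in(\gamma,1-\gamma)$, an interval which is non-empty precisely because $\tau>3$ forces $\gamma<\tfrac12$. For both the factor and the preferential attachment kernel such an $\alpha$ makes $\int_0^1 p(x,y)\,y^{-\alpha}\,\mathrm dy$ bounded by a constant multiple of $x^{-\alpha}$. A generator computation of the type carried out in Theorem~\ref{teoupper_vertex_improved} should then yield, for every $\lambda$ smaller than some $\lambda_c>0$ depending on $\eta$, $\tau$ and the kernel, the drift inequality
\[
\tfrac{\mathrm d}{\mathrm dt}\,\E\bigl[M_t\bigr] \;\le\; -c\,\E\bigl[M_t\bigr],
\]
so that Gronwall's lemma gives $\E[M_t]\le e^{-ct}\,M_0\le CN e^{-ct}$. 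Since $\min_i w_i=1$, Markov's inequality furnishes $\P(T_{\rm ext}>t)\le CN e^{-ct}$, and choosing $t=A\log N$ for $A>1/c$ large, then summing over successive windows of this length, yields $\E[T_{\rm ext}]=O(\log N)$, proving ultra-fast extinction.

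The main obstacle is establishing the uniformly negative drift. The hypothesis $\eta\ge\tfrac12$ enters essentially here: as explained in the heuristic discussion of the introduction, the local survival time $\lambda^2 k^{1-2\eta}$ on a vertex of expected degree $k$ stops growing with $k$ once $1-2\eta\le 0$, so there is no degree-dependent amplification that could offset the mean-field decay. Combined with $\tau>3$, which for $\lambda<\lambda_c$ rules out all four spreading strategies (quick and delayed, direct and indirect), this makes a uniformly negative drift possible. The concrete drift estimate must still be carried out separately for each kernel: for the factor kernel it reduces to a single power-law integral, while for the preferential attachment kernel the integral must be split across the diagonal $x=y$ where the kernel changes behaviour, each piece contributing a term comparable to $x^{-\alpha}$ precisely when $\alpha\in(\gamma,1-\gamma)$. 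The boundary cases $\eta=\tfrac12$ and $\tau\downarrow 3$ will require the most delicate balancing of exponents.
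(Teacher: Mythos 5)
Your overall route is the paper's: Proposition~\ref{supermartingale_upperbound2} is obtained by applying Theorem~\ref{teoupper_vertex_improved} with $a=0$ and a power-law score, and your verification that $\alpha\in[\gamma,1-\gamma)$ is non-empty for $\tau>3$ and makes $\int_0^1 p(x,y)\,y^{-\alpha}\,\mathrm dy\lesssim x^{-\alpha}$ for both kernels is correct (for the factor kernel one may simply take $\alpha=\gamma$). The role you assign to $\eta\ge\tfrac12$ is also the right one: it is exactly the condition under which $\pi(x)/\kappa(x)\lesssim x^{-\gamma+2\gamma\eta}$ is bounded, hence $T^{\rm loc}$ is bounded, which is hypothesis $(H)_0$ of the theorem; part (1)(c) then gives $\E[T_{\rm ext}]=O(\log N)$, i.e.\ ultra-fast extinction.

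The gap is in the middle of your argument, where you replace the theorem's score by the naive weighted occupancy $M_t=\sum_i w_i X_t(i)$ and assert that ``a generator computation'' yields $\frac{\mathrm d}{\mathrm dt}\E[M_t]\le -c\,\E[M_t]$. For the joint process $(\mathscr G^{\ssup N}_t,X_t)$ this computation does not close: the new-infection term is $\lambda\,\E\bigl[\sum_{\{i,j\}\in\mathscr G_t} w_j X_t(i)(1-X_t(j))\bigr]$, and you cannot replace the indicator of $\{i,j\}\in\mathscr G_t$ by $p_{ij}$, because conditionally on the infection history the graph is not stationary --- an infected vertex is biased towards having many edges to other infected vertices, since the infection has already used edges that persist until the next update. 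Handling exactly this dependence is the content of the clogged/clear edge coupling, the saturation states, and the extra score terms $t_i$, $R_i(t)$ and the edge scores in Section~\ref{sec:Thm3}; the functional whose drift is uniformly negative is $M_t=\sum_i m_i(t)+\sum_{(i,j)\text{ present}}(s_j+t_j+u_j)$, not $\sum_i w_iX_t(i)$. The clean way to complete your proof is therefore not to redo the drift estimate but to check the two hypotheses of Theorem~\ref{teoupper_vertex_improved}(1): inequality~\eqref{IMI_quick} with $a=0$ and $s(x)=x^{-\alpha}$ (which reduces, via the boundedness of $T^{\rm loc}$ and your integral estimate, to $7C\lambda\le 1$ for small $\lambda$), and $(H)_0$; the $O(\log N)$ bound on $\E[T_{\rm ext}]$ is then the statement of part (1)(c), proved there via the supermartingale $\log(1+M_{t\wedge T_{\rm ext}})+r(t\wedge T_{\rm ext})/2$ rather than via Gronwall applied to $\E[M_t]$ directly.
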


\smallskip

As in~\cite[Theorem~2]{JLM19} or \cite[Theorem~5]{JLM22} the basic approach for the proof of Propositions~\ref{supermartingale_upperbound} and~\ref{supermartingale_upperbound2} is to use a function $s\colon(0,1]\to[1,\infty)$ associated with the model and a functional inequality satisfied by~$s$ to define a supermartingale, which in first approximation associates the score $s(i/N)$ to each infected vertex $i$. The optional stopping theorem gives an upper bound on the time when the  total score hits zero. The main difficulties in this approach are:
\begin{itemize}[leftmargin=*]
	\item \emph{Choosing an appropriate functional inequality.} There is no ready-made recipe, although the inequalities we get,  namely~\eqref{IMI_quick}, \eqref{IMI_slow}, and~\eqref{OMIweak}--\eqref{OMIstrongslow}, are directly inspired by understanding of the local survival and spreading mechanisms studied in Section~\ref{sec:lower_bounds}.\smallskip
	\item \emph{Defining the supermartingale.} This is hard as the underlying Markov process of network evolution and infection has a large and complex state space with a sophisticated transition structure depending on the network geometry. To deal with this we introduce an exploration process which only partially reveals the network structure depending on the infection paths. Again, we are guided by our understanding of the survival mechanisms. 
\end{itemize} \medskip

Theorem~\ref{teoupper_vertex_improved} below %, which can be seen as a generalisation of Theorem 2 of \cite{JLM19},
allows us to prove Proposition~\ref{supermartingale_upperbound2} and to treat the case of the factor kernel in Proposition~\ref{supermartingale_upperbound} (a),(i) and (ii), and of the preferential attachment kernel in Proposition~\ref{supermartingale_upperbound} (b).
Theorem~\ref{teoupper_optimal} is considerably more difficult to prove as it introduces a new `hybrid' approach where we treat low-degree vertices locally as in a static network and include this treatment in our global supermartingale approach. The combination of local and global ideas is the main technical innovation of this paper. Theorem~\ref{teoupper_optimal} allows us to treat the case of the factor kernel in Proposition~\ref{supermartingale_upperbound} (a)-(iii) and (iv). \pagebreak[3]

%Simarly as in \cite{JLM19}, we first prove general theorems obtained by the introduction of appropriate supermartingales, and then apply them to the case of the factor kernel and the preferential attachment kernel. Theorem~\ref{teoupper_vertex} below can actually be seen as a generalisation of Theorem 2 of \cite{JLM19}, and allows to treat 

\begin{definition}\label{def: piandTloc}
	Define $\pi\colon (0,1)\to (0,\infty)$ and %for $\lambda>0$,
	 the time-scale function $T^{\rm{loc}}\colon (0,1)\rightarrow [1,\infty)$~as 
	\begin{eqnarray}
	\pi(x)&=& \int_0^1 \frac {p(x,y)} {\kappa(x)+\kappa(y)} \, \mathrm d y,\\
	T^{\rm{loc}}(x)&=&\max\left\{8,\frac 8 {3\kappa(x)},16 \lambda^2 \frac {\pi(x)}{\kappa(x)}\right\}.
	\end{eqnarray}
\end{definition}
%\margEJ{We had $ T^{\rm{loc}}=\Tloc(x)= \max\left\{4,\frac 1 {\kappa(x)},16 \lambda^2 \frac {\pi(x)}{\kappa(x)}\right\}$ in Theorem~\ref{teoupper_vertex}.}

For given $j\in \{1,\ldots, N\}$, the time $T^{\rm{loc}}_j:=T^{\rm{loc}}(j/N)$ should be interpreted as an upper bound for the time during which the infection can survive locally around $j$ using only infections of its neighbours and direct reinfection of $j$ from these neighbours, 
%\margEJ{More discussions before the modified statements of Theorems~3 and~4, to better describe the quick/slow vertices (Th3) or strong quick/strong slow/weak vertices (Th4) and in particular define $\astr$, $\asl$ and $\assl$.}
but the precise definition of $T^{\rm{loc}}$ comes from the statements and proofs of Theorems~\ref{teoupper_vertex_improved} and~\ref{teoupper_optimal} below.
The definition of $T^{\rm{loc}}$ is of course to be compared with the definition of $T_{\rm{loc}}$ by~\eqref{deftloc}. In many cases we have $\pi(x)\asymp p(x,1)\asymp x^{-\gamma}$ all of the same order, so for strong vertices we will typically have \smash{$T^{\rm{loc}}_j\asymp \lambda^2 (j/N)^{-\gamma} \kappa(j/N)^{-1}\asymp T_{\rm{loc}}(j/N, \lambda)$} all of the same order, which is an indication that  in these cases $T^{\rm{loc}}$ is an accurate upper bound. 
%However, for weak vertices with typical degree smaller than $\lambda^{-2}$, we might still have $T^{\rm{loc}}$ large, which does not reflect well the fact that reinfections should be rare.
\medskip

In Theorem~\ref{teoupper_vertex_improved} below, we will say a vertex $j$ is \emph{quick} if it updates at rate $\kappa(j/N)\ge \lambda$ and \emph{slow} if it updates at rate $\kappa(j/N)< \lambda$. When $\eta\ge 0$, we assume for simplicity $\lambda\le \kappa_0$ so all vertices are quick. When $\eta<0$, a vertex is slow if $j/N< \asl$, where $\asl$ is defined as
\begin{equation}\label{def_asl}
	\asl=\left(\frac \lambda {\kappa_0}\right)^{-\frac 1 {\gamma \eta}}.
\end{equation}

\begin{theorem}
	\label{teoupper_vertex_improved}
	Let $\eta\in\R$ and suppose there exists some $a=a(\lambda)\ge 0$ and some non-increasing function $s:[a,1]\rightarrow [1,\infty)$, or $s:(0,1]\rightarrow [1,\infty)$ if $a=0$, such that for all $x>a$, we have
	\begin{equation}\label{IMI_quick}
		 7 T^{\rm{loc}}(x) \int_0^1 \lambda p(x,y) s(y\vee a) \, \mathrm d y\le s(x) %\qquad \text{if }\eta\ge 0 \text{ or }x\ge \asl,
	\end{equation}
 	if $\eta\ge 0$ or $x\ge \asl$, or
	\begin{equation}
		\label{IMI_slow}
		7\ T^{\rm{loc}}(x) \left(\int_0^{\asl} (\kappa(x) \vee \kappa(y)) p(x,y) s(y\vee a) \, \mathrm d y+ \int_{\asl}^1 \lambda p(x,y) s(y) \, \mathrm d y\right)\le s(x)
	\end{equation}
	if $\eta<0$ and $x< \asl$.
%	\begin{equation}
%	\label{IMI}
%	7\ T^{\rm{loc}}(x) \int_0^1 \left(\lambda \wedge (\kappa(x) \vee \kappa(y)\right) \ p(x,y) s(y\vee a) \, \mathrm d y\le s(x).
%	\end{equation}

	%	\begin{eqnarray}
	%	\label{IMI}
	%	7\ \Tloc(x) \int_0^1 \left(\lambda \wedge (\kappa(x) \vee \kappa(y)\right) \ p(x,y) s(y\vee a) \mathrm d y\le s(x) &&\forall x \in (a,1], \\
	%	\label{cond_scarcity}
	% \int_0^{(\lambda/\kappa_0)^{-1/\gamma\eta}} p(x,y) dy\le \frac 2 3 \lambda \pi(x) \qquad  &&\forall x \in (a,b].
	%	\end{eqnarray}
	%Then,
	\begin{enumerate}
		\item If $a=0$, then the expected extinction time is at most linear in $N$ and in particular there is fast extinction. More precisely,  writing $(H)_\delta$ for the hypothesis that $\Tloc s^{-\delta}$ is a bounded function, we have
		\begin{enumerate}
			\item $(H)_1$ is a consequence of \eqref{IMI_quick} and \eqref{IMI_slow}.\smallskip
			\item If $(H)_\delta$ is satisfied for some $\delta \in (0,1]$, then there is $\omega=\omega(\lambda, \delta)$ such that for all $N$,
			\begin{equation}\label{resextime}
			\E\big[T_{\rm ext}\big]\;\leq\; \omega N^{\delta}.
			\end{equation}
			Combining with (a) we have that $\E[T_{\rm ext}]$ is always at most linear in $N$.\smallskip
			\item $(H)_0$ is satisfied exactly when $\eta\ge 1/2$, and in that case there is ultra-fast extinction. More precisely, there is $\omega=\omega(\lambda, \delta)$ such that for all $N$,
			\begin{equation}\label{logboundext}
			\E\big[T_{\rm ext}\big]\;\leq\; \omega \log N. \notag
			\end{equation}
			%\smallskip
		\end{enumerate}
		\item If $a> 0$ then there exists $\omega=\omega(a,\lambda)>0$ 
		such that, for all  large $N$  and all $t\ge0$,
		\begin{equation}\label{ineqmetastablethm3}
		I_N(t)\;\leq\;a+\frac{7}{6s(a)}\int_a^1 s(y)\,  \mathrm dy + \frac \omega t+\frac 1 N.
		\end{equation}
		In particular, if there is metastability, then the upper metastable density satisfies
		\begin{equation}
		\label{upperdens3}
		\rho^+(\lambda) \le a(\lambda)+\frac{7}{6s(a(\lambda))}\int_{a(\lambda)}^1 s(y)\, \mathrm  dy.
		\end{equation}
	\end{enumerate}
\end{theorem}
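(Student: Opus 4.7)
The overall plan is to construct a nonnegative supermartingale $M_t$ of the form
\begin{equation*}
M_t \;=\; \sum_{i\,:\,X_t(i)=1} s\!\left(\tfrac{i}{N}\vee a\right) \;+\; R_t,
\end{equation*}
with $R_t$ a compensator, and extract both the extinction time bounds and the density bound via optional stopping. The score $s$ is chosen precisely so that \eqref{IMI_quick}--\eqref{IMI_slow} bound, by a factor $\tfrac17$, the expected total future contribution of a newly infected vertex, giving the supermartingale a strict negative drift. Because the infection at a vertex of class $x$ can persist locally for a long time by reinfection from its neighborhood, $M_t$ cannot be built directly on the raw infection process; instead one batches time into local epochs of length $\Tloc(x)$, during which the local survival of the central vertex is handled once and for all through the factor $\Tloc$ sitting outside the integral.

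The key structural step is an exploration process partitioning the dynamics of each vertex $j$ into successive epochs of length $\Tloc_j := \Tloc(j/N)$, each beginning when $j$ becomes freshly infected. Edges not traversed by an actual infection are never revealed and remain at the stationary law, which provides the independence needed to control $\E R_t$. Within one epoch the local reinfection of $j$ is absorbed into the factor $\Tloc_j$, so that the expected sum of outgoing scores emitted by $j$ is exactly the left-hand side of \eqref{IMI_quick} in the quick case, or of \eqref{IMI_slow} in the slow case (when $\eta<0$ and $j/N<\asl$). The slow case is the delicate one: the effective outgoing infection rate toward a neighbor of class $y$ is capped by $\kappa(j/N)\vee\kappa(y)$ rather than $\lambda$, which is the depletion effect, and this forces the exploration to split outgoing events into slow and quick pieces. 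Applying the strong Markov property at each epoch opening and using \eqref{IMI_quick}--\eqref{IMI_slow}, each new epoch at class $x$ injects at most $\tfrac17 s(x)$ into $R_t$ while closing an epoch removes at most $s(x)$ from the score sum, delivering the supermartingale inequality.

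Once $M_t$ is in hand, part (2) follows from $s$ being nonincreasing: $N I_N(t) \le |\{i : i\le aN\}| + s(a)^{-1}\E[M_t]$, and with $\E M_0 \le \tfrac{7N}{6}\int_a^1 s(y)\,\mathrm dy$ (the $\tfrac76$ absorbing the initial compensator), an averaging of $\E[M_u]$ over $u\in[0,t]$ yields \eqref{ineqmetastablethm3}, the $\omega/t$ term arising from supermartingale decay and the $1/N$ term from rounding. For part (1), case (a) comes from specializing \eqref{IMI_quick} to a small neighborhood of $y=x$ with $s\ge 1$, forcing $\Tloc \le Cs$; case (c) follows because by Definition~\ref{def: piandTloc} the function $\Tloc$ is globally bounded exactly when $\lambda^{2}\pi(x)/\kappa(x)$ is, which via $\pi(x)\asymp x^{-\gamma}$ and $\kappa(x)\asymp x^{-\gamma\eta}$ is equivalent to $\eta\ge\tfrac12$; in that regime the induced branching has depth $O(\log N)$. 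Case (b) interpolates via a Lyapunov argument with weight $M_t^{1-\delta}$, producing $\E[\Tex] \le \omega N^{\delta}$. The main obstacle is making $R_t$ precise enough that the supermartingale property holds pathwise while correctly handling multiple infection attempts on a single vertex, distinguishing fresh from stale infections, and running the slow/quick dichotomy uniformly; the factor $7$ in \eqref{IMI_quick}--\eqref{IMI_slow} is sized precisely to absorb the fluctuations of these compensator contributions together with the random length of each epoch.
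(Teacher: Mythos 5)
Your high-level plan (a score-weighted supermartingale plus optional stopping, with the Lyapunov weights $M_t^{1-\delta}$ and $\log(1+M_t)$ for parts (1)(b),(c)) is the same kind of argument the paper runs, but two steps in your sketch are genuinely broken. First, your derivation of part (2): the inequality $N I_N(t)\le |\{i\le aN\}|+s(a)^{-1}\E[M_t]$ does not follow from $s$ being nonincreasing — it goes the wrong way. In your $M_t$ an infected non-star $i$ contributes $s(i/N\vee a)\le s(a)$ (typically much smaller, e.g.\ $s(1)$ may be of order $1$ while $s(a)$ is polynomially large in $\lambda^{-1}$), so monotonicity only yields $N I_N(t)\le aN+\E[M_t]$, which loses the factor $1/s(a)$ and gives a bound far too weak to produce \eqref{ineqmetastablethm3}. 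The factor $7/(6s(a))$ cannot come from comparing $M_t$ to the number of infected vertices at time $t$; it comes from self-duality, see \eqref{ineq:I_N_byduality}: one runs the process (and the supermartingale) from a \emph{single} infected non-star vertex, stops it at the first time a star is infected, notes that at that time the score is at least $s(a)$, and applies optional stopping to get a hitting probability of order $s_i/s(a)$; the $\omega/t$ term then comes from the strictly negative drift before that hitting time. Your proposal never introduces the star-hitting time or the dual single-vertex start, and without them the stated bound is unobtainable.

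Second, the core of your construction — "each epoch at class $x$ injects at most $\tfrac17 s(x)$ into $R_t$, reinfection being absorbed into the factor $\Tloc(x)$" — is asserted, not derived, and it is exactly where \eqref{IMI_quick}--\eqref{IMI_slow} are insufficient on their own. Those inequalities only budget transmissions at the reduced rate $\lambda p(x,y)$, i.e.\ along edges still at stationary law; but once an edge has been used it is revealed and transmits at rate $\lambda$, and it is precisely these repeated uses (reinfection of a vertex by its infected neighbours, back-and-forth infections between two powerful vertices) that sustain local survival and are not charged by your master inequality. Your compensator $R_t$ is never specified in a way that pays for them: who pays when $j$ is reinfected along a revealed edge just after its epoch closes, or when an infected vertex re-infects an already-visited neighbour? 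The paper's proof exists to solve this accounting: healthy vertices carry anticipatory scores $2R_i(t)(s_i+t_i)$, vertices with $R_i\ge 1/4$ or too many present slow edges are \emph{saturated} and forbidden to recover (so the cheap-reinfection loop is never re-charged), slow edges are split into directed versions and become saturated after one use, which is what rigorously implements the depletion cap $\kappa(x)\vee\kappa(y)$ in \eqref{IMI_slow}, and the negative drift of saturated vertices comes from their update rate ($\kappa_i t_i\sim s_i/\Tloc_i$) rather than from any claim that the local survival time is bounded by $\Tloc$ in expectation. Without some substitute for this machinery your epoch/strong-Markov step is circular. A smaller slip: in (1)(c) you claim $\pi(x)\asymp x^{-\gamma}$, which together with $\kappa(x)\asymp x^{-\gamma\eta}$ would put the boundedness threshold of $\lambda^2\pi/\kappa$ at $\eta\ge1$; the correct estimate is $\pi(x)/\kappa(x)\le\kappa(x)^{-2}\int_0^1 p(x,y)\,\mathrm dy\lesssim x^{-\gamma+2\gamma\eta}$, which is what gives $\eta\ge\tfrac12$.
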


%\margEJ{Most of the paragraph here was placed after Theorem 4 but I replaced it inbetween the two theorems. I also discuss degradation, regeneration and depletion more than before.}
Note that Theorem~2 in~\cite{JLM19} is similar to Theorem~\ref{teoupper_vertex_improved} here, but it holds only for $\eta\ge 0$, and always requires a condition similar to~\eqref{IMI_quick} (with only a different multiplicative constant in the left-hand side of the inequality), irrespectively of whether the vertex is quick or slow. Inequality~\eqref{IMI_quick} would actually be the natural inequality to look at if we were considering a model of infection without any underlying graph, where an infected vertex $i$ could infect any other vertex $j$ with the `temporal mean rate' $\lambda p_{ij}\le \lambda p(i/N,j/N)/N$, and could recover at rate $1/(7 T^{\rm{loc}}_i)$ instead of one. It is remarkable that our model can in this sense be upper bounded by a model without any underlying graph, or more precisely where the only manifestation of the underlying geometry lies in the introduction of this `local survival time' $T^{\rm{loc}}_i$ at each vertex. The fact that Theorem~2 in~\cite{JLM19} only requires a condition like~\eqref{IMI_quick} is also an indication that this theorem takes into account adequately the degradation and regeneration effects discussed in the introduction. In a sense, our Theorem~\ref{teoupper_vertex_improved} generalizes Theorem~2 of~\cite{JLM19} with the following additional ingredients:
	\begin{enumerate}
		\item[(i)] The local time $T^{\rm{loc}}_i$ is appropriately extended to negative values of $\eta$ and thus takes into account the weaker degradation effect for slow evolutions.
		\item[(ii)] The depletion effect is now taken into account by requiring only Inequality~\eqref{IMI_slow} to be satisfied for slow vertices, instead of Inequality~\eqref{IMI_quick}.
%		Inequality~\eqref{MI} is replaced by~\eqref{IMI} (or~\eqref{OMIstrong}), an improvement which takes into account the depletion effect encountered in the lower bounds. 
%		\item[(iii)] Inequality~\eqref{OMIweak} provides a special treatment for the weaker vertices with average degree below $1/(10\lambda^2)$, indicating that for these vertices the local survival effect can be neglected and $T^{\rm{loc}}_i$ be replaced by one. This special treatment is largely inspired by the idea of treating those vertices as much as possible as in a static network.
	\end{enumerate}
%The reason why we will still require another theorem is that
Theorem~\ref{teoupper_vertex_improved} is quite strong but still has a weakness. For weak vertices with typical degree smaller than $\lambda^{-2}$, we might still have $T^{\rm{loc}}$ large, which does not reflect well the fact that reinfections should be rare and the local survival effect should be nonexistent.\smallskip

Theorem~\ref{teoupper_optimal} below provides a special treatment for these weak vertices. It requires to first distinguish weak and strong vertices, %according to whether their typical degree if more or less than $\lambda^{-2}$, 
and then distinguish strong quick and strong slow vertices. More precisely, we define
\begin{align}\label{def_astr}
	\astr&:=\sup\Big\{x\in (0,1],\colon
	\int_0^1 p(x,y) \, \mathrm dy > \tfrac1{10\lambda^2}\Big\},\\
	\label{def_assl}
	\assl&:= \astr \wedge \asl,
\end{align}
and say a vertex $j$ is \emph{strong slow} if $j/N<\assl$, \emph{strong quick} if $\assl\le j/N<\astr$ and \emph{weak} if $\astr\le j/N$. Note there is no strong quick vertex if $\assl=\astr$.

\begin{theorem}
	\label{teoupper_optimal}
%	Define
%	\begin{equation}\label{def_astr}
%	\astr:=\sup\Big\{x\in (0,1],\colon
%	\int_0^1 p(x,y) \, \mathrm dy > \tfrac1{10\lambda^2}\Big\}
%	\end{equation}
%	the threshold for strong vertices.
	There exists a large constant $\CMI$ such that the following holds: {Let $\eta\le 0$ and} suppose 
	there exists some $a=a(\lambda)> 0$ and some non-increasing function $s:[a,1]\rightarrow [1,\infty)$ such that the following master inequality is satisfied: for all $x>a$,
	 \begin{equation}
	\label{OMIweak}
	\CMI\ \int_0^1 \lambda p(x,y) s(y\vee a) \, \mathrm d y\le s(x) 
	%\quad\text{ for all } x>\astr \vee a, \qquad  
	\end{equation}
	if $x\ge \astr$, or
	\begin{equation}
			\label{OMIstrongquick}
			\CMI\ T^{\rm{loc}}(x) \int_0^1 \lambda\ p(x,y) s(y\vee a) \, \mathrm d y\le s(x)
	\end{equation}
	if $\assl\le x<\astr$, or
	\begin{equation}
		\label{OMIstrongslow}
		\CMI\ T^{\rm{loc}}(x) \left(\int_0^{\assl} (\kappa(x) \vee \kappa(y)) p(x,y) s(y\vee a) \, \mathrm d y+ \int_{\assl}^1 \lambda p(x,y) s(y) \, \mathrm d y\right)\le s(x)
	\end{equation}
	if $x<\assl$. Suppose also the following technical assumptions are satisfied:
%	\begin{equation}
%	\label{OMIstrong}
%	\CMI\ T^{\rm{loc}}(x) \int_0^1 \left(\lambda \wedge (\kappa(x) \vee \kappa(y)\right) \ p(x,y) s(y\vee a) \, \mathrm d y\le s(x)\quad\text{ for all } a<x\leq \astr. \qquad  
%	\end{equation}
%	are satisfied, and also the technical assumptions
	\begin{enumerate}
		\item[(H1)] 
		%\begin{equation}
		\null \hspace{10mm} $\displaystyle
		\label{condTechnical_optimal}
		s(\astr) \int_{y<\astr} p(x,y)  \mathrm dy \le s(x) \quad\text{ for all $x>\astr$ and }
		$
		%\end{equation}
		\item[(H2)] there exists some $\alpha>0$ and $c_{\eqref{cond:H2}}>0$ not depending on $\lambda$, such that
		\begin{align} \label{cond:H2}
		s(\astr)&\le c_{\eqref{cond:H2}} \lambda^{-3+\alpha}, \\
		s(\astr)&\le c_{\eqref{cond:H2}} \lambda^{-1+\alpha} \inf \big\{\tfrac{s(x)}{T^{\rm{loc}}(x)} \colon x<\astr\big\}.
		\end{align}
	\end{enumerate}
	%are satisfied.
	Suppose finally the functions $a^{-1}$ and $s(a)$ have polynomial growth 
	%\margEJ{Additional hypotheses necessary for Corollary~\eqref{LDBoundsFixedTime}... :-(} 
	at 0, in the sense that they are bounded by \smash{$\lambda^{-\CMI'}$} when $\lambda\to 0$ for some $\CMI'>0$. %such that $ are bounded (below? above?) by $\lambda^{-\CMI'}$ when $\lambda\to 0$ for some $\CMI'>0$. 
	Then, there exists $\omega=\omega(\lambda)>0$ and $\omega'= \omega'(\lambda)>0$ such that, for all $N$ and all $t\ge0$, we have, for small $\lambda$,%\margEJ{There might be a way to argue that the first term $a$ is always negligible compared to the second term $\frac 1 {s(a)} \int s(y)dy$ ?}
	\begin{equation}
	\label{boundIn}
	I_N(t)\;\leq\;2a+ \frac{4}{3s(a)}\int_a^1 s(y)\, \mathrm dy +\frac \omega t+\frac {\omega'} N.
	\end{equation}
	If there is metastability, we get that the upper metastable density satisfies, for small $\lambda>0$,
	\begin{equation}
	\label{upperdens4}
	\rho^+(\lambda) \le 2 a(\lambda) +\frac{4}{3s(a(\lambda))}\int_{a(\lambda)}^1 s(y)\, \mathrm dy.
	\end{equation}
\end{theorem}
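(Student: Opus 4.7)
The plan is to extend the global supermartingale technique of Theorem~\ref{teoupper_vertex_improved} by a hybrid scheme that treats weak vertices (those with $x\ge\astr$) through a local, static-style analysis as in Proposition~\ref{static_upperbound}, while strong vertices are handled by the global supermartingale. By definition of $\astr$, each weak vertex has expected degree at most $1/(10\lambda^2)$, so a contact process confined to its neighbourhood is subcritical and dies out quickly when deprived of external input. I would first set up an exploration of the joint network-infection process in which edges are revealed only when infections attempt to flow along them; this yields the independence near weak vertices required for the subcritical comparison to apply.

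With the exploration at hand, define the capped score $\bar{s}(x)=s(x\wedge\astr)$ and the process
\[
\Phi_t \;=\; \sum_{i\colon X_t(i)=1}\bar{s}(i/N).
\]
The capping reflects the absence of a local survival clock at weak vertices: without a $T^{\rm{loc}}$ boost one cannot credit them with longer local lifetimes. I would construct a predictable compensator $A_t$ so that $\Phi_t+A_t$ is a supermartingale. For an infected strong slow vertex $j$, the master inequality~\eqref{OMIstrongslow}, with its depletion correction $(\kappa(x)\vee\kappa(y))p(x,y)$ on slow targets, ensures that the $T^{\rm{loc}}(j/N)$-scaled expected rate of outgoing-infection score is dominated by $\bar{s}(j/N)$; inequality~\eqref{OMIstrongquick} plays the same role for strong quick vertices; and~\eqref{OMIweak} does so for weak vertices, this time without any $T^{\rm{loc}}$ factor. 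Assumption~(H1) is invoked whenever infections cross from a strong source into the weak region, absorbing the weak-target contribution $s(\astr)\int_{y<\astr}p(x,y)\,\mathrm dy$ into $s(x)$.

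The main obstacle will be closing the supermartingale argument at the interface between the two regimes without double-counting or destroying the supermartingale property globally. The auxiliary local contact process around a weak vertex can trigger several reinfections, and these must be inserted into $A_t$ consistently with the global scoring. Assumption~(H2) is the decisive ingredient: the bound $s(\astr)\le c\,\lambda^{-3+\alpha}$ means the total score of a weak cluster ignited by a single external infection is smaller by a factor $\lambda^{\alpha}$ than that of the source, while $s(\astr)\le c\,\lambda^{-1+\alpha}\inf_{x<\astr}s(x)/T^{\rm{loc}}(x)$ ensures that this weak contribution is dominated by the $T^{\rm{loc}}$-boosted recovery decrement produced at strong vertices. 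Coupling the local processes around distinct weak vertices to independent subcritical branching-type comparisons then bounds their aggregate contribution to $A_t$ uniformly.

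To conclude I would apply optional stopping to $\Phi_t+A_t$ at time $t$. Using $\Phi_0 \le Nas(a)+N\int_a^1 \bar{s}(y)\,\mathrm dy$ after discarding the $\lfloor aN\rfloor$ most powerful vertices (which contribute the leading $2a$), together with the bound $\bar{s}\ge s(\astr)$ on every infected vertex with index in $(aN,N]$, and invoking the polynomial-growth hypothesis on $a^{-1}$ and $s(a)$ to absorb the discretization error between the continuous kernel $p(x,y)$ and the discrete connection probabilities $p_{ij}$ into the $\omega'/N$ term while the optional-stopping error becomes~$\omega/t$, we extract inequality~\eqref{boundIn}. The upper metastable density bound~\eqref{upperdens4} then follows by letting $t\to\infty$ along a metastable sequence.
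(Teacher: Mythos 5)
Your high-level plan (global supermartingale for strong vertices, local subcritical treatment for weak ones, exploration to gain independence) is the right idea, but the central step is missing rather than merely sketched. The flat capped score $\bar s(x)=s(x\wedge\astr)$ together with \eqref{OMIweak} only controls the \emph{first} generation of infections leaving a weak vertex; it gives no control over the vertex being reinfected by the very cluster it has ignited, which is exactly the effect that forces the $T^{\rm{loc}}$ factor at strong vertices and which a constant score cannot absorb. The phrase ``coupling the local processes around distinct weak vertices to independent subcritical branching-type comparisons'' is precisely the construction that has to be supplied. The paper does this by forcing the explored graph to be a tree, checking that weak vertices keep degree at most $1/(8\lambda^2)$, and assigning each infected weak vertex the Mountford--Valesin--Yao distance-weighted score $M_i(t)=\sum_{j}(2\lambda)^{d(i,j)}\tilde s_j$ over its weak tree component; only this functional is a supermartingale under reinfection. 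That choice in turn requires new stopping times (cycle creation, excessive weak degree, excessive connectivity of visited strong vertices) and a separate, substantial argument --- exponential moment bounds for the idealized tree scores $\bar M_j,\bar J_i,\bar K_i$, which is where (H2) actually enters --- showing these stopping events are unlikely before the score reaches $s(a)$. The reverse interaction is also unaddressed: a thriving weak cluster adjacent to a strong vertex would reinfect it repeatedly, and the paper needs the weak contribution $R_i^{\rm{weak}}$ to the saturation rule (including the long-distance saturation rule, where (H1) is used) to close the supermartingale inequality; your plan uses (H1) only to absorb a single crossing into the weak region.

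The concluding step is also misdescribed. The factor $\frac{1}{s(a)}\int_a^1 s(y)\,\mathrm dy$ does not come from bounding $\Phi_0$ for the all-infected start and discarding powerful vertices; it comes from self-duality, $I_N(t)\le a+\frac1N\sum_i\P_i(t<T_{\rm{ext}})$, followed, for each single starting vertex $i$, by optional stopping at the time the score first exceeds $s(a)$ (which in particular happens when a star is hit), giving $\P_i(T=T_{\rm{hit}})\le \frac{7s_i}{6\,s(a)}$. The $2a$ in \eqref{boundIn} is one $a$ from the star indices in the duality bound plus another $a$ from bounding $\P_i(T_{\rm{bad}}<T_{\rm{hit}}\wedge t_0)$, and \eqref{upperdens4} is obtained by evaluating at $t_0\asymp s(a)$ and using monotonicity of $I_N$, not by letting $t\to\infty$. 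As written, your argument would not produce the stated constants nor, more importantly, a valid supermartingale.
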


\pagebreak[3]

We postpone the proof of these theorems to Section~\ref{sec_upper_new}, and check here how they can be used to deduce Propositions~\ref{supermartingale_upperbound} and~\ref{supermartingale_upperbound2}. In order to use Theorems~\ref{teoupper_vertex_improved}~and~\ref{teoupper_optimal}, we have to determine a level $a$ and a function $s$ satisfying the hypotheses of the theorems and providing the required upper bounds. The way we do this is led by two complementary principles. First, a purely analytic approach, when the conditions required by Theorems~\ref{teoupper_vertex_improved}~and~\ref{teoupper_optimal} lead to optimal or natural choices. Second, the comparison of the approach with the lower bounds. In the lower bounds, we were seeking for the largest possible threshold $a$ such that the stars $\St:=\{1,\ldots,\lfloor aN\rfloor\}$ are typically infected in the metastable state. By contrast, in this upper bound section we seek for the smallest possible threshold $a$ such that the contact process shows a subcritical behaviour while infecting only vertices in $\{\lfloor aN\rfloor+1,\ldots, N\}$.  We thus expect that the $a$ we use in this section to be larger than the $a$ used in the lower bounds section, 
and our aim is to make them of the same order.\smallskip

To avoid cluttered notation, we henceforth assume $\beta=1$ in the definition of the kernels. Recall that $\tau\in (2,\infty)$ and $\gamma\in (0,1)$ are linked by~$\tau=1+1/\gamma$. 
To shorten computations, we write $f\lesssim g$ if the positive functions $f$ and $g$ of $\lambda$ satisfy that $f/g$ is bounded when $\lambda\to 0$, and similarly $f\gtrsim g$ or $f\asymp g$. 
We will use repeatedly that 
\smash{$\int_a^b y^{r-1} \mathrm dy \asymp a^{r}+b^{r}$} when the parameter $r$ is different from 0. Of course, when the sign of $r$ is known, only one of the two terms $a^{r}$ or $b^{r}$ has to be kept. For simplicity we disregard in our computations the cases involving a parameter $r=0$. These cases would induce a logarithmic factor coming from the integration of the harmonic function, but this logarithmic factor never concerns the leading term of the quantities we are estimating and can therefore be omitted.
\pagebreak[3]

%When the parameter $r$ is 0, there is an additional logarithmic factor, but this logarithmic factor never concerns the leading term in our applications of Theorem~\ref{teoupper_vertex_improved} and~Theorem~\ref{teoupper_optimal}. For simplicity we thus decided to disregard these cases in our computations. 

\subsection*{Application of Theorem~\ref{teoupper_vertex_improved} to the preferential attachment kernel.}

When applying Theorem~\ref{teoupper_vertex_improved}, 
we actually check~\eqref{IMI_quick} for all $x$ irrespectively of whether $x\ge \asl$ or $x<\asl$, so as to ease computations\footnote{Checking instead the less demanding Condition~\eqref{IMI_slow} for slow vertices would actually provide slightly better upper bounds, but only in regimes where our optimal upper bound requires Theorem~\ref{teoupper_optimal} anyway.}. 
%we actually always check~\eqref{MI}, which implies~\eqref{IMI} but is computationally simpler\footnote{Checking the less demanding Condition~\eqref{IMI} instead would provide slightly better upper bounds, but only in regimes where our optimal upper bound requires Theorem~\ref{teoupper_optimal} anyway.}. 
%We now check for Condition~\eqref{MI}.
Recall that Proposition \ref{supermartingale_upperbound} assumes $\eta \in (-\frac12, 0)$, and under this hypothesis, we have for \smash{$c_3=\sfrac 1 {1- \gamma}+ \sfrac 1 {\gamma+\gamma \eta}$} and $y \in (0,1]$,
$$\pi(y)=\int_0^1 \frac  {p(y,z)}{\kappa(y)+\kappa(z)} \mathrm d z \le c_3 y^{-\gamma}.$$
Introducing $f(x)=s(x)/\Tloc(x)$ Inequality~\eqref{IMI_quick} is equivalent to  
%begin{equation}
%\label{condS2bis}^
$\Delta_a(f)(x)\le f(x)$ for all $ x\ge a,$
%\end{equation}
where we have defined the functional $\Delta_a$ by
$$ \Delta_a(f)(x):= 8 \lambda \int_0^1 p(x,y) f(y\vee a) \Tloc(y \vee a) \, \mathrm d y.$$
%In the following, we use the notation $f \lesssim g$ if the function $f$ is bounded by $cg$ for some finite constant $c$, which is not allowed to depend on $\lambda$ or $a$, but may depend on some other parameters of the model like $\gamma$ or $\eta$. We use similarly the notations $f\gtrsim g$ and $f\sim g$. 
%We also use similarly the notation $f\gtrsim g$, as well as $f\sim g$ if $f\lesssim g$ and $f\gtrsim g$.
The assumption $\eta<0$ yields
$\Tloc(y)\asymp y^{\gamma \eta}+\lambda^2 y^{-\gamma + \gamma \eta}.$
In our pursuit to match the dominant strategies in Section \ref{paoptimal} we now suppose $a(\lambda) \lesssim \lambda^{2/\gamma}\approx a_{ls}$ (where $a_{ls}$ was introduced in Section \ref{fkoptimal}). With this assumption, when $y=a$ the last estimate simplifies to
%first term of the RHS is bounded by (some constant times) the second term, so we have 
$
\Tloc(a)\asymp \lambda^2 a^{-\gamma + \gamma \eta}.
$
In case of the preferential attachment kernel we get, for~$x\ge a$,
\begin{align*}
\Delta_a(f)(x) \asymp &\quad \lambda x^{\gamma-1} \int_0^a a^{-\gamma} f(a)\; \lambda^2 a^{-\gamma + \gamma \eta} \mathrm d y \\
& +\ \lambda x^{\gamma-1} \int_a^x y^{-\gamma} f(y) (y^{\gamma \eta}+\lambda^2 y^{-\gamma + \gamma \eta}) \mathrm d y \\
& +\ \lambda x^{-\gamma} \int_x^1 y^{\gamma-1} f(y) (y^{\gamma \eta}+\lambda^2 y^{-\gamma + \gamma \eta}) \mathrm d y.
\end{align*}
%We try a function of the type $f(x)=x^{\alpha}$ for some $\alpha<0$. Then the integrals are simple to compute, and under the hypothesis that none of the four exponents
Searching for a monomial $f(x)=x^{\alpha}$ with $\alpha<0$, and assuming for simplicity that none of the four exponents $1-\gamma+\alpha+\gamma \eta$, $1-2\gamma+\alpha+\gamma \eta$, $\gamma+\alpha+\gamma \eta$, or $\alpha+\gamma \eta$, equals 0, we get
%\begin{eqnarray*}
\[ 
\Delta_a(f)(x) \sim  \lambda x^{\alpha+\gamma \eta}+ \lambda^3 x^{\alpha-\gamma+\gamma\eta}+ \lambda^3 x^{\gamma -1} a^{\alpha+1-2\gamma +\gamma \eta}+ \lambda x^{-\gamma}.
\]
%\end{eqnarray*}
%If one of the four exponents equals 0, then some of the terms above get mutliplied by a $\log(1/a)$ or $\log(1/x)$ factor. For clarity we will not include these terms in our proof. The interested reader can check that including them does not change the result, as these logarithmic terms never concern the leading term.
For $\Delta_a(f)(x)\le f(x)$ to hold for all $ x\ge a,$ we need\smallskip

\begin{tabular}{lll}
	(i) $\lambda \lesssim a^{-\gamma \eta},$ &
	(ii) $\lambda^3 \lesssim a^{\gamma (1-\eta)},$ &
	(iii) $ \lambda^3 \lesssim a^{2 \gamma - 1 - \alpha-\gamma \eta},$ \\[1mm]
	(iv) $\lambda \lesssim a^{\alpha+\gamma}$, &
	(v) $\lambda^4 \lesssim a^{3\gamma-1 -\gamma \eta}.$ & \\
\end{tabular}
\smallskip

\noindent
Here (v) is just a consequence of  (iii) and (iv). (i), (ii) and (v) do not depend on $\alpha$, and prevent us from choosing $a$ too small. We now consider the different cases.
\medskip

\pagebreak[3]

\noindent
{\bf (1)\ The case $-\frac 12<\eta\le0$ and $\tau>\frac 8 3+ \frac \eta 3$.}

\smallskip
\noindent
This is the  delayed direct spreading phase, where  
%$\gamma<3/(5+\eta)$, 
%In the Delayed direct spreading phase, defined by $\eta>-1/2$ and $\gamma<3/(5+\eta)$, 
(i) and (v) are less restrictive than (ii), leading naturally to the choice (with $r$ some constant to be defined later)
$$a(\lambda) = r \lambda^{\sfrac 3 {\gamma(1-\eta)}}.$$
(iii) requires $\alpha\ge \gamma -1$, while (iv) requires $\alpha\ge \frac{\gamma}{3}(\eta-2)$ which is less restrictive under our assumptions, so we choose $\alpha= \gamma -1$, as~\eqref{upperdens3} will then give the best possible upper bound for the upper metastable density. % This choice makes (iv) automatically satisfied as $\gamma<3/(5+\eta)$.
 It is now a simple verification that choosing $r$ large enough indeed ensures~\eqref{IMI_quick} is satisfied for small $\lambda>0$. Then~\eqref{upperdens3} yields
\[
\rho^+(\lambda)\lesssim \frac 1 {s(a(\lambda))}\lesssim \frac 1 {a^{\gamma- 1} \lambda^2 a^{-\gamma + \gamma \eta}} \lesssim \lambda^{\frac {3-2\gamma -\gamma \eta} {\gamma -\gamma \eta}} =\lambda^{\frac{3\tau-5-\eta}{1-\eta}}.
\]

\smallskip
\noindent
{\bf (2)\ The case $-\frac 12<\eta\le0$ and $2-\eta \le \tau\le \frac 8 3+ \frac \eta 3$.}

\smallskip
\noindent
This is the delayed indirect spreading phase, %defined by $\eta>-1/2$ and $3/(5+\eta)\le \gamma\le \frac 1 {1-\eta}$, 
where (i) and (ii) are less restrictive than (v), leading naturally to the choice (with $r$ some constant to be defined later)
$$a(\lambda) = r \lambda^{\sfrac 4 {\gamma(3-\eta)-1}},$$
(iii) and (iv) is more restrictive than (iii), thus leading to the choice 
$\alpha= -\frac {\gamma + \gamma \eta + 1} 4.$
Choosing $r$ large we see that~\eqref{IMI_quick} is satisfied for small $\lambda>0$. Furthermore 
\smash{$a\lesssim \lambda^{2/\gamma}$}  holds as $\gamma<\frac 1 {1- \eta}.$
Finally, \eqref{upperdens3} yields
\[
\rho^+(\lambda)\lesssim \lambda^{\frac {3-\gamma -\gamma \eta} {3\gamma -\gamma \eta -1}}=\lambda^{\frac{3\tau-4-\eta}{4-\eta-\tau}}.
\]

\subsection*{Application of Theorem~\ref{teoupper_vertex_improved} to the factor kernel.}

{ As discussed previously, for the factor kernel we use Theorem~\ref{teoupper_vertex_improved} to prove the upper bounds in Proposition~\ref{supermartingale_upperbound} only in the cases (a),(i) and (ii), so we we either assume $\frac{5}{2}-\eta\leq\tau<4+2\eta$ or $4-\eta<\tau$}.  As in the previous analysis we check for Condition~\eqref{IMI_quick}.
{Replacing $p(x,y)=x^{-\gamma}y^{-\gamma}$, }the left side of this inequality  
factorizes as $D_a \Tloc(x) x^{-\gamma}$, where $D_a$ does not depend on $x$ and is given by
$$D_a:= 8\lambda \int_0^1 y^{-\gamma} s(a \vee y) \mathrm d y.$$
It is thus natural to choose the scoring function $s(x)= \Tloc(x) x^{-\gamma},$ so that~\eqref{IMI_quick} becomes equivalent to $D_a\le 1$.
%We suppose $\gamma<\frac 1 {1-\eta}$
{From our assumptions we have $2-\eta<\tau$}, so that
$\pi(x)=O(x^{-\gamma})$ and we get
%, recalling that $a$ is a function of $\lambda$,
%if $a$ is taken as a function of $\lambda$,
\begin{equation}\label{ineqfkT3} D_a\lesssim \lambda+ \lambda a^{1- 2 \gamma+\gamma \eta}+\lambda^3 a^{1- 3 \gamma+\gamma \eta},\end{equation}
%\[
%D_a = O\left( \lambda+ \lambda a^{1- 2 \gamma+\gamma \eta}+\lambda^3 a^{1- 3 \gamma+\gamma \eta}\right)
%\]
assuming for simplicity that the exponents $1-2\gamma+\gamma \eta$ and $1-3\gamma+\gamma \eta$ are nonzero. %When one of these exponents is zero, the corresponding term has to be multiplied by a $\log(1/a)$ factor coming from the integration of the harmonic function. 
To discuss which values of $a$ can make $D_a$ smaller than 1, we consider different cases.\pagebreak[3]\smallskip

\noindent
{\bf (1)\ The case $\tau>4-\eta$.}

\smallskip
\noindent
%As  $\gamma<\frac 1 {3-\eta}$ 
{In this case the two exponents on the right-hand side of \eqref{ineqfkT3} are positive so }we have $D_0 \lesssim c \lambda$ with finite $c$, and in particular~\eqref{IMI_quick} is satisfied for $a=0$ and $\lambda\le 1/c$, yielding fast extinction. Moreover, the reader might check that the hypothesis $(H)_\delta$ is satisfied for $\delta\ge \frac {1-\eta} {2-\eta}$. It follows that, for $\lambda\le 1/c$, the expected extinction time is at most \smash{$\omega N^{\frac {1-\eta} {2-\eta}},$}
for some $\omega=\omega(\lambda)<\infty$.%
\smallskip%
\pagebreak[3]

\noindent
{\bf (2)\ The case $\tau=4-\eta$.}

\smallskip
\noindent
We get $D_a\sim \lambda + \lambda^3 \log(1/a)$ and so
%$D_a= O(\lambda + \lambda^3 \log(1/a))$
$D_a\le 1$ for 
$
a(\lambda)= e^{-r \lambda^{- 3}},
$
for well-chosen~$r$ and $\lambda$ small. Thus we get an exponentially small upper bound for the upper metastable density.

\smallskip
\noindent
{\bf (3)\ The case $\frac 5 2-\eta \le \tau\le 4+2\eta$.}

\smallskip
\noindent
%In that case equivalent to $\frac 1 {3-\eta}<\gamma\le\frac 2 {3-2\eta},$ we can ensure 
In this region the largest term of \eqref{ineqfkT3} is $\lambda^3a^{1-3\gamma+\gamma\eta}$ so} we get $D_a\le 1$ for small $\lambda$ by taking \smash{$a(\lambda)= r \lambda^{\frac 3 {3\gamma-1-\gamma \eta}},$}
for a well-chosen $r$.  \eqref{upperdens3} then gives us the upper bound for the upper metastable density as
\begin{eqnarray*}
	\rho^+(\lambda)&\lesssim& a(\lambda)+\frac 1 {s(a(\lambda))}\int_{a(\lambda)}^1 s(y) \mathrm d y 
	\lesssim \frac 1 {s(a(\lambda))} \lesssim \lambda^{\frac {2-\gamma\eta}{3\gamma-1-\gamma \eta}}=\lambda^{\frac{2\tau-2-\eta}{4-\tau-\eta}}.
	%&=& O\left(\frac 1 {s(a(\lambda))}\right)  = O\left(\lambda^{\frac {2-\gamma\eta}{3\gamma-1-\gamma \eta}}\right).
\end{eqnarray*}
Notice that this bound is available in the full region $\frac 5 2-\eta \le \tau< 4-\eta$, yet it does not match the lower bounds obtained in Section \ref{fkoptimal} when $4+2\eta\leq\tau<4-\eta$. In the next section we use Theorem~\ref{teoupper_optimal} to obtain the matching upper bounds we are missing here.

\subsection*{Application of Theorem~\ref{teoupper_optimal} to the factor kernel}

We consider the vertex updating model for the factor kernel $p(x,y)=x^{-\gamma} y^{-\gamma}$ in the region delimited by the inequalities $\tau>3$, $\tau>4+2\eta$, $\tau>2-\eta$ and $\tau<4-\eta$. %To simplify slightly the following computations, we suppose $\kappa_0=1$ so $\kappa(x)=x^{-\gamma \eta}$. 
The inequality $\tau>2-\eta$ implies $\pi(x)\asymp x^{-\gamma}$. We also have $\astr\asymp \lambda^{2/\gamma}$. For $\eta>-1/2$, we have $\assl\asymp \lambda^{1/-\gamma \eta} <\astr$ while for $\eta<-1/2$, we have $\assl= \astr\asymp \lambda^{2/\gamma}$. We now search for some $a<\assl$ and function $s$ satisfying Conditions~\eqref{OMIweak},~\eqref{OMIstrongquick} and~\eqref{OMIstrongslow} as well as the Hypotheses $(H1)$ and $(H2)$.
Let us introduce $A_1$, $A_2$ and $A_3$ the following integrals over the function $s$:
%\[\asl=\lambda^{-\frac{1}{\gamma\eta}}\wedge\astr\qquad\astr=\lambda^{\frac{2}{\gamma}}=a_{ls}\]  
$$
A_1=\int_0^{\assl} \lambda y^{-\gamma} s(y\vee a) dy, \, \,
A_2= \int_0^{\assl} \kappa(y) y^{-\gamma} s(y\vee a) dy, \, \,
A_3= \int_{\assl}^1 \lambda y^{-\gamma} s(y) dy.
$$
{Recalling the form of the factor kernel $p(x,y)=x^{-\gamma}y^{-\gamma}$ and bounding the max $\kappa(x)\wedge \kappa(y)$ by the sum, we see that the inequalities \eqref{OMIweak}, \eqref{OMIstrongquick} and \eqref{OMIstrongslow} are guaranteed by
%\margEJ{Some minor differences come from considering explicitly differently strong slow and strong quick vertices. For example, there is now no term $\rho x^{-\gamma} T^{loc}(x)$ for strong quick vertices.}
	\begin{align*}
		\CMI  (A_1+A_3)x^{-\gamma} \le s(x), &\qquad \mbox{ for } x\ge \astr,\\
		\CMI  (A_1+A_3)x^{-\gamma}T^{\rm{loc}}(x) \le s(x), &\qquad \mbox{ for } \assl\le x<\astr,\\
		\CMI  \left(\frac {\kappa(x)}\lambda A_1+ A_2+A_3\right)x^{-\gamma}T^{\rm{loc}}(x)\le s(x),&\qquad \mbox{ for } x<\assl.
	\end{align*}
A natural choice for the scoring function $s$ is then given by
\[
s(x)= \left\{
\begin{array}{ll}
	x^{-\gamma} &\quad \mbox{ for } x\ge \astr \\
	x^{-\gamma} T^{\rm{loc}}(x) &\quad \mbox{ for } \assl\le x< \astr, \\
	\frac {\kappa(x)}\lambda x^{-\gamma} T^{\rm{loc}}(x) + \rho x^{-\gamma}T^{\rm{loc}}(x) &\quad \mbox{ for } x<\assl,
\end{array}
\right.
\]
leading naturally to the requests
\[
\left\{
\begin{array}{l}
	\CMI (A_1+A_3)\le 1, \\
	\CMI (A_2+A_3)\le \rho,
\end{array}
\right.
\]
}
which in turn are implied by the three requests
\[
A_1\le \frac 1 {2\CMI},\quad A_2\le \frac \rho {2\CMI},\quad A_3\le \frac {1\wedge \rho}{2 \CMI}.
\]
%\[
%A_1\le \frac 1 {4\CMI},\quad A_2\le \frac \rho {4\CMI},\quad A_3\le \frac {1\wedge \rho}{4 \CMI}.
%\]
Before computing these terms, we check that the hypotheses $(H1)$ and $(H2)$ are satisfied with this choice of scoring function. The hypothesis $(H1)$ is equivalent to
\[
{\astr}^{-\gamma}\int_0^{\astr} x^{-\gamma} y^{-\gamma} \mathrm dy \le x^{-\gamma},  \qquad \mbox{ for } x>\astr,
\]
which is in turn equivalent to $\astr^{1-2\gamma}/(1-\gamma)\le 1$, and is satisfied for small $\lambda$ as we lie in the phase $\tau>3$ or equivalently $\gamma<1/2$. Furthermore, we have
\[s(\astr)=\astr^{-\gamma}\asymp \lambda^{-2},\]
so the first part of the hypothesis $(H2)$ is satisfied, with $\alpha=1$. We will check the second part of the hypothesis $(H2)$ later on, while we now bound the terms $A_1$ to $A_3$, up to multiplicative constants depending only on $\gamma$ and $\eta$. First, observe that since $\assl\le \astr$ we have that $T^{\rm{loc}}(x)\asymp\lambda^2x^{-\gamma+\gamma\eta}$,  for $x<\assl$, and hence by our choice of $s$ and the definition of $\assl$,
\begin{align*}
A_1&\asymp \lambda a^{1-\gamma} s(a)+ \rhoMI \int_a^{\assl} \lambda^3 y^{-3\gamma+\gamma \eta} dy +  \int_a^{\assl} \lambda^2 y^{-3\gamma} dy\\
&\asymp \rhoMI  \lambda^3 a^{1-3 \gamma +\gamma \eta} + \lambda^2 a^{1-3 \gamma} + \lambda^2 \assl^{1-3\gamma},
\end{align*}
using that $1-3\gamma+\gamma \eta= (\tau-4+\eta)\gamma<0$ for $\tau<4-\eta$, and assuming for simplicity~$1-3\gamma\ne 0$.% 
\smallskip%

%\margNAL{what happened to the term $\lambda a^{1-\gamma}s(a)$?} 
When $\eta\ge -1/2$, we have $\assl\asymp \lambda^{-1/\gamma \eta}$ and therefore we get that 
\smash{$\lambda^2 \assl^{1-3\gamma}\asymp \lambda^{\frac {\tau-4-2\eta}{-\eta}}$}, and this term tends to 0 as $\lambda$ tends to 0 as $\tau>4+2\eta$. When $\eta<-1/2$, we have $\assl\asymp \lambda^{2/\gamma}$ and thus the term
$\lambda^2 \assl^{1-3\gamma}\asymp \lambda^{2\tau-6}$ also goes to 0 as $\tau>3$. Thus, to guarantee the inequality $A_1\le 1/2\CMI$ for small $\lambda$, it suffices to satisfy the following requests:
\begin{eqnarray}
\label{request1}\lambda^2 a^{1-3 \gamma} &\lesssim& 1,\\
\label{requestA4}\lambda^3 a^{1-3 \gamma +\gamma \eta} &\lesssim&1/\rhoMI,
\end{eqnarray}
with an abuse of notation, as for these requests to make sense, the involved multiplicative constants should be specified. In other words, the request $\lesssim 1$ should be understood as $\le \mathfrak{c}(\gamma, \eta)$, where $\mathfrak{c}(\gamma,\eta)>0$ is some constant which may change from one inequality to the other, and which could be made explicit if necessary.\smallskip
%with the abuse of notation $\lesssim$ meaning less than, up to a multiplicative that should be computed for the request to make sense, but that we omit for simplification.
%where these requests natually involve a multiplicative constant, that we decide to omit from the notation.
%Note here that for these requests to really make sense, the involved multiplicative constants should be specified. Thus the inequalities $\lesssim 1$ should be understood as $\le \mathfrak{c}(\gamma, \eta)$, where $\mathfrak{c}(\gamma,\eta)>0$ is some constant which may change from one inequality to the other, and which could be made explicit if necessary.

Similarly, we obtain
\[
A_2\asymp \rhoMI(\lambda^2 a^{1-3\gamma}+\lambda^2 \assl^{1-3\gamma}) + (\lambda a^{1-3 \gamma - \gamma \eta} +\lambda \assl^{1-3 \gamma - \gamma \eta}), 
\]
leading to the requests~\eqref{request1} and~\eqref{requestA4} again, as well as the additional requests
\begin{equation}
\label{requestA2}\lambda a^{1-3 \gamma - \gamma \eta} +\lambda \assl^{1-3 \gamma - \gamma \eta} \lesssim \rhoMI.
\end{equation}
To bound $A_3$, we first treat the case $\eta<-1/2$ for which \smash{$A_3= \lambda \int_{\assl}^1 y^{-2\gamma} \mathrm dy\asymp \lambda$} as we have $\tau>3$ or equivalently $\gamma<1/2$. In the case $\eta>-1/2$, we obtain
\begin{align*}
A_3&\asymp \lambda +(1+\rho) \lambda^3 \assl^{1-3\gamma+ \gamma \eta}\\
& \asymp \lambda + (1+\rho) \lambda \assl^{1-3\gamma-\gamma \eta},
\end{align*}
using the value of $\assl$ in that case. Assuming further $\rho\le 1$, the original request for $A_3$ now reads 
\[\lambda + \lambda\assl^{1-3\gamma-\gamma \eta}\lesssim \rho\]
where the bound on $\lambda\assl^{1-3\gamma-\gamma \eta}$ already follows from~\eqref{requestA2} and thus we only require
\begin{equation}
	\label{requestadd} \lambda \lesssim \rho.
\end{equation}
 %the second term yields a request already contained in~\eqref{requestA2}. Thus, in both cases, we obtain the additional request 
%\margNAL{Reformulated this a bit} %$\lambda \lesssim \rho$.
%\begin{equation}
%\label{requestA3} \lambda \lesssim \rho.
%\end{equation}
Thus far we have several requests, namely, \eqref{request1}, \eqref{requestA4}, \eqref{requestA2}, and \eqref{requestadd}. In order to make these inequalities independent of $\rho$ we  combine \eqref{requestadd} with inequality~\eqref{requestA4}, obtaining
%\margNAL{there were too many "requests", I had to remove a few}
\begin{equation}
\label{request2} \lambda^4 a^{1-3\gamma+ \gamma \eta}\lesssim 1,
\end{equation}
and also combine inequalities \eqref{requestA4} and \eqref{requestA2}, which gives
\begin{align*}
\lambda^4 a^{2-6\gamma} &\lesssim 1,\\
\lambda^4 a^{1-3\gamma +\gamma \eta} \assl^{1-3\gamma-\gamma \eta}&\lesssim 1.
\end{align*}
These last two inequalities are redundant with~\eqref{request1} and~\eqref{request2}; the first of these requests amounts to~\eqref{request1}, while the second is less restrictive than either~\eqref{request1} or~\eqref{request2} depending on the sign of $1-3\gamma-\gamma \eta$ (using the fact that $a\leq\assl$).\smallskip

\pagebreak[3]

We are thus left with the two main requests~\eqref{request1} and~\eqref{request2}. The other requests only yield restictions on the choice of $\rho$, which we should check are compatible with $\rho\le 1$. Which of~\eqref{request1} or~\eqref{request2} is the most demanding depends on whether $\tau$ is larger or smaller than $4+\eta$, and we now treat these two cases.
\begin{itemize}[leftmargin=*]
	\item $\tau< 4+\eta$. In that case the most restrictive request is~\eqref{request1}, leading to the choice
	\[
	a\asymp \lambda^{\frac 2 {3\gamma-1}}.
	\]
	Moreover, $\rho$ should satisfy
	\[
	\lambda\lesssim \rho \lesssim \lambda^{-3} a^{-1+3\gamma+\gamma \eta} \asymp \lambda^{\frac {\tau-4-2\eta}{4-\tau}}\asymp \lambda^{1-2 \frac {4+\eta-\tau}{4-\tau}}.
	\]
	In particular, the choice $\rho\asymp \lambda^{1-2 \frac {4+\eta-\tau}{4-\tau}}$ allows to satisfy the second part of $(H2)$ with $\alpha=2\frac {4-\tau+\eta}{4-\tau} \in (0,2).$
	%\EJ{Alternatively, we could minimize $s(x)/T^{\rm{loc}}(x)$ for $x<\astr$ using the first term of the sum defining $s(x)$. We then should have $\lambda^\alpha x^{-\gamma(1+\eta)}\gtrsim 1$ for $x=\assl$, leading to $\alpha=2(1+\eta)$ for $\eta<-1/2$ (and $\alpha=1$ for $\eta>-1/2$)}.
	Pushing further the computations we get
	\[\frac 1 {s(a)}\int_{a}^1 s(y)\mathrm dy \asymp \frac 1 {s(a(\lambda))}%\asymp \lambda a^{1-\gamma} 
	\asymp \lambda^{\frac \tau {4-\tau}}.
	\]
		We thus obtain an upper bound for the metastable density as
	\[
	\rho^+(\lambda)\lesssim \lambda^{\frac \tau {4-\tau}}.
	\]
	\item $\tau\ge 4+\eta$. In that case the most restrictive request is~\eqref{request2}, leading to the choice
	\[
	a\asymp \lambda^{\frac 4 {3\gamma-1-\gamma \eta}}
	\]
	In that case we have $\rho\asymp \lambda$, which does not allow to satisfy the hypothesis $(H2)$ for $\alpha>0$. The first term in the definition of $s(x)$ gives another bound, but this also does not always allows to satisfy $(H2)$. However, we can modify slightly the definition of $s(x)$ so that $(H2)$ is satisfied, by adding the term 	\smash{$\lambda^{1-\alpha} \astr^{-\gamma} T^{\rm{loc}}(x)$} when $x<\astr$,
	for some $\alpha>0$, which makes $(H2)$ automatically satisfied. This new term changes the values of the integrals by adding new terms $A'_1$, $A'_2$ and $A'_3$ to the previous ones, however the new inequalities that we request remain almost unchanged
		\[
\left\{
\begin{array}{l}
	\CMI ((A_1+A'_1)+(A_3+A_3'))\le 1 \\
	\CMI ((A_2+A_2')+(A_3+A'_3))\le \lambda,
\end{array}
\right.
\]
since it is enough to satisfy the inequalities for the previous score function (because the new one is bigger). 
We thus obtain the additional requests $A'_1\lesssim 1$ and $A'_2,A'_3\lesssim \lambda$. Computing the integrals yields
\begin{align*}
A'_1 & \asymp \lambda^{4-\alpha} \astr^{-\gamma}  \left(a^{1-2\gamma+\gamma\eta}+ 
\assl^{1-2\gamma+\gamma \eta}\right)\\
&\asymp \lambda^{2-\alpha} \left(a^{1-2\gamma+\gamma\eta}+ 
\assl^{1-2\gamma+\gamma \eta}\right).
\end{align*}
By the value of $a$, the first term is $\lambda$ to the power \smash{$\sfrac {2(\tau-2+\eta)}{4-\tau-\eta}-\alpha$}, leading to the condition \smash{$\alpha<\sfrac {2(\tau-2+\eta)}{4-\tau-\eta}$}. For the second term, we split the analysis between the case $\eta\ge -1/2$, with $\assl\asymp \lambda^{-1/{\gamma \eta}}$, leading to \smash{$\alpha< \sfrac {\tau-3-\eta}{-\eta}$}, and the case $\eta<-1/2$, with $\astr \asymp \lambda^{2/\gamma}$, leading to the request $\alpha<2(\tau-2+\eta)$.
Similarly, we compute
\[
A'_2\asymp \lambda^{3-\alpha} \astr^{-\gamma} \assl^{1-2\gamma}\asymp \lambda^{1-\alpha}\assl^{1-2\gamma}.
\]
Splitting again the analysis in the two cases $\eta\ge -1/2$ and $\eta<-1/2$, we see that the request $A'_2\lesssim \lambda$ is guaranteed by asking $\alpha<\sfrac {\tau-3}{-\eta}$ in the first case, and $\alpha<2(\tau-3)$ in the second case.\smallskip

Finally, the term $A'_3$ is present only in the case $\eta>-1/2$, where $\astr>\assl\asymp \lambda^{1/-\gamma \eta}$, and we then have %$A'_3$ is then of order
\[
A'_3\asymp \lambda^{2-\alpha}\assl^{1-2\gamma+\gamma \eta}\asymp \lambda^{1-\alpha}\assl^{1-2\gamma}\asymp A'_2,
\]
thus not leading to any further request. There is now no difficulty choosing $\alpha>0$ small satisfying all the requests, and pushing further the computations we then get
\[\frac 1 {s(a)}\int_{a}^1 s(y)\mathrm dy \asymp \frac 1 {s(a(\lambda))}\asymp \lambda^{\frac {3\tau-4-\eta} {4-\tau-\eta}}.
\]
We thus obtain an upper bound for the metastable density as
$
\rho^+(\lambda)\lesssim \lambda^{\frac {3\tau-4-\eta} {4-\tau-\eta}}.
$
\end{itemize}

\section{Proof of Theorems~\ref{teoupper_vertex_improved}~and~\ref{teoupper_optimal}}
\label{sec_upper_new}
%We first provide the full tehnical proof of Theorem~\ref{teoupper_optimal}. The proof of Theorem~\ref{teoupper_vertex_improved} will mainly follow by simplifying largely the latter by not considering the special treatment of the weak vertices. There will however still be some ingredients specific to this proof, which we will underline.

\subsection{Proof of Theorem~\ref{teoupper_vertex_improved}}\label{sec:Thm3}

We now provide the proof of Theorem~\ref{teoupper_vertex_improved}, using settings and notations that will be reusable in the proof of Theorem~\ref{teoupper_optimal}. In particular, we rewrite~\eqref{IMI_quick} as
\begin{align*}
		\CMI\ T^{\rm{loc}}(x) \int_0^1 \lambda p(x,y) s(y\vee a) \, \mathrm d y&\le s(x)
%		\\ %\qquad \text{if }\eta\ge 0 \text{ or }x\ge \asl,\\
%\CMI\ T^{\rm{loc}}(x) \left(\int_0^{\asl} (\kappa(x) \vee \kappa(y)) p(x,y) s(y\vee a) \, \mathrm d y+ \int_{\asl}^1 \lambda p(x,y) s(y) \, \mathrm d y\right)&\le s(x),
\end{align*}
%\CMI \ T^{\rm{loc}}(x) \int_0^1 \left(\lambda \wedge (\kappa(x) \vee \kappa(y)\right) \ p(x,y) s(y\vee a) \, \mathrm d y\le s(x),
%\]
with of course $\CMI=7$, and similarly for ~\eqref{IMI_slow}.
We suppose $a(\lambda)$ and $s(x)$ are given satisfying the hypotheses of Theorem~\ref{teoupper_vertex_improved}. When $a>0$, we extend $s$ to $(0,1]$ by $s(x)=s(x\vee a)$. The theorem also involves other functions defined on $(0,1]$, namely $\kappa$, $\pi$ and $\Tloc$.
%The statements of Theorems~\ref{teoupper_vertex_improved} and~\ref{teoupper_optimal} involve the functions $\kappa(x)$, $\pi(x)$ and $\Tloc(x)$ and $s(x)$ defined on $(0,1]$%and the kernel $p(x,y)$ defined on $(0,1]^2$
We use the subscript notation when considering the corresponding functions defined on $\{1,\ldots, N\}$% or $\{1,\ldots, N\}^2$
, so
$\kappa_j=\kappa(j/N)$, $\pi_j=\pi(j/N)$, $\Tloc_j=\Tloc(j/N)$, and $s_j=s(j/N)$. Recall the notation \smash{$p_{i,j}=\sfrac1N {p(i/N,j/N)} \wedge 1$}.%
\medskip%

The fact that $(H)_1$ follows from \eqref{IMI_quick} and \eqref{IMI_slow} 
(when $a=0$) is immediate as then, using the monotonicity of $p$,
\[
\frac {s(x)}{\Tloc(x)}\ge \CMI \int_0^1 (\lambda \wedge \kappa(y)) p(1,y)s(y\vee a) \, \mathrm d y >0.
\]
Moreover, $(H)_0$ is  satisfied when $\eta\ge 1/2$ since
\[
\frac {\pi(x)}{\kappa(x)}\le \frac 1 {\kappa(x)^2}\int_0^1 p(x,y) \mathrm d y\le \frac {\cu}{\kappa_0^2}x^{-\gamma+2 \gamma \eta},
\]
where the second inequality follows from \eqref{condp}. Using again \eqref{condp} we deduce that for $\eta<1/2$ it cannot hold since
\[
\frac {\pi(x)}{\kappa(x)}\ge \frac {\cl x^{-\gamma}}{2\kappa(x)\big(\kappa(x)+ \min(\kappa(y), y\in[1/2,1]) \big)}.
\]
If $a>0$ we call the vertices $j<aN$ stars. The proof attaches a score to a configuration that behaves as a supermartingale as long as no star gets infected. We stop the process when infecting a star, thus before any edge between stars can be used to transmit the infection. It is convenient and clearer to consider in the following that there are simply no edges between stars, for example by redefining $p_{ij}=0$ when $i$ and $j$ are stars.\medskip
	
Recall the definition of slow vertices just before Theorem~\ref{teoupper_vertex_improved}, and in particular the definition of $\asl$ in~\eqref{def_asl}. In Theorem~\ref{teoupper_vertex_improved}, the request~\eqref{IMI_slow} seems to indicate that the infection propagates between slow vertices at rate 
$\kappa(x)\vee \kappa(y)$ instead of $\lambda$, which amounts to take into account the depletion effect, and is a significant difficulty in this theorem. We say an edge $\{i,j\}$ (not necessarily in $G$) is slow if $i$ and $j$ are both slow vertices, otherwise we refer to the edge as quick. So a slow edge updates at a rate bounded by $2\lambda$, and a quick edge at rate at least $\lambda$. In the following, we suppose $a<\asl$. When this does not hold, the proof still works and is actually easier as there are no slow edges.\medskip

The proof relies on the following ideas, introducing a modification of the entire process (which alters the underlying network, its dynamics and the evolution of the infection) which constitutes a stochastic upper bound:
	
	\begin{itemize}[leftmargin=*]\setlength\itemsep{1ex}
		\item For the new underlying network keep the original set of vertices $\{1,\ldots,N\}$, maintaining their previous classification as either slow or quick, as well as the definition of stars. We also maintain all quick edges, consisting of edges with at least one quick vertex as an endpoint.
		\item  As for the remaining edges (between slow vertices), we replace them with their directed versions so for each pair $i,j$ of slow vertices we consider $(i,j)$ and $(j,i)$ as possible edges of the network (even at the same time) instead of $\{i,j\}$.
		\item Each vertex $i$ still updates at rate $\kappa_i$, however updatings affect the edges differently according to their type. {At this point it} is enough to keep in mind that updatings maintain the role of losing the local neighbourhood of vertices.
		\item The new process is still a contact process at its core, where each vertex is classified as either healthy or infected, and can be infected, recover, and be reinfected. Vertices recover at rate $1$ independently from one another as in the original process, however the rate at which the infection is transmitted along each edge is more {intricate}.
		\item A new addition to the process is that we also subclassify infected vertices as \emph{saturated} and \emph{unsaturated}. The reader might think of a vertex as saturated whenever it is surrounded by many neighbours (or maybe just powerful ones), likely to reinfect it after its recovery.
		For this reason for saturated vertices we just negate recoveries altogether, and hence these become beacons of infection that \emph{cannot recover}. In order for saturated vertices to disappear they have to first update and observe a more favourable environment to become unsaturated, and thus be available for recovery. The different conditions that result in saturation will be given gradually as we explain below the evolution of the process.
		\item We begin by describing first the rules affecting slow vertices and slow (directed) edges, {see Figure~\ref{figu2} for an example}. By definition, for the original process the subnetwork of slow vertices evolves at a slower pace than the contact process and hence is seen as almost static by the infection, which explores it as it spreads. We adopt the point of view of the infection and thus classify slow edges as either \emph{unrevealed}, \emph{present} or, \emph{absent} (all mutually exclusive). A fourth label, \emph{saturated}, will be helpful when dealing with saturated vertices, but we leave the intuition behind this label for later. Slow edges can change their labels through updates or infections of slow vertices:
		
		\begin{itemize}[leftmargin=*]\setlength\itemsep{1ex}
			\item Every slow vertex $i$ updates at rate $\kappa_i$, and upon updating a two-step mechanism occurs:
			\begin{itemize}[leftmargin=*]
				\item First, all slow edges $(i,j)$ and $(j,i)$ become unrevealed regardless of their previous status.
				\item Next, if $i$ is infected at the time of the update, we \emph{resample} each edge $(i,j)$, declaring it \emph{present} or \emph{absent} by tossing a coin with probability $p_{i,j}$, independently from all other edges. We do the same for each edge $(j,i)$ for which $j$ is infected at the time of the update.
			\end{itemize}
			\item If a slow vertex $i$ becomes infected, we \emph{resample} each unrevealed edge $(i,j)$ leaving $i$ {in the same manner} as before. Observe that this procedure does not affect already revealed edges .
			
			\begin{figure}[h!]
				\scalebox{0.7}{
			\begin{tikzpicture}[x=0.75pt,y=0.75pt,yscale=-1,xscale=1]
				%uncomment if require: \path (0,300); %set diagram left start at 0, and has height of 300
				
				%Shape: Circle [id:dp463539402851167] 
				\draw   (69.17,216.92) .. controls (69.17,208.68) and (75.85,202) .. (84.08,202) .. controls (92.32,202) and (99,208.68) .. (99,216.92) .. controls (99,225.15) and (92.32,231.83) .. (84.08,231.83) .. controls (75.85,231.83) and (69.17,225.15) .. (69.17,216.92) -- cycle ;
				%Shape: Circle [id:dp38189106458149236] 
				\draw   (50.17,116.92) .. controls (50.17,108.68) and (56.85,102) .. (65.08,102) .. controls (73.32,102) and (80,108.68) .. (80,116.92) .. controls (80,125.15) and (73.32,131.83) .. (65.08,131.83) .. controls (56.85,131.83) and (50.17,125.15) .. (50.17,116.92) -- cycle ;
				%Shape: Circle [id:dp5751694605194844] 
				\draw  [fill={rgb, 255:red, 155; green, 155; blue, 155 }  ,fill opacity=1 ] (129.17,61.92) .. controls (129.17,53.68) and (135.85,47) .. (144.08,47) .. controls (152.32,47) and (159,53.68) .. (159,61.92) .. controls (159,70.15) and (152.32,76.83) .. (144.08,76.83) .. controls (135.85,76.83) and (129.17,70.15) .. (129.17,61.92) -- cycle ;
				%Shape: Circle [id:dp9032825125841102] 
				\draw  [fill={rgb, 255:red, 155; green, 155; blue, 155 }  ,fill opacity=1 ] (170.17,154.92) .. controls (170.17,146.68) and (176.85,140) .. (185.08,140) .. controls (193.32,140) and (200,146.68) .. (200,154.92) .. controls (200,163.15) and (193.32,169.83) .. (185.08,169.83) .. controls (176.85,169.83) and (170.17,163.15) .. (170.17,154.92) -- cycle ;
				%Curve Lines [id:da7602679332976241] 
				\draw  [dash pattern={on 0.84pt off 2.51pt}]  (65.08,131.83) .. controls (61.08,161.23) and (57.16,178.15) .. (72.06,206.11) ;
				\draw [shift={(73,207.83)}, rotate = 241.11] [color={rgb, 255:red, 0; green, 0; blue, 0 }  ][line width=0.75]    (10.93,-3.29) .. controls (6.95,-1.4) and (3.31,-0.3) .. (0,0) .. controls (3.31,0.3) and (6.95,1.4) .. (10.93,3.29)   ;
				%Curve Lines [id:da7562834087520651] 
				\draw  [dash pattern={on 0.84pt off 2.51pt}]  (77,202.83) .. controls (71.07,228.02) and (85.26,151.81) .. (66.59,133.15) ;
				\draw [shift={(65.08,131.83)}, rotate = 37.41] [color={rgb, 255:red, 0; green, 0; blue, 0 }  ][line width=0.75]    (10.93,-3.29) .. controls (6.95,-1.4) and (3.31,-0.3) .. (0,0) .. controls (3.31,0.3) and (6.95,1.4) .. (10.93,3.29)   ;
				%Curve Lines [id:da8250755220111412] 
				\draw    (144.08,76.83) .. controls (145.93,110.61) and (159.97,129.48) .. (175.32,139.75) ;
				\draw [shift={(177,140.83)}, rotate = 212.01] [color={rgb, 255:red, 0; green, 0; blue, 0 }  ][line width=0.75]    (10.93,-3.29) .. controls (6.95,-1.4) and (3.31,-0.3) .. (0,0) .. controls (3.31,0.3) and (6.95,1.4) .. (10.93,3.29)   ;
				%Curve Lines [id:da5312034966450154] 
				\draw  [dash pattern={on 0.84pt off 2.51pt}]  (75,103.83) .. controls (98.64,115.65) and (109.67,90.6) .. (128.31,63.17) ;
				\draw [shift={(129.17,61.92)}, rotate = 124.47] [color={rgb, 255:red, 0; green, 0; blue, 0 }  ][line width=0.75]    (10.93,-3.29) .. controls (6.95,-1.4) and (3.31,-0.3) .. (0,0) .. controls (3.31,0.3) and (6.95,1.4) .. (10.93,3.29)   ;
				%Curve Lines [id:da06157831139811032] 
				\draw  [dash pattern={on 0.84pt off 2.51pt}]  (99,216.92) .. controls (130.52,214.86) and (152.34,193.49) .. (177.83,170.87) ;
				\draw [shift={(179,169.83)}, rotate = 138.5] [color={rgb, 255:red, 0; green, 0; blue, 0 }  ][line width=0.75]    (10.93,-3.29) .. controls (6.95,-1.4) and (3.31,-0.3) .. (0,0) .. controls (3.31,0.3) and (6.95,1.4) .. (10.93,3.29)   ;
				%Curve Lines [id:da3376787511310959] 
				\draw    (170.17,154.92) .. controls (138.16,135.54) and (97,120.89) .. (79.78,126.18) ;
				\draw [shift={(78,126.83)}, rotate = 336.37] [color={rgb, 255:red, 0; green, 0; blue, 0 }  ][line width=0.75]    (10.93,-3.29) .. controls (6.95,-1.4) and (3.31,-0.3) .. (0,0) .. controls (3.31,0.3) and (6.95,1.4) .. (10.93,3.29)   ;
				%Curve Lines [id:da937416902768673] 
				\draw  [dash pattern={on 0.84pt off 2.51pt}]  (78,126.83) .. controls (96.72,141.61) and (133.86,155.41) .. (168.58,154.95) ;
				\draw [shift={(170.17,154.92)}, rotate = 178.51] [color={rgb, 255:red, 0; green, 0; blue, 0 }  ][line width=0.75]    (10.93,-3.29) .. controls (6.95,-1.4) and (3.31,-0.3) .. (0,0) .. controls (3.31,0.3) and (6.95,1.4) .. (10.93,3.29)   ;
				%Curve Lines [id:da19569284146635524] 
				\draw    (138.68,73.74) .. controls (115.96,103.71) and (89.35,181.35) .. (92.61,201.16) ;
				\draw [shift={(93,202.83)}, rotate = 252.33] [color={rgb, 255:red, 0; green, 0; blue, 0 }  ][line width=0.75]    (10.93,-3.29) .. controls (6.95,-1.4) and (3.31,-0.3) .. (0,0) .. controls (3.31,0.3) and (6.95,1.4) .. (10.93,3.29)   ;
				%Curve Lines [id:da9797288329272877] 
				\draw  [dash pattern={on 0.84pt off 2.51pt}]  (93,202.83) .. controls (109.64,185.75) and (135.16,111.09) .. (138.54,75.34) ;
				\draw [shift={(138.68,73.74)}, rotate = 94.47] [color={rgb, 255:red, 0; green, 0; blue, 0 }  ][line width=0.75]    (10.93,-3.29) .. controls (6.95,-1.4) and (3.31,-0.3) .. (0,0) .. controls (3.31,0.3) and (6.95,1.4) .. (10.93,3.29)   ;
				%Shape: Circle [id:dp5873289908221784] 
				\draw   (256.17,213.92) .. controls (256.17,205.68) and (262.85,199) .. (271.08,199) .. controls (279.32,199) and (286,205.68) .. (286,213.92) .. controls (286,222.15) and (279.32,228.83) .. (271.08,228.83) .. controls (262.85,228.83) and (256.17,222.15) .. (256.17,213.92) -- cycle ;
				%Shape: Circle [id:dp35326009092488797] 
				\draw   (237.17,113.92) .. controls (237.17,105.68) and (243.85,99) .. (252.08,99) .. controls (260.32,99) and (267,105.68) .. (267,113.92) .. controls (267,122.15) and (260.32,128.83) .. (252.08,128.83) .. controls (243.85,128.83) and (237.17,122.15) .. (237.17,113.92) -- cycle ;
				%Shape: Circle [id:dp8965553239408011] 
				\draw  [fill={rgb, 255:red, 155; green, 155; blue, 155 }  ,fill opacity=1 ] (316.17,58.92) .. controls (316.17,50.68) and (322.85,44) .. (331.08,44) .. controls (339.32,44) and (346,50.68) .. (346,58.92) .. controls (346,67.15) and (339.32,73.83) .. (331.08,73.83) .. controls (322.85,73.83) and (316.17,67.15) .. (316.17,58.92) -- cycle ;
				%Shape: Circle [id:dp6184541144043223] 
				\draw  [fill={rgb, 255:red, 255; green, 255; blue, 255 }  ,fill opacity=1 ] (357.17,151.92) .. controls (357.17,143.68) and (363.85,137) .. (372.08,137) .. controls (380.32,137) and (387,143.68) .. (387,151.92) .. controls (387,160.15) and (380.32,166.83) .. (372.08,166.83) .. controls (363.85,166.83) and (357.17,160.15) .. (357.17,151.92) -- cycle ;
				%Curve Lines [id:da3228266432185871] 
				\draw  [dash pattern={on 0.84pt off 2.51pt}]  (252.08,128.83) .. controls (248.08,158.23) and (244.16,175.15) .. (259.06,203.11) ;
				\draw [shift={(260,204.83)}, rotate = 241.11] [color={rgb, 255:red, 0; green, 0; blue, 0 }  ][line width=0.75]    (10.93,-3.29) .. controls (6.95,-1.4) and (3.31,-0.3) .. (0,0) .. controls (3.31,0.3) and (6.95,1.4) .. (10.93,3.29)   ;
				%Curve Lines [id:da18249140784679496] 
				\draw  [dash pattern={on 0.84pt off 2.51pt}]  (264,199.83) .. controls (258.07,225.02) and (272.26,148.81) .. (253.59,130.15) ;
				\draw [shift={(252.08,128.83)}, rotate = 37.41] [color={rgb, 255:red, 0; green, 0; blue, 0 }  ][line width=0.75]    (10.93,-3.29) .. controls (6.95,-1.4) and (3.31,-0.3) .. (0,0) .. controls (3.31,0.3) and (6.95,1.4) .. (10.93,3.29)   ;
				%Curve Lines [id:da22985998112404427] 
				\draw    (331.08,73.83) .. controls (332.93,107.61) and (346.97,126.48) .. (362.32,136.75) ;
				\draw [shift={(364,137.83)}, rotate = 212.01] [color={rgb, 255:red, 0; green, 0; blue, 0 }  ][line width=0.75]    (10.93,-3.29) .. controls (6.95,-1.4) and (3.31,-0.3) .. (0,0) .. controls (3.31,0.3) and (6.95,1.4) .. (10.93,3.29)   ;
				%Curve Lines [id:da36038676285102067] 
				\draw  [dash pattern={on 0.84pt off 2.51pt}]  (262,100.83) .. controls (285.64,112.65) and (296.67,87.6) .. (315.31,60.17) ;
				\draw [shift={(316.17,58.92)}, rotate = 124.47] [color={rgb, 255:red, 0; green, 0; blue, 0 }  ][line width=0.75]    (10.93,-3.29) .. controls (6.95,-1.4) and (3.31,-0.3) .. (0,0) .. controls (3.31,0.3) and (6.95,1.4) .. (10.93,3.29)   ;
				%Curve Lines [id:da6768914218167112] 
				\draw  [dash pattern={on 0.84pt off 2.51pt}]  (286,213.92) .. controls (317.52,211.86) and (339.34,190.49) .. (364.83,167.87) ;
				\draw [shift={(366,166.83)}, rotate = 138.5] [color={rgb, 255:red, 0; green, 0; blue, 0 }  ][line width=0.75]    (10.93,-3.29) .. controls (6.95,-1.4) and (3.31,-0.3) .. (0,0) .. controls (3.31,0.3) and (6.95,1.4) .. (10.93,3.29)   ;
				%Curve Lines [id:da6182516139511693] 
				\draw    (357.17,151.92) .. controls (325.16,132.54) and (284,117.89) .. (266.78,123.18) ;
				\draw [shift={(265,123.83)}, rotate = 336.37] [color={rgb, 255:red, 0; green, 0; blue, 0 }  ][line width=0.75]    (10.93,-3.29) .. controls (6.95,-1.4) and (3.31,-0.3) .. (0,0) .. controls (3.31,0.3) and (6.95,1.4) .. (10.93,3.29)   ;
				%Curve Lines [id:da35329847479556675] 
				\draw  [dash pattern={on 0.84pt off 2.51pt}]  (265,123.83) .. controls (283.72,138.61) and (320.86,152.41) .. (355.58,151.95) ;
				\draw [shift={(357.17,151.92)}, rotate = 178.51] [color={rgb, 255:red, 0; green, 0; blue, 0 }  ][line width=0.75]    (10.93,-3.29) .. controls (6.95,-1.4) and (3.31,-0.3) .. (0,0) .. controls (3.31,0.3) and (6.95,1.4) .. (10.93,3.29)   ;
				%Curve Lines [id:da19287549952483052] 
				\draw    (325.68,70.74) .. controls (302.96,100.71) and (276.35,178.35) .. (279.61,198.16) ;
				\draw [shift={(280,199.83)}, rotate = 252.33] [color={rgb, 255:red, 0; green, 0; blue, 0 }  ][line width=0.75]    (10.93,-3.29) .. controls (6.95,-1.4) and (3.31,-0.3) .. (0,0) .. controls (3.31,0.3) and (6.95,1.4) .. (10.93,3.29)   ;
				%Curve Lines [id:da8485630388219494] 
				\draw  [dash pattern={on 0.84pt off 2.51pt}]  (280,199.83) .. controls (296.64,182.75) and (322.16,108.09) .. (325.54,72.34) ;
				\draw [shift={(325.68,70.74)}, rotate = 94.47] [color={rgb, 255:red, 0; green, 0; blue, 0 }  ][line width=0.75]    (10.93,-3.29) .. controls (6.95,-1.4) and (3.31,-0.3) .. (0,0) .. controls (3.31,0.3) and (6.95,1.4) .. (10.93,3.29)   ;
				%Shape: Circle [id:dp41953854744564434] 
				\draw   (449.17,214.92) .. controls (449.17,206.68) and (455.85,200) .. (464.08,200) .. controls (472.32,200) and (479,206.68) .. (479,214.92) .. controls (479,223.15) and (472.32,229.83) .. (464.08,229.83) .. controls (455.85,229.83) and (449.17,223.15) .. (449.17,214.92) -- cycle ;
				%Shape: Circle [id:dp10903330949154277] 
				\draw   (430.17,114.92) .. controls (430.17,106.68) and (436.85,100) .. (445.08,100) .. controls (453.32,100) and (460,106.68) .. (460,114.92) .. controls (460,123.15) and (453.32,129.83) .. (445.08,129.83) .. controls (436.85,129.83) and (430.17,123.15) .. (430.17,114.92) -- cycle ;
				%Shape: Circle [id:dp9429816612546129] 
				\draw  [fill={rgb, 255:red, 155; green, 155; blue, 155 }  ,fill opacity=1 ] (509.17,59.92) .. controls (509.17,51.68) and (515.85,45) .. (524.08,45) .. controls (532.32,45) and (539,51.68) .. (539,59.92) .. controls (539,68.15) and (532.32,74.83) .. (524.08,74.83) .. controls (515.85,74.83) and (509.17,68.15) .. (509.17,59.92) -- cycle ;
				%Shape: Circle [id:dp8031940642121165] 
				\draw  [fill={rgb, 255:red, 255; green, 255; blue, 255 }  ,fill opacity=1 ][line width=2.25]  (550.17,152.92) .. controls (550.17,144.68) and (556.85,138) .. (565.08,138) .. controls (573.32,138) and (580,144.68) .. (580,152.92) .. controls (580,161.15) and (573.32,167.83) .. (565.08,167.83) .. controls (556.85,167.83) and (550.17,161.15) .. (550.17,152.92) -- cycle ;
				%Curve Lines [id:da1706471551852442] 
				\draw  [dash pattern={on 0.84pt off 2.51pt}]  (445.08,129.83) .. controls (441.08,159.23) and (437.16,176.15) .. (452.06,204.11) ;
				\draw [shift={(453,205.83)}, rotate = 241.11] [color={rgb, 255:red, 0; green, 0; blue, 0 }  ][line width=0.75]    (10.93,-3.29) .. controls (6.95,-1.4) and (3.31,-0.3) .. (0,0) .. controls (3.31,0.3) and (6.95,1.4) .. (10.93,3.29)   ;
				%Curve Lines [id:da690655629614567] 
				\draw  [dash pattern={on 0.84pt off 2.51pt}]  (457,200.83) .. controls (451.07,226.02) and (465.26,149.81) .. (446.59,131.15) ;
				\draw [shift={(445.08,129.83)}, rotate = 37.41] [color={rgb, 255:red, 0; green, 0; blue, 0 }  ][line width=0.75]    (10.93,-3.29) .. controls (6.95,-1.4) and (3.31,-0.3) .. (0,0) .. controls (3.31,0.3) and (6.95,1.4) .. (10.93,3.29)   ;
				%Curve Lines [id:da7197638095964467] 
				\draw  [dash pattern={on 0.84pt off 2.51pt}]  (455,101.83) .. controls (478.64,113.65) and (489.67,88.6) .. (508.31,61.17) ;
				\draw [shift={(509.17,59.92)}, rotate = 124.47] [color={rgb, 255:red, 0; green, 0; blue, 0 }  ][line width=0.75]    (10.93,-3.29) .. controls (6.95,-1.4) and (3.31,-0.3) .. (0,0) .. controls (3.31,0.3) and (6.95,1.4) .. (10.93,3.29)   ;
				%Curve Lines [id:da4060772232436838] 
				\draw  [dash pattern={on 0.84pt off 2.51pt}]  (479,214.92) .. controls (510.52,212.86) and (532.34,191.49) .. (557.83,168.87) ;
				\draw [shift={(559,167.83)}, rotate = 138.5] [color={rgb, 255:red, 0; green, 0; blue, 0 }  ][line width=0.75]    (10.93,-3.29) .. controls (6.95,-1.4) and (3.31,-0.3) .. (0,0) .. controls (3.31,0.3) and (6.95,1.4) .. (10.93,3.29)   ;
				%Curve Lines [id:da4310013884108417] 
				\draw  [dash pattern={on 0.84pt off 2.51pt}]  (550.17,152.92) .. controls (518.16,133.54) and (477,118.89) .. (459.78,124.18) ;
				\draw [shift={(458,124.83)}, rotate = 336.37] [color={rgb, 255:red, 0; green, 0; blue, 0 }  ][line width=0.75]    (10.93,-3.29) .. controls (6.95,-1.4) and (3.31,-0.3) .. (0,0) .. controls (3.31,0.3) and (6.95,1.4) .. (10.93,3.29)   ;
				%Curve Lines [id:da706708280967004] 
				\draw  [dash pattern={on 0.84pt off 2.51pt}]  (458,124.83) .. controls (476.72,139.61) and (513.86,153.41) .. (548.58,152.95) ;
				\draw [shift={(550.17,152.92)}, rotate = 178.51] [color={rgb, 255:red, 0; green, 0; blue, 0 }  ][line width=0.75]    (10.93,-3.29) .. controls (6.95,-1.4) and (3.31,-0.3) .. (0,0) .. controls (3.31,0.3) and (6.95,1.4) .. (10.93,3.29)   ;
				%Curve Lines [id:da2094830195929851] 
				\draw    (518.68,71.74) .. controls (495.96,101.71) and (469.35,179.35) .. (472.61,199.16) ;
				\draw [shift={(473,200.83)}, rotate = 252.33] [color={rgb, 255:red, 0; green, 0; blue, 0 }  ][line width=0.75]    (10.93,-3.29) .. controls (6.95,-1.4) and (3.31,-0.3) .. (0,0) .. controls (3.31,0.3) and (6.95,1.4) .. (10.93,3.29)   ;
				%Curve Lines [id:da33658264175846586] 
				\draw  [dash pattern={on 0.84pt off 2.51pt}]  (473,200.83) .. controls (489.64,183.75) and (515.16,109.09) .. (518.54,73.34) ;
				\draw [shift={(518.68,71.74)}, rotate = 94.47] [color={rgb, 255:red, 0; green, 0; blue, 0 }  ][line width=0.75]    (10.93,-3.29) .. controls (6.95,-1.4) and (3.31,-0.3) .. (0,0) .. controls (3.31,0.3) and (6.95,1.4) .. (10.93,3.29)   ;
				%Curve Lines [id:da9189013262340677] 
				\draw  [dash pattern={on 0.84pt off 2.51pt}]  (557,138.83) .. controls (548.27,98.25) and (538.6,90.45) .. (525.33,76.18) ;
				\draw [shift={(524.08,74.83)}, rotate = 47.46] [color={rgb, 255:red, 0; green, 0; blue, 0 }  ][line width=0.75]    (10.93,-3.29) .. controls (6.95,-1.4) and (3.31,-0.3) .. (0,0) .. controls (3.31,0.3) and (6.95,1.4) .. (10.93,3.29)   ;
				%Curve Lines [id:da42478990824313634] 
				\draw  [dash pattern={on 0.84pt off 2.51pt}]  (553,167) .. controls (514,169.93) and (496.86,197.57) .. (480.28,213.7) ;
				\draw [shift={(479,214.92)}, rotate = 316.89] [color={rgb, 255:red, 0; green, 0; blue, 0 }  ][line width=0.75]    (10.93,-3.29) .. controls (6.95,-1.4) and (3.31,-0.3) .. (0,0) .. controls (3.31,0.3) and (6.95,1.4) .. (10.93,3.29)   ;

			\end{tikzpicture}}
		\caption{Left: Four slow vertices, {of which the top and right ones are infected}. Unrevealed edges are represented as dotted, present as solid, and absent are not drawn. In the picture the edges leaving the infected vertices, {marked as shaded}, have been revealed. Centre: The vertex on the right has recovered but no updatings have occurred so the information about edges is kept. Right: The vertex on the right has updated making all the edges leaving it unrevealed. Observe that the edge going from the top vertex to the right one also updates since the top vertex is infected, {but} it is immediately revealed again, flipping a coin to decide its status which in this case turns to be absent.}
		\label{figu2}
			\end{figure}
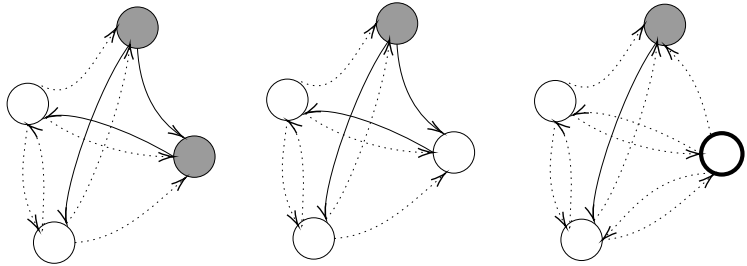
		\end{itemize}
		
		\item Slow vertices recover at rate $1$ (unless saturated) and transmit the infection through present edges at rate $\lambda$ (regardless of the state of the receiving vertex). If an infection is sent through the present edge $(i,j)$, then\smallskip
		\begin{itemize}
			\item both vertices $i$ and $j$ become instantly \emph{saturated};
			\item the edges $(i,j)$ and $(j,i)$ become \emph{saturated}, regardless of their previous state.\smallskip
		\end{itemize} 
		{See Figure~\ref{figu3} for an example.}
		The reason why this infection leads to saturation is that for a slow vertex $i$ we have $T^{loc}(i/N)\ge 8/3\kappa_i\ge 8/3\lambda$, which fits the intuition that both $i$ and $j$ are likely to have enough time to infect each other if the edge $(i,j)$ is present. Contrary to the intuition given for vertices, the saturation of an edge means that it becomes useless for the infection, since we know that both vertices are saturated and hence constantly infected. It will be important later that when referring to present edges we are not including saturated ones for this same reason.
		
		%\tikzset{every picture/.style={line width=0.75pt}} %set default line width to 0.75pt        
		\begin{figure}[h!]
			
			\begin{tikzpicture}[x=0.65pt,y=0.65pt,yscale=-1,xscale=1]
				%uncomment if require: \path (0,202); %set diagram left start at 0, and has height of 202
				
				%Shape: Ellipse [id:dp07813928967828321] 
				\draw  [fill={rgb, 255:red, 155; green, 155; blue, 155 }  ,fill opacity=1 ] (13,50.77) .. controls (13,39.45) and (22.22,30.28) .. (33.59,30.28) .. controls (44.96,30.28) and (54.18,39.45) .. (54.18,50.77) .. controls (54.18,62.08) and (44.96,71.26) .. (33.59,71.26) .. controls (22.22,71.26) and (13,62.08) .. (13,50.77) -- cycle ;
				%Shape: Ellipse [id:dp88293636075371] 
				\draw   (156.32,50.77) .. controls (156.32,39.45) and (165.54,30.28) .. (176.91,30.28) .. controls (188.28,30.28) and (197.5,39.45) .. (197.5,50.77) .. controls (197.5,62.08) and (188.28,71.26) .. (176.91,71.26) .. controls (165.54,71.26) and (156.32,62.08) .. (156.32,50.77) -- cycle ;
				%Curve Lines [id:da936042272539189] 
				\draw    (45.53,66.95) .. controls (72.31,94.4) and (139.84,94.81) .. (160.75,67.4) ;
				\draw [shift={(161.67,66.13)}, rotate = 134.31] [color={rgb, 255:red, 0; green, 0; blue, 0 }  ][line width=0.75]    (21.86,-6.58) .. controls (13.9,-2.79) and (6.61,-0.6) .. (0,0) .. controls (6.61,0.6) and (13.9,2.79) .. (21.86,6.58)   ;
				\draw [shift={(103.97,87.73)}, rotate = 45] [color={rgb, 255:red, 0; green, 0; blue, 0 }  ][line width=0.75]    (-11.18,0) -- (11.18,0)(0,11.18) -- (0,-11.18)   ;
				%Curve Lines [id:da22416493442908614] 
				\draw  [dash pattern={on 0.84pt off 2.51pt}]  (164.14,34.17) .. controls (137.37,6.72) and (69.84,6.31) .. (48.93,33.72) ;
				\draw [shift={(48.01,34.99)}, rotate = 314.31] [color={rgb, 255:red, 0; green, 0; blue, 0 }  ][line width=0.75]    (21.86,-6.58) .. controls (13.9,-2.79) and (6.61,-0.6) .. (0,0) .. controls (6.61,0.6) and (13.9,2.79) .. (21.86,6.58)   ;
				%Shape: Ellipse [id:dp5134392736730803] 
				\draw  [fill={rgb, 255:red, 155; green, 155; blue, 155 }  ,fill opacity=1 ] (231,50.77) .. controls (231,39.45) and (240.22,30.28) .. (251.59,30.28) .. controls (262.96,30.28) and (272.18,39.45) .. (272.18,50.77) .. controls (272.18,62.08) and (262.96,71.26) .. (251.59,71.26) .. controls (240.22,71.26) and (231,62.08) .. (231,50.77) -- cycle ;
				%Shape: Ellipse [id:dp8037909429932862] 
				\draw  [fill={rgb, 255:red, 155; green, 155; blue, 155 }  ,fill opacity=1 ] (374.32,50.77) .. controls (374.32,39.45) and (383.54,30.28) .. (394.91,30.28) .. controls (406.28,30.28) and (415.5,39.45) .. (415.5,50.77) .. controls (415.5,62.08) and (406.28,71.26) .. (394.91,71.26) .. controls (383.54,71.26) and (374.32,62.08) .. (374.32,50.77) -- cycle ;
				%Curve Lines [id:da10475325701998839] 
				\draw    (263.53,66.95) .. controls (290.31,94.4) and (357.84,94.81) .. (378.75,67.4) ;
				\draw [shift={(379.67,66.13)}, rotate = 134.31] [color={rgb, 255:red, 0; green, 0; blue, 0 }  ][line width=0.75]    (21.86,-6.58) .. controls (13.9,-2.79) and (6.61,-0.6) .. (0,0) .. controls (6.61,0.6) and (13.9,2.79) .. (21.86,6.58)   ;
				\draw [shift={(321.97,87.73)}, rotate = 45] [color={rgb, 255:red, 0; green, 0; blue, 0 }  ][line width=0.75]    (-11.18,0) -- (11.18,0)(0,11.18) -- (0,-11.18)   ;
				%Curve Lines [id:da16706378515868847] 
				\draw    (382.14,34.17) .. controls (355.37,6.72) and (287.84,6.31) .. (266.93,33.72) ;
				\draw [shift={(266.01,34.99)}, rotate = 314.31] [color={rgb, 255:red, 0; green, 0; blue, 0 }  ][line width=0.75]    (21.86,-6.58) .. controls (13.9,-2.79) and (6.61,-0.6) .. (0,0) .. controls (6.61,0.6) and (13.9,2.79) .. (21.86,6.58)   ;
				%Shape: Ellipse [id:dp4122927639982096] 
				\draw  [fill={rgb, 255:red, 0; green, 0; blue, 0 }  ,fill opacity=1 ] (448,52.77) .. controls (448,41.45) and (457.22,32.28) .. (468.59,32.28) .. controls (479.96,32.28) and (489.18,41.45) .. (489.18,52.77) .. controls (489.18,64.08) and (479.96,73.26) .. (468.59,73.26) .. controls (457.22,73.26) and (448,64.08) .. (448,52.77) -- cycle ;
				%Shape: Ellipse [id:dp6274826728685751] 
				\draw  [fill={rgb, 255:red, 0; green, 0; blue, 0 }  ,fill opacity=1 ] (591.32,52.77) .. controls (591.32,41.45) and (600.54,32.28) .. (611.91,32.28) .. controls (623.28,32.28) and (632.5,41.45) .. (632.5,52.77) .. controls (632.5,64.08) and (623.28,73.26) .. (611.91,73.26) .. controls (600.54,73.26) and (591.32,64.08) .. (591.32,52.77) -- cycle ;
				%Straight Lines [id:da4020698020695854] 
				\draw    (489.18,52.77) -- (591.32,52.77) ;
				
				% Text Node
				\draw (26.45,71.4) node [anchor=north east] [inner sep=0.75pt]  [font=\large]  {$i$};
				% Text Node
				\draw (242.45,69.4) node [anchor=north east] [inner sep=0.75pt]  [font=\large]  {$i$};
				% Text Node
				\draw (456.45,69.4) node [anchor=north east] [inner sep=0.75pt]  [font=\large]  {$i$};
				% Text Node
				\draw (197.45,69.4) node [anchor=north east] [inner sep=0.75pt]  [font=\large]  {$j$};
				% Text Node
				\draw (415.45,69.4) node [anchor=north east] [inner sep=0.75pt]  [font=\large]  {$j$};
				% Text Node
				\draw (635.45,69.4) node [anchor=north east] [inner sep=0.75pt]  [font=\large]  {$j$};
				% Text Node
				\draw (329,111) node [anchor=north] [inner sep=0.75pt]  [font=\footnotesize] [align=left] {{If $\displaystyle j$ gets infected by a third}\\ {vertex, we reveal the edge $\displaystyle ( j,i)$, }\\{which in this case is present.}};
				% Text Node
				\draw (547,111) node [anchor=north] [inner sep=0.75pt]  [font=\footnotesize] [align=left] {{An infection occured along $\displaystyle ( j,i)$.}\\{Both vertices and both edges are}\\{now saturated. Only an update of}\\{$\displaystyle i$ or $\displaystyle j$ will change this situation. }};
				% Text Node
				\draw (506,34) node [anchor=north west][inner sep=0.75pt]   [align=left] {saturation};
				% Text Node
				\draw (109,111) node [anchor=north] [inner sep=0.75pt]  [font=\footnotesize] [align=left] {{If $\displaystyle i$ becomes infected and $\displaystyle j$ is not, }\\{we reveal the edge $\displaystyle ( i,j)$, which in }\\{this case is absent. We keep the}\\{edge $\displaystyle ( j,i)$ unrevealed.}};
			\end{tikzpicture}	
			\caption{Dynamics of the slow edges $(i,j)$ and $(j,i)$.}
			\label{figu3}
		\end{figure}
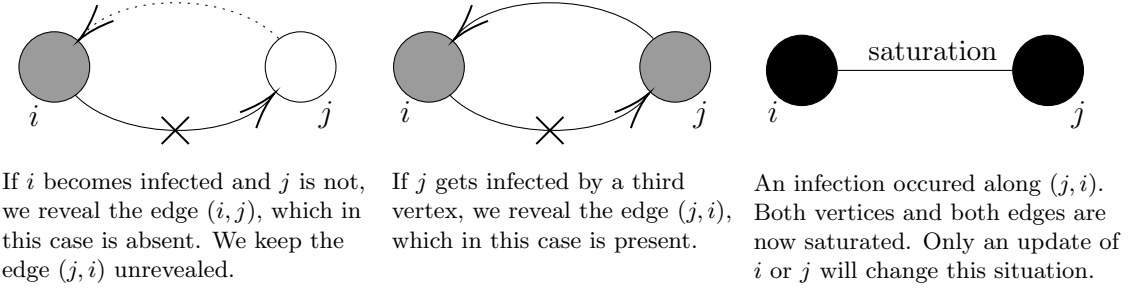
		
		\item For any given slow vertex $i$ define $\Npr_i(t)$ as the number of present edges leaving $i$ at time $t$. If at any given time it satisfies \[
		\Npr_i(t)\ge \frac {3\kappa_i \Tloc_i} {8\lambda},
		\]
		then we declare vertex $i$ to be \emph{saturated}.
		Observe that when a vertex is healthy, the number of present edges leaving it can only decrease so at any time in which the condition is met, $i$ must be necessarily infected. We remind the reader that a saturated vertex can only stop being saturated by updating, so dropping $\Npr_i(t)$ below \smash{$\frac {3\kappa_i \Tloc_i} {8\lambda}$} by the updating of its neighbours is irrelevant. {Intuitively, this condition entails that $i$ is anyway very likely to quickly transmit an infection to another slow vertex, thus leading to saturation by the previous rule.} 
		\item When a slow vertex $i$ updates while infected, we sample freshly the slow edges leaving it and thus observe a totally new value for $\Npr_i(t)$. 
		 %It could happen, that $i$ becomes saturated if $\Npr_i(t)$ is large. 
		 Irrespectively of whether $i$ was saturated or unsaturated before the update, the vertex $i$ will be saturated after the update {only} if the new value of 
		 $\Npr_i(t)$ is too large, however this happens with probability bounded, using the Markov inequality, by	
		\begin{equation}\label{ineq:resaturationbyN}
			\frac {8\lambda}{3\kappa_i T^{\rm{loc}}_i} \sum_{j\le \asl N} p_{ij} \le \frac 1 {6\lambda \pi_i} \int_0^{\asl} p(i/N,z) \, \mathrm dz \le \frac 13,
		\end{equation}
		where the first inequality comes from the definition of $T^{\rm{loc}}_i$ and the second from that of $\pi_i$, together with the fact that the updating rate of a slow edge is bounded by~$2\lambda$.

		\item We now describe the rules affecting quick edges, for which we use the wait-and-see coupling introduced in~\cite[Section 5.1]{JLM19} where each such edge is treated as a possible {channel} for the infection, irrespectively of whether the edge belongs to the original network or not. Instead of thinking of quick edges as absent or present   we classify them as {\emph{clogged} or \emph{clear}}. A clogged edge $\{i,j\}$ transmits the infection at the reduced rate $\lambda p_{i,j}$ (as long as either $i$ or $j$ is infected), {and if this happens the edge is cleared.} A \emph{clear} edge then transmits the infection at the regular rate $\lambda$. Note that if the vertices $i$ and $j$ are infected and the edge is clogged, we still count an infection along the edge at rate $\lambda p_{i,j}$, with the only effect of clearing the edge. The reduced rate associated to clogged edges reflects the fact that the edge is likely to be absent from the original network. We refer to~\cite[Section 5.1]{JLM19} to justify that we get an upper bound for the original process. Observe that only quick edges can be clogged or clear whereas only slow edges can be unrevealed, present, absent or saturated.
		
		\begin{table}[h]
			\centering
			\begin{tabular}{|c||c|c|c|c|c|c|}
				\hline
				Type of edge &  \multicolumn{2}{c|}{Quick}& \multicolumn{4}{c|}{Slow}\\
				\hline
				 State of  edge  & \emph{clogged}  & \emph{clear} & \emph{unrevealed}  & \emph{absent}& \emph{present} & \emph{saturated}
				\\ \hline
				
				Infection rate      & $\lambda p_{i,j}$  & $\lambda$ &   \slashbox{}{} & 0&$\lambda$ & \slashbox{}{}                          \\  \hline               
			\end{tabular}
			\caption{Infection rate along an edge, depending on its state. }
			\label{tab:edgesinfectionrates}
		\end{table}
		\item Recall that quick edges can be adjacent to both slow and quick vertices $i$, the latter also updating at rate $\kappa_i$. When a vertex $i$ updates, all quick edges $\{i,j\}$ become clogged irrespectively of their previous status.
		
		\item  For every quick edge $\{i,j\}$ define $R_{ij}= \lambda/(\kappa_i+\kappa_j)$ and for any vertex $i$ (quick or slow) define
		\[R_i(t):=  \sum_{\{i,j\}\text{ clear}} R_{ij},\]
		which is an upper bound for the probability that one of the clear edges incident to $i$ transmits an infection to $i$ before being updated. If at any given time, vertex $i$ satisfies $R_i(t)\geq 1/4$, then we declare it to be \emph{saturated}. Observe that quick edges can only become clear through the spread of an infection, thus at the time in which $R_i(t)\geq1/4$ vertex $i$ is necessarily infected. Commonly, the number of infected neighbours of $i$ is a fraction of $R_i$ and hence a vertex, which is saturated due to this condition, would be very likely reinfected after a recovery.
	
		\item At time $t=0$ the initial configuration of the process is constructed as follows:
		\begin{itemize}[leftmargin=*]
			\item First, declare the set of vertices that are infected. Also, set each quick edge as clogged and each slow edge as unrevealed.
			\item Next, update each slow vertex through the two-step mechanism described above. This reveals all slow edges leaving slow infected vertices, while maintaining all other slow edges unrevealed.
		\end{itemize}
		\end{itemize}
		
		\smallskip
		
		The fact that this new process {is a stochastic upper bound of} the original contact process can be proved similarly to what was done in~\cite[Section 5.1]{JLM19}, together with Lemma~\ref{lem:splitting_edges} below, and {observing that saturated vertices cannot recover, and whenever a slow edge $(i,j)$ is used to spread the infection, both $(i,j)$ and $(j,i)$ become saturated, and thus can be thought as belonging to the network.}
		 
	\begin{lemma} \label{lem:splitting_edges}
		Suppose $p\in [0,1]$,  $G=(V,E)$ is a finite graph, and $e\in\mathcal P_2(V)\backslash E$ an edge not present in the graph. 
		Let $G^{e,p}$ and $\overrightarrow{G}^{e,p}$ be two random graphs such that:
		\begin{itemize}
			\item $G^{e,p}$ is obtained from $G$ by adding edge $e$ with probability $p$.\smallskip
			\item $\overrightarrow{G}^{e,p}$ is obtained from $G$ by adding independently\footnote{The lemma stays true without this independence property, and this assumption is not used in the proof.} the directed edge $\overrightarrow{e}$ as well as its reverse edge $\overleftarrow{e}$ with probability $p$. \smallskip
		\end{itemize}
		Writing $G^e=(V,E\cup\{e\})$, we thus have $\P(G^{e,p}=G^e)=p$. The contact process on $G^{e,p}$ is then stochastically dominated by the contact process on 
		\smash{$\overrightarrow{G}^{e,p}$} where, the first time an infection is transmitted along edge $\overrightarrow{e}$ or $\overleftarrow{e}$, the graph is instantly replaced by $G^e$.
	\end{lemma}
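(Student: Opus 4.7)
The plan is to prove the stochastic domination by constructing an explicit coupling of the two contact processes on a single probability space under which $X_t\subseteq Y_t$ almost surely for every $t\ge 0$, where $X$ denotes the contact process on $G^{e,p}$ and $Y$ the contact process on $\overrightarrow{G}^{e,p}$ with the prescribed graph replacement rule; the lemma then follows from Strassen's theorem.

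First I would set up the standard graphical construction on the shared graph $G$, using common rate-$1$ Poisson recovery clocks $\mathcal R^v$ for every vertex $v\in V$ and common rate-$\lambda$ Poisson infection clocks $\mathcal I^{u,v}$ for every $\{u,v\}\in E$, so that the two processes evolve identically at events not involving $e$. For $Y$, the two directed edges $\overrightarrow{e},\overleftarrow{e}$ use independent Bernoulli$(p)$ coins $C^\rightarrow, C^\leftarrow$ and independent rate-$\lambda$ Poisson processes $\mathcal I^\rightarrow,\mathcal I^\leftarrow$. For $X$, I would construct the coin $C^e$ and the infection clock $\mathcal I^e$ along $e$ in an \emph{adaptive} state-dependent way, using the ingredients above together with independent auxiliary randomness, in such a way that every $X$-infection along $e$ in direction $D\in\{i\to j,\,j\to i\}$ is matched, at the same instant, by a $Y$-infection along the corresponding directed edge; this triggers the replacement rule, so that from this moment onward the undirected edge $e$ is available in $Y$ as well.

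Granted such a coupling, $X_t\subseteq Y_t$ is then maintained by induction over the ordered event times: shared recoveries can only reduce the two infected sets simultaneously, infections on $E$ preserve the inclusion by the attractivity of the contact process on a common graph, and infections along $e$ in $X$ are matched by $e$-infections in $Y$ by construction, after which both processes carry the full undirected edge $e$ and can be coupled via a single rate-$\lambda$ Poisson clock. The main obstacle is carrying out the adaptive construction consistently with the prescribed marginal laws, namely that $C^e$ is Bernoulli$(p)$ and $\mathcal I^e$ is a rate-$\lambda$ Poisson process, each independent of the relevant filtration. The room for this lies in the elementary inequality $\P(C^\rightarrow=1\vee C^\leftarrow=1)=2p-p^2\ge p$, which leaves enough probability to fit the coin $C^e$ inside the event that the appropriate directed edge is present in $\overrightarrow{G}^{e,p}$; verifying this reveal-on-use construction rigorously is a technical bookkeeping exercise, carried out by conditioning on the direction of the first prospective $X$-transition along $e$ and exploiting the symmetry and independence of $\mathcal I^\rightarrow,\mathcal I^\leftarrow$.
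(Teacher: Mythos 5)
Your proposal is correct and follows essentially the same route as the paper: a reveal-on-use coupling in which the presence of $e$ in $G^{e,p}$ is decided only at the first prospective transmission along $e$, and the Bernoulli$(p)$ coin for $e$ is identified with the coin of the directed edge pointing in the direction of that first transmission, so that this transmission (when it survives) is matched by an infection in $\overrightarrow{G}^{e,p}$ triggering the replacement by $G^e$. One small remark: the slack $\P(C^\rightarrow=1\vee C^\leftarrow=1)=2p-p^2\ge p$ is not actually needed, since the single directed edge in the relevant direction is present with probability exactly $p$ and the direction is determined by clock randomness independent of the coins — which is precisely why the paper's footnote notes that the independence of $\overrightarrow{e}$ and $\overleftarrow{e}$ is never used.
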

	\begin{proof}
		Consider the contact process on $G^e$, but at the first time the edge $e$ is used to transmit the infection, with probability $1-p$ the infection and the edge $e$ are deleted.~We get the contact process on $G^{e,p}$ (with the presence of $e$ in the graph not being revealed at once).
		If the infection (in $G^e$) was transmitted in the direction of $\overrightarrow{e}$ (resp. $\overleftarrow{e}$), we couple the choice of keeping $e$ in $G^{e,p}$ with that of keeping $\overrightarrow{e}$ (resp. $\overleftarrow{e}$) in \smash{$\overrightarrow{G}^{e,p}$}.	Then, the first infection using $e$ in $G^{e,p}$ (if $G^{e,p}=G^e$) is an infection in \smash{$\overrightarrow{G}^{e,p}$}. The lemma~follows.
		%The only thing we need to prove is that we can couple the existence 
	\end{proof}
	
	\smallskip
	
	We are now ready to define the score associated to a given configuration of the new process: Let $s$ be the function given in the hypothesis of the theorem and define \begin{equation}\label{def: t(x)}
		t(x)= \frac {s(x)}{\ratiost \kappa(x)\Tloc(x)},
	\end{equation}
	as well as $t_j=t(j/N)$, with the constant $\ratiost$ being defined\footnote{The proof of Theorem~\ref{teoupper_optimal} will use again the same notation but with a different value for $\ratiost$, which is why we do not use that $\ratiost=3/2$ up until the end of the proof.} by $\ratiost=3/2$. Since $\Tloc_i\leq 8$ and $\Tloc_i\leq\frac{8}{3\kappa_i}$ we obtain the following bounds which will be used extensively throughout this section: 
	\begin{equation}
		\label{ineq:t_ivss_i}
		t_j\le \frac {3 s_j}{8\ratiost}= \frac {s_j} 4,\quad\text{ and }\quad\kappa_i t_i\le \frac {\Tloc_i \kappa_i t_i}8= \frac {s_i}{8\ratiost}.
	\end{equation}	
	Define
	\[
	m_j(t):=\left\{ \begin{array} {ll}
		s_j+t_j  &\mbox{if } j \mbox{ is \color{magenta} infected saturated},\\
		s_j+ 4 R_j(t) t_j &\mbox{if } j \mbox{ is \color{red} infected unsaturated},\\
		2R_j(t) (s_j+t_j) &\mbox{if }j\mbox{ is \color{mygreen} healthy}.
	\end{array}\right.
	\]
	The score associated to the configuration is then
	\begin{equation}\label{eq:M_t}
	M_t:= \sum_{i}m_i(t)+ {\color{orange}\sum_{(i,j)\text{ present edge}} s_j+t_j+u_j},
	\end{equation}
	where $u_j=t_j/3$. We refer to the first term on the right-hand side of \eqref{eq:M_t} as the \emph{vertex score}, whereas we refer to the second term as the \emph{edge score}. We now discuss the idea behind this score and some computations involved:
	\begin{itemize}[leftmargin=*]
		\item Each vertex $j$ is associated to a score $m_j(t)$ that depends on its current state (healthy, infected unsaturated, infected saturated), on the quick edges $\{j,k\}$ and on the slow edges $(j,k)$ leaving $j$. Each present slow edge $(i,j)$ {entering $j$} is also associated to a score, that reflects the fact that $i$ is likely to later infect $j$. No score is associated to other slow or quick edges and in particular no score is associated to saturated edges.%, which are anyway necessarily incident to two saturated vertices.
		\item The score $m_j(t)$ associated to an infected vertex $j$ consists of a base score $s_j$ (reflecting the infective potential of the vertex) plus a term $4R_j(t)t_j$ which scales linearly with $R_j$ and thus with the number of clear edges connected to it. We cap this score at saturation.
		\item The score $m_j(t)$ associated to a healthy vertex $j$ %, $2R_j(t)(s_j+t_j)$, 
		also scales with $R_j$, and thus on the estimated probability that $j$ could be later (re)infected by one of these edges.
	%reflects the fact that the edge is likely to later transmit an infection anyway, which can imply an average score increase of $s_j+t_j+u_j$ by infecting/saturating $j$ and revealing further slow edges.  
		%\item The quantities $s_j$, $t_j$ and $u_j$ are in decreasing order of importance. This reflects the idea that for a vertex $j$, it is bad to be infected, slightly worse if it is saturated, and again slightly worse if we further have to reveal the slow edges leaving it.
		\item When revealing the neighbourhood of a slow vertex (whether it is a result of an infection or an update), the \emph{average} increase of the edge score is bounded by
		\begin{equation}\label{eq:boundnewneigh}
			\sum_{j<\asl N} p_{ij} (s_j+t_j+u_j)\le 
			\frac 4 3 \frac 1 {\kappa_i} 
			\sum_{j<\asl N}  (\kappa_i\vee \kappa_j)  p_{ij}s_j 
			\le  \frac 4 3 \frac {s_i}{\CMI \kappa_i \Tloc_i}=\frac {4 \ratiost}{\CMI} u_i \leq u_i, \hspace{-.8cm}
		\end{equation}
		where we have used that $s_j+t_j+u_j\le s_j+\sfrac 4 3 t_j\le \sfrac 4 3 s_j$, the assumptions on the function $s$, the definition of $t$, and our choice $\CMI > 4\ratiost=6$.
		\item 
		Note that when the present directed slow edge $(i,j)$ transmits an infection to $j$, the score $s_j+t_j+u_j$ associated to it, is itself an upper bound for the new score $m_j(t)=s_j+t_j$ assigned to $j$ (that is now infected and saturated) plus the average increase in score due to revealing  the directed slow edges leaving $j$ (since it is bounded by $u_j$ from the previous argument). 
		Simultaneously, the edge $(i,j)$ becomes saturated, with no score associated to it anymore. Hence, this infection does not increase the total score in average, except possibly for the score associated to $i$, which will then also be saturated. It is especially for this feature that we have introduced $u_j$ and associated the score $s_j+t_j+u_j$ to the present edge $(i,j)$.
		In a sense, associating the score $s_j+t_j+u_j$ to a present slow edge $(i,j)$ also amounts to considering that revealing the edge $(i,j)$ is ``as bad'' as using it to transmit the infection.
		\item 
		We now summarize the previously described conditions entailing saturation:\smallskip
		\begin{itemize}
			\item[(i)] $i$ is a slow vertex, and since its last update it has infected another slow vertex or has been infected by another slow vertex. 
			\item[(ii)] $R_i(t)\ge 1/4$,
			\item[(iii)] $i$ is a slow vertex and $
			\Npr_i(t)\ge \frac {3\kappa_i \Tloc_i} {8\lambda}$.\smallskip
		\end{itemize}
		Recall also that a vertex can only stop being saturated by updating, so saturation is maintained even if $R_i(t)$ or $\Npr_i$ drop below their respective values due to the updating of other vertices.

	\end{itemize}
	
	\bigskip

Let $T_{\rm{hit}}$ be the first infection time of a star in $[1,\ldots,\lfloor aN\rfloor]$. Our aim is to show that $M_{t\wedge T_{\rm{hit}}}$ is a supermartingale. To do so we fix $t>0$, and bound the expected infinitesimal change of the score
\begin{equation}\label{defM_t}
\frac 1 {\de t}  \E[M_{t+\de t} -M_t\vert \F_t]
\end{equation}
uniformly over all configurations in $\F_t$ where stars have not yet been infected. {Loosely speaking, if} $\de t$ is sufficiently small, then aside from some term tending to zero with $\de t$, the difference $M_{t+\de t} -M_t$ comes from a single configuration-changing event. The possible events are infections, recoveries and updates, whose effect on the configuration vary depending on the type of vertex/edge and its state. Since the score is a sum of individual scores, the expression \eqref{defM_t} becomes a large sum of terms associated to different vertices/edges affected by these events. In order to facilitate the analysis of this sum, we group these terms into \emph{contributions} which we control below. The reader {can easily check} that these contributions indeed exhaust the changes of $M_t$ under all possible events without double counting.

\medskip
$\mathbf{ (i)\ Negative\ Contributions}.$\medskip

$\bullet$ \emph{Recoveries of infected unsaturated vertices.} The recovery of an infected unsaturated vertex $i$ (whether fast or slow) holds at rate one and induces a change of score 
\[m_i(t+\de t)-m_i(t)=2R_i(t)(s_i+t_i)-(s_i+4R_i(t)t_i)\leq (2R_i(t)-1)s_i\leq-\frac{s_i}{2}\]
since the fact that $i$ is unsaturated implies that $R_i\leq\frac{1}{4}$. The total contribution to the expected infinitesimal change is thus upper bounded by
\begin{equation}\label{neg1} {\color{red}
-\sum_{\heap{i \text{ infected}}{\text{unsaturated}}}
\frac {s_i} 2.}\end{equation}

$\bullet$ \emph{Unrevealing of present slow edges.} In this contribution we group the terms which appear due to the effect of \emph{unrevealing} previously present edges as a result of the updating of slow vertices. For any given present directed slow edge $(i,j)$, 
the update of one of its endvertices occurs at rate $\kappa_i +\kappa_j$, and has probability at least $2/3$ (actually tending to $1$ with $N$) to stay unrevealed after the update. This yields a contribution
\begin{equation}\label{neg2}
{\color{orange}-\sum_{(i,j)\text{ present}} \frac 23 (\kappa_{i}+\kappa_j) (s_j+t_j+u_j).}
\end{equation}

$\bullet$ \emph{Updates of infected saturated vertices.} In this contribution we group the change in score of infected saturated vertices as a result of their own updating, together with the increase in score that occurs as a result of revealing their new neighbourhood. Each saturated vertex $i$ updates at rate $\kappa_i$, and after its update we have $R_i(t+\de t)=0$ so if $i$ is quick, then
\[m_i(t+\de t)-m_i(t)=s_i-(t_i+s_i)=-t_i.\] 
If $i$ is slow, on the other hand, we also need to take into account the effect of the update on the slow edges:
\begin{itemize}[leftmargin=*]
	\item[--] During the first step of the two-step mechanism all slow edges leaving $i$ become unrevealed, thus dropping $\Npr_i(t+\de t)$ to zero.
	\item[--] During the second step it could happen that $i$ reveals a large value of $\Npr_i(t+\de t)$, thus becoming saturated again, so $m_i(t+\de t)-m_i(t)=0$. However, from \eqref{ineq:resaturationbyN} the probability of this event is bounded by $1/3$.
	\item[--] Also during the second step, a new neighbourhood of $i$ is revealed, which induces a change in the edge score that is in average upper bounded by $u_i=t_i/3$ due to \eqref{eq:boundnewneigh}.
\end{itemize}
Adding both effects we obtain that for a slow vertex $i$ the average change in score is upper bounded by $\frac{2}{3}(-t_i)+\frac{t_i}{3}=-\frac{t_i}{3}$, and since this bound also works for quick vertices, we deduce that the total contribution to \eqref{defM_t} is at most
\begin{equation}\label{neg3}
{\color{magenta}- \sum_{\heap{i \text{ infected}}{\text{saturated}}} \frac 1{3} {\kappa_i t_i}}.
\end{equation}

$\bullet$ \emph{Updates of clear edges adjacent to at least one healthy vertex.} In this contribution we count the change in score $m_i(t+\de t)-m_i(t)$ for every healthy vertex $i$ due to the decrease of $R_i$ after either its own update or the update of one of its neighbours. Observe that the previous contribution addressed the change in score for saturated vertices, which are infected, and hence we are not double counting. For each healthy vertex $i$, its vertex score~is 
\[m_i(t)= 2 R_i(t)(s_i+t_i)= 2 \sum_{\heap{j\sim i}{\{i,j\} \text{ clear edge}}}\frac \lambda {\kappa_i+\kappa_j} (s_i+t_i)\]
This score can decrease by $2R_i(t)(s_i+t_i)$ after its own update (which occurs at rate $\kappa_i$), or can decrease by \smash{$2\frac{\lambda}{\kappa_i+\kappa_j}(s_i+t_i)$} after the update of its neighbour $j$ (which occurs at rate $\kappa_j$). This yields a contribution
\begin{equation}\label{neg4}\sum_{i \text{ healthy}}\bigg[-2R_i(t)\kappa_i(s_i+t_i)-2\sum_{\heap{j\sim i}{\{i,j\} \text{ clear}}}\frac{\lambda\kappa_j}{\kappa_i+\kappa_j}(s_i+t_i)\bigg]={\color{mygreen}
- \sum_{i \text{ healthy}} 2 \lambda \Ncl_i(t) (s_i+t_i),}\end{equation}
where $\Ncl_i(t)$ is the number of clear edges incident to $i$.\\

$\bullet$ \emph{Remaining negative contributions}. There are a few negative contributions we are neglecting in our count, such as the the decrease of $R_i$ after an infected unsaturated 
vertex~$i$ updates, or after one of its neighbours does. The reason behind neglecting such terms is that these are smaller in magnitude than the ones already considered, and are not necessary to counteract the positive contributions we count below.

\medskip

We now come to all the positive contributions, coming from the infections and from the updating of the infected vertices (which may reveal a less favourable environment). We will systematically express these positive terms so as to have them easily compared with and compensated by the negative ones.

\medskip
$\mathbf{ (ii)\ Positive\ contributions}.$\medskip

$\bullet$ \emph{Revealing the neighbourhood of infected unsaturated vertices.} In this contribution we group the terms coming from revealing the neighbourhood of infected unsaturated (slow) vertices after their own update. Observe that this leaves out the effect of revealing new neighbours as a result of the updating of said neighbours; this will be addressed in another contribution. Similarly as for saturated vertices, we count both the increase of the edge scores of the newly revealed present edges, and the one coming from the possibility of having $\Npr_i$ too large after the update. This gives a positive contribution bounded by
\begin{equation}\label{pos1}
\sum_{\heap{i \text{ infected}}{\text{unsaturated}}} \frac 2 {3} \kappa_i t_i \le {\color{red}\sum_{\heap{i \text{ infected}}{\text{unsaturated}}} \frac 2 {3}\ \frac {s_i}{8\ratiost}}.
\end{equation}

%The update of vertex $x$ holds at rate $\kappa_x$ and induces an expected change of the total score bounded by:
%\[\bullet \qquad
%u_x-t_x -2\sum_{y  \text{ unsaturated}: Z_t(x,y)=1, Z_t(y)=0} \frac {\lambda}{\kappa_x+\kappa_y}(s_y+t_y)
%\]
%if $x$ is a slow infected and saturated vertex. Indeed $u_x-t_x$ bounds the expected change of score associated to the slow vertices, and the sum over $y$ takes into account the change of score $m_t(y)$ induced by the fact the edge $\{x,y\}$ turns to unrevealed. Note $y$ in this sum cannot be a slow vertex.

\medskip
%$\mathbf{ (iii)\ Infections}.$
$\bullet$ \emph{Infections along  clear edges}. The infection of a healthy vertex $i$ through clear edges holds at rate $\lambda \Ncl_i(t)$ and implies an increase of its vertex score of at most
\[m_i(t+\de t)-m_i(t)=(s_i+4R_i(t)t_i)-2R_i(t)(s_i+t_i)=s_i+2R_i(t)(t_i-s_i)\leq s_i\]
 Observe that if $i$ is slow, the infection of $i$ has the added effect of revealing its neighbourhood, thus increasing the edge score. Again, due to \eqref{eq:boundnewneigh} this increase is bounded in average by $u_i\leq t_i$, so grouping together this term and the previous one we obtain a contribution bounded by
\begin{equation}\label{pos2}
%\lambda\sum_{x \text{ healthy}}
%N_x(t) (s_x+t_x+u_x)\le {\color{green} \frac {4\lambda} 3
	{\color{mygreen}	\sum_{i \text{ healthy}}
		\lambda  \Ncl_i(t) (s_i+t_i)}.
	\end{equation}
	
	$\bullet$ \emph{Infections along present edges}. Each present slow edge $(i,j)$ with $i$ infected
	transmits the infection at rate $\lambda$. This infection has the effect of making $i$, $j$ and $(i,j)$ saturated, while also possibly revealing the neighbours of $j$. The saturation of $j$ implies a maximal increase in edge score of $s_j+t_j$ while the new edges revealed imply an average increase in score of $u_j$. Both contributions cancel out with the saturation of $(i,j)$ which imply a decrease in score of $s_j+t_j+u_j$. It follows that we only have to consider the increase in score given by the saturation of $i$, which is null if $i$ was saturated, and is bounded by $t_i$ otherwise (since $i$ was already infected). It follows that this contribution is bounded by
	\begin{equation}\label{pos3}
	\sum_{\heap{i \text{ infected}}{\text{unsaturated}}} \lambda \Npr_i(t)t_i%=\sum_{x \text{ infected}} \frac{\lambda N_x^{\rm{pen}}(t)}{\ratiost \kappa_x \Tloc_x} s_x 
	\le \sum_{\heap{i \text{ infected}}{\text{unsaturated}}} \frac {3}{8} \Tloc_i \kappa_i t_i
	= {\color{red}\sum_{\heap{i \text{ infected}}{\text{unsaturated}}} 3\ \frac {s_i}{8\ratiost}},
	\end{equation}
	where in the first inequality we have used the bound $\Npr_i\leq\frac{3\Tloc_i\kappa_i}{8\lambda}$ which holds for unsaturated vertices.
	\medskip

	$\bullet$ \emph{Infections along clogged edges and revealing of slow edges $(i,j)$ by the update of $j$.}
	In this contribution we cluster the terms coming from two different effects, since the bounds obtained from each effect can be naturally merged into one. Consider first any infected vertex $i$. For any clogged edge $\{i,j\}$, we have transmission of an infection from $i$ to $j$ with intensity $\lambda p_{i,j}$. This increases:
	\begin{itemize}
		\item[$(I.1)$] The vertex score of $j$, which is bounded by $s_j+t_j$.
		\item[$(I.2)$] The edge score associated to the possible revealing of the neighbourhood of $j$ (if $j$ slow). By \eqref{eq:boundnewneigh} the average increase of this score is bounded by $u_j$.
		\item[$(I.3)$] The vertex score of $i$ by the increase of $R_i(t)$. This increase in score is equal to
		$\frac{4\lambda}{\kappa_i + \kappa_j}t_i$ if $i$ is unsaturated.
	\end{itemize}

	Suppose now that $i$ is an infected slow vertex (so its neighbourhood has been revealed) and fix any absent slow edge $(i,j)$. This edge becomes present by the update of $j$ with intensity $\kappa_j p_{i,j}$ (it can also become present by the update of $i$, but it was already taken into account). This has the effect of:
	\begin{itemize}
		\item[$(II.1)$] Increasing the edge score by $s_j+t_j+u_j$ due to the new present edge. 
		\item[$(II.2)$] Possibly making $i$ saturated if $\Npr_i(t)$ becomes too large by this additional present edge. Since $i$ was already infected, the score increase is at most $t_i$. 
	\end{itemize} 
	
	In order to deal with all these contributions suppose first that $i$ is infected and quick. Grouping $(I.1)$ and $(I.2)$ together gives a contribution bounded by
	\[\sum_{j}\lambda p_{i,j}(s_j+t_j+u_j)\leq\frac{4}{3}\sum_{j}\lambda p_{i,j}s_j\leq \frac{4}{3}\frac{s_i}{\CMI\Tloc_i}\]
	where in the last inequality we have used~\eqref{IMI_quick}. Suppose now that $i$ is infected and slow. In that case the contributions $(I.1)$ and $(I.2)$ only arise for neighbours $j$ that are quick, while for slow neighbours we can count the contribution coming from $(II.1)$:
	\begin{align*}\sum_{j\text{ quick}}\lambda p_{i,j}(s_j+t_j+u_j)+\sum_{j\text{ slow}}\kappa_j p_{i,j}(s_j+t_j+u_j)&=\sum_{j}(\lambda\wedge\kappa_j)p_{i,j}(s_j+t_j+u_j) \\&\leq\frac{4}{3}\sum_{j}(\lambda \wedge\kappa_j)p_{i,j}s_j\leq \frac{4}{3}\frac{s_i}{\CMI\Tloc_i},\end{align*}
	where this time we have used~\eqref{IMI_slow}. We conclude then that from $(I.1)$, $(I.2)$ and $(II.1)$ we obtain the bound
	\[\sum_{i \text{ infected}}\frac 43 \frac {s_i}{\CMI \Tloc_i}\]
	It will be convenient to split this sum, counting separately over saturated and unsaturated vertices. Using the bounds $\Tloc_i\geq 8$ and $\Tloc_i\geq\frac{8}{3\kappa_i}$ we obtain
	\begin{equation}\label{pos4}
	\sum_{\heap{i \text{ infected}}{\text{unsaturated}}} \frac 43 \frac {s_i}{\CMI \Tloc_i}\le 
	{\color{red}\sum_{\heap{i \text{ infected}}{\text{unsaturated}}} \frac {4  }{3\CMI} \frac {s_i} 8}
	\end{equation}
	and
	\begin{equation}\label{pos5}
		\sum_{\heap{i \text{ infected}}{\text{saturated}}} \frac 43 \frac {s_i}{\CMI \Tloc_i}\le 
		{\color{magenta}\sum_{\heap{i \text{ infected}}{\text{saturated}}} \frac {4r  }{3\CMI} \kappa_it_i}
	\end{equation}
	Regarding $(I.3)$ we first assume that $i$ is quick and unsaturated so the total contribution coming from clogged edges incident to $i$ is at most
	\[\sum_{j}\lambda p_{i,j}\frac{4\lambda}{\kappa_i+\kappa_j}t_i=4\lambda^2t_i\pi_i\leq \frac{\Tloc_i}{4}\kappa_it_i=\frac{s_i}{4r}\]
	where in the inequality we have used that $\Tloc_i\geq16\frac{\lambda^2\pi_i}{\kappa_i}$. Suppose now that $i$ is slow. In this case we can use the fact that for any slow vertex $j$ we have $\kappa_j\leq\lambda$ to bound the contribution coming from $(II.2)$ by
	\[\sum_{j\text{ slow}}\kappa_j p_{i,j}t_i\leq \sum_{j\text{ slow}}\lambda p_{i,j}t_i\leq \sum_{j\text{ slow}}\lambda p_{i,j}t_i\frac{2\lambda}{\kappa_i+\kappa_j}\]
	Since vertices connected to $i$ through clogged edges are necessarily quick we can thus add the terms coming from $(I.3)$ and $(II.2)$ (associated to $i$) together to obtain
	\[\sum_{j\text{ quick}}\lambda p_{i,j}\frac{4\lambda}{\kappa_i+\kappa_j}t_i+\sum_{j\text{ slow}}\kappa_j p_{i,j}t_i\leq \sum_{j}\frac{4\lambda^2}{\kappa_i+\kappa_j}t_ip_{i,j}\leq\frac{s_i}{4r}\]
	which is the same bound obtained for quick vertices, and hence we conclude the upper bound for the total contribution of terms coming from $(I.3)$ and $(II.2)$
	\begin{equation}\label{pos6}
			{\color{red}\sum_{\heap{i \text{ infected}}{\text{unsaturated}}}  \frac {s_i} {4\ratiost}}
	\end{equation}

	\smallskip
	We can now group all the different contributions to the expected infinitesimal change. We begin by grouping \eqref{neg1}, \eqref{pos1}, \eqref{pos3}, \eqref{pos4} and \eqref{pos6} together, so we arrive at
	\[
	{\color{red}
		-\sum_{\heap{i \text{ infected}}{\text{unsaturated}}}\left(4\ratiost- \frac {17} 3 - \frac{4 \ratiost}{3\CMI} \right) \frac {s_i}{8\ratiost}
	}=- \sum_{\heap{i \text{ infected}}{\text{unsaturated}}} \frac {1} {21}\frac {s_i}{8\ratiost}\le 
	- \sum_{\heap{i \text{ infected}}{\text{unsaturated}}} \frac {\kappa_i t_i}{21},
	\]
	where in the equality we have injected the values $\ratiost=3/2$ and $\CMI=7$, and in the inequality we have used \eqref{ineq:t_ivss_i}. Grouping together \eqref{neg3} and \eqref{pos5} yields the upper bound
	\[{\color{magenta}- \sum_{\heap{i \text{ infected}}{\text{saturated}}} \left(\frac 1 3-\frac {4 \ratiost}{3\CMI}\right) \kappa_i t_i= - \sum_{\heap{i \text{ infected}}{\text{saturated}}} \frac {\kappa_i t_i}{21}.
	}
	\]
	Next, we group together \eqref{neg4} and \eqref{pos2}, which gives
	\[{\color{mygreen}-\sum_{i \text{ healthy}} \lambda %\frac {2\lambda} 3
		\Ncl_i(t)(s_i+t_i).
	}\]
	Using the basic observation $R_i(t)\le \Ncl_i(t)$, we finally obtain that the average infinitesimal increase  
	$\frac 1 {\de t}  \E[M_{t+\de t} -M_t\vert \F_t]$ is upper bounded by
	\begin{equation}\label{finalboundM}
	%-\sum_{\heap{i \text{ infected}}{\text{or saturated}}} 
	-\sum_{i \text{ infected}} \frac {\kappa_i t_i}{21}- \sum_{i \text{ healthy}} \frac \lambda 2 m_i(t)-\sum_{(i,j) \text{ {present}}} \frac 2 3 (\kappa_{i}+\kappa_j)(s_j+t_j+u_j),
	\end{equation}
	and since this quantity is negative we conclude that $M_{t\wedge T_{\rm{hit}}}$ is a supermartingale.
	\medskip
	
	To finish the proof of the theorem, we first consider the case $a>0$, and go back to the original contact process $X$, for which we can use self-duality to get
	\begin{equation}
		\label{ineq:I_N_byduality}
		I_N(t)\le \frac{ \lceil aN\rceil}N + \frac 1 N \sum_{i=\lceil aN\rceil + 1}^N \P_i(t<T_{\rm{ext}}^X),
	\end{equation}
	where $\P_i$ stands for the law of the infection process with initially only vertex $i$ infected, and $T_{\rm{ext}}^X$ stands for its extinction time. 
	Since $X$ is stochastically bounded from above by the new process up until the hitting time $T_{\rm{hit}}$, we know that on the event $T_{\rm{ext}}<T_{\rm{hit}}$, where $T_{\rm{ext}}$ is the extinction time of the new process, we have $T_{\rm{ext}}^X\leq T_{\rm{ext}}$. Letting $T= T_{\rm{hit}}\wedge T_{\rm{ext}}$, we thus have
	\begin{equation}\label{ineqhitting}
	\P_i(t<T_{\rm{ext}}^X)\le \P_i(T_{\rm{hit}}<T_{\rm{ext}})+ \P_i(t<T).
	\end{equation}
	To control the term $\P_i(T_{\rm{hit}}<T_{\rm{ext}})$ we use that $(M_{t\wedge T})$ is a supermartingale to obtain
	\[\E_i[M_0]\geq\E_i[M_T]=\E_i[M_{T_{\rm{hit}}}\one_{T_{\rm{hit}}<T_{\rm{ext}}}]+\E_i[M_{T_{\rm{ext}}}\one_{T_{\rm{hit}}>T_{\rm{ext}}}]=\E_i[M_{T_{\rm{hit}}}\one_{T_{\rm{hit}}<T_{\rm{ext}}}],\]
	since $M_{T_{\rm{ext}}}\ge 0$. Now, at $T_{\rm{hit}}$ we know that there is at least one star infected, so $M_{T_{\rm{hit}}}\geq s(a)$. On the other hand, under the assumption that at time $t=0$ only $i$ is infected, the total score of the configuration comes from its vertex score, and the initially revealed slow edges $(i,j)$ (if $i$ slow). From \eqref{ineq:resaturationbyN} and \eqref{eq:boundnewneigh} we can thus bound
	\[
	\P_i(T_{\rm{hit}}<T_{\rm{ext}})\leq \frac{\E_i[M_0]}{s(a)}\le \frac{s_i+ \frac {t_i} 3 + u_i}{s(a)}\leq \frac 7 6  \frac {s_i}{s(a)}.
	\]
	To control the term $\P_i(t<T)$ we observe that on $t<T$ there is at least one vertex $j>aN$ infected and hence from \eqref{finalboundM} we deduce that 
	$
	\frac 1 {\de t}  \E[M_{t+\de t} -M_t\vert \F_t] \le - r
	$
	for some $r >0$ independent of $i$, $t$ and $N$ (but which might depend on $a$ and $\lambda$). Hence $M_{t\wedge T}+ r (t\wedge T)$ is a supermartingale, and in particular $r\E_i[T]\leq\E_i[M_{T}+ r T]\leq \E_i[M_0]$. We infer that
	\[
	\P_i(t<T)\le \frac 1 t \E_i[T]\le \frac 1 {r t} \E_i[M_0]\le \frac {7s_i} {6 r t}.
	\]
	Finally, replacing the bounds found in \eqref{ineqhitting} and using said bounds in \eqref{ineq:I_N_byduality} together with a simple sum-integral comparison yields
	\[
	I_N(t)\le a+\frac 1 N + \frac 7 {6 s(a)} \int s(y)dy+ \frac 7{6 r t} \int s(y) dy,
	\]
	and hence we conclude \eqref{ineqmetastablethm3}.\smallskip

	In the case $a=0$ with $(H)_\delta$ satisfied for some $\delta\in[0,1]$, there is $C=C(a,\lambda)$ independent of $N$ such that $\Tloc s^{-\delta}\leq C$. We can use this to further improve \eqref{finalboundM} by observing that
	\[\kappa_i t_i\geq\frac{1}{C}\kappa_i t_i\Tloc_i s_i^{-\delta}=\frac{s_i^{1-\delta}}{Cr}\geq\frac{(s_i+t_i+u_i)^{1-\delta}}{C'}, \]
	for some $C'$. It follows from \eqref{finalboundM} that
	\begin{align*}
		& \frac 1 {\de t}  \E[M_{t+\de t} -M_t\vert \F_t]\leq
			-\sum_{i \text{ infected}} \frac {\kappa_i t_i}{21}- \sum_{i \text{ healthy}} \frac \lambda 2 m_i(t)-\sum_{(i,j) \text{ present}} \frac 2 3 \kappa_jt_j\\&\leq	-\sum_{i \text{ infected}} \frac {(s_i+t_i)^{1-\delta}}{21C'}- \sum_{i \text{ healthy}} \frac \lambda 2 m_i(t)^{1-\delta}-\sum_{(i,j) \text{ present}} \frac {2}{3C'} (s_j+t_j+u_j)^{1-\delta}\\ &\leq -\frac{1}{C''}M_t^{1-\delta},
	\end{align*}
	where in the last inequality we used the concavity of the function $f(x)=x^{1-\delta}$. 
	\pagebreak[3]
	
	\noindent In particular, if $\delta>0$ we conclude that \smash{$M_{t\wedge T_{\rm{ext}}}^\delta + \frac{\delta}{C''} (t\wedge T_{\rm{ext}})$} is a supermartingale and so 
	\[
	\E_i[T_{\rm{ext}}]=\E_i\left[T_{\rm{ext}}+\frac{C''}{\delta}M^{\delta}{T_{\rm{ext}}}\right]\le \frac {C''} {r\delta} \E[M_0^\delta]\le \frac {C''}{r\delta} \E[M_0]^\delta \le \frac {C''\left(\frac 7 6 \int s(y)dy\right)^\delta} { \delta} N^\delta.
	\]
	We get that \eqref{resextime} holds. When $\delta=0$ we use that $\log(1+M_{t\wedge T_{\rm{ext}}})+r (t\wedge  T_{\rm{ext}})/2$ is a supermartingale to deduce the upper bound $\E[\Tex]=O(\log N)$, so \eqref{logboundext} holds. For more details, we refer here to the similar computations made in the appendix of \cite{JLM22}.

\smallskip	

\subsection{Proof of Theorem~\ref{teoupper_optimal}}

The general strategy of the proof is the same as in the proof of Theorem~\ref{teoupper_vertex_improved}: We introduce a stochastic upper bound for the infection process and then associate a score with each configuration. We then prove that the scoring function, recording the evolution in time of the score, defines a supermartingale. We maintain the name $M(t)$ for this scoring function, to emphasise the fact that several computations from the previous proof will be reused here. 
However, our upper bound is much closer to the actual process and the analysis of the supermartingale therefore more complex. In particular, the scores depend on a finer classification of vertices allowing us to separate local and global features of the infection process in the analysis.%
\smallskip 

As before, the threshold $a$ provided in the statement of the theorem defines stars $i<aN$ and non-star vertices $i\ge aN$. We also recall the thresholds
\[\asl=(\lambda /\kappa_0)^{-\frac 1 {\gamma \eta}}\quad \text{and} \quad\assl=\astr \wedge \asl,\]
where $\astr$ was defined by~\eqref{def_astr} and which, by \eqref{condp}, is of order $\lambda^{2/\gamma}$. 
 Using these thresholds we introduce the new classification of the vertices, dividing them into \emph{strong slow}, \emph{strong quick}, and \emph{weak}, as shown in Table~\ref{tab:verticestypes} below. Without loss of generality we 
suppose $a<\assl$, as this is the %toughest 
case featuring the largest variety in the classification for nonstar vertices. This also implies that star vertices are strong slow vertices.
%\smallskip

%Using these thresholds we introduce a new classification of the non-star vertices, dividing them into \emph{strong slow}, \emph{strong quick}, and \emph{weak}, as shown in Table~\ref{tab:verticestypes} below. 

%We will also classify edges as \emph{strong slow} if both endvertices are strong slow, and \emph{normal} otherwise.

\begin{table}[h!]
	\centering
	\begin{tabular}{|l||c|c|c|}
		\hline
		Vertex $i$      & $i \le \assl N$                    &      $\assl N< i\le \astr N$                           &   $\astr N <i \le N$                              \\ \hline\hline
		Type of vertex     & Strong slow                      & Strong quick                    & Weak                            \\ \hline
		Average degree             & \multicolumn{2}{c|}{$\ge 1/\left(10\lambda^2\right)$ }& $<1/(10\lambda^2)$
		\\ \hline
		Updating rate  &$<\lambda$  & $\ge \lambda$ &  %\multicolumn{2}{l|}{}%{ $\qquad \ge \lambda$} 
		\\ \hline
		$T^{\rm{loc}}_i$        & \multicolumn{2}{c|}{$\max\left(8,\frac 8{3\kappa_i},16\lambda^2 \frac {\pi_i}{\kappa_i}\right)$} &     not applicable    \\ \hline                       
	\end{tabular}
	\caption{The three types of vertices and their main features.}
	\label{tab:verticestypes}
\end{table}

To 
compare the current classification of vertices to the one used in the proof of Theorem~\ref{teoupper_vertex_improved} we consider separately the cases $\eta<-1/2$ and $\eta\geq -1/2$. In the first case we have $\assl\ll \astr$, and hence the \emph{strong slow vertices} coincide with slow vertices, while the quick vertices are subdivided into \emph{strong quick} and \emph{weak}. On the other hand, if $\eta\geq-1/2$, we have $\assl= \astr$, so \emph{strong quick} vertices do not exist. In that case, \emph{strong slow vertices} are still slow, but now weak vertices can be slow, too. Although weak vertices may exhibit qualitatively distinct behaviours, our methods allow us to work with these vertices without the need to differentiate between the quick and the slow ones. 
\pagebreak[3]

\begin{table}[h!]
	\begin{tabular}{|ll|llllllllllll|}
		\hline
		\multicolumn{2}{|c|}{Theorem~\ref{teoupper_vertex_improved}} & \multicolumn{6}{c|}{$\qquad\qquad$Slow$\qquad\qquad$}         & \multicolumn{6}{c|}{Quick}                         \\ \hline
		\multicolumn{2}{|c|}{Theorem~\ref{teoupper_optimal}, case $\eta<-1/2$} & \multicolumn{6}{l|}{Strong slow}         & \multicolumn{3}{l|}{Strong quick} & \multicolumn{3}{c|}{Weak} \\ \hline
		\multicolumn{2}{|c|}{Theorem~\ref{teoupper_optimal}, case $\eta\geq-1/2$} & \multicolumn{4}{l|}{Strong slow} & \multicolumn{8}{c|}{Weak}                                 \\ \hline
	\end{tabular}
	\captionof{table}{Comparison of the classification of vertices in both theorems\label{table:comparison}}
		%\label{table:comparison}
		%\caption{Comparison between the classification of vertices in both theorems}
\end{table}

Compared to Theorem~\ref{teoupper_vertex_improved}, the new classification introduced here allows for a finer control of the behaviour of the infection,  combining a local and global element of the analysis. This can be observed when comparing the inequalities within the statements of both theorems: while inequalities~\eqref{IMI_quick} and~\eqref{IMI_slow} have analogous counterparts~\eqref{OMIstrongquick} and~\eqref{OMIstrongslow}, in Theorem~\ref{teoupper_optimal} we also find inequality~\eqref{OMIweak} for weak vertices. Most noticeably, there is no 
term~$\Tloc$ in~\eqref{OMIweak}, which is in line with the idea that local survival does not apply to weak vertices, and hence imposing this inequality follows closer the expected behaviour of the infection on weak vertices. With the new classification comes a small trade-off: a slight decrease in precision concerning the constants within the statement. Unlike in Theorem~\ref{teoupper_vertex_improved}, where the constant~$\CMI$ was explicit, here we need it to be sufficiently large for the proof to work.\smallskip
 %Theorem~\ref{teoupper_vertex_improved} is that the constant $\CMI$ in~\eqref{OMIweak},~\eqref{OMIstrongquick} and~\eqref{OMIstrongslow} is not explicit. We will see that we will sometimes require it to be large enough for the proof to work.

 The stochastic upper bound introduced here has several differences with the one introduced in the proof of Theorem~\ref{teoupper_vertex_improved} (although many similarities as well), and to motivate its definition we point out the following observations: 
 \begin{itemize}[leftmargin=*]
	\item 
	 Providing an upper bound for $I_N(t)$ translates  by~\eqref{ineq:I_N_byduality}  into bounding from above the extinction probability
	 of the contact process starting from a single infected non-star vertex. The infection should not be able to spread and survive for too long, unless it infects a star on its way. 	Hence we should not observe vertices with unusually large degree, short cycles, or other structures that are rare in the local limit of the (static) network. If we do, we stop the  process. Part of the proof consists in showing that by that time the infection has already either died out or infected a star.\smallskip
	\item %We replicate this structure by stochastically bounding 
	The stochastic upper bound for $(\mathscr G^{\ssup N}_t,X_t)$ is an infection process (akin to the contact process) running on top of an environment modelled as a growing oriented network $(G_t)_{t\geq0}$ representing the environment seen by the infection. The growth of the network 
	is following the oriented infection paths
	%is driven by the infection process, functioning as an exploration mechanism, and 
	using updating rules similar to those in $\mathscr G^{\ssup N}_t$, with one significant deviation from the previous model: connections are never destroyed but merely blocked for the infection. Hence the growing network serves not only to carry  the infection process, but also to keep track of all the vertices and connections that have been revealed in the past. This information 
	can be used for the \emph{global analysis} in a similar way as in the previous section.	\smallskip
	\item 
	The growing network is set up to have a tree structure,
	which we exploit in order to control the behaviour of the infection on weak vertices. For these vertices, the scoring function $M(t)$ is defined for the \emph{local analysis} similarly as in \cite{MVY13}, where the authors study the contact process on static trees with degrees bounded by $1/(8\lambda^2)$. 	Note that weak vertices have expected degree less than $1/(10\lambda^2)$, and thus are quite likely to have degree bounded by $1/(8\lambda^2)$. At the early stage of the infection, we will typically never visit a weak vertex with a degree higher than this threshold. \smallskip
	\item For our methods to work we need to control the degree of weak vertices, which, as seen in Table~\ref{table:comparison} can be slow. In particular, it is not viable to use the clogged/clear scheme introduced in the proof of Theorem~\ref{teoupper_vertex_improved}, {where vertices steadily increase their degree until the next updating event}. Instead, we are forced to reveal more of the network structure, which implies several {complications}. But this network structure will also imply  {simplifications} in our computations as compared to the previous section.
	%\item  The growing network will likely satisfy several geometric properties that we use, for some time, including among others the mentioned tree structure and the bound on the degree of weak vertices. However, the growth of the network inevitably reaches a point, after which some of these properties cease to hold. At that point we stop the whole process. Part of the proof consists in showing that by that time the infection has already died out or infected a star.
	\end{itemize}

\subsection*{The stochastic upper bound} We now define more precisely the upper bound consisting of an infection process running on top of a growing oriented network $(G_t)_{t\geq0}$. {Despite the complexity of the process it will always be clear from the construction that  up to the time when the process is stopped it represents a stochastic upper bound on the infection.}%
\smallskip

{Let $\mathcal{E}$ the set of all {oriented} edges of the vertex set $\{1,2,\ldots,N\}$ excluding loops.  The growing network is initialized at time $t=0$ as some $G_0=(V_0,E_0)$ to be described later, with vertex set $V_0\subseteq \{1,\ldots,N\}$ and $E_0\subseteq \mathcal{E}$. The network then evolves by connecting to new vertices from this set through edges in $\mathcal{E}$. Together with the evolution of the network $(G_t)_{t\geq0}$ we also keep track of an accompanying process $(\mathfrak{A}_t)_{t\geq0}$ of \emph{available edges}, where $\mathfrak{A}_t\subseteq \mathcal{E}\setminus E_t$ at each time $t$ consists of all the {oriented edges that can be used for the growth of the network. Each edge can only be used once and in only one of the two possible orientations} in the construction of $G_t$, so the set of available edges decreases over time and the set inequality $\mathfrak{A}_t\subseteq \mathcal{E}\setminus E_t$ is, in general, strict because as the network grows, several unused edges will be discarded.}\smallskip

We maintain the %newly introduced 
classification of vertices in $\{1,\ldots,N\}$ distinguishing between %stars, 
strong slow, strong quick, and weak. Vertices can be either \emph{healthy} or \emph{infected}, and strong vertices (both quick and slow) can also be \emph{saturated}, which is a state analogous to the one defined in the proof of Theorem~\ref{teoupper_vertex_improved}. We also classify each oriented edge $(i,j)$ %between vertices in $\{1,\ldots,N\}$ 
as either
\begin{itemize}
	\item \emph{weak}, if $j$ is weak,
	\item \emph{strong slow} if both $i$ and $j$ are strong slow,
	\item or \emph{strong quick} if $j$ is strong (either quick or slow) but $(i,j)$ is not strong slow,%\textcolor{red}{Drawing}%
\end{itemize}
{see Table~\ref{tab:edgestypes2}.} Observe that in general it is the head of the oriented edge $(i,j)$ which determines its classification, except  for strong slow edges. Also note that in the case $\eta\geq-1/2$, strong quick edges only occur if the head is strong slow and the tail is weak. In that case the term ``strong" is a bit awkward, but, as will become clear below, it points towards the idea that the infection uses such an edge to discover a new strong vertex.\smallskip\pagebreak[3]

Besides their classification, edges also have a \emph{state} which depends on their type and whether they belong to the network at a given time:

\begin{itemize}[leftmargin=*]
		\item Edges that belong to $G_t$ are either \emph{present} or \emph{absent}, regardless of their classification. The reader might think of this in the same way as in the original dynamic network, where present edges are used for the spread of the infection, while absent edges are not.
		\item Strong slow edges that belong to $G_t$ (and only these edges) can also become saturated, which is a state analogous to the one defined for slow edges in the proof of Theorem~\ref{teoupper_vertex_improved}. Again, the intuition is that saturated edges connect saturated vertices, and hence become useless (we provide the exact conditions for saturation later).
		\item Available edges that are strong quick are 
		%classified as 
		\emph{clogged}, which, as in Theorem~\ref{teoupper_vertex_improved}, can be thought of as constantly available channels for the infection, traversed at a much smaller~rate.
		\item Available edges that are either weak or strong slow are 
		%classified as 
		\emph{unseen}, which is a state that reflects the fact that ``these connections have not occurred so far". 
\end{itemize}

\begin{table}[h]
	\centering
	\begin{tabular}{|c||c|c|c|}
		\hline
		\diagbox{Type of $i$} {Type of $j$}   & Strong slow                      & Strong quick                    & \multicolumn{1}{c|}{ Weak}                             \\ \hline\hline
		Strong slow     & \multicolumn{1}{d|}{unseen/absent/}                &    \multicolumn{1}{b|}{}             & \multicolumn{1}{a|}{ }                         \\ 
		     & \multicolumn{1}{d|}{present/saturated}                 &      \multicolumn{1}{b|}{}              &       \multicolumn{1}{a|}{unseen/absent/present} \\ \cline{1-2}
		Strong quick             & \multicolumn{2}{b|}{\multirow{2}{*}{} }& \multicolumn{1}{a|}{}
		\\ \cline{1-1} 
		Weak  & \multicolumn{2}{b|}{\qquad clogged/absent/present} &  \multicolumn{1}{a|}{}
		\\ \hline               
	\end{tabular}
	\caption{The possible states  of an edge $(i,j)$. Edges are classified \emph{strong slow} in the green, \emph{strong quick} in the blue and \emph{weak} in the yellow field.} 
\label{tab:edgestypes2}
\end{table}
The network $G_t$ represents the graph structure that has been observed by the infection by time $t$. In this sense, the reader may find it useful to think of $G_t$ as a window in which we (somewhat closely) track the structure belonging to $\mathscr{G}^{\ssup N}_t$, and hence it makes intuitive sense for this window to grow over time, representing the growth of the region ``explored" by the infection. This intuition may contrast with the intuition about the \emph{actual} structure in $\mathscr{G}^{\ssup N}_t$, where connections are destroyed over time. We reconcile both intuitions by pointing out that while $G_t$ grows, most of its edges are labeled \emph{absent} over time. From this point onwards, all comments about the growth of the network refer to $G_t$.\smallskip

We now define the following operations:
\begin{enumerate}[leftmargin=*]
	\item Let $(i,j)$ in $\mathfrak{A}_t$ be an available edge at time $t$. We say that we \emph{incorporate} the edge into the network, by:
	\begin{itemize}[leftmargin=*]
		\item Adding the vertex $j$ to $G_t$. If $j$ was already in $G_t$, we ignore this step.
		\item Adding the edge $(i,j)$ to $G_t$, changing its state to \emph{present}.
		\item Removing both $(i,j)$ and $(j,i)$ from $\mathfrak{A}_t$.
	\end{itemize}
	At times we abuse notation and say that we incorporate a vertex $j$. This amounts to saying that we incorporate the edge $(i,j)$ if $i$ is clear from context.\smallskip
	\item Let $i$ be a vertex in $G_t$.  We say that we \emph{reveal new neighbours of $i$} by:
	\begin{itemize}[leftmargin=*]
		\item Tossing an independent coin with probability $p_{i,j}$ for each \emph{unseen} edge $(i,j)$.
		\item Incorporating to the network each edge whose toss turned up heads
	\end{itemize}
	We frequently specify some subset of the neighbours of $i$ to reveal. This means that we only toss coins for the edges $(i,j)$ with $j$ belonging to the specified subset. \smallskip
	\item Let $i\in\{1,2,\ldots,N\}$. We say that we \emph{reveal $i$} by:
	\begin{itemize}[leftmargin=*]
		\item Tossing an independent coin with probability $p_{j,i}$ for each \emph{unseen} edge $(j,i)$, $j\in G_t$.
		\item Incorporating to the network each edge whose toss turned up heads.
	\end{itemize}
		As in the previous operation, we might specify that $i$ is revealed \emph{to a particular subset of vertices}, which implies that we only toss coins for the edges $(j,i)$ with $j$ belonging to the specified subset.
		\end{enumerate}	
	\begin{figure}[h]
	\begin{center}
		\includegraphics[scale=1]{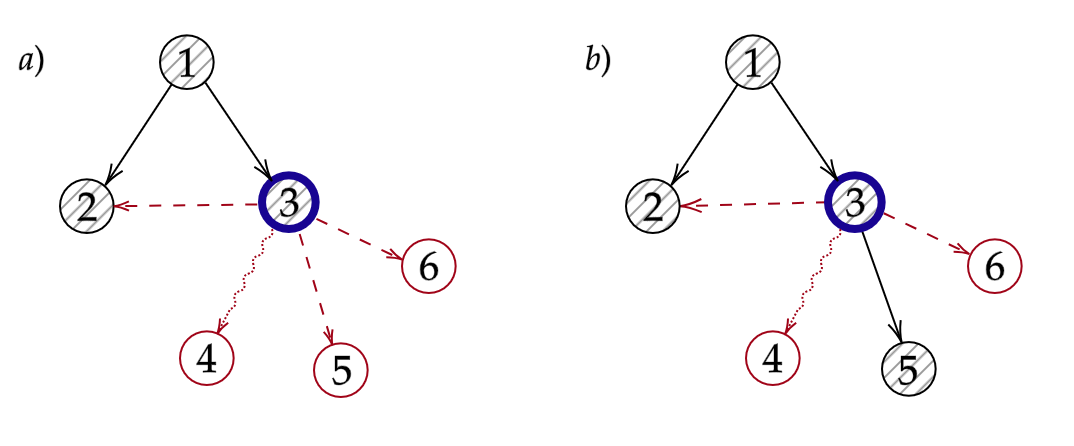}
		\caption{\small $a)$ An example where the vertex set of $G_t$ is $\{1,2,3\}$, while $4$, $5$, and~$6$ have not been added to the network yet. Vertex~$3$ reveals new neighbours so we require to sample all \emph{unseen} available edges leaving $3$. In this case, this includes the edge $(3,2)$, even though $2$ already belongs to $G_t$. We also exclude the edge $(3,4)$ which is \emph{clogged}, represented through a squiggly arrow.  $b)$ After sampling all edges, only edge $(3,5)$ is incorporated to the network, so in particular the new vertex set of $G_t$ is $\{1,2,3,5\}$.}
		\label{fig1thm4}
	\end{center}
	\end{figure}
	\begin{figure}[h]
		\begin{center}
			\includegraphics[scale=1]{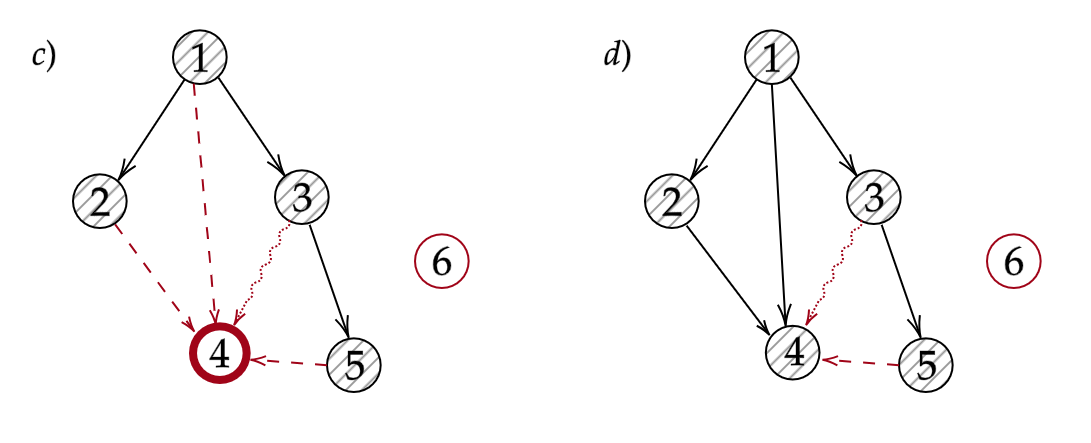}
			\caption{\small $c)$ In the picture we see the network from Figure~\ref{fig1thm4} where now it is vertex $4$ that is being revealed. This requires to sample all \emph{unseen} available edges entering $4$ from vertices in $G_t$. Again, this excludes edge $(3,4)$ which is \emph{clogged}. $d)$ After sampling all edges, both $(1,4)$ and $(2,4)$ are incorporated to the network. The creation of a cycle then stops the process.}\label{fig2thm4}
		\end{center}
	\end{figure}
The following observations are key about the newly defined operations:
\begin{enumerate}[leftmargin=*, label=(\alph*)]
	\item Incorporating an edge means that the network grows by adding the edge (with the corresponding endpoint) to its structure, and since edges can only be incorporated once it must become unavailable afterwards along with its inverse.
 %The reason why this operation also renders the reversed edge unavailable is that an edge $(i,j)$ belonging to the network is essentially treated as its unoriented version, $\{i,j\}$, so adding $(j,i)$ later on would lead to a redundancy.
	\item When revealing new neighbours of a vertex $i$, we only reveal \emph{unseen} edges, thus leaving out strong quick edges. The reason behind this is that in the new process, strong slow and weak edges behave similarly to the edges in the original dynamic network $\mathscr G^{\ssup N}_t$, while strong quick edges behave similarly to the quick edges in the previous proof, see Figure~\ref{fig1thm4} for an example.
	\item Typically, when we reveal new neighbours of $i \in G_t$, these will not be in  $G_t$ and when we reveal a vertex $i$ to a vertex $j\in G_t$, we have $i\notin G_t$. It is still possible that {neighbours $i,j\in G_t$ are revealed (and the tree structure is broken), see for example Figure~\ref{fig2thm4},} but this is an unlikely and undesirable scenario upon which we stop the process, as we will explain later.
	\item The classification of edges is such that strong quick vertices can never be revealed, and strong slow ones can only be revealed to strong slow vertices in~$G_t$.
\end{enumerate}

%Finally, we say that a 
We say a vertex $i\in\{1,2,\ldots,N\}$ has been \emph{visited} by time $t$, if it has become infected or saturated at least once by that time. Since $G_t$ represents the {part of the environment observed by the infection by time $t$,} visited vertices always belong to the network. The reader might think of non-visited vertices in $G_t$ as places yet to be explored by the infection.% 
\smallskip  

Using the new definitions, we can now begin to describe the evolution of the process and the scoring associated with the configurations it can reach. 

\subsubsection*{Evolution of the process within $G_t$}
 Setting aside the growth of the network, the rules followed by the process are analogous to the original contact process on $\mathscr{G}^{\ssup N}_t$:
\begin{enumerate}[leftmargin=*, label=Rule (\arabic*):, series=processrules]
	\item %As in the original process, e
	Each vertex $i\in\{1,2,\ldots,N\}$ updates at rate $\kappa_i$. If $i$ belongs to $G_{t^-}$, then (among other effects we describe later) all edges $(i,j)$ and $(j,i)$ in $G_{t^-}$, whether previously present, absent, or saturated, are resampled as in $\mathscr{G}^{\ssup N}_t$: for each such edge, we flip a coin with probability $p_{i,j}$ and label \emph{present} all the ones for which the coin turned up heads. We label \emph{absent} all the rest.
	\item Within $(G_t)_{t\geq0}$ the infection evolves as a regular contact process with infection rate $\lambda$ (except for saturated vertices which we cover later). The edges used to spread the infection must always be \emph{present} (not absent or saturated) and can be used regardless of their orientation.
\end{enumerate}
Note that within $G_t$ all edges and vertices are treated the same, regardless of their type, and oriented edges are treated as if they were unoriented. %It follows that 
The classification of vertices has more impact on the network growth than on the changes and infections occurring within~it.% 

\subsubsection*{Evolution of the process by inherited rules} 

%The upper bound used in the current proof is a modification of the one defined in the previous section. Consequently, 
Several of the evolution rules and scoring functions are adaptations of what was defined in the previous section. Before giving these rules we provide a brief (and incomplete) summary of the process described in Section~\ref{sec_upper_new}:
\begin{itemize}[leftmargin=*]
	\item Vertices were divided into slow and quick, where each vertex $i$ updated at rate~$\kappa_i$.
	\item When a slow vertex $i$ updated, it lost its set of slow neighbours (which became \emph{unrevealed}). However, if at that time $i$ was infected, we sampled a new set of slow neighbours. Also, while $i$ was infected (or saturated), every time another slow vertex $j$ updated, its connection to $i$ was resampled.
	\item When a slow vertex $i$ transitioned from \emph{healthy} to \emph{infected} or \emph{saturated}, we added new slow neighbours among the ones that were \emph{unrevealed} at that time. 
	\item Edges $\{i,j\}$ with at least one quick vertex were classified as either \emph{clogged} or \emph{clear}. Clogged edges transmitted the infection at rate $\lambda p_{i,j}$, and at the infection event, the edge was cleared. Clear edges transmitted the infection at rate $\lambda$, and upon updating, become clogged again.
	\end{itemize}
	We refer the reader to Section~\ref{sec:Thm3} for the intuition behind this construction. We adapt these rules to the current scenario, with strong slow and strong quick vertices playing the role of slow and quick vertices, respectively:
\begin{enumerate}[leftmargin=*, label=Rule (\arabic*):, resume=processrules]
	\item A strong slow vertex $i\in G_t$ reveals new strong slow neighbours if either,
	\begin{itemize}[leftmargin=*]
		\item it updates while infected or saturated ($i$ must be \emph{visited} at time~$t^-$), or
		\item it transitions from \emph{healthy} to \emph{infected} or \emph{saturated}.
	\end{itemize}
	\item When a strong slow vertex $j\in\{1,2,\ldots,N\}$ updates at a time $t$, it is revealed to the strong slow vertices in $G_t$ that are infected.
	\item An infected vertex $i$ transmits the infection through the clogged edges of the form $(i,j)$ at rate $\lambda p_{i,j}$. When an infection of this sort takes place, $j$ becomes infected, $(i,j)$ is incorporated to the network and $j$ becomes instantly infected (and thus visited), {see for example Figure~\ref{fig:clogged}.}
\end{enumerate}

\begin{figure}[h]
	\begin{center}
		\includegraphics[scale=1.2]{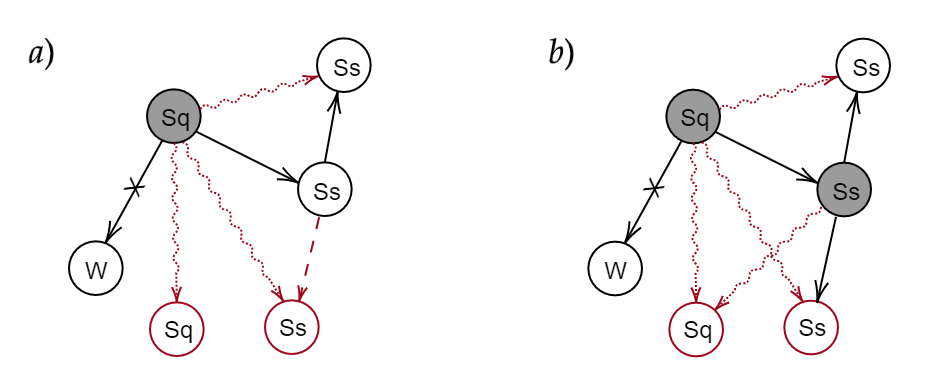}
		\caption{\small $a)$ In the picture $G_t$ consists of the four vertices joined through solid arrows, with a single vertex infected, depicted as grey. Each vertex is classified as either weak (W), strong slow (Ss), or strong quick (Sq), and solid arrows marked by  an $X$ are \emph{absent}. The infected vertex can infect its strong slow neighbour (but not the weak one) or transmit the infection through one of the \emph{clogged} edges, depicted as squiggly arrows, at rate $\lambda p_{i,j}$. $b)$ The infection is transmitted through the present edge. The newly infected strong slow vertex reveals new strong slow neighbours, which results in the incorporation of a new edge to the network.}
		\label{fig:clogged}
	\end{center}
\end{figure}

{A small difference between the rules given here, and the description of the process in Section~\ref{sec_upper_new} is that strong quick edges belonging to $G_t$ do not become clogged again upon updating, and instead evolve as any other edge. In that sense, the current process is closer to the original dynamic network $\mathscr{G}^{\ssup N}_t$, because it has ``better memory".}

\subsubsection*{Evolution of the process at weak vertices and edges}
As pointed out after the presentation of Theorem~\ref{teoupper_vertex_improved} at the beginning of Section~\ref{sec: supermart}, the main improvement of Theorem~\ref{teoupper_optimal} over Theorem~\ref{teoupper_vertex_improved} is related to the treatment of weak vertices. In this treatment, we need to control the degrees of these vertices at all times, which forces us to endow weak vertices and edges with a set of rules that closely resembles those in the original network $\mathscr{G}^{\ssup N}_t$: 
\begin{enumerate}[leftmargin=*, label=Rule (\arabic*):, resume=processrules]
	\item When a visited vertex $i\in G_t$ updates, it reveals new weak neighbours. This also occurs when $i$ first becomes visited.
	\item When a weak vertex $i\in\{1,2,\ldots,N\}$ updates, we reveal it to all visited vertices in $G_t$.
\end{enumerate}
Note that Rule $(7)$ introduces a steady growth of the network, while Rule $(6)$ introduces jumps to this growth. Also new weak edges are only appended to visited vertices, since the structure around non-visited vertices has not yet been observed by the infection.\smallskip
\pagebreak[3]

All the rules described so far act either simultaneously, or following a sequential intuitive ordering. For example, {see Figure~\ref{fig:exa}, } when a visited weak vertex within $G_t$ updates, it resamples the state of the connections within $G_t$ (following Rule $(1)$), reveals new weak neighbours (Rule $(6)$), and reveals itself to non-neighbouring vertices in $G_t$ (Rule $(7)$), all at the same time.% 

\begin{figure}[h]
	\begin{center}
		\includegraphics[scale=1.2]{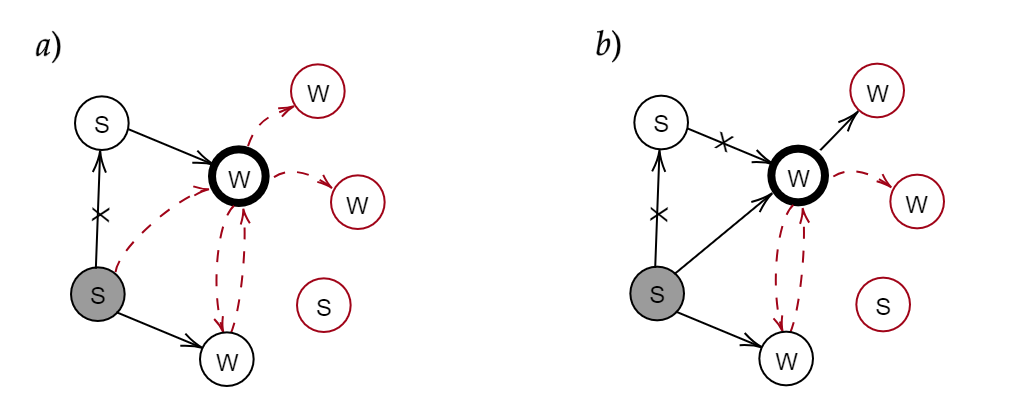}
		\caption{\small %A picture of the example described above. 
		$a)$ The visited weak vertex (identified with a wide border) updates, so we require to resample its connection to the strong vertex at the top left, and both reveal new weak neighbours, and be revealed to the vertices in the network. $b)$ After resampling its connection to the strong vertex, the edge becomes absent. Also, two new edges are incorporated to the network.}\label{fig:exa}
	\end{center}
\end{figure}

As a further example, when a vertex $i\in G_t$ infects some $j\notin G_t$ through a clogged edge, $j$ is added to the network and becomes visited. Consequently, according to Rule $(6)$, it reveals new weak neighbours. If $j$ is a strong slow vertex, then according to Rule $(3)$, it also reveals new strong slow neighbours. Observe that we have not provided the mechanisms involved in the saturation of vertices and edges. As %Unlike the description given in the previous section, 
some of the conditions for saturation require us to describe certain elements of the scoring function we will postpone it.

\subsubsection*{Starting and stopping the process}

As pointed out before, the goal of the theorem is to upper bound the extinction probability $\P_{i_0}(t<T_{\rm{ext}})$ of the contact process starting from a single infected non-star vertex. We fix such a vertex $i_0$ and {initialize $G_0$} as~follows:
\begin{itemize}[leftmargin=*]
	\item %Prior to the construction of $G_0$, 
	{Initially, we consider all possible oriented edges $(i,j)$  in $\mathcal{E}$ as \emph{available}. }
	\item We %construct $V_0$ by 
start by adding $i_0$ to the network and infecting it, thus declaring it \emph{visited}.
	\item Following Rules $(3)$ and $(6)$, we reveal new weak and (possibly) strong slow neighbours~of~$i_0$, {thereby defining $G_0$ as well as $\mathfrak A_0$.}
\end{itemize}\pagebreak[3]

As mentioned at the beginning of this section, for our methods to work, we require the network to satisfy a few conditions that are likely to hold for some time. If one or more of these conditions are not met, we stop the process. Let $s$ be the function in the statement of Theorem~\ref{teoupper_optimal}, and introduce the stopping times:
\begin{itemize}
	\item[$T_{\rm{ext}}$] the extinction time of the infection process,\smallskip
	\item[$T_{\rm{hit}}$] %$=\inf\{t\ge 0, 
	the smallest time $t$ such that
	$M(t)\ge s(a)$,\smallskip
	\item[$T_{\rm{bad}}^{\rm{tree}}$] the smallest time $t$ such that either $G_t$ contains an (undirected) cycle, or an edge $(i,j)\in G_{t^-}$ is resampled as \emph{present} (following Rule $(1)$),
	\smallskip
	\item[$T_{\rm{bad}}^{\rm{deg}}$] the smallest time $t$ such that a weak vertex $i$ is connected to at least $1/(8\lambda^2)$ weak vertices in $G_t$ through present edges. 
\end{itemize}

We also advance that there is an additional stopping time  $T_{\rm{bad}}^{\rm{conn}}$, which is far more technical and {will be defined in the next section, see~\eqref{def: Tconn}}, 
 since it requires several elements which will be introduced when defining the scoring function. The process stops at time $T$, which is defined as
\[T=T_{\rm{bad}}\wedge T_{\rm{hit}} \wedge T_{\rm{ext}},\qquad\text{ with }\qquad T_{\rm{bad}}=T_{\rm{bad}}^{\rm{tree}}\wedge T_{\rm{bad}}^{\rm{deg}}\wedge T_{\rm{bad}}^{\rm{conn}}.\]
The intuition behind the stopping time $T$ is that we run the process until either the network stops behaving nicely ($T_{\rm{bad}}$), the infection goes extinct ($T_{\rm{ext}}$), or the score becomes too large ($T_{\rm{hit}}$). %The latter condition might seem technical, but, as will be seen later, it is the one that includes the critical event in which the infection reaches a star.
{As will be seen later, the latter condition includes the critical event in which the infection reaches a star, or a star vertex is incorporated to $G_t$. In particular $G_t$ contains only nonstar vertices at times $t<T_{\rm{hit}}$.
Moreover, at time $t<T_{\rm{bad}}^{\rm{tree}}$, the network $G_t$ consists of a tree in which all edges point outwards} {from the root, which is} { the starting infected vertex $i_0$. Once again, we stress that the orientation of the edges is used only in the construction of $G_t$ {and the scoring}, not for the propagation of the epidemics inside $G_t$.} 
\smallskip

At this point it is worth mentioning that once a vertex $i$ becomes visited, we begin tracking its weak neighbourhood (following Rules $(6)$ {and $(7)$}). After an update of $i$, with large probability all edges incident to $i$ in $G_t$ become absent. Thus, if $i$ was healthy at the time of the update, it becomes inaccessible for the infection, never to be reached again. One might question whether tracking the weak neighbours of $i$ could prematurely halt the process by reintroducing an absent edge or creating a loop in $G_t$. However, as we demonstrate in the last section, even in such cases, the infection is more likely to reach a star or extinction before such events occur. 

\subsubsection*{Score attached to weak vertices}

We begin describing the score given to configurations as long as the network behaves nicely, that is, for all $t< T_{\rm{bad}}$. The general score $M(t)$ is divided as \[M(t)=M_{\rm{strong}}(t)+M_{\rm{weak}}(t),\] 
with $M_{\rm{strong}}(t)$ and $M_{\rm{weak}}(t)$ the scores attached to strong and weak vertices. In order to define $M_{\rm{weak}}(t)$ we {use that locally around a weak vertex the graph has a tree structure and methods from \cite{MVY13} developed for the contact process on static trees can be adapted to our situation.}
We  %exploit the tree structure of $G_t$, and 
begin by recalling \cite{MVY13}, where the contact process was studied on a %\emph{static} 
tree~$\tree$ with degrees bounded by $1/(8\lambda^2)$. In this work the authors associate a scoring $W^{\rm{static}}_j(t)$ to each vertex $j$, defined as
\[W^{\rm{static}}_j(t)=\sum_{\substack{i\in \tree\\ i\text{ infected}}} (2\lambda)^{d_\tree(i,j)},\]
where $d_\tree$ is the graph distance within $\tree$. In order to prove that this scoring is a supermartingale, it is shown that each \smash{$W^{\rm{static}}_j(t)$} has average infinitesimal increase bounded by $-W^{\rm{static}}_j(t)/4$. It follows that for any choice of nonnegative weights $w_j$ on $\tree$, the process
\begin{equation}\label{eq: staticsupermartingale}
\sum_{j\in\tree} w_j W^{\rm{static}}_j(t)=\sum_{i,j\in\tree} (2\lambda)^{d_\tree(i,j)} w_j \1_{i \text{ infected at time }t}
\end{equation}
is a non-negative supermartingale. % bounded below by $\sum_{i \text{ infected}} s_i$.
Observe that the process in \eqref{eq: staticsupermartingale}, being a double sum, can be written as
\[
\sum_{j\in\tree} w_j W^{\rm{static}}_j(t)=\sum_{\substack{i\in\tree\\i\text{ infected}}}M^{\rm{static}}_i(t)\]
where, for each $i\in\tree$,
\begin{equation}\label{def: dualstatic}
	M^{\rm{static}}_i(t)=\sum_{j\in\tree}(2\lambda)^{d_\tree(i,j)}w_j
\end{equation}
serves as a dual scoring function. Maybe surprisingly, we can use the same functions in the context of evolving graphs with only minor modifications. %$s(i/N)$ ($s$ being the function in the statement of Theorem~\ref{teoupper_optimal}).
\smallskip

Let $t\geq0$ and $i\in G_t$ a weak vertex. We define $\tree(t)\subseteq G_t$ as the subgraph induced by the weak vertices, where we only keep the present edges (which for all purposes will be taken unoriented). The resulting network corresponds to a collection of trees with maximum degree at most $1/(8\lambda^2)$, since we assume that \smash{$t<T_{\rm{bad}}^{\rm{deg}}$},  where we define the usual graph distance $d_{\tree}(i,j)$, with the convention $d(i,j)=\infty$ if $i$ and $j$ are not connected.
\smallskip\pagebreak[3]

Observe that vertices in $\tree$ that are not labelled \emph{visited}, have been revealed as neighbours of visited vertices, but have not revealed their own neighbourhoods yet. These vertices, which we call  {\emph{pending leaves}}, prevent us from choosing {$w_j=s(j/N)$ in \eqref{eq: staticsupermartingale}}, since upon being infected they would reveal new neighbours (following Rule $(6)$) and thus increase the score in average. To prevent this, we replace $w_j$ with a slightly different scoring whose role is to overestimate the possible scoring obtained after revealing a new part of the structure. {Let $\CMI\ge 4$, which will be further specified later,  and assume that $s$ satisfies the assumptions of Theorem~\ref{teoupper_optimal} for this value of $\CMI$.}
\smallskip

We define the function
\[\tilde s_i:=\begin{cases}s_i&\mbox{ if }i\text{ is visited,}\\
\frac{\CMI}{\CMI-2}s_i&\mbox{ if }i\text{ is a pending leaf,}\end{cases}\]
where $s_i=s(i/N)$. {From the constraint $\CMI\ge 4$, we thus have $s_i< \tilde s_i\le 2s_i$.} We now define for each $j$ the individual scoring $M_j$ similarly to \eqref{def: dualstatic} as
\begin{equation}\label{eq: weakscoring}M_i(t)=\sum_{j\in \tree} (2\lambda)^{d(i,j)}\tilde s_j\end{equation}
under the assumption that $\lambda<1/2$. Observe that from this assumption and the choice $d(i,j)=\infty$ for non-connected vertices, the sum can be restricted to the connected component of $i$. We thus define
\[M_{\rm{weak}}(t)=\sum_{\substack{i\in\tree\\i\text{ infected}}}M_i(t).\]

\subsubsection*{Evolution of the process by saturation of vertices and edges}

As in the process introduced in the previous section, vertex saturation is a state akin to being infected, but in which vertices cannot recover. When a vertex becomes saturated, it remains in this state, acting as a source of infection, until it updates. Even though the intuition is the same as in the proof of Theorem~\ref{teoupper_vertex_improved}, the conditions for saturation differ, and not all vertices can become saturated. We also maintain the intuition for edge saturation, which occurs when an edge connects two saturated vertices, allowing the use of the edge for infection transmission to be ignored. Before defining the exact conditions for saturation we introduce more notation.\smallskip

{Recall the definition of $\tree$ as introduced in the previous subsection and let $\tilde\tree$ be an arbitrary subset of $\tree$. We write \begin{equation}\label{def: subscoreweak}M_{\tilde\tree}(t):= \sum_{\substack{k \in \tilde\tree\\k\text{ infected}}} M_k(t)\end{equation}
to refer to the total scoring of $\tilde\tree$. Define also $\tree_j$ as the connected component of $j$ in $\tree$ (for $j\in G_t$ weak).} With this notation, for any strong vertex $i\in G_t$, define $R_{ij}(t)$, for $j\in G_t$,~as:
\[R_{ij}(t):=\begin{cases}\;\;0&\mbox{ if }j\mbox{ is strong slow,}\\[0ex]\frac{\lambda}{\kappa_i+\kappa_j}&\mbox{ if }j\mbox{ is strong quick,}\\[1ex] \frac {16 \lambda M_{\tree_j}(t)}{s_j}&\mbox{ if }j\mbox{ is weak.}\end{cases}\]
Using these functions define
\begin{equation}\label{def: R_i(t)}R_i(t):=R_i^{\rm{strong}}(t)+R_i^{\rm{weak}}(t):=\sum_{j\,\rm{ strong }\,\sim i}R_{ij}(t)+\sum_{j\,\rm{ weak }\,\sim i}R_{ij}(t)\end{equation}
where $j\sim i$ means, as usual, that $i$ and $j$ are neighbours in $G_t$ (regardless of orientation). Finally, for a strong slow vertex $i\in G_t$, let $\Nss_i$ be the number of neighbours of $i$ in the network that are strong slow. We can now introduce the evolution rules concerning saturated vertices and edges.

\begin{enumerate}[leftmargin=*, label=Rule (\arabic*):, resume=processrules]
	\item A strong vertex $i\in G_t$ is instantly declared \emph{infected and saturated} as soon as any one of the following two conditions is met:
	\begin{itemize}
		\item[$(i)$] $R_i(t)\ge 1/4$.
		\item[$(ii)$] $i$ is incident to a weak vertex  $j$ {(through a present edge)}, and some vertex $k\in \tree_j$ infects another strong vertex $i'$.	\end{itemize}
	%\smallskip	
	If $i$ is strong slow, we also declare it infected and saturated if
	\begin{itemize}
		\item[$(iii)$]  since its last update it has infected another strong slow vertex or has been infected by another strong slow vertex,
		\item[$(iv)$] $\Nss_i(t) \ge \frac {3\kappa_i \Tloc_i} {8\lambda}$, where $\Tloc_i$ is as in Definition~\ref{def: piandTloc}.
	\end{itemize} 
	\pagebreak[3]
	\smallskip
	\item Saturated vertices do not recover, and can only desaturate when updating. When a saturated vertex $i$ updates, if none of the conditions introduced in Rule $(8)$ hold, then the vertex is no longer saturated and becomes just infected.\smallskip
	\item When a strong slow vertex $i$ infects another strong slow vertex $j$, the edge between them becomes \emph{saturated}.
\end{enumerate}

	The reader might wonder why, as opposed to the proof of Theorem~\ref{teoupper_vertex_improved} we do not need to ``split" the slow strong edges within $G_t$. To answer this question recall that the splitting of edges was used to handle the possibility of reinfection between slow vertices, which for this process is impossible. We argue as follows: suppose that the strong slow edge $(i,j)$ is incorporated to the network at some time $t$, at which time $j$ must have been revealed as a (healthy) neighbour of $i$. Since the network is a tree, for $j$ to become infected for the first time, it must receive the infection from $i$. At that point, both $i$ and $j$ become saturated (following Condition $(iii)$ in Rule $(8)$). For $j$ to reinfect $i$ later on, $i$ must be healthy, and hence must have updated at some point. However, in that case, the edge $(i, j)$ would have become absent, thereby disconnecting $i$ and $j$.\smallskip

Observe that the definition of saturation in Theorem~\ref{teoupper_vertex_improved} involves conditions analogous to $(i)$, $(iii)$, and $(iv)$ in Rule $(8)$. Condition $(ii)$, however, is new and might be quite astonishing with its long distance interaction, which does not even require $i$ to be infected in the first place, { see for example Figure~\ref{fig:ii}.} It is however extremely useful, as it ensures that for a strong unsaturated vertex $i$, the weak trees $\tree_j$ attached to it are not connected to any other strong vertex from which it could receive infections. %Moreover, as will be shown, the rate at which some vertex in $\tree_j$ can infect another strong vertex and thus lead to saturation of $i$ by this long distance interaction, can be controlled by $M_{\tree_j}$, and thus by $R_{ij}(t)$.
\begin{figure}[h]
	\begin{center}
		\includegraphics[scale=1.2]{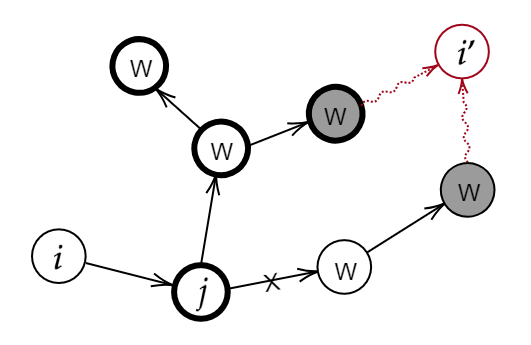}
		\caption{\small In the picture, $i$ is connected to the weak vertex $j$, and the vertices in $\tree_j$ are depicted with a wide border. There is one infected vertex in $\tree_j$ attempting to infect the strong vertex $i'$ through a \emph{clogged} edge. If it were to succeed, then $i$ would immediately become infected and saturated. Observe that the vertex at the right also could infect $i'$, but in that case nothing would happen to $i$, because of the absent edge (with an $X$) on its path to $i$.}
		\label{fig:ii}
	\end{center}
\end{figure}

A first application of Condition~$(ii)$ is the justification of the new choice for the function $R_i(t)$ in this proof. Observe that it is similar to the one introduced in the proof of Theorem~\ref{teoupper_vertex_improved}, the main difference being the introduction of the term $R_{ij}(t)=16 \lambda M_{\tree_j}(t)/s_j$ for weak vertices. Recall that  the role of $R_{ij}$ is to provide an upper bound for the probability that the vertex $j$ will later infect $i$ before the connection is refreshed. With this in mind, \begin{itemize}[leftmargin=*]
	\item suppose that $i$ is unsaturated and connected to $j$, and in what follows conditions $(i)-(iv)$ are never met {(since upon becoming saturated, infections from $j$ to $i$ are pointless)}. {Because the event in condition $(ii)$ does not occur}, the tree $\tree_j$ cannot be directly connected to any other strong vertex \smash{$i'$}, and hence in our interpretation of $R_{ij}$ we assume that the infection is ``trapped within $\tree_j$".\smallskip
	\item  In the next section we show that, {when neglecting infections between strong and weak vertices,  $M_{\rm{weak}}$ is a supermartingale, and in fact,
	\[\frac 1 {\de t}  \E[M_{\tree_j(t+\de t)}(t+\de t) -M_{\tree_j(t)}(t)\vert \F_t]\leq-\frac{1}{8}M_{\tree_j(t)}(t),\]
	where $\tree_j(t)$ and $\tree_j(t+\de t)$ are the connected components of $j$ in $\tree$ at times $t$ and $t+\de t$, respectively. Using almost identical computations it can be proved that if we allow for infections between $i$ and $j$, a similar upper bound is obtained if we replace $M_{\tree_j(t)}(t)$ by
	\[\tilde{M}(t):=M_{\tree_j(t)}(t)+ \1_{i \text{ infected at time }t}\sum_{l\in \tree} (2\lambda)^{d(l,j)-1}\tilde s_l.\]
	If we were to start the infection process at time $t$ from a given configuration in which $i$ is healthy, then, using the stopping time $\tilde{T}\geq t$ in which either the infection dies out within $\tree_j$, or $i$ becomes infected, we arrive at 
	\[\frac{s_j}{2\lambda}\P(j\text{ infects }i\,|\,\mathcal{F}_t)\leq \E(\tilde{M}(\tilde{T})\,|\,\mathcal{F}_t)\leq \tilde{M}(t)=M_{\tree_j(t)}(t),\]
	thus showing that $R_{ij}$ fits the intended role.}\smallskip
	\item When $i$ first infects $j$, the value of $R_{ij}(t)$ will be of order $\lambda$, and the same will be typically true when the infection propagates throughout $\tree_j$. This is an improvement over the expression $\lambda/(\kappa_{i}+\kappa_{j})$ {used for strong quick vertices or in the proof of Theorem~\ref{teoupper_vertex_improved}.} 
	%, which can be much bigger if $j/N$ is close to $\astr$. 
\end{itemize}

\subsubsection*{Score attached to strong vertices and edges}

We now address the remaining expression $M_{\rm{strong}}(t)$ from the decomposition $M(t)=M_{\rm{weak}}(t)+M_{\rm{strong}}(t)$, which is analogous to the scoring introduced in the proof of Theorem~\ref{teoupper_vertex_improved}, so in particular we use the same notation: We recall the functions $\pi$ and $\Tloc$ introduced in Definition~\ref{def: piandTloc},
	\begin{eqnarray*}
		\pi(x)&=& \int_0^1 \frac {p(x,y)} {\kappa(x)+\kappa(y)} \, \mathrm d y,\\
		T^{\rm{loc}}(x)&=&\max\left\{8,\frac 8 {3\kappa(x)},16 \lambda^2 \frac {\pi(x)}{\kappa(x)}\right\},
	\end{eqnarray*}
and take $a(\lambda)$ and $s(x)$ satisfying the hypotheses of Theorem~\ref{teoupper_optimal}, that is, \eqref{OMIweak}, \eqref{OMIstrongquick}, and \eqref{OMIstrongslow}, as well as assumptions (H1) and (H2). As in the previous proof we use the subscript notation when considering the corresponding functions defined on $\{1,\ldots, N\}$. We now define the scoring $M_{\rm{strong}}$ as
\begin{equation}\label{eq: strongscoring}
M_{\rm{strong}}(t):= \sum_{i\in G_t \text{ strong}}m_i(t)+ {\color{orange}\sum_{\heap{(i,j)\text{ present}}{\text{strong slow}}} s_j+t_j+u_j},
\end{equation}
with
\[
m_i(t):=\left\{ \begin{array} {ll}
	s_i+t_i  &\mbox{if } i \mbox{ is \color{magenta} infected saturated},\\
	s_i+ 4 R_i(t) t_i &\mbox{if } i \mbox{ is \color{red} infected unsaturated},\\
	2R_i(t) (s_i+t_i) &\mbox{if }i\mbox{ is \color{mygreen} healthy},
\end{array}\right.
\]
and again, {for some $r\ge3/2$ to be fixed later,}
\[
t_i:= \frac {s_i}{\ratiost \kappa_i T^{\rm{loc}}_i}, \qquad u_i=\frac {t_i} 3.
\]
Regarding the scoring we point out the following useful facts:
\begin{itemize}[leftmargin=*]
	
	\item Since $\Tloc_j\geq \frac{8}{3\kappa_i}$ and $\Tloc_j\geq 8$, it easily follows from the definition of $t_i$ that inequalities
	\begin{equation}\label{eq: adapt1}t_i\leq \frac{s_i}{4}\qquad\text{and}\qquad\kappa_it_i\leq \frac{s_i}{8r}\end{equation}
	hold as in the previous section. Similarly, as in \eqref{ineq:resaturationbyN}, for any strong slow vertex $i$,
		\begin{equation}\label{eq: adapt2}\frac {8\lambda}{3\kappa_i T^{\rm{loc}}_i} \sum_{j\le \assl N} p_{ij} \le \frac 13,\end{equation}
		by exploiting the fact that $\assl\leq \asl$. As a last adaptation of previous computations, we can require that $\CMI\geq 4r$ (by fixing $\CMI$ after fixing $\ratiost$), which gives
		\begin{equation}\label{eq: adapt3}
			\sum_{j<\assl N} p_{ij} (s_j+t_j+u_j)\le u_i,
		\end{equation}
		analogously to \eqref{eq:boundnewneigh}.
	\item As in the previous proof, the score related to the connection between strong slow vertices is attached to the strong slow edges rather than the vertices (because $R_{ij}=0$ if both vertices are strong slow). Note that this is one of the few instances where the orientation of an edge $(i,j)\in G_t$ does matter, since the score value depends on the endpoint $j$ alone. Again, this scoring reflects the idea that $j$ is very likely to get infected by $i$.
	{\item A star vertex $j$ can be incorporated to $G_t$ without being infected only through revealing a present strong slow edge $(i,j)$, with associated score $s_j+t_j+u_j>s(a)$. If this happens we instantly reach $T_{\rm{hit}}$.} 
	\item The scoring of strong vertices only depend on the weak vertices through $R^{\rm{weak}}$, while the scoring of weak vertices do not depend on the strong vertices.
	\item Non visited vertices have null score. This follows since the only non visited vertices that can be found in $G_t$ are either healthy weak vertices (for which the scoring is null), or strong slow vertices with a single neighbour $j\in G_t$, which is also strong slow (so the score is null since $R_{ij}=0$ if $j$ is strong slow). A similar analysis shows that when a strong vertex $i$ reveals new neighbours, it does not increase its score. 
	\end{itemize}

\subsection*{Supermartingale property of $M(t)$}

In this section we compute the average infinitesimal change of $M(t)$ due to the evolution of the process and show that it is a nonnegative supermartingale. Before the computations we point out two important facts:
\begin{enumerate}[leftmargin=*]
	\item The scoring $M(t)$ has only been defined at times $t$ satisfying $t<T_{\rm{bad}}$, so for the scoring to be useful, we need to define what the scoring should be when reaching a configuration giving $t=T_{\rm{bad}}$. Fortunately, since $M(t)$ is nonnegative, it is enough to set $M(T_{\rm{bad}})=0$ without any risk of losing the supermartingale property.
	\item We have not yet defined the stopping time $T_{\rm{bad}}^{\rm{conn}}$. The definition will be given at the end of this section, but we advance that it comprises the scenarios in which strong vertices are connected to too many weak trees where the infection has proliferated.
\end{enumerate} 

As in the proof of Theorem~\ref{teoupper_vertex_improved}, we show that $M(t)$ is a supermartingale by fixing $t>0$, and writing the expected infinitesimal change of the score
\begin{equation}\label{eq: infinitscore}
	\frac 1 {\de t}  \E[M(t+\de t) -M(t)\vert \F_t]
\end{equation}
%so we only consider transitions of the Markov process that preserve the tree event.
as a large sum of terms associated with the effects that {a single transition event of the process have on the scoring of the vertices and edges. In what follows we assume that:
\begin{itemize}[leftmargin=*]
	\item At time $t$ we have not yet stopped the process. Even though considering times $t\geq T$ is harmless for the supermartingale property (because in that case $M(t+\de t)-M(t)=0$), it is important for our computations to obtain strictly negative upper bounds for~\eqref{eq: infinitscore}.
	\item The transition events considered preserve the tree event (that is $t+\de t<T_{\rm{bad}}$). Again, this is a harmless assumption since by definition $M(T_{\rm{bad}})=0$.
\end{itemize}}
 Analogously to the proof of Theorem~\ref{teoupper_vertex_improved}, we group these terms into \emph{contributions}, many of them identical to the ones presented in the previous section. We divide these contributions according to the types of vertices involved in the events taking place. In order to avoid cluttered notation, we will drop the dependence on time for all functions for the rest of this section, provided there is no risk of confusion.
\smallskip

$\mathbf{ (i)\;\;\text{\bf Contributions from events that only involve weak vertices}}.$%\medskip

\noindent
For any fixed $t\geq0$ let $\tilde\tree$ be a connected component of $\tree$ at time $t$. Our main goal will be to show that the expressions appearing in
{\begin{equation}\label{eq: upperboundweak}
\frac 1 {\de t}  \E[M_{\rm{weak}}(t+\de t) -M_{\rm{weak}}(t)\vert \F_t]\end{equation}
as a result of events that involve vertices in $\tilde\tree$, and their interactions with other weak vertices,} can be upper bounded by $-\frac{1}{8}M_{\tilde\tree}(t)$, {where $M_{\tilde\tree}$ was defined in \eqref{def: subscoreweak}. We first observe that because we are assuming $t+\de t<T_{\rm{bad}}$, we exclude the possibility of connecting vertices in $\tilde\tree$ to other vertices in $G_t\setminus\tilde\tree$ and hence the score of the latter vertices is unaffected. On the other hand, weak vertices that are added to $\tilde\tree$ are healthy, so their score is not counted when computing $M_{\rm{weak}}(t+\de t)$. Hence, for these events it is enough to upper bound \[\frac 1 {\de t}  \E[M_{\tilde\tree}(t+\de t) -M_{\tilde\tree}(t)\vert \F_t],\]
where it is important to bear in mind that $M_{\tilde\tree}(t+\de t)$ stands for the collective score at time $t+\de t$ of the vertices that belonged to the connected component at time $t$ (the connected component might have changed at time $t+\de t$).} As with \eqref{eq: infinitscore}, the expression above corresponds to a large sum of terms, which we group depending on the type of event they originate from:\smallskip\pagebreak[3]

{$\bullet$ \emph{Infections and recoveries within $\tilde\tree$.} In this contribution we bound the average infinitesimal changes in score due to the recoveries of vertices in $\tilde\tree$, as well as the infections between these vertices. Notice that this must take into account the possible reveal of new neighbours as a result of infecting a previously non visited vertex (following Rule $(6)$). %We begin by exploiting the fact that $M_{\tilde\tree}$ is computed as a double sum and rewrite
%\begin{equation}\label{def: W_i}M_{\tilde\tree}=\sum_{i\in\tilde\tree}\tilde{s}_i \sum_{\substack{j\in \tilde\tree\\j\text{ infected}}}(2\lambda)^{d(i,j)}:=\sum_{i\in\tilde\tree}W_i.\end{equation}
%With this expression we assign a dual score $W_i$ to each $i\in\tilde\tree$, infected or not, which is more manageable to analyze.
Fix $j\in\tilde\tree$ with $j$ infected, and consider the following events:
\begin{itemize}[leftmargin=*]
\item[-] $j$ recovers. This event occurs at rate $1$, and it results in the exclusion of $M_{j}$ from $M_{\tilde\tree}$ (all other terms remain unchanged). It follows that this event yields a total contribution
\begin{equation}\label{eq: weakcontribution1}
-\sum_{i\in\tilde\tree}\tilde{s}_i(2\lambda)^{d(i,j)}.
\end{equation}
\item[-] $j$ infects an already visited neighbour $j'\in\tilde\tree$. This event results in the inclusion of $M_{j'}$ into $M_{\tilde\tree}$, while all other terms remain unchanged. Since for each $j'\sim j$, this occurs at rate $\lambda$, the event results in the total contribution
\begin{equation}\label{eq: weakcontribution2}
	\lambda\sum_{{\substack{ { j'\sim j}\\{j'\text{ visited}}}}}\sum_{i\in\tilde\tree}\tilde{s}_i(2\lambda)^{d(i,j')}.
\end{equation}
\item[-] $j$ infects a pending leaf $j'\in\tilde\tree$. This is the most delicate event, since it has the added effect of changing the value of $\tilde{s}_{j'}$ and connecting new vertices to $\tilde\tree$.  We break down the changes in score as follows:
\begin{itemize}[leftmargin=*]
	\item[$*$] For every $k\in\tilde\tree$, the change in value of $\tilde{s}_{j'}$ implies a decrease of $M_k$ by $\frac{2}{\CMI-2}s_{j'}(2\lambda)^{d(k,j')}$.
	\item[$*$] For every $k\in\tilde\tree$, the connection of a vertex $l$ to $j'$ implies an increase of $M_k$ by
	\[\tilde{s}_{l}(2\lambda)^{d(k,l)}=\frac{2\lambda\CMI}{\CMI-2}s_l(2\lambda)^{d(k,j')}\]
	where we have used that $d(k,l)=d(k,j')+1$ and the fact that $l$ is not visited (for the definition of $\tilde{s}_l$).
	\item[$*$] Since every weak vertex $l\notin G_t$ becomes a neighbour of $j'$ with probability $p_{j',l}$, it follows that the average change of $M_k$ is upper bounded by
	\[-\frac{2}{\CMI-2}s_{j'}(2\lambda)^{d(k,j')}+\frac{2}{\CMI-2}(2\lambda)^{d(k,j')}\sum_{l\text{ weak}}\lambda\CMI p_{j',l}s_l\leq 0\]
	where we have used \eqref{OMIweak} to deduce that $\sum\lambda \CMI p_{j',l} s_l\leq s_{j'}$.
\end{itemize}
	Since the effect of revealing new vertices decreases all the $M_k$ in average, it follows that the change of $M_{\tilde\tree}$ by infecting a pending leaf can be upper bounded by the one given by the infection of an already visited vertex. We conclude that the total contribution is bounded by
\begin{equation}\label{eq: weakcontribution3}
\lambda \sum_{{\substack{ { j'\sim j}\\{j'\text{ non visited}}}}}\sum_{i\in\tilde\tree}\tilde{s}_i(2\lambda)^{d(i,j')}.
\end{equation}
\end{itemize}}

$\bullet$ \emph{Network dynamics.} In this contribution we upper bound the average changes of $M_{\tilde\tree}$ that follow from the updates of weak vertices (not necessarily in $\tilde\tree$). Observe that on these events the infection does not spread so $M_{\tilde\tree}$ does not increase by adding new terms $M_j$, but rather by modifying the values $M_i$ of all currently infected vertices in $\tilde\tree$. Fix then any such infected $i$ and  observe that as long as $t+\de t<T_{\rm{bad}}$, the updating of another $i'\in\tree$ can only decrease $M_{i}$ (because the connected component of $i$ in $\tree$ is reduced) and hence the only transitions of the network that may increase $M_i(t)$ are:
\begin{itemize}
	\item[$*$] the update of $i$, or
	\item[$*$] the update of an unseen vertex $k$ that then connects to some visited vertex $j\in \tilde\tree$, becoming a pending leaf.
\end{itemize}
In the first case, the resulting tree $\tree_i(t+\de t)$ containing $i$ will just consist of $i$ and some pending leaves $j$. The average value of $M_i(t+\de t)$ will then be
\begin{equation}\label{eq: averageM}M_i(t+\de t)\leq s_i+ \sum_{j \text{ weak}}  \frac{2\lambda\CMI}{\CMI-2} p_{i,j}s_j\leq s_i+\frac{2}{\CMI-2}s_i=\frac{\CMI}{\CMI-2}s_i\end{equation}
where in the second inequality we have used \eqref{OMIweak}. In the second case, the unseen vertex $k$ that connects to $j\in \tilde\tree$ will add to $M_i$ the quantity  $(2\lambda)^{d(i,j)+1} \frac{\CMI}{\CMI-2}s_k$. Since the update of $i$ occurs at rate $\kappa_i\leq\kappa_0$, and each $k$ attaches to a visited vertex $j$ with rate $\kappa_kp_{j,k}\leq \kappa_0p_{j,k}$ we deduce that the average change of $M_i$ due to these events is upper bounded by
\begin{align*}
	\kappa_0  & \left(\frac{\CMI}{\CMI-2}s_i-M_i(t)\right)+ \sum_{j\in \tilde\tree} \sum_{k\text{ weak}} \kappa_{0} p_{j,k} (2\lambda)^{d(i,j)+1} \frac{\CMI}{\CMI-2}s_k \\
	\le &\kappa_0 \left(\frac{\CMI}{\CMI-2}M_i-M_i\right)+ \sum_{j\in \tilde\tree} \frac{2\kappa_0}{\CMI-2}(2\lambda)^{d(i,j)}s_j=\frac{4\kappa_0}{\CMI-2}M_i,
\end{align*}
where we have used that $s_i\leq M_i$ (since $i$ is infected), as well as~\eqref{OMIweak}, and in the equality we have used the definition of $M_i$. Adding all these average changes, and taking $\CMI$ sufficiently large we obtain that the total contribution given by network dynamics events is
\begin{equation}\label{eq: contributiondynamicweak}
\sum_{\substack{i\in\tilde\tree\\i\text{ infected}}}\frac{4\kappa_0}{\CMI-2}M_i\leq \frac{1}{8}M_{\tilde\tree}.
\end{equation}
By adding the terms in \eqref{eq: weakcontribution1}, \eqref{eq: weakcontribution2}, and \eqref{eq: weakcontribution3}, for all infected vertices $j\in\tilde\tree$, as well as the contribution obtained in \eqref{eq: contributiondynamicweak}, we obtain that the expressions appearing in $\frac 1 {\de t}  \E[M_{\tilde\tree}(t+\de t) -M_{\tilde\tree}(t)\vert \F_t]$ can be upper bounded by
\begin{align*} \sum_{\substack{j\in\tilde\tree\\j\text{ infected}}} & \sum_{i\in\tilde\tree}\tilde{s}_i\left(\sum_{j'\sim j}\lambda(2\lambda)^{d(i,j')}-(2\lambda)^{d(i,j)}\right)+\frac{1}{8}M_{\tilde\tree}\\&\leq \sum_{\substack{j\in\tilde\tree\\j\text{ infected}}}\sum_{i\in\tilde\tree}\tilde{s}_i\left(\lambda(2\lambda)^{d(i,j)-1}+\frac{1}{8\lambda^2}(2\lambda)^{d(i,j)+1}-(2\lambda)^{d(i,j)}\right)+\frac{1}{8}M_{\tilde\tree}
\end{align*}
\begin{align*} \phantom{\sum_{\substack{j\in\tilde\tree\\j\text{ infected}}}} 
&= -\frac{1}{4}\sum_{\substack{j\in\tilde\tree\\j\text{ infected}}}\sum_{i\in\tilde\tree}\tilde{s}_i(2\lambda)^{d(i,j)}+\frac{1}{8}M_{\tilde\tree}=-\frac{1}{8}M_{\tilde\tree},\end{align*}
where in the inequality we have used that for any pair of vertices $i,j\in\tree$, there is at most one vertex $j'\sim j$ satisfying $d(i,j')=d(i,j)-1$, while the rest of the (at most) $\frac{1}{8\lambda^2}$ neighbours of $j$ satisfy $d(i,j')=d(i,j)+1$.\smallskip

%\margEJ{changed ``the totality of all events'' to simply ``all events''.}
{Now that we have shown that the events involving vertices in $\tilde\tree$ and their interactions with other weak vertices induce an average change of at most $-\frac{1}{8}M_{\tilde\tree}$ in $M_{\rm{weak}}$, we can consider all events involving only weak vertices (such that $t+\de t<T_{\rm{bad}}$) by adding all these contributions to obtain that their contribution to \eqref{eq: upperboundweak} is upper bounded by
\[-\frac{1}{8}M_{\rm{weak}}(t).\]
Similarly, it can be shown that the contribution of said events to
\[\frac 1 {\de t}  \E[M_{\tree_j(t+\de t)}(t+\de t) -M_{\tree_j(t)}(t)\vert \F_t]\]
is upper bounded by $-\frac{1}{8}M_{\tree_j(t)}(t)$. Indeed, we can repeat the exact same computations as for $M_{\tilde\tree}$, with the only exception of the case in which a vertex $j'\neq j$ in $\tree_j$ updates, which then becomes detached from $j$. We exploit this upper bound, together with the fact that for each strong vertex $i$, its scoring depends on the weak vertices only through the terms $R_{ij}$, which depend linearly on the corresponding $M_{\tree_j}$ to finally conclude that the contribution of events involving only weak vertices to the average change in the total score $M(t)$ is upper bounded by}
\begin{equation}\label{eq: totalcontributionweak}
	{\color{blue} -\frac 1 8 \sum_{\heap{i\  \text{weak}}{\text{ infected}}}M_i(t)}+{\color{red} \sum_{\heap{i \text{ strong infected}}{\text{ unsaturated}}} - \frac 4 8 R_i^{\rm{weak}}(t) t_i }+ {\color{mygreen}\sum_{\heap{i \text{ strong}}{\text{ healthy}}} -\frac 2 8 R_i^{\rm{weak}}(t)(s_i+t_i)}.
\end{equation}
%Indeed, adding the bounds for all connected components $\tilde\tree$, we obtain that the change of $M^{\rm{weak}}$ is bounded by the first term in \eqref{eq: totalcontributionweak}. For each strong vertex, its scoring depends on the weak vertices only through the terms $R_{ij}$, which depend linearly on the corresponding $M_{\tree_j}$. Neglecting that the $j$ become detached from $i$ after updating (which decreases the score even further), the rest of the terms in \eqref{eq: totalcontributionweak} follow from previous computations. 
\medskip

$\mathbf{ (ii)\;\;\text{\bf Contributions from events that only involve strong vertices}}.$\smallskip

Observe that events only involving strong vertices do not affect $M_{\rm{weak}}$. Indeed, the scoring of each $M_i$ depends on $\tree_i$ which is unaffected by these events, while the possible revealing of new weak vertices (as neighbours of strong ones) also leave $M_{\rm{weak}}$ unaffected, since these new vertices are healthy. It follows that the events involving exclusively strong vertices only impact $M_{\rm{strong}}$.\smallskip

 Noting the inherited rules concerning strong vertices, the almost identical score function to the one in the proof of Theorem~\ref{teoupper_vertex_improved}, as well as the bonds \eqref{eq: adapt1}, \eqref{eq: adapt2}, and \eqref{eq: adapt3}, it follows that the computations arising from these events are analogous to the ones in the previous proof, so we refrain from repeating them, although we point out that:
\begin{itemize}[leftmargin=*]
	\item When vertices update, the already revealed edges turn absent instead of unrevealed. This does not affect our computations since we always use worst-case scenario bounds. This actually improves some of the negative terms.\smallskip
	\item The contributions obtained in \eqref{neg4} and \eqref{pos2}, {which depended on clear edges, now depend on present edges $(i,j)$ where both $i$ and $j$ are strong, but where at least one of them is strong quick.} The term \smash{$\Ncl_i(t)$} appearing in both terms is now replaced by the number of all such edges incident to $i$.\smallskip
	\item The contribution computed in \eqref{pos1} still relates to the event in which an infected unsaturated strong slow vertex updates, thus revealing new neighbours. In that case only new strong slow neighbours affect the scoring.\smallskip
	\item The contribution computed in \eqref{pos3} now relates to the transmission of the infection through a strong slow edge.
\end{itemize}
Following the same computations leading to \eqref{finalboundM} we obtain a total contribution of
%\margEJ{I replaced the sum over $(i,j)\in G_t$ strong slow edge, by the sum over $(i,j)$ present strong slow edge, because the sum should of course not consider saturated edges.}
\begin{equation}\label{ineq: strongstrongbound}
	\begin{split}
	%-\sum_{\heap{i \text{ infected}}{\text{or saturated}}} 
	&{\color{red}-\sum_{{i \text{ infected}}\atop{\text{ unsaturated}}} \left(4\ratiost-\tfrac {17} {3} -\tfrac {4\ratiost} {3\CMI} \right)\tfrac {s_i} {8\ratiost}}-{\color{magenta}\sum_{{i \text{ infected}}\atop{\text{saturated}}} \left(\tfrac{1}{3}-\tfrac {4\ratiost} {3\CMI}\right)\kappa_i t_i}\\&\hspace{1cm}- {\color{mygreen}\sum_{{i \text{ healthy}}\atop{\text{strong}}} \lambda %\frac {2\lambda} 3
	\Nsq_i(t)(s_i+t_i)}-{\color{orange}\sum_{{(i,j) \text{ present}%\in G_t
		}\atop{\text{strong slow}}} \tfrac 2 3 (\kappa_{i}+\kappa_j)(s_j+t_j+u_j)},
	\end{split}
\end{equation}
where $\Nsq_i$ is the number of non strong slow present edges connecting $i$ to strong vertices.
\pagebreak[3]

\medskip
$\mathbf{ (iii)\;\;\text{\bf Contributions from events that involve both strong and weak vertices}}.$\medskip

The remaining transitions to consider are events that follow from infections of weak vertices by strong ones, and vice-versa. 
Moreover, we emphasize that the infection of a strong vertex by a weak one along a clogged edge can also lead to infection and saturation of other strong vertices by the ``long distance saturation'' rule. \medskip

$\bullet$ \emph{Infection of weak vertices by strong neighbours.} The infection of a weak vertex $j$ by a neighbouring strong infected vertex $i$ happens at rate $\lambda$. Following similar computations to the ones in \eqref{eq: averageM}, we deduce that $j$ increases its score in average by $M_j(t)$ (even if infecting $j$ resulted in revealing new neighbours). It follows that if $i$ is unsaturated, then it increases its score by at most $64 \lambda M_j(t) t_i/s_j$. This yields the contribution
\begin{equation}
\label{strong_weak}
{\color{red} \sum_{\heap{i \text{ strong infected}}{\text{unsaturated}}} 64\lambda^2  \sum_{\substack{j \text{ weak}\sim i\\j\text{ healthy}}} \frac {M_j(t)}{s_j} t_i} +
{\color{purple}\sum_{\heap{i \text{ strong}} {\text{infected}}}\ 
\sum_{\substack{j {\textrm{ weak}} \sim  i\\j\text{ healthy}}} \lambda M_j(t)}.
\end{equation}

$\bullet$ \emph{Infection of a strong healthy vertex by a connected weak vertex}. %weak one along a clear edge}.
The infection of a strong vertex $i$ by a weak infected vertex $j$ through a present edge holds at rate $\lambda$, leading to the contribution
\[
\sum_{\heap{i \text{ strong}}{\text{ healthy}}}\ \ 
\sum_{j \text{ weak infected }\sim i} \lambda (s_i+t_i).
%\sum_{\heap{j \text{ weak infected}}{\{i,j\} \text{ clear} }} \lambda (s_i+t_i).
\]
%where we write here $j\sim i$ to mean the edge is clear. 
Observing that $j$ is infected, we have $M_{\tree_j}\ge s_j$ and thus $R_{ij}(t)\ge 16\lambda$, we can further bound this contribution by
\begin{equation}
\label{strong_used_edge}
{\color{mygreen}
\sum_{\heap{i \text{ strong}}{\text{ healthy}}} \frac {R_i^{\rm{weak}}(t)} {16} (s_i+t_i)}.
\end{equation}

$\bullet$ \emph{Infection of a strong healthy vertex by a weak one along a clogged edge}.
The infection of a strong vertex $j$ by a weak infected vertex $i$ holds at rate $\lambda p_{ij}$ when the edge is clogged. Apart from a possible long distance effect, which is taken into account separately below, this transmission creates an average score associated to $j$ and to the strong slow vertices pending to $j$ (when $j$ is slow) at most of
\[(s_j+t_j)+u_j
\le \tfrac 4 3 s_j.
\]
Summing over $i$ and $j$ and using  the inequality~\eqref{OMIweak} available for the weak vertex $i$, we obtain a contribution bounded by
\begin{equation}
\label{infection_weak_strong}
\sum_{\heap{i \text { weak}}{\text{ infected}}} \frac 4 {3\CMI} s_i\le 
{\color{blue}
\sum_{\heap{i \text { weak}}{\text{ infected}}} \frac 4 {3\CMI} M_i(t)
}.
\end{equation}

$\bullet$ \emph{Long distance saturation.} For each strong unsaturated $i$  and each weak neighbour $j$ of $i$, a vertex $k\in \tree_j$ may infect another strong vertex (through a clogged edge), leading to infection and saturation of $i$, and thus a score increase of at most $t_i$ if $i$ infected, or $(s_i+t_i)+u_i$ if $i$ healthy (taking into account the possible revealing of strong slow vertices pending to $i$ in that case).
For each such $i$ and $j$, this holds at rate
\[
\sum_{\substack{k\in \tree_j \\ \text{infected}}} \sum_{i' \text{ strong}} \lambda p_{k,i'}
\le \sum_{\substack{k\in \tree_j \\ \text{infected}}} \lambda \int_{x'<\astr} p(\tfrac{k}N,x') \mathrm d x' \le \frac \lambda {s(\astr)} \sum_{\substack{k\in \tree_j \\ \text{infected}}} s_k \le \frac {\lambda M_{\tree_j}}{s_j}= \frac {R_{ij}}{16},\] 
where the third inequality follows %\margEJ{This is the only place where the technical assumption\eqref{condTechnical_optimal} is needed. Unfortunately I couldn't get rid of this assumption...} 
from~(H1) %~\eqref{condTechnical_optimal} 
and the last inequality from the definition of $M_{\tree_j}$ and $s_j\le s(\astr)$.
Summing over the choices of $i$ and $j$, we  obtain a contribution bounded by
\begin{equation}
\label{long_distance_saturation}
{\color{red}\sum_{\heap{i \text{ strong infected}}{\text{unsaturated}}} \frac 1 {16} R_i^{\rm{weak}}(t) t_i }+ 
{\color{mygreen}\sum_{\heap{i \text{ strong}}{\text{ healthy}}} \frac 1 8 R_i^{\rm{weak}}(t)s_i}.
\end{equation}

\medskip
$\mathbf{ (iv)\;\;\text{\bf Total contribution}}.$\medskip

Grouping the contributions obtained in \eqref{eq: totalcontributionweak}, \eqref{ineq: strongstrongbound}, \eqref{strong_weak}, \eqref{strong_used_edge}, \eqref{infection_weak_strong}, and \eqref{long_distance_saturation}, we can upper bound the infinitesimal increase of $M$ as
\begin{align*} 
&\frac 1 {\de t}\E[\left(M(t+\de t)-M(t) \right)
| \F_t] \\
&\le {\color{blue}\sum_{\heap{i \text { weak}}{\text{infected}}}I_i(t)} +
{\color{orange}\sum_{\heap{i \text{ strong}}{\text{\rm{pending}}}} I_i^{\rm{pend}}(t)}
+  \!\!\!
{\color{red}\sum_{\heap{i \text{ strong infected}}{ \text{unsaturated}}}I_i^{\rm{inf}}(t) }+
{\color{mygreen} 
\sum_{\heap{i \text{ strong}}{\text{healthy}}}I_i^{\rm{hea}}(t)}
+ \!\!\!
{\color{magenta}\sum_{\heap{i \text{ strong infected}}{\text{saturated}}} I_i^{\rm{sat}}(t) 
}, 
\end{align*}
where the expressions of the terms inside the sums are given by
\begin{align*}
{\color{blue}I_i(t)}&= -\left(\tfrac 1 8- \tfrac 4 {3\CMI} \right) M_i(t)\\
{\color{orange}I_i^{\rm{pend}}(t)}&= - \tfrac 2 3 \kappa_i (s_i+t_i+u_i).\\
{\color{red}
	I_i^{\rm{inf}}(t)}&=- \left(4\ratiost-\tfrac {17} {3} -\tfrac {4\ratiost} {3\CMI} \right)\tfrac {s_i} {8\ratiost} 
+\sum_{j {\text{ weak}}\sim i} \lambda M_j(t) 
+ 64\lambda^2t_i \sum_{j {\text{ weak}}\sim i} \tfrac {M_j(t)}{s_j} - \tfrac 7 {16} R^{\rm{weak}}_i(t) t_i,\\
{\color{mygreen}I_i^{\rm{hea}}(t)}&=-\left(\lambda \Nsq_i(t) +  \tfrac 1 {16} R^{\rm{weak}}_i(t)\right) (s_i + t_i),\\
{\color{magenta}I_i^{\rm{sat}}(t)}&=-\left(\tfrac 1 3 - \tfrac {4\ratiost} {3 \CmMI}\right) \kappa_i t_i + \sum_{j {\text{ weak}}\sim i} \lambda M_j(t).
\end{align*}
Taking $\ratiost$ and $\CmMI$ large with still $\CmMI> 4\ratiost,$ we have that the terms $\sfrac 1 8-\sfrac 4 {3\CMI}$, $\sfrac 1 3-\sfrac {4\ratiost}{3\CMI}$ and $4\ratiost-\sfrac {17} 3-\sfrac {4\ratiost}{3\CMI}$ are all positive, as well as 
$4\ratiost-6- \sfrac {4\ratiost}{3\CMI}$. Define the expressions,
\begin{align*}
J_i(t):= \sum_{j {\text{ weak}}\sim i} \lambda M_j(t), \qquad K_i(t):=\sum_{j {\text{ weak}}\sim i} \frac  {M_j(t)}{s_j}
\end{align*}
and note that as soon as there is some $\eps>0$ such that the inequalities
\begin{align}
\label{LD_for_J} J_i(t)&\le \left( \tfrac 1 3 - \tfrac {4\ratiost} {3\CmMI}-\eps \right) \kappa_i t_i, \\
\label{LD_for_K} K_i(t)&\le \left(4 \ratiost - 6 - \tfrac {4\ratiost} {3\CmMI}-\eps \right) \tfrac {\pi_i} {32},
\end{align}
hold, then \eqref{LD_for_J} ensures that $I_i^{\rm{sat}}(t)\le -\eps \kappa_i t_i$ as well as 
$$J_i(t)\le \tfrac {\kappa_i t_i} 3=\tfrac {s_i}{3\ratiost \Tloc_i}\le \tfrac 1 3 \tfrac {s_i}{8\ratiost},$$ 
while \eqref{LD_for_K} ensures that 
\smash{$64\lambda^2 t_i K_i(t)\le (4\ratiost-6 -\sfrac {4\ratiost} {3\CMI}-\eps)\sfrac {s_i} {8\ratiost}$} and further $I_i^{\rm{inf}}(t)\le -\eps s_i/8\ratiost$, so in particular all the contributions become negative. With this in mind we define at last the missing stopping time $T_{\rm{bad}}^{\rm{conn}}$ as
\begin{equation}\label{def: Tconn}T_{\rm{bad}}^{\rm{conn}}=\inf\left\{\begin{array}{c}t\geq 0,\; \text{\eqref{LD_for_J} or~\eqref{LD_for_K}
	fails to hold for at least}\\ \text{one vertex $i$ amongst the visited strong vertices}\end{array}\right\},\end{equation}
{for some values of $\ratiost$, $\CMI$ and $\eps>0$ that are still not fixed, but satisfy all previously mentioned constraints}. We then have:
\begin{equation}\label{eq:negative_infinitesimal_increase}
\frac 1 {\de t}  \E[M_{t+\de t} -M_t\vert \F_t]\le - c_\eqref{eq:negative_infinitesimal_increase} \one_{t<T},
\end{equation}
for some $c_\eqref{eq:negative_infinitesimal_increase}>0$ independent of $i_0$, $t$, and $N$. We therefore conclude that both $(M(t\wedge T))_{t\ge 0}$ and $(M(t\wedge T)+c_\eqref{eq:negative_infinitesimal_increase}(t\wedge T))_{t\ge 0}$ are supermartingales. 

\subsection*{Exploiting the supermartingale}

As in the previous proof, our aim is to exploit the supermartingale property to prove Theorem~\ref{teoupper_optimal}. First, we bound $\P_i(t<T_{\rm{ext}})$ as follows:
\begin{align*}
\P_i(t<T_{\rm{ext}})&\le \P_i(t\le T)+ \P_i(T= T_{\rm{hit}})+\P_i(t\le T_{ext},T<t,T\ne T_{\rm{hit}})\\
&\le \P_i(t\le T)+ \P_i(T= T_{\rm{hit}})+\P_i(T_{\rm{bad}}< T_{\rm{hit}}\wedge t).
\end{align*}
As in the previous proof, $M(0)$ is not deterministic under $\P_i$ but satisfies
\smash{$s_i\le \E_i[M(0)]\le \frac 76 s_i$},
so the optional stopping theorem applied to $M(\cdot \wedge T)$ implies
$
\P_i(T= T_{\rm{hit}})\le \frac {7 s_i}{6s(a)}.
$
Moreover, with the  optional stopping theorem applied to $M(\cdot \wedge T)+ c_\eqref{eq:negative_infinitesimal_increase} (\cdot \wedge T)$, we get
\[
\P_i(t<T)\le \frac 1 t \E_i[T]\le \frac 1 {c_\eqref{eq:negative_infinitesimal_increase} t} \E_i[M_0]\le \frac {7 s_i} {6c_\eqref{eq:negative_infinitesimal_increase} t}.
\]
Using~\eqref{ineq:I_N_byduality}, we now obtain
\begin{equation}\label{ineq:fullboundIN}
I_N(t)\le a+\tfrac 1 N+\tfrac 7 6 \Big(\tfrac 1 {s(a)}+\tfrac 1 {c_\eqref{eq:negative_infinitesimal_increase} t}  \Big)\int_a^1 s(y)dy+\frac 1 N \sum_{i=\lceil aN\rceil+1}^N \P_i(T_{\rm{bad}}< T_{\rm{hit}} \wedge t).
\end{equation}
This expression resembles~\eqref{boundIn}, except for the last term. In Subsection~\ref{sec:stopping_process}, we will prove that we can bound this term at time $t=t_0:=7s(a)/c_\eqref{eq:negative_infinitesimal_increase}$ as
\begin{equation}\label{Bound_on_bad_stopping}
\frac 1 N \sum_{i=\lceil aN\rceil+1}^N \P_i(T_{\rm{bad}}< T_{\rm{hit}} \wedge t_0)\le a+ \frac {\omega'}N,
\end{equation}
for some choice of $\omega'=\omega'(\lambda)$, and small $\lambda$.
Taking this for granted, we continue as follows:
\begin{itemize}[leftmargin=*]
	\item For $t\le t_0$, we bound $\P_i(T_{\rm{bad}}< T_{\rm{hit}} \wedge t)$ by $\P_i(T_{\rm{bad}}< T_{\rm{hit}} \wedge t_0)$ to obtain
	\[
	I_N(t)\le 2a+ \frac 7 {6 s(a)}\int_a^1 s(y)dy+ \frac {\omega}{t} +\frac {1+\omega'}N,
	\]
	with $\omega=\omega(\lambda)=\tfrac 7 {6c_\eqref{eq:negative_infinitesimal_increase}} \int_a^1 s(y)  \mathrm dy$.
	\item For $t\ge t_0$, we use that $I_N$ is nonincreasing to bound $I_N(t)$ by $I_N(t_0)$, which, using again~\eqref{Bound_on_bad_stopping} and the equality $\frac 76(\frac 1 {s(a)}+\frac 1 {c_\eqref{eq:negative_infinitesimal_increase} t_0})=\frac 4 {3s(a)}$, yields
	\[
	I_N(t)\le 2a+ \frac 4 {3 s(a)}\int_a^1 s(y)dy+\frac {1+\omega'(\lambda)} N.
	\]
\end{itemize}
Thus \eqref{boundIn} is satisfied for all $t\ge 0$, showing Theorem~\ref{teoupper_optimal}. It only remains to prove~\eqref{Bound_on_bad_stopping}.

%To conclude~\eqref{boundIn}, it suffices to show, for $t=t_0= 7s(a)/\rho$, that we have
%\begin{equation}\label{Bound_on_bad_stopping}
%\frac 1 N \sum_{i=\lceil aN\rceil+1}^N \P_i(T_{\rm{bad}}< T_{\rm{hit}} \wedge t_0)\le a+ \frac {\omega'}N,
%\end{equation}
%for some $\omega'(\lambda)$ and $\lambda$ small. We finally deduce~\eqref{boundIn} with 
%\[\omega(\lambda):= \frac 7 {6\rho} \int_a^1 s(y)  \mathrm dy.\]
%Indeed, for $t<t_0$, we argue by upper bounding $\P_i(T_{\rm{bad}}< T_{\rm{hit}} \wedge t)$ by $\P_i(T_{\rm{bad}}< T_{\rm{hit}} \wedge t_0).$ For $t\ge t_0$, the term $\omega/t$ is even not needed, as we can directly bound $I_N(t)$ by $I_N(t_0)$.

%Indeed, we then get~\eqref{boundIn}:
%\begin{itemize}
%	\item for $t\ge t_0$ with $\omega=0$, using the inequality $I_N(t)\le I_N(t_0)$,
%	\item for $t<t_0$ with \[\omega(\lambda)= \frac 7 {6r} \int_a^1 s(y)  \mathrm dy,\]
%	using the inequality $\P_i(T_{\rm{bad}}< T_{\rm{hit}} \wedge t)\le \P_i(T_{\rm{bad}}< T_{\rm{hit}} \wedge t_0).$
%\end{itemize} for $t<t_0$ with \[\omega(\lambda)= \frac 7 {6r} \int_a^1 s(y)  \mathrm dy,\]
%using the inequality $\P_i(T_{\rm{bad}}< T_{\rm{hit}} \wedge t)\le \P_i(T_{\rm{bad}}< T_{\rm{hit}} \wedge t_0),$ as well as 

\subsection{Bound on the probability of stopping the exploration early.}\label{sec:stopping_process}
%before $T_{\rm{hit}}\wedge t_0$}

%We now prove that~\eqref{Bound_on_bad_stopping} is satisfied for some $\omega'(\lambda)$, for all $\lambda$ small and large $N$. More precisely, 
We will provide a bound of the form
\begin{equation}
\label{Bound_on_bad_stopping_unif}
\P_i(T_{\rm{bad}}< T_{\rm{hit}} \wedge t_0)\le o(a(\lambda))+ \frac {\omega'(\lambda)}N,
\end{equation}
valid for all small $\lambda$ and large $N$, and uniformly over $i\in \{\lceil aN\rceil+1,\ldots, N\}$, from which~\eqref{Bound_on_bad_stopping} naturally follows. Recall the definition of $t_0= t_0(\lambda)= 7s(a)/c_\eqref{eq:negative_infinitesimal_increase}$ and \smash{$T_{\rm{bad}}=T_{\rm{bad}}^{\rm{tree}}\wedge T_{\rm{bad}}^{\rm{deg}}\wedge T_{\rm{bad}}^{\rm{conn}}$}, and introduce further the following stopping times:
\smallskip

\begin{itemize}[leftmargin=*]
	\item $T_\ell$, for $\ell\ge1$, the time of the $\ell$-th infection, where an infection means a vertex transitioning from the state healthy to infected (and possibly saturated). Note that two infections can occur simultaneously (by the saturation rules $(8)(i)$ or $(8)(ii)$), {in which case we count the two infections, so we might have $T_\ell=T_{\ell+1}$ for example}. A single vertex can also be infected multiple times. In any case, we can roughly bound the number of visited vertices at times $t<T_\ell$ by $\ell$.\smallskip
	
	%the $k-$th infection time, namely the $k-$th time some vertex transitions from healthy to infected (and possibly saturated). Observing that at most two vertices can be infected simultaneously (an infection by a weak vertex can indeed lead to instant saturation of one strong vertex by the saturation rules $(8)(i)$ or $(8)(ii)$, but this amounts to at most two simultaneous infections), we can roughly bound the total number of infections before time $T_k$ and the total number of visited vertices at times $t<T_k$ by $2k$.%, including the initially infected vertex, by $2k+1$.
	%at least $k$ vertices in total have been infected in the evolution of the infection process,\smallskip
	\item $\tilde T_{\rm{deg}}$, the smallest time when a visited strong vertex is connected to more than $2 c_2 a^{-\gamma}$ weak vertices in $G_t$. Note that when incorporated to the network, a strong vertex $j$ was possibly connected to a weak vertex $i$ that has infected it (so that $(i,j)$ has been incorporated to $G_t$). However, its connections to the other weak vertices follow the same dynamics as in the original stationary dynamic network (by rules (6) and (7)). It follows that at any given time $t$, we can use~\eqref{condp} to bound the average number of present weak edges $(j,k)$ by $c_2 a^{-\gamma}$. Being connected to more than $2 c_2 a^{-\gamma}$ is thus an unlikely event, which will help us to control the more involved quantities $J_j(t)$ and $K_j(t)$ appearing in~\eqref{LD_for_J} and~\eqref{LD_for_K}.
	%an active vertex $x$ has degree larger than $2 c_2 a^{-\gamma}$, where we recall that by~\eqref{condp} the expected degree of $x$ is bounded by $c_2 x^{-\gamma}$.
\end{itemize}
We have
\begin{align*}
\P_i(T_{\rm{bad}}<T_{\rm{hit}}\wedge t_0)&\le \P_i(T_\ell\wedge T_{\rm{bad}}^{\rm{tree}}\wedge T_{\rm{bad}}^{\rm{deg}}\wedge \tilde T_{\rm{deg}}\wedge T_{\rm{bad}}^{\rm{conn}}< T_{\rm{hit}}\wedge t_0)\\
&\le P^{\rm{infected}} +P^{\rm{tree}}+P^{\rm{deg}}\ +P^{\rm{conn}},
\end{align*}
where the second line is obtained by looking at which of the stopping times is smallest, and the terms are defined as
\begin{align*}
P^{\rm{infected}}&:= \P_i(T_\ell\le T_{\rm{bad}}\wedge T_{\rm{hit}}\wedge t_0),\\
P^{\rm{tree}}&:=\P_i(T_{\rm{bad}}^{\rm{tree}}<T_\ell\wedge T_{\rm{hit}}\wedge t_0),\\
P^{\rm{deg}}&:= \P_i(T_{\rm{bad}}^{\rm{deg}}\wedge \tilde T_{\rm{deg}}<T_\ell\wedge T_{\rm{bad}}^{\rm{tree}}\wedge T_{\rm{hit}} \wedge t_0), \\
P^{\rm{conn}}&
:=\P_i(T_{\rm{bad}}^{\rm{conn}}<T_\ell\wedge T_{\rm{bad}}^{\rm{tree}} \wedge T_{\rm{bad}}^{\rm{deg}}\wedge \tilde T_{\rm{deg}}\wedge T_{\rm{hit}}\wedge t_0).
\end{align*}
We now prove that
\begin{gather*}
P^{\rm{tree}}\le \tfrac {\omega'(\lambda)} N,\\
P^{\rm{infected}}+ P^{\rm{deg}}+ P^{\rm{conn}}= o(a(\lambda)),
\end{gather*}
when $\ell$ is chosen as 
\begin{equation}\label{ineq:k_limit}
\ell=\ell(\lambda)=\lambda^{-C_{\eqref{ineq:k_limit}}}
\end{equation}
with $C_{\eqref{ineq:k_limit}}>0$ defined below.

\pagebreak[3]

\subsection*{Bounding the total number of infections}

We prove
\begin{equation}\label{ineq:Pinfected}
P^{\rm{infected}}\le \frac {2+4s(a)^2}{\ell}.
\end{equation}
%\begin{equation}\label{ineq:Pinfected}
%P^{\rm{infected}}\le \frac {(2+8s(a)^2)\left[1+\frac{100}{\min\{2,\assl p(1,1)\}}\right]}{k}.
%\end{equation}
From now on, we fix a sufficiently large value of \smash{$C_{\eqref{ineq:k_limit}}$} to ensure this is $o(a)$. This is possible as $a^{-1}$ and $s(a(\lambda))$ are growing polynomially in $\lambda^{-1}$. 
\medskip

\noindent\emph{Proof.}
	 The idea is to associate each infection with a positive increase of value at least $1/2$ for 
		the supermartingale $M(t\wedge T)$, and then use standard martingale theory to obtain the desired upper bound. Indeed, the infection of a weak vertex $i$ implies a score increase of at least $s_i\ge 1$, while the infection of a strong vertex $i$ implies an increase of its associated score $m_i$ of at least $s_i/2\ge 1/2$. This simple idea however has to take into consideration several subtleties.
		\begin{itemize}[leftmargin=*]
			\item The infection of a strong pending vertex possibly implies a \emph{decay of the total score}. This is due to the fact that while the vertex becomes infected (and saturated), the edge simultaneously also becomes saturated, which implies a decay of its associated score. For this reason, in this proof, we will consider a strong pending vertex as already infected when counting the number of ``infections''.
			\item By the previous consideration, when a strong vertex $i$ reveals a strong pending leaf~$j$, we count this as an infection, which we also want to associate to an increase of score. However, it might be the case that $i$ was itself a strong pending leaf, that was just infected by another strong slow edge $(k,i)$. In that case, the following events occur simultaneously:
			\begin{itemize}[leftmargin=*]
				\item $i$ gets infected and saturated, with associated score $s_i+t_i$.
				\item $k$ also gets saturated (if it were not before), but for simplicity we neglect this effect from this discussion.
				\item The edge $(k,i)$ becomes saturated, implying a score decrease of $s_i+t_i+u_i$. These first events together thus imply possibly a score decrease of $u_i$.
				\item The vertex $i$ has revealed the pending strong slow vertex $j$ (and possibly other strong slow and weak vertices as well). The  increase in score associated to this newly revealed strong slow vertex is $s_j+t_j+u_j$.
			\end{itemize}
			The problem is that the increase $s_j+t_j+u_j$ might be compensated by the decrease $u_i$, so it is not clear whether we can associate a score increase to the revelation of~$j$.%
			\smallskip%
			\end{itemize}
			
			In order to overcome this difficulty, we define a discrete-time supermartingale $(\Md_n)_{n\ge 0}$, that evolves as the jump process associated to the supermartingale $M_t$, except for the particular case in which a strong slow vertex $i$ is infected by another strong slow vertex~$k$. In that case, the discrete-time process $\Md$ decomposes the jump $\Delta M_t$ into several jumps as follows:
			\begin{itemize}[leftmargin=*]
				\item As a first jump of $\Md$, it increases due to the change in score given by the saturation of $i$, $k$, and $(k,i)$, the revelation of new weak neighbours of $i$, and an additional term $\sum_{j\text{ strong slow}}p_{ij}(s_j+t_j+u)$ representing the average increase of $M_t$ resulting from the revelation of new strong slow neighbours. Equivalently, this term can be seen as the result of assigning a temporary score $p_{i,j} (s_j+t_j+u_j)$ to each strong slow edge $(i,j)$, before attempting to reveal it as in Rule $(3)$. %that we will have to reveal, we temporarily associate the average score $p_{i,j} (s_j+t_j+u_j)$.
				\item For the remaining steps associated to this event, we successively attempt to reveal each strong slow edge $(i,j)$ 
				(starting from the smallest $j$ or in other arbitrary order), and replace the score $p_{i,j} (s_j+t_j+u_j)$ by either $(s_j+t_j+u_j)$ or $0$, according to whether the edge was revealed or not. %These jumps are centered, so by doing this indeed $(\Md_n)_{n\ge 0}$ is a supermartingale.
			\end{itemize}
			Once the vertex $i$ has gone through all the strong slow vertices, the two processes coincide again (via a time-change that we do not write down explicitly).	This provides the construction of $(\Md_n)_{n\ge 0}$. Note that:
			\begin{itemize}[leftmargin=*]
				\item When we successfully reveal a strong slow neighbour $j$, this implies a positive jump $\Delta \Md_n=(1-p_{i,j})(s_j+t_j+u_j)\ge 1/2$, using that $p_{i,j}\le p(a,a)/N\le 1/2$ for large $N$. Actually it is now true that any ``infection'' that we count in this proof is associated to an increase $\Delta \Md_n\ge 1/2$. We might also have two simultaneous infections, or reveal several strong pending vertices when a strong slow vertex updates, but in each case the total number of simultaneous ``infections'' is bounded by $2\Delta \Md_n$. 
				\item The first jump of $\Md$ is on average equal to $\Delta M_t$, while the rest of the jumps have mean $0$. Doing this, we maintain the supermartingale property of the process, so $\Md$ is indeed a supermartingale.
				%The jumps of $\Md$ associated to the revealing of the strong slow edges are centered, which ensures that the process $\Md$ keeps the supermartingale property.
				\item The process $\Md$ takes nonnegative values, and it can reach a value larger than $s(a)$ only at a moment corresponding to the time $T_{\rm{hit}}$ when $M_t$ also reaches such a value.\smallskip
			\end{itemize}
			Writing $N_{\rm{hit}}:=\inf\{n\ge 0, \Md_n>s(a)\}$ and \smash{$Z_n:=(s(a)-\Md_{n\wedge N_{\rm{hit}}})_+$}, we have that $Z_n$ is a submartingale with values in $[0,s(a)]$. The total number of infections occurring before time $T$ can now be bounded (using also $T\le T_{\rm{hit}}$), by
			\[
			\sum_{n<N_{\rm{hit}}} 2 \Delta \Md_n \1_{\Delta \Md_n\ge 1/2}\le 4 \sum_{n<N_{\rm{hit}}}  \left(\Delta \Md_n\right)^2.
			\]
			Noting that we have $\Delta Z_n=-\Delta \Md_n$ for $n<N_{\rm{hit}}$, we can further bound the expected number of infections occurring before time $T$ by
			\[
			4\E\Big[\sum_{n<N_{\rm{hit}}} \left(\Delta Z_n\right)^2\Big]\le 4\E[Z_\infty^2-Z_0^2]\le 4s(a)^2.
			\]
			We also have at most two additional infections occurring at time $T$.
			Then~\eqref{ineq:Pinfected} follows by the Markov inequality.\hfill\qed

\subsection*{Bounding the probability of breaking the tree structure}

We now prove 
\[
P^{\rm{tree}}%=\P_x(T_{\rm{bad}}^{\rm{tree}}< \tilde T_{\rm{hit}}\wedge T_k \wedge t_0)
\le 8\ell^2
(2\kappa_0t_0+\lambda t_0+\ell+1)^2\frac{p(a,a)^2}{N}=: \frac {\omega'(\lambda)}N.
\]

\begin{proof}
	Introduce, in this proof only, the notation $\tilde T:=T_k\wedge T_{\rm{hit}}\wedge t_0$, {and the quantities
	\begin{align*}
		E^{\rm{tree}}_{\rm{rep}}&=\bigg\{\parbox{10cm}{$(i,j)\in \{1,2,,\ldots,N\}^2$, such that $(i,j)$ becomes present at least twice before time $\tilde{T}$}\bigg\}\\[3pt]
		V^{\rm{tree}}_{\rm{cycle}}&=\bigg\{\parbox{10cm}{$j\in \{1,2,,\ldots,N\}$, there are $i,i'\in\{1,2,\ldots N\}$ such that both $(i,j)$ and $(i',j)$ become present 
		before time $\tilde{T}$}\bigg\}
	\end{align*}
	It follows that $T_{\rm{bad}}^{\rm{tree}}<\tilde T$ if and only if $|E^{\rm{tree}}_{\rm{rep}}|+|V^{\rm{tree}}_{\rm{cycle}}|\geq 1$ so from the Markov inequality, we can bound
	\[P^{\rm{tree}}\leq\E[|E^{\rm{tree}}_{\rm{rep}}|]+\E[|V^{\rm{tree}}_{\rm{cycle}}|].\]
	The quantities in both expectations involve counting edges that become present, so we stress that in order for an edge $(i,j)$ to become present at time $t<\tilde{T}$:
	\begin{itemize}[leftmargin=*]
		\item The edge $i$ must be visited at time $t$.
		\item It must be the case that at time $t$ either:
		\begin{itemize}
			\item $(i,j)$ is a strong quick edge and $i$ is infecting $j$ (which was previously not in the network), or
			\item $i$ or $j$ updates, or
			\item $i$ is a strong slow neighbour which becomes saturated due to Rule $(8.iii)$.
		\end{itemize}
	\end{itemize}\pagebreak[3]
	
	Denote by  $\tau_i$ the time at which vertex $i$ becomes visited, and $N_{\rm{visited}}$ the number of visited vertices by time $\tilde{T}$. We now bound $|E^{\rm{tree}}_{\rm{rep}}|$ and $|V^{\rm{tree}}_{\rm{cycle}}|$ separately:
	\begin{itemize}[leftmargin=*]
		\item \emph{Repeated edges}: Start from the observation
		\[\E[|E^{\rm{tree}}_{\rm{rep}}|]=\sum_{\text{ edges }(i,j)}\P(\tau_i<\tilde{T}\text{ and $(i,j)$ becomes present at least twice before time $\tilde{T}$})\]
		and observe that, conditionally on $\tau_i$:
		\begin{itemize}
			\item The number of updates of $(i,j)$ in $(\tau_i,\tilde{T}]$ that result in the edge being present is bounded by an independent Poisson random variable with rate $(\kappa_i+\kappa_j)(\tilde{T}-\tau_i)p_{i,j}\leq 2\kappa_0t_0p_{i,j}$.
			\item Since $\tilde{T}\leq T_\ell$, there are at most $\ell$ infections within $[\tau_i,\tilde{T}]$ which could reveal $(i,j)$ due to Rule $(8.iii)$, and at each such infection an independent coin with probability $p_{i,j}$ is tossed to decide whether the edge becomes present.
			\item The number of possible events in which $(i,j)$ is infecting $j$ through a clogged edge is bounded by a Poisson random variable with rate $\lambda t_0p_{i,j}$.
		\end{itemize}
		We thus deduce that, conditionally on $\tau_i$, the probability of $(i,j)$ becoming present at least twice is bounded by
		\[2(2\kappa_0t_0+\lambda t_0+\ell+1)^2p_{i,j}^2\leq 2(2\kappa_0t_0+\lambda t_0+\ell+1)^2\frac{p(a,a)^2}{N^2},\]
		and as there are at most $\ell$ infections before time $\tilde{T}$, it must be that $N_{\rm{visited}}\leq \ell$,~so
		\begin{align*}\E[|E^{\rm{tree}}_{\rm{rep}}|]&\leq \sum_{\text{ edges }(i,j)}\P(\tau_i<\tilde{T})2(2\kappa_0t_0+\lambda t_0+\ell+1)^2\frac{p(a,a)^2}{N^2}\\[1pt]&= \sum_{i=1}^N\P(\tau_i<\tilde{T})2(2\kappa_0t_0+\lambda t_0+\ell+1)^2\frac{p(a,a)^2}{N}\\[1pt]&\leq 2\ell(2\kappa_0t_0+\lambda t_0+\ell+1)^2\frac{p(a,a)^2}{N}\end{align*}
	\item \emph{Cycles}: This case is similar to the previous one, where we can write
	\[\E[|V^{\rm{tree}}_{\rm{cycle}}|]=\sum_{i,j,l}\P(\tau_i,\tau_j<\tilde{T},\text{ and both $(i,l)$ and $(j,l)$ become present before time $\tilde{T}$}).\]
	Following the same line of thought as in the previous case, we deduce that, conditionally on $\tau_i$ and $\tau_j$, the probability of both $(i,l)$ and $(j,l)$ becoming present is bounded by
	\[4(2\kappa_0t_0+\lambda t_0+\ell+1)^2p_{i,l}p_{j,l}\leq 4
	(2\kappa_0t_0+\lambda t_0+\ell+1)^2\frac{p(a,a)^2}{N^2},\]
	and hence we obtain
	\begin{align*}
		\E[|V^{\rm{tree}}_{\rm{cycle}}|]&\leq \sum_{i,j,l}\P(\tau_i,\tau_j<\tilde{T})4
		(2\kappa_0t_0+\lambda t_0+\ell+1)^2\frac{p(a,a)^2}{N^2}\\[1pt]&=\sum_{i,j}\P(\tau_i,\tau_j<\tilde{T})4
		(2\kappa_0t_0+\lambda t_0+\ell+1)^2\frac{p(a,a)^2}{N}\\[1pt]&\leq 4\ell^2
		(2\kappa_0t_0+\lambda t_0+\ell+1)^2\frac{p(a,a)^2}{N}.
	\end{align*}
	\end{itemize}
The conclusion follows by adding both bounds.}
\end{proof}

%We deduce from this result that
%\[
%\P_x(T=T_{\rm{bad}}^{\rm{tree}}\1_{T<T_k\wedge t}) \le  \frac {k^2 (1+3\kappa_0 t)\big(p(a,a)+p(a,a)^2\big)} {2N}.
%\]
%In particular this probability vanishes when $N\to \infty$.
%\margEJ{Note that the numerator is polynomial in $\lambda$.}

\subsection*{Bounding the degrees} %of weak vertices}

We prove below the upper bounds
\begin{align}
%\P_x(T_{\rm{bad}}^{\rm{deg}}<T_k \wedge t)
\label{Pdegree} P^{\rm{deg}}&\le  \ell t_0 e^{-c_\eqref{Pdegree}\lambda^{-2}},\end{align}
with $c_\eqref{Pdegree}>0$ a constant independent of $\lambda$ or $N$. As $\ell$ and $t_0$ are polynomial in $\lambda$, this term is exponentially small in $\lambda$, and in particular negligible compared to $a$ as $\lambda \to 0$.

\begin{proof}
	At times $t<T_{\rm{bad}}^{\rm{tree}}$, each vertex $j$ in $G_t$ (except $i_0$) is the head of one edge $(i,j)\in G_t$, with possibly $i$ weak. Once $j$ is infected, every other weak edge $(j,k)$ is revealed and evolves as in the true stationary dynamic network. Therefore, conditionally given $j$ is in $G_t$, the number $N_j^{\rm w}(t)$ of present weak edges $(j,k)$ in $G_t$, follows a Poisson binomial distribution, with expectation bounded by $1/10\lambda^2$ if $j$ is itself a weak vertex. 	Therefore, for given $t$, the probability of having $N_j^{\rm w}(t)\ge 1/9\lambda^2$ %that the weak vertex $j$ has (in)degree larger than $1/(9\lambda^2)$ 
	is uniformly bounded by $e^{-c\lambda^{-2}}$ for some constant $c>0$. Moreover, the updating rates of the vertices are upper bounded, therefore one can define a time 
	step~$\tau$ such that for any given $s$,
	\[
	\P\left(\min \{N_j^{\rm w}(u) \colon s\le u\le s+\tau\}\ge 1/9\lambda^2 \ \Big|\ N_j^{\rm w}(s)\ge 1/8\lambda^2\right)\ge \frac 1 2.
	\]
	%, conditionally on the event $N_j^{\rm w}(s)\ge 1/(8\lambda^2)$, we also have with probability at least $1/2$ the event 
%	that we reach a value $N_j^{\rm w}(s)\ge 1/(8\lambda^2)$ at some time $s$, with probability at least $1/2$ 
%	we will keep $N_j^{\rm w}(u)\ge 1/(9\lambda^2)$ for all $u$ in a time interval $[s,s+\tau]$. 
	%its degree stays lower bounded by $1/(9\lambda^2)$ for the whole time interval $[s, s+\tau]$. 
	It follows that we have 
	\[
	\P\left(\max \{N_j^{\rm w}(s) \colon s\le t\}\ge 1/8\lambda^2\ \Big|\ j\in G_t \right)\le \frac {2 t}{\tau} e^{-c\lambda^{-2}}.
	\]
	Now, we inspect the successively infected weak vertices one by one up until time $T_\ell\wedge T_{\rm{bad}}^{\rm{tree}}\wedge T_{\rm{hit}}\wedge t_0$. We find at most $\ell$ such vertices, for which we can use the previous bound with $t=t_0$. Thus the probability of finding a weak vertex with at least $1+\lfloor 1/8\lambda^2 \rfloor$ weak neighbours is bounded by \smash{$\frac {2 \ell t_0}{\tau} e^{-c\lambda^{-2}}$}.
	Similarly, for a strong vertex $j>aN$ and conditionally given  $j\in G_t$, the number of present weak edges $(j,k)$ in $G_t$ follows a Poisson binomial distribution with expectation bounded by~$c_2 a^{-\gamma}$, and we deduce that the probability of finding a strong vertex connected to more than $2 c_2 a^{-\gamma}$ weak vertices is also bounded by  \smash{$\frac {2 \ell t_0}{\tau} e^{-c\lambda^{-2}}$} (with a possibly different value of $c>0$). We deduce~\eqref{Pdegree} by decreasing the value of $c>0$ if necessary. \end{proof}

\subsection*{Bounding the connectivities of visited strong vertices}

It finally remains to bound the probability $P^{conn}$ to visit a strong vertex $i$ that fails to satisfy~\eqref{LD_for_J} or \eqref{LD_for_K} by having a too large value for $J_i(t)$ or $K_i(t)$. This last bound is more technical than the previous ones, and before tackling it, we first provide an upper bound for quantities that can be thought of at upper bounds for $J_i(t)$ and $K_i(t)$ in a fixed idealized environment.

\subsubsection*{A large deviation upper bound in an idealized environment.}

Fix a strong vertex $i$, and introduce a family $(C_{j_1,\ldots,j_n})_{n\ge 1, j_1,\ldots,j_n \text{ weak}}$ of independent random variables, indexed by the sequences of distinct weak vertices, such that $C_{j_1}$ is Bernoulli distributed with parameter $p_{i,j_1}$ and,  for $n\ge2$,  $C_{j_1,\ldots,j_n}$ is Bernoulli distributed with parameter $p_{j_{n-1},j_n}$. Define
\[\bar J_i = \sum_{j\text{ weak}} \lambda  C_j \bar M_j, \qquad \bar K_i = \sum_{j\text{ weak}} \frac {C_j}{s_j} \bar M_j,
\]
where $\bar M_j$ is defined by
\[
\bar M_j:= 2 s_j + 2 \sum_{n\ge 1}\sum_{j_1,\ldots,j_n \text{ weak}}\left(\prod_{k=1}^n 2\lambda C_{j,j_1,\ldots,j_k} \right)s_{j_n}.
\]

\begin{proposition}\label{exponential_moments}
	For $\CMI$ sufficiently large, there exists a finite constant $C_{\ref{exponential_moments}}$ such that the following results hold:
	\begin{enumerate}
		\item %There exists a finite constant $c>0$ such that 
		for any $j$ weak and any $\theta\ge 0$ satisfying $\theta\le 1/(2\lambda s(\astr))$ and $\theta \le s_j^{-1}$, we have:
		\[
		\E[e^{\theta  \bar M_j}]\le 1 + C_{\ref{exponential_moments}} \theta s_j.
		\]
		\item %There exists a finite constant $c>0$ such that 
		for any $j$ weak and any $\theta\ge 0$ satisfying $\theta\le 1/(2\lambda s(\astr))$, we have 
		\[
		\E[e^{\theta \bar M_j}]\le e^{ C_{\ref{exponential_moments}} \theta s_j}.
		\]
		\item 
		%There exists a finite constant $c>0$ such that 
		for any $i$ strong and any $\theta\ge 0$ satisfying $\theta\le  1/(\lambda s(\astr))$, we have 
		\[
		\E[e^{\theta \bar J_i}]\le e^{ C_{\ref{exponential_moments}} \theta \sigma_i}\le e^{\frac {C_{\ref{exponential_moments}} \theta} \CMI \,  \frac{ s_i}{T^{\rm{loc}}_i}  },
		\]
		where  $\sigma_i=\lambda \sum_{j \text{weak}} p_{ij} s_j$. 
		\item %There exists a finite constant $c>0$ such that 
		for any $i$ strong and any $\theta\ge 0$ satisfying $\theta \le 1$ and $\theta\le  1/(2\lambda s(\astr))$, we have
		\[
		\E[e^{\theta \bar K_i}]\le e^{ \frac {C_{\ref{exponential_moments}}}{2 \kappa_0} \theta \tilde \sigma_i}\le e^{C_{\ref{exponential_moments}} \theta \pi_i},
		\]
		where $\tilde \sigma_i= \sum_{j \text{weak}} p_{ij}$.
		\end{enumerate}
\end{proposition}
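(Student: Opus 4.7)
The plan is to exploit the recursive structure of $\bar M_j$. By regrouping the terms of the defining series according to the first index $j_1$ after $j$, one sees that
\[
\bar M_j \;\stackrel{d}{=}\; 2 s_j + 2\lambda \sum_{j_1 \text{ weak}} C_{j,j_1} \, \widetilde{M}_{j_1},
\]
where the $\widetilde{M}_{j_1}$ are independent copies of $\bar M_{j_1}$, independent also of the Bernoullis $C_{j,j_1}$ (since they are built from disjoint families of the $C_{j,j_1,j_2,\dots}$). To prove (1), I would introduce the truncation $\bar M_j^{(n)}$ obtained by cutting the series at depth $n$, and prove $\E[e^{\theta \bar M_j^{(n)}}] \le 1 + C_{\ref{exponential_moments}} \theta s_j$ by induction on $n$ under the stated constraints. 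The base case $\bar M_j^{(0)} = 2 s_j$ is immediate from a uniform bound of the form $e^x \le 1 + Cx$ on a bounded interval. For the inductive step, the recursion yields
\[
\E\bigl[e^{\theta \bar M_j^{(n+1)}}\bigr] = e^{2\theta s_j} \prod_{j_1 \text{ weak}} \Bigl(1 - p_{j,j_1} + p_{j,j_1} \E\bigl[e^{2\theta\lambda \bar M_{j_1}^{(n)}}\bigr] \Bigr),
\]
and one checks that the constraints $\theta \le 1/(2\lambda s(\astr))$ and $\theta s_j \le 1$ propagate to the next level (in particular $2\theta\lambda s_{j_1} \le 1$ since $s_{j_1}\le s(\astr)$). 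Plugging in the induction hypothesis, using $1+x \le e^x$, and invoking the master inequality \eqref{OMIweak} to bound $\lambda \sum_{j_1\text{ weak}} p_{j,j_1} s_{j_1}$ by $s_j/\CMI$, one obtains $\E[e^{\theta \bar M_j^{(n+1)}}] \le \exp\bigl((2 + 2 C_{\ref{exponential_moments}}/\CMI)\theta s_j\bigr)$. A uniform bound $e^x \le 1 + Cx$ on a compact interval then closes the induction, provided $\CMI$ is chosen large enough that the multiplicative factor $(2+2C_{\ref{exponential_moments}}/\CMI)$ stays safely below $C_{\ref{exponential_moments}}$. Monotone convergence gives (1), and (2) follows at once from $1+x \le e^x$.

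For (3), I would exploit the independence of the Bernoullis $C_j\sim\text{Bernoulli}(p_{i,j})$ across $j$ and their independence from the $\bar M_j$'s to write
\[
\E[e^{\theta \bar J_i}] = \prod_{j \text{ weak}} \Bigl(1 - p_{i,j} + p_{i,j} \E[e^{\theta\lambda \bar M_j}]\Bigr).
\]
Applying (1) with parameter $\theta\lambda$ (admissible since $\theta \le 1/(\lambda s(\astr))$ ensures both $\theta\lambda \le 1/(2\lambda s(\astr))$ for small $\lambda$ and $\theta\lambda s_j \le 1$), together with $1+x \le e^x$, yields $\E[e^{\theta \bar J_i}] \le \exp(C_{\ref{exponential_moments}} \theta \sigma_i)$. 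The desired estimate $\sigma_i \le s_i/(\CMI T^{\rm{loc}}_i)$ then follows from \eqref{OMIstrongquick} when $i$ is strong quick, and from \eqref{OMIstrongslow} when $i$ is strong slow: in the latter case only the second integral contributes since weak $j$ satisfies $j/N \ge \astr \ge \assl$.

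For (4), the analogous product formula reads
\[
\E[e^{\theta \bar K_i}] = \prod_{j\text{ weak}} \bigl(1 - p_{i,j} + p_{i,j} \E[e^{(\theta/s_j) \bar M_j}]\bigr),
\]
and I would apply (2) with parameter $\theta/s_j$, whose admissibility follows from $s_j \ge 1$ combined with the hypothesis $\theta \le 1/(2\lambda s(\astr))$. This yields the uniform bound $\E[e^{(\theta/s_j) \bar M_j}] \le e^{C_{\ref{exponential_moments}}\theta}$. Combining the elementary inequality $e^{C\theta}-1 \le (e^C-1)\theta$ on $\theta\in[0,1]$ with the crude bound $\tilde\sigma_i \le 2\kappa_0\,\pi_i$ (valid because $\kappa_x+\kappa_y \le 2\kappa_0$ under the standing assumption $\eta\le 0$) allows one to pass from $\tilde\sigma_i$ to $\pi_i$ at the cost of enlarging the constant. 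The main obstacle throughout is the bookkeeping in the induction for (1): one must simultaneously control the recursion depth, check that both constraints on $\theta$ are preserved when we replace $\theta$ by $2\theta\lambda$, and calibrate the interplay between $C_{\ref{exponential_moments}}$ and $\CMI$ so that the additive term $2C_{\ref{exponential_moments}}/\CMI$ does not break the closure of the induction. Once (1) is in place, parts (2)--(4) are straightforward corollaries of independence, elementary convexity, and the three master inequalities.
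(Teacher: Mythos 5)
Your proposal follows essentially the same route as the paper: the same truncation $\bar M_j^{(n)}$ and induction closed by the uniform bound $e^x\le 1+Cx$ on a compact interval, the same use of \eqref{OMIweak} to control $\lambda\sum p_{j,j_1}s_{j_1}$, and the same product formulas over independent Bernoullis for parts (3) and (4), with the master inequalities \eqref{OMIstrongquick} and \eqref{OMIstrongslow} giving $\sigma_i\le s_i/(\CMI T^{\rm{loc}}_i)$. One small but real slip: part (2) does \emph{not} ``follow at once'' from the conclusion of part (1) via $1+x\le e^x$, because (2) is asserted under the weaker hypothesis $\theta\le 1/(2\lambda s(\astr))$ without the constraint $\theta s_j\le 1$, and this extended range is genuinely used later (e.g.\ in Corollary~\ref{LDBoundsFixedTime0} one takes $\theta=1/(2\lambda s(\astr))$ with $\theta s_j$ of order $1/\lambda$). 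The correct derivation — which is implicitly contained in your own induction step — is to observe that the intermediate bound $\E[e^{\theta\bar M_j^{(n)}}]\le \exp(2(1+C_{\ref{exponential_moments}}/\CMI)\theta s_j)$ only needs the root constraint $\theta s_j\le 1$ in the final convexity step, so one simply stops the computation there; this is exactly what the paper does. Your variant of (4), applying (2) with parameter $\theta/s_j$ and then $e^{C\theta}-1\le(e^C-1)\theta$ on $[0,1]$, works and differs only by the value of the constant from the paper's direct application of (1).
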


\begin{proof}[Proof of Proposition \ref{exponential_moments}]
We focus on the first point of the proposition. For $j$ weak and $n\ge0$, we introduce
\[
\bar M_j^{(n)}:= 2 s_j + 2 \sum_{k=1}^n \sum_{j_1,\ldots,j_k \text{ weak}}\left(\prod_{l=1}^k 2\lambda C_{j,j_1,\ldots,j_l} \right) s_{j_k},
\]
so that  $\bar M_j^{(0)}= 2 s_j$ and $\bar M_j =\lim \bar M_j^{(n)}$. We prove recursively on $n$ the existence of a constant $C_{\ref{exponential_moments}}>0$ independent of $n$ such that, for any $j$ weak and any $\theta\ge 0$ satisfying $\theta\le 1/(2\lambda s(\astr))$ and $\theta \le s_j^{-1}$, we have
\[
\E[e^{\theta  \bar M_j^{(n)}}]\le 1 + C_{\ref{exponential_moments}} \theta s_j.
\]
For $n=0$, using the convexity of the exponential function and $\theta s_j\le 1$, we have
\[e^{\theta \bar M_j^{(0)}}=
e^{2\theta s_j}\le 1 + (e^2-1)%\frac {e^4-1} 2
\theta s_j,
\]
whence the result holds if $C_{\ref{exponential_moments}}\ge e^2-1$. %(e^4 -1)/2$. 
For the induction step, write
\[
\bar M_j^{(n)}= 2 s_j+ \sum_{j_1 \text{ weak}} 2\lambda C_{j,j_1} \bar M_{j,j_1}^{(n)},
\]
where the random variables 
\[
\bar M_{j,j_1}^{(n)}:=2 s_{j_1} +2\sum_{k=2}^n \sum_{j_2,\ldots,j_n} \left(\prod_{l=2}^n 2\lambda C_{j,j_1, \ldots,j_l}\right) s_{j_k}
\]
are independent, independent from the variables $C_{j,j_1}$, and have the same laws as $\bar M_{j_1}^{(n-1)}$. Further,
\begin{align*}
	\E[e^{\theta \bar M_j^{(n)}}]&=e^{2\theta s_j } \prod_{j_1 \text{ weak} }\E[e^{2\lambda \theta C_{j,j_1} M_{j,j_1}^{(n)} }] 
	= e^{2\theta s_j} \prod_{j_1 \text{ weak} }\left( 1 + p_{jj_1}\left( \E[e^{2\lambda \theta \bar M_{j_1}^{(n-1)}}]-1\right)\right).
\end{align*}
From the hypothesis $\theta\le 1/(2\lambda s(\astr))$, it follows $2\lambda \theta \le 1/s(\astr)\le 1/s_{j_1}$ for all $j_1$ weak, as well as  $2\lambda \theta\le \theta\le 1/(2\lambda s(\astr))$, since $\lambda\le 1/2$. We can thus apply the induction hypothesis to 
%handle $\E[e^{2\lambda \theta \bar M_{j_1}^{(n-1)}}]$, 
obtain
\begin{align*}
	\E[e^{\theta \bar M_j^{(n)}}]&\le e^{2\theta s_j} \prod_{j_1 \text{ weak} }\left( 1 + 2\lambda \theta C_{\ref{exponential_moments}} p_{jj_1} s_{j_1}\right)\\
	&\le \exp\left(2\theta s_j+2  C_{\ref{exponential_moments}} \theta \sum_{j_1 \text{ weak}} \lambda p_{jj_1} s_{j_1}\right) 
	\le \exp\left(2\theta s_j\big(1 + \frac {C_{\ref{exponential_moments}}}{\CMI}\big)\right),
\end{align*}
where we used~\eqref{OMIweak}.
Using now the convexity of the exponential function and $2 \theta s_j(1 + C_{\ref{exponential_moments}}/\CMI)\le 2(1+C_{\ref{exponential_moments}}/\CMI)$, we obtain
\[
\E[e^{\theta \bar M_j^{(n)}}]\le 1 + \left(e^{2(1 + C_{\ref{exponential_moments}}/\CMI)}-1\right)\ \theta s_j.
\]
To perform the recurrence step, we need $e^{2(1 +C_{\ref{exponential_moments}}/\CMI)}-1\le C_{\ref{exponential_moments}}$, which we can ensure by choosing $C_{\ref{exponential_moments}}>e^2-1$ and $\CMI$ sufficiently large. This concludes the proof of the first part of the proposition.%
\smallskip

For the second part, we proceed exactly as above to bound $\E[e^{\theta \bar M_j^{(n)}}]$ by the expression $\exp\left(2\theta (1+C_{\ref{exponential_moments}}/\CMI) s_j\right)$. This is the required upper bound, by increasing the value of $C_{\ref{exponential_moments}}$ if necessary. Note also that we cannot further bound this without the hypothesis $\theta s_j \le 1$.
\medskip

For the third and fourth parts, the computations are similar. For $\theta\le 1/\lambda s(\astr)$, which ensures $\lambda \theta \le \min(1/2\lambda s(\astr), 1/s_j)$, we obtain
\begin{align*}
	\E[e^{\theta \bar J_i}]&\le \prod_{j\text{ weak}} \left(1+p_{ij}\left(\E[e^{\lambda \theta  \bar M_j}] - 1\right)\right) 
	\le e^{C_{\ref{exponential_moments}}\theta \sigma_i}.%\left(\frac {\beta c}{1-\gamma} \theta (x/N)^{-\gamma}\right),
\end{align*}
For  $\theta\le \min(1, 1/(2\lambda s(\astr)))$, which ensures $\theta/s_j\le \min(1/2\lambda s(\astr), 1/s_j)$, we obtain
\[
\E[e^{\theta \bar K_i}]\le e^{C_{\ref{exponential_moments}}\theta \sum_{j \text{weak}} p_{ij}} \le e^{2C_{\ref{exponential_moments}} \theta \kappa_0 \pi_i}.%\exp\left(\frac {\beta c}{1-2\gamma} \lambda \theta (x/N)^{-\gamma}\right),
\]
%	where we used that the quantities $\lambda \theta$ and $\theta/s_y$ are both bounded by $\min(1/2\lambda s(a_{str}), 1/s_y)$.
\\[-11mm]
\end{proof}
\smallskip

\begin{corollary}\label{LDBoundsFixedTime0}
	Under the same conditions, choosing a value $C_{\ref{exponential_moments}}>2$ for which Proposition~\ref{exponential_moments}(2) holds, we also have, for any $j$ weak,
	%and requesting a value $C_{\ref{exponential_moments}}$ larger than 2, we have for any $j$ weak:
	\begin{equation}
	\label{LD_coro_sy}
	\P(\bar M_j\ge 2 C_{\ref{exponential_moments}} s(\astr))\le e^{-\lambda^{-1}}.
	\end{equation}
\end{corollary}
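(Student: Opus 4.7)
The corollary is an immediate consequence of Proposition~\ref{exponential_moments}(2) via an exponential Markov (Chernoff) argument, so the plan is short. I would pick $\theta$ as the largest value for which the moment bound is available, namely $\theta = 1/(2\lambda s(\astr))$, and apply Markov's inequality to $e^{\theta \bar M_j}$ at the threshold $t = 2C_{\ref{exponential_moments}} s(\astr)$.

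More precisely, starting from $\P(\bar M_j \ge t) \le e^{-\theta t}\, \E[e^{\theta \bar M_j}]$ and plugging in the bound from Proposition~\ref{exponential_moments}(2), one obtains
\[
\P\big(\bar M_j \ge 2C_{\ref{exponential_moments}} s(\astr)\big) \le \exp\!\Big(C_{\ref{exponential_moments}}\,\theta\, s_j - 2C_{\ref{exponential_moments}}\,\theta\, s(\astr)\Big).
\]
Now I would use that $j$ is a weak vertex, so $j/N \ge \astr$, and since $s$ is non-increasing this yields $s_j \le s(\astr)$. The exponent is therefore bounded above by $-C_{\ref{exponential_moments}}\,\theta\, s(\astr)$.

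Substituting $\theta = 1/(2\lambda s(\astr))$ simplifies this to $-C_{\ref{exponential_moments}}/(2\lambda)$, and the hypothesis $C_{\ref{exponential_moments}} > 2$ then gives $-C_{\ref{exponential_moments}}/(2\lambda) \le -\lambda^{-1}$, yielding~\eqref{LD_coro_sy}. There is no genuine obstacle here; the only small point of care is verifying that the chosen $\theta$ lies in the admissible range of Proposition~\ref{exponential_moments}(2), which is exactly the endpoint of the allowed interval, and noting that the argument only uses the monotonicity of $s$ together with the weakness of $j$.
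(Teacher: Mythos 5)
Your proposal is correct and coincides with the paper's own proof: the same choice $\theta=1/(2\lambda s(\astr))$, the same Chernoff bound combined with Proposition~\ref{exponential_moments}(2), the bound $s_j\le s(\astr)$ for weak $j$, and the final use of $C_{\ref{exponential_moments}}>2$ to conclude. Nothing to add.
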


\begin{proof}
	For $\theta=1/(2\lambda s(\astr))$, Markov inequality and Proposition~\ref{exponential_moments}\emph{(2)} yield
%	Take $C_{\ref{exponential_moments}}$ as in Proposition~\ref{exponential_moments}(1) and larger than 2 and $\theta=1/(2\lambda s(\astr))$, then
	\begin{align*}
	\P\left(\bar M_j\ge 2 C_{\ref{exponential_moments}} s(\astr)\right)&\le e^{-2C_{\ref{exponential_moments}} \theta s(\astr)}\E[e^{\theta \bar M_j}]
	\le e^{-2 C_{\ref{exponential_moments}} \theta s(\astr)} e^{C_{\ref{exponential_moments}}\theta s_j} \\
	&\le e^{- C_{\ref{exponential_moments}} \theta s(\astr)}=e^{-C_{\ref{exponential_moments}}/2\lambda},
	\end{align*}
	using also that the score $s_j$ associated to the weak vertex $j$ is smaller than $s(\astr)$.
\end{proof}

\begin{corollary}\label{LDBoundsFixedTime} 
Assume that the additional technical condition $(H2)$ holds, yielding some $\alpha\in (0,1]$, and $c_{\eqref{cond:H2}}>0$, not depending on $\lambda$, such that
\begin{align*}
	s(\astr)&\le c_{\eqref{cond:H2}} \lambda^{-3+\alpha}, \\
	s(\astr)&\le c_{\eqref{cond:H2}} \lambda^{-1+\alpha} \inf \big\{ \tfrac{s(x)}{T^{\rm{loc}}(x)} \colon x<\astr\big\}.
\end{align*}
For any $c_\eqref{LD_coro_J}>0$ and large $\ratiost>0$, taking $\CMI$ sufficiently large, there exists a constant $c_{\ref{LDBoundsFixedTime}}$ such that for small $\lambda$, for any strong vertex $i$ and weak vertex $j$, we have
\begin{align}
%	\label{LD_coro_sy}
%	\P(\bar M_j\ge 2 C_{\ref{exponential_moments}} s(\astr))&\le e^{-\lambda^{-1}}, \\
	\label{LD_coro_sy2}
	\P(\bar M_j\ge \pi_{\astr} s_j)&\le e^{-c_{\ref{LDBoundsFixedTime}} \lambda^{-\alpha}},\\
	\label{LD_coro_J}
	\P(\bar J_i \ge c_\eqref{LD_coro_J} \tfrac {s_i}{T^{\rm{loc}}_i})&\le e^{-c_{\ref{LDBoundsFixedTime}} \lambda^{-\alpha}}, \\ %this was a 1.
	\label{LD_coro_K}
	\P(\bar K_i \ge \tfrac {\ratiost}{10} \pi_i)&\le e^{-c_{\ref{LDBoundsFixedTime}} \lambda^{-\alpha}}.
\end{align}
%	Remark: For $J_x$ we could actually take $c>0$ as small as we wish in the LHS.
\end{corollary}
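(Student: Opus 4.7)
\textbf{Proof plan for Corollary~\ref{LDBoundsFixedTime}.} The strategy is a standard Chernoff-type argument: for each of the three bounds, apply Markov's inequality to the exponential moment estimate from the corresponding part of Proposition~\ref{exponential_moments}, choose $\theta$ as large as the exponential-moment hypothesis allows (typically at the endpoint $\theta=1/(2\lambda s(\astr))$ or $\theta=1/(\lambda s(\astr))$), then invoke the two inequalities in (H2) to convert the resulting exponent into one of order $\lambda^{-\alpha}$. I would first fix $\CMI$ and $\ratiost$ large enough so that the absolute constant $C_{\ref{exponential_moments}}$ is negligible compared to the factor we want to produce on the other side of the inequality; all three bounds tolerate such a choice.

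For \eqref{LD_coro_J}, I would write, using Proposition~\ref{exponential_moments}(3) with $\theta=1/(\lambda s(\astr))$,
\[
\P\!\left(\bar J_i\ge c_\eqref{LD_coro_J}\tfrac{s_i}{T^{\rm{loc}}_i}\right)\le \exp\!\left(-\theta\tfrac{s_i}{T^{\rm{loc}}_i}\Big(c_\eqref{LD_coro_J}-\tfrac{C_{\ref{exponential_moments}}}{\CMI}\Big)\right),
\]
and choose $\CMI$ so large that the bracket exceeds $c_\eqref{LD_coro_J}/2$. The exponent is then at least $(c_\eqref{LD_coro_J}/2)\cdot s_i/(T^{\rm{loc}}_i \lambda s(\astr))$, and the second part of (H2) gives $s(\astr)\le c_{\eqref{cond:H2}}\lambda^{-1+\alpha}\, s_i/T^{\rm{loc}}_i$, so the exponent is $\gtrsim \lambda^{-\alpha}$. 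For \eqref{LD_coro_K}, Proposition~\ref{exponential_moments}(4) with the same kind of choice, $\theta=\min(1,1/(2\lambda s(\astr)))$ (which for small $\lambda$ equals the second term), gives exponent of order $\ratiost \pi_i/(\lambda s(\astr))$ once $\ratiost$ is taken large enough to dominate $C_{\ref{exponential_moments}}$; since $i$ is strong we have $\pi_i\ge 1/(10\lambda^2)$ by definition of $\astr$, and using the first part of (H2), $s(\astr)\le c_{\eqref{cond:H2}}\lambda^{-3+\alpha}$, the exponent is again $\gtrsim \lambda^{-\alpha}$.

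For \eqref{LD_coro_sy2}, I would again choose $\theta=1/(2\lambda s(\astr))$ and apply Proposition~\ref{exponential_moments}(2):
\[
\P(\bar M_j\ge \pi_{\astr} s_j)\le \exp\!\left(-\theta s_j(\pi_{\astr}-C_{\ref{exponential_moments}})\right)\le \exp\!\left(-\tfrac{\pi_{\astr} s_j}{4\lambda s(\astr)}\right),
\]
valid for small $\lambda$, since $\pi_{\astr}=\pi(\astr)\ge \cl \astr^{-\gamma}/(2\kappa_0)$ diverges. Using $s_j\ge 1$, the trivial lower bound $\pi_{\astr}\ge 1/(10\lambda^2)$, and once more (H2) to bound $s(\astr)\le c_{\eqref{cond:H2}}\lambda^{-3+\alpha}$, the exponent is $\gtrsim \lambda^{-\alpha}$, as required.

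The only mildly delicate point is coordinating the constants: the choice of $\CMI$ large must be compatible with the standing hypothesis of Proposition~\ref{exponential_moments} (which also required $\CMI$ large), and the value of $\ratiost$ used here must be the one already fixed in the supermartingale construction. I would therefore state the corollary as holding for $\CMI$ and $\ratiost$ chosen after Proposition~\ref{exponential_moments} but before specifying $c_{\ref{LDBoundsFixedTime}}$. No other obstacle arises: the hypotheses (H2) are precisely tailored so that the Chernoff exponent, evaluated at the largest admissible $\theta$, produces the claimed polynomial-in-$\lambda^{-1}$ decay rate.
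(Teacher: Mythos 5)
Your proposal is correct and follows essentially the same route as the paper: Markov's inequality applied to the exponential moments of Proposition~\ref{exponential_moments} at (near-)maximal admissible $\theta$, with (H2) converting the exponents into $\lambda^{-\alpha}$ and $\CMI$, $\ratiost$ taken large to make the constants work (the paper uses the concrete choice $\theta=\lambda^{2-\alpha}/2$ for $\bar K_i$, which is admissible by (H2), but your $\theta=\min(1,1/(2\lambda s(\astr)))$ gives an at least as strong bound on either branch of the minimum). The only nitpicks are that the minimum need not equal its second term in general, and that $\pi_{\astr}\ge 1/(20\kappa_0\lambda^2)$ rather than $1/(10\lambda^2)$ (the definition of $\astr$ involves $\int p$, not $\pi$); neither affects the argument.
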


\begin{proof}
%To obtain $\eqref{LD_coro_sy}$, take $C_{\ref{exponential_moments}}$ as in Proposition~\ref{exponential_moments}(1) and larger than 2 and $\theta=1/(2\lambda s(\astr))$, then
%\begin{align*}
%	\P\left(\bar M_j\ge 2 C_{\ref{exponential_moments}} s(\astr)\right)&\le e^{-2C_{\ref{exponential_moments}} \theta s(\astr)}\E[e^{\theta \bar M_j}]
%	\le e^{-2 C_{\ref{exponential_moments}} \theta s(\astr)} e^{C_{\ref{exponential_moments}}\theta s_j} 
%	\le e^{-2 C_{\ref{exponential_moments}} \theta s(\astr)}=e^{-C_{\ref{exponential_moments}}/2\lambda},
%\end{align*}
%whence the result.
To obtain $\eqref{LD_coro_sy2}$, proceed as in the proof of~\eqref{LD_coro_sy}  to get%, with $c$ as in Lemma~\ref{exponential_moments}.(1) and $c'$ as in 
\[
\P\left(\bar M_j\ge \pi_{\astr} s_j\right)\le e^{- \frac  {\pi_{\astr}-C_{\ref{exponential_moments}}}{\AL{2}\lambda s(\astr)}},
\]
and conclude by observing that $\lambda^2 \pi_{\astr}$ and $\lambda^{\alpha-3}/s(\astr)$ are both lower bounded by positive constants.
To obtain \eqref{LD_coro_J}, take $\theta= 1/(\lambda s(\astr))$ to get
\begin{align*}
	\P\left(\bar J_i\ge c_\eqref{LD_coro_J} \tfrac {s_i}{T^{\rm{loc}}_i}\right)
	&\le e^{-c_\eqref{LD_coro_J} \theta \frac {s_i}{T^{\rm{loc}}_i}} \E[e^{\theta \bar J_i}] 
	\le e^{(-c_\eqref{LD_coro_J} +C_{\ref{exponential_moments}}/\CMI) \frac {s_i}{\lambda s(\astr) T^{\rm{loc}}_i}} 
	\le e^{ \frac {-c_\eqref{LD_coro_J} +c/\CMI} {c_{\eqref{cond:H2}}}\lambda^{-\alpha}},
\end{align*}
using $(H2)$, provided the constant \smash{$-c_\eqref{LD_coro_J}+C_{\ref{exponential_moments}}/\CMI$} is negative, which we can ensure by choosing $\CMI$ large.
Finally, to obtain \eqref{LD_coro_K}, take $\theta=\lambda^{2-\alpha}/2\le \min(1, 1/2\lambda s(\astr))$ and $\ratiost/10$ larger than $C_{\ref{exponential_moments}}$ given by Proposition~\ref{exponential_moments}\emph{(4)}. Then,
\[
\P\left(\bar K_i\ge \frac \ratiost{10} \pi_x\right) \le e^{-(\frac \ratiost{10}-C_{\ref{exponential_moments}}) \theta \pi_i}\le e^{-c  \lambda^{2-\alpha} \pi_{\astr}}\]
	for some small constant $c$. We conclude again by observing that $\lambda^2 \pi_{\astr}$ is lower bounded by a positive constant.
\end{proof}

\subsubsection*{Exploring and bounding the connectivities of the visited strong vertices}

We now turn to the true environment encountered by the infection. From now on, we assume that condition $(H2)$ holds, so in particular we can apply Corollary~\ref{LDBoundsFixedTime}. For a strong vertex $i\in G_t$, recall the definition of $J_i(t)$ and decompose it as $J_i^{\rm{out}}(t)+ J_i^{\rm{in}}(t)$ with
\[
J_i^{\rm{out}}(t):= \sum_{\substack{j \text{ weak}\\ (i,j) \text{ present}}}\lambda M_j(t),\qquad J_i^{\rm{in}}(t):= \sum_{\substack{j \text{ weak}\\ (j,i) \text{ present}}}\lambda M_j(t).
\]
Recalling that there is (at most) one edge $(j,i)$ in $G_t$ pointing to $i$, the sum in the definition of $J_i^{\rm{in}}$  contains only one or zero term. Similarly, we write $K_i(t)=K_i^{\rm{out}}(t)+ K_i^{\rm{in}}(t)$ with
\[
K_i^{\rm{out}}(t):= \sum_{\substack{j \text{ weak}\\ (i,j) \text{ present}}}\frac { M_j(t)}{s_j},\quad K_i^{\rm{in}}(t):= \sum_{\substack{j \text{ weak}\\ (j,i) \text{ present}}}\frac {M_j(t)}{s_j}.
\]
For a weak vertex $i$, we also decompose $M_i(t)=M_i^{\rm{out}}(t)+M_i^{\rm{in}}(t)$ with 
\begin{align*}
	M_i^{\rm{out}}(t)&:= \tilde s_i+ \sum_{\substack{j \text{ weak}\\ (i,j) \text{ present}}}\ \sum_{\substack{k\in \tree\\ d(i,k)=d(j,k)+1}} (2\lambda)^{d(i,k)} \tilde s_k,\\
	M_i^{\rm{in}}(t)&:= \sum_{\substack{j \text{ weak}\\ (j,i)\text{ present}}}\
	\sum_{\substack{k\in \tree\\ d(i,k)=d(j,k)+1}} (2\lambda)^{d(i,k)} \tilde s_k.
\end{align*}
By the following lemma, the idealized environment provides an upper bound for the quantities $M_i^{\rm{out}}$, $J_i^{\rm{out}}$ and $K_i^{\rm{out}}$.
\begin{lemma}\label{lem:coupling_exploredtree}
	\ \\[-3mm]
	\begin{itemize}[leftmargin=*]
		\item If $j$ is a weak vertex and $t\ge 0$, then, conditionally on $j$ being visited, the random variable $M_j^{\rm{out}}(t)\one_{t< T_{\rm{bad}}^{\rm{tree}}}$ is stochastically bounded by $\bar M_j$.
		\item If $i$ is a strong vertex and $t\ge 0$, then, conditionally on $i$ being visited, the random variables \smash{$J_i^{\rm{out}}(t) \one_{t< T_{\rm{bad}}^{\rm{tree}}}$} and \smash{$K_i^{\rm{out}}(t)\one_{t< T_{\rm{bad}}^{\rm{tree}}}$} are stochastically bounded by $\bar J_i$ and $\bar K_i$ respectively.\\[-9mm]
	\end{itemize},
\end{lemma}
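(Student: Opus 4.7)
The plan is to construct an explicit coupling between the true outward exploration of $\tree$ and the idealized branching-like structure defined by the family $(C_{j_1,\ldots,j_n})$, under which the three quantities of interest are dominated path by path on the event $\{t<T_{\rm{bad}}^{\rm{tree}}\}$. As a first step I would identify $M_j^{\rm{out}}(t)$, $J_i^{\rm{out}}(t)$ and $K_i^{\rm{out}}(t)$ as sums over sequences of distinct weak vertices $(x,j_1,\ldots,j_n)$ starting at the root $x\in\{j,i\}$ along present edges with the orientation prescribed by the corresponding definition. Since $\tree$ is a forest on $\{t<T_{\rm{bad}}^{\rm{tree}}\}$, these sequences form a tree rooted at $x$, with each $j_n$ contributing weight $(2\lambda)^n \tilde s_{j_n}$ (together with a leading factor $\lambda$ in $J_i^{\rm{out}}$, or $1/s_{j_1}$ in $K_i^{\rm{out}}$, coming from the initial step).

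The central probabilistic input is a marginal bound on edge presence: for any two weak vertices $k$ and $k'$, the event that $\{k,k'\}$ is a present edge of $G_t$ at time $t$ is contained in the event that $\{k,k'\}$ is present in the stationary network $\mathscr G^{\ssup N}_t$ at time $t$, because Rule~$(1)$ synchronizes the state of each edge in $G_t$ with that of $\mathscr G^{\ssup N}_t$ from incorporation onward. The latter event has stationary probability exactly $p_{k,k'}$, and the states of distinct edges in $\mathscr G^{\ssup N}_t$ at a fixed time are mutually independent. Consequently, the collection of indicators of ``present in $G_t$'' is stochastically dominated by an independent family of Bernoulli$(p_{k,k'})$ variables, which I would couple directly to $(C_{j_1,\ldots,j_n})$ by matching, along a breadth-first exploration of $\tree$ from $x$, the indicator of the edge probed at depth $k$ with $C_{x,j_1,\ldots,j_k}$ (and with $C_{j_1}$ for the initial layer when $x=i$ is strong). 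The tree property ensures that distinct BFS edges correspond to distinct Bernoulli indices, so the independence on both sides is preserved.

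Under this coupling, every sequence $(x,j_1,\ldots,j_n)$ appearing in the true outward tree satisfies $\prod_{k=1}^n C_{x,j_1,\ldots,j_k}=1$, and the pointwise bound $\tilde s\le 2 s$ is absorbed by the factor $2$ built into the definition of $\bar M_j$. The path-by-path comparison then yields $M_j^{\rm{out}}(t)\one_{t<T_{\rm{bad}}^{\rm{tree}}}\le \bar M_j$ directly, and, after extending the coupling by the initial $C_{j_1}$-layer when the root is strong, $J_i^{\rm{out}}(t)\one_{t<T_{\rm{bad}}^{\rm{tree}}}\le \bar J_i$ and $K_i^{\rm{out}}(t)\one_{t<T_{\rm{bad}}^{\rm{tree}}}\le \bar K_i$. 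The principal obstacle is the marginal estimate on edge presence: although the reveal operations can probe a given weak pair multiple times before it first enters $G_t$, the coupling to the stationary network $\mathscr G^{\ssup N}_t$ caps the relevant marginal at $p_{k,k'}$ and supplies the cross-edge independence needed to identify the dominating family with the independent Bernoullis $(C_{j_1,\ldots,j_n})$ of the idealized model.
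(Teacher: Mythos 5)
Your proposal is correct and follows essentially the same route as the paper: expand $M_j^{\rm{out}}$, $J_i^{\rm{out}}$, $K_i^{\rm{out}}$ over directed paths of distinct weak vertices, use that once the tail of an unseen weak edge is visited the edge's presence at the fixed time $t$ has exactly the stationary probability $p_{k,k'}$ independently across edges, and use the event $\{t<T_{\rm{bad}}^{\rm{tree}}\}$ to match each explored edge with a unique index of the independent family $(C_{j_1,\ldots,j_n})$, absorbing $\tilde s\le 2s$ into the factor $2$ in $\bar M_j$. The only cosmetic difference is that you package the marginal bound and cross-edge independence as a coupling with the stationary network $\mathscr G^{\ssup N}_t$, whereas the paper obtains the same facts by induction along the first-visit times of the explored vertices.
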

\begin{proof}
	%To prove this lemma, 
	For a visited weak vertex, we can {write} 
	$M_j^{\rm{out}}(t)$ on the event $\{t< T_{\rm{bad}}^{\rm{tree}}\}$ as
	\[
	\tilde s_j +  \sum_{n\ge 1}(2\lambda)^n \sum_{\substack{j_1,\ldots,j_n \text{ distinct}\\\text{weak vertices}}}\one_{\{\text{the edges }(j,j_1), \ldots, (j_{n-1},j_n) \text{ are in } G_t\}}\tilde s_{j_n}.
	\]
	Recalling the bound $\tilde s_k\le 2s_k$, it remains to couple the graph and the random variables $C$ in such a way that for any directed path $(j,j_1,\ldots,j_n)$ in
	$G_t$, we have $C_{j,\ldots,j_n}=1$. We proceed by induction on $n$. If $j_{n-1}$ is first visited at time $s\le t$, then for every weak vertex $j_n$ which is not in $G_{s-}$, the edge $(j_{n-1},j_n)$ is present in %$G_s$ or in 
	$G_t$ with the stationary probability $p_{j_{n-1},j_n}$. We then couple the connection at time $t$ with $C_{j,\ldots,j_n}$. Note that we consider times smaller than $T_{\rm{bad}}^{\rm{tree}}$ {so the connection of an edge $(j_{n-1},j_n)$ is coupled to at most one variable $C_{j,\ldots,j_n}$, thus maintaining the independence between the $C$ variables. }%so each visited vertex belongs to only one directed path and the coupling is legit. 
	The second part of the lemma with strong vertices is similar.
\end{proof}\pagebreak[3]
%\medskip

In the rest of this proof specifically, we write $\tilde T$ for $T_\ell\wedge T_{\rm{bad}}^{\rm{tree}} \wedge T_{\rm{bad}}^{\rm{deg}}\wedge \tilde T_{\rm{deg}}\wedge T_{\rm{hit}}\wedge t_0$.
\begin{corollary}\label{LD_outcontributions}
	Considering the same settings as in Corollary~\ref{LDBoundsFixedTime}, say a weak vertex $j$ is \emph{bad} at time $t$ if it is visited and satisfies 
	\[
	M_j^{\rm{out}}(t)\ge 2 C_{\ref{exponential_moments}} s(\astr)\wedge \pi_{\astr} s_j.
	\] 
	Similarly, say a strong vertex $i$ is bad at time $t$ if it is visited and satisfies any of the following inequalities:
	\begin{align*}
		%M_i^{\rm{out}}(t)&\ge \pi_{\astr} s_i,\\
		J_i^{\rm{out}}(t)&\ge c_\eqref{LD_coro_J} \frac {s_i}{T_i^{\rm{loc}}},\\
		K_i^{\rm{out}}(t)&\ge \frac r {10} \pi_i. 
	\end{align*}
	Finally, say a vertex is bad if it is bad at some time $t<\tilde T$. 
	%	
	%	if it belongs to $G_T$ and satisfies $M_i^{\rm{out}}(t)\ge \pi_{\astr} s_i$ for some $t<T$. Similarly, say a strong vertex $i$ is bad if it belongs to $G_T$ and satisfies, for some $t<T$, any of the following inequalities:
	%	\begin{align*}
		%		%M_i^{\rm{out}}(t)&\ge \pi_{\astr} s_i,\\
		%		J_i^{\rm{out}}(t)&\ge c_\eqref{LD_coro_J} \frac {s_i}{T_i^{\rm{loc}}},\\
		%		K_i^{\rm{out}}(t)&\ge \frac r {10} \pi_i. 
		%	\end{align*}
	Then the probability of finding any bad vertex is bounded by $C_{\ref{LD_outcontributions}} \ell^3 a^{-2\gamma} t_0 e^{-c_{\ref{LDBoundsFixedTime}}\lambda^{-\alpha}}$, for some finite constant $C_{\ref{LD_outcontributions}}$.
\end{corollary}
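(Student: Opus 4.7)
The strategy is a union bound over a finite set of event-driven ``checkpoint times'' combined with the single-time coupling of Lemma~\ref{lem:coupling_exploredtree} and the fixed-time deviation estimates of Corollaries~\ref{LDBoundsFixedTime0} and~\ref{LDBoundsFixedTime}. The starting observation is a monotonicity reduction: for any visited vertex $v$, the quantities $M_v^{\rm{out}}(t)$, $J_v^{\rm{out}}(t)$ and $K_v^{\rm{out}}(t)$ can \emph{increase} only at the discrete times $\sigma_1<\sigma_2<\cdots<\sigma_K$ at which a new edge is incorporated into $G_t$. Indeed, an edge becoming \emph{absent} can only remove positive contributions from the outgoing sum, while a descendant transitioning from pending to visited replaces the factor $\tilde s_k=\tfrac{\CMI}{\CMI-2}s_k$ by the smaller $s_k$. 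It therefore suffices to control the out-quantities at the checkpoints $\sigma_k$.

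Next I would bound the number $K$ of checkpoints deterministically on the event $\{\tilde T>0\}$. Before $T_{\rm{bad}}^{\rm{tree}}$ the graph $G_t$ is a tree, so each undirected edge is incorporated at most once and $K=|V(G_{\tilde T})|-1$. Before $\tilde T_{\rm{deg}}$ every visited strong vertex has at most $2c_2 a^{-\gamma}$ weak neighbours in $G_t$, before $T_{\rm{bad}}^{\rm{deg}}$ every visited weak vertex has at most $1/(8\lambda^2)$ such neighbours, and this latter bound is dominated by $a^{-\gamma}$ since $a\le \astr$; combined with the a priori bound $\ell$ on the number of visited vertices coming from $T_\ell$, this yields $K\le C\ell a^{-\gamma}$. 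The factor $t_0$ in the final estimate accounts for the contribution of clogged-edge infections, which are additional checkpoints whose total number over $[0,t_0]$ is bounded by a Poisson random variable of mean $O(\lambda \ell a^{-\gamma} t_0)$. At each checkpoint $\sigma_k$ and for each visited vertex $v$, the coupling of Lemma~\ref{lem:coupling_exploredtree}, applied conditionally on $\mathcal F_{\sigma_k^-}$ with a fresh family of independent Bernoulli variables $C_{\,\cdot\,}$, stochastically dominates the out-quantity at $\sigma_k$ by the idealized variable $\bar M_v$, $\bar J_v$ or $\bar K_v$ on $\{\sigma_k<T_{\rm{bad}}^{\rm{tree}}\}$; the estimates~\eqref{LD_coro_sy}--\eqref{LD_coro_K} then give probability at most $4e^{-c_{\ref{LDBoundsFixedTime}}\lambda^{-\alpha}}$ that $v$ is bad at $\sigma_k$. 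A union bound over the at most $\ell$ visited vertices and at most $C\ell^2 a^{-2\gamma} t_0$ checkpoints produces the claimed estimate $C_{\ref{LD_outcontributions}}\ell^3 a^{-2\gamma} t_0 e^{-c_{\ref{LDBoundsFixedTime}}\lambda^{-\alpha}}$.

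I anticipate the main obstacle to be the invocation of the pointwise-in-$t$ coupling at the random times $\sigma_k$: Lemma~\ref{lem:coupling_exploredtree} is stated for a deterministic $t$ and uses a single fresh family of independent Bernoullis. The resolution is to redo the exploration at each $\sigma_k$ conditionally on $\mathcal F_{\sigma_k^-}$ with a new independent copy of the $C$-variables, so that the hypothesis of the lemma is satisfied at each checkpoint; the union bound then pays only a polynomial factor in $\ell$, $a^{-\gamma}$ and $t_0$, while the exponential decay $e^{-c_{\ref{LDBoundsFixedTime}}\lambda^{-\alpha}}$ is preserved. A secondary technicality is that the degree bounds used to estimate $K$ are valid only up to $\tilde T_{\rm{deg}}\wedge T_{\rm{bad}}^{\rm{deg}}$, but this is already incorporated into the definition of $\tilde T$, so no additional argument is required on the complementary event.
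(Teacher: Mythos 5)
Your route differs from the paper's: you reduce to the random ``checkpoint'' times $\sigma_k$ at which edges are incorporated (using that the out-quantities are non-increasing between incorporations before $T_{\rm{bad}}^{\rm{tree}}$), whereas the paper discretises time into a \emph{deterministic} grid $\tau\N$ with $\tau=1/(\kappa_0\ell'^2)$ and shows that a vertex that turns bad at a random time remains bad until the next grid point with probability at least $1/e$ (no relevant vertex updates in a window of length $\tau$), so that the fixed-time estimates can be applied at deterministic times only; the union bound over the $t_0/\tau+1\asymp t_0\ell^2 a^{-2\gamma}$ grid points and $\ell$ visited vertices is what produces the stated constant $C\ell^3 a^{-2\gamma}t_0$. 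Your monotonicity observation is correct as far as it goes, but it does not remove the central difficulty, and the step where you invoke Lemma~\ref{lem:coupling_exploredtree} at the times $\sigma_k$ contains a genuine gap.

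The problem is this: $M_j^{\rm{out}}(\sigma_k)$ is $\F_{\sigma_k}$-measurable --- it is already determined by the history of the process up to $\sigma_k$. ``Redoing the exploration conditionally on $\F_{\sigma_k^-}$ with a fresh independent family of $C$-variables'' produces an \emph{independent copy} of an idealized tree, not a stochastic upper bound for the value the process has actually realised; it tells you nothing about $\P(M_j^{\rm{out}}(\sigma_k)\ge \cdot)$. The coupling in Lemma~\ref{lem:coupling_exploredtree} works for a deterministic $t$ precisely because, given that an edge $(j_{n-1},j_n)$ was revealed at the (stopping) time $j_{n-1}$ was first visited, its state at a \emph{fixed later} time $t$ is stationary Bernoulli with parameter $p_{j_{n-1},j_n}$. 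At a random time such as $\sigma_k$ (the $k$-th incorporation), the event $\{\sigma_k\le u\}$ is correlated with the presence or absence of exactly these edges (their states influence the spread of the infection and hence the timing of subsequent incorporations), so the conditional presence probability at $\sigma_k$ need not be $p_{j_{n-1},j_n}$ and the domination by $\bar M_j$ fails. An attempt to salvage this by bounding $\sup_t M_j^{\rm{out}}(t)$ by the cumulative tree of all edges ever incorporated also fails to land back on $\bar M_j$: each potential edge can be \emph{attempted} many times over $[0,t_0]$ (once per update of the tail, by Rules (6)--(7)), so the cumulative object corresponds to inflated Bernoulli parameters and the exponential-moment bounds of Proposition~\ref{exponential_moments} no longer apply with the same constants. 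You need some device --- such as the paper's persistence-to-a-deterministic-grid-point argument --- to convert the random-time event into a fixed-time event before applying Corollaries~\ref{LDBoundsFixedTime0} and~\ref{LDBoundsFixedTime}.
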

We stress that the definition of bad vertices in this corollary actually depends on the parameters $c_\eqref{LD_coro_J}$ and $r$ (that we have still not fixed), and also requires a constraint of large~$\CMI$ to apply Corollary~\ref{LDBoundsFixedTime}. We discuss later how all these parameters are chosen.
\begin{proof}
	%For simplicity, we only bound the probability of finding a bad weak vertex, but the proof
	%\\M_j^{\rm{out}}(t)\ge 2 C_{\ref{exponential_moments}} s(\astr)\wedge \pi_{\astr} s_j.
	Note that at times $t<\tilde T$, the number of visited vertices is bounded by $\ell$, while the number of nonvisited weak vertices in $G_t$ is bounded by $(2c_2 a^{-\gamma} \vee 1/8\lambda^2)\ell$. Further, let $\ell'=\ell+(2c_2 a^{-\gamma} \vee 1/8\lambda^2)\ell$, and choose a vertex $i$ conditioned to be in $G_{\tilde T}$. Suppose that the vertex turns bad at some time \smash{$t<\tilde T$}. Then, with probability at least $1/e$, the vertex $i$ and the weak vertices in $G_t$ do not update until time $t+\tau$, with $\tau=1/\kappa_0 \ell'^2$. Therefore,  with probability at least $e^{-1}$, we have $t+\tau<T_{\rm{bad}}^{\rm{tree}}$ and the value of $M_i$ (if $i$ is weak) or of $J_i$ and $K_i$ (if $i$ is strong) do not decrease on this time interval. In particular, there exists a time $s\in \tau \N \cap [0,\tilde T+\tau)$ which is less than $T_{\rm{bad}}^{\rm{tree}}$ and at which we observe a large value for $M_i(s)$ (or $J_i(s)$ or $K_i(s)$). By Lemma~\ref{lem:coupling_exploredtree} and Corollaries~\ref{LDBoundsFixedTime0} and~\ref{LDBoundsFixedTime}, the probability that such a time $s$ exists, is bounded by $(t_0/\tau+1)e^{-c_{\ref{LDBoundsFixedTime}} \lambda^{-\alpha}}.$ We deduce successively a bound of
	\smash{$e(t_0/\tau+1)e^{-c_{\ref{LDBoundsFixedTime}} \lambda^{-\alpha}}$} for the probability that a visited vertex turns bad, and a bound of 
	\smash{$\ell e(t_0/\tau+1)e^{-c_{\ref{LDBoundsFixedTime}} \lambda^{-\alpha}}$} for the probability of finding any bad vertex.
\end{proof}
We now suppose that we do not encounter any bad vertex. We will then  prove recursively 
%on the visited vertices that
that all the visited weak vertices $j$ satisfy
\begin{equation}\label{ineq:good_weak_vertex}
	M_j(t)< 3 C_{\ref{exponential_moments}} s(\astr)\wedge 2\pi_{\astr} s_j,
\end{equation}
at all times $t<\tilde T$.
%, while the strong vertices $i$ satisfy
%\begin{align*}
%J_i(t)&< 2c_\eqref{LD_coro_J} \frac {s_i}{T_i^{\rm{loc}}},\\
%K_i(t)&< \left(2+\frac r {10}\right) \pi_i.
%\end{align*}
For the first visited weak vertex, we have $M_i(t)=M_i^{\rm{out}}(t)$, so the bound follows from the fact that this vertex is good. Now, for a newly visited weak vertex $i$, we can bound
$M_i^{\rm{out}}$ by $2 C_{\ref{exponential_moments}} s(\astr)\wedge \pi_{\astr} s_i$, so it suffices to bound $M_i^{\rm{in}}$ by $C_{\ref{exponential_moments}} s(\astr)\wedge \pi_{\astr} s_i$. But the sum on $j$ in the definition of $M_i^{\rm{in}}$ contains at most one term, which corresponds to a previously visited weak vertex $j$ and can be bounded by $2\lambda M_j$. By the recurrence hypothesis, we can further bound this term by $6\lambda C_{\ref{exponential_moments}} s(\astr)$, which is smaller than $C_{\ref{exponential_moments}} s(\astr)$ if $\lambda\le 1/6$. For small $\lambda$, we also obtain the bound $2\lambda M_j\le \pi_{\astr}\le  \pi_{\astr} s_i$ by observing that we have $ \pi_{\astr}\ge 1/20\kappa_0 \lambda^{-2}$ and recalling that by condition $(H2)$ we have $s(\astr)\le c_{\eqref{cond:H2}} \lambda^{-3+\alpha}$.\\
Having proved that all visited weak vertices satisfy~\eqref{ineq:good_weak_vertex}, we deduce the following bound for any visited strong vertex $i$. 
\begin{align*}
	J_i^{\rm{in}}&\le 3\lambda C_{\ref{exponential_moments}} s(\astr)
	\le 3\lambda^\alpha C_{\ref{exponential_moments}} c_{\eqref{cond:H2}} \frac {s_i}{T_i^{\rm{loc}}},
\end{align*}
where we used condition $(H2)$ to obtain the second inequality. Taking $\lambda$ small, we can further bound this by $c_\eqref{LD_coro_J} s_i/T_i^{\rm{loc}}$. For the term $K_i^{\rm{in}}$ we obtain similarly a simple bound, namely $2\pi_{\astr}\le 2\pi_i$. Combining this with the fact that all visited strong vertices are good, we obtain that they all satisfy, at all times $t<\tilde T$,
\begin{align}
	\label{LD_for_J_effective}	
	J_i(t)&\le \frac {2c_\eqref{LD_coro_J} s_i}{T_i^{\rm{loc}}}=2c_\eqref{LD_coro_J} r \kappa_i t_i,\\
	\label{LD_for_K_effective}
	K_i(t)&\le \left(2+ \frac r {10}\right) \pi_i.
\end{align}
We thus finally obtained bounds that resemble~\eqref{LD_for_J} and~\eqref{LD_for_K}, and it only remains to explain in detail how we fix the values of $r$, $\CMI$, $\eps$, $c_\eqref{LD_coro_J}$.\smallskip\pagebreak[3]

First, recall we have to fix the values $\CMI$, $r$ and $\eps$, so that if the scoring function $s$ is assumed to satisfy Inequalities~\eqref{OMIweak},~\eqref{OMIstrongquick} and~\eqref{OMIstrongslow} with this value of $\CMI$, and if we use this value of $r$ in the definition the vertex scores $t_i$ (and thus of the configuration score $M(t)$), then we can ensure that Inequalities~\eqref{LD_for_J} and~\eqref{LD_for_K} are likely to be satisfied up until time $\tilde T$. In order to do so, we first request $\CMI>8\ratiost$ and fix  a small value of $\eps$ so as to have $4\ratiost/3\CMI+\eps\le 1/6.$ Then, choosing $r$ large, we can ensure that we have \smash{$(4 \ratiost - 6 - \tfrac {4\ratiost} {3\CmMI}-\eps)/32\ge 2+r/10$}, so that~\eqref{LD_for_J} is implied by~\eqref{LD_for_J_effective}. Now, choosing $c_\eqref{LD_coro_J}=1/12 r$, we also have \smash{$(\tfrac 1 3- \tfrac {4\ratiost} {3\CmMI}-\eps)\ge \tfrac 1 6=2c_\eqref{LD_coro_J} r$}, so that~\eqref{LD_for_K} is implied by~\eqref{LD_for_K_effective}.\smallskip

At this point, we are still free to increase the value of $r$ if we keep the request $\CMI\ge 8r$ and $c_\eqref{LD_coro_J}=1/12r$. We choose the (large) value of $r$ and $c_\eqref{LD_coro_J}=1/12r$, so as to apply Corollary~\ref{LDBoundsFixedTime}. Eventually, we fix the large value of $\CMI$ necessary to apply Corollaries~\ref{LDBoundsFixedTime0} and~\ref{LDBoundsFixedTime}, fixing at the same time the value $C_{\ref{exponential_moments}}$. We then have a proper definition of bad vertices and can apply Corollary~\ref{LD_outcontributions} to deduce that each visited strong vertex satisfies~\eqref{LD_for_J_effective} and~\eqref{LD_for_K_effective} and thus~\eqref{LD_for_J} and~\eqref{LD_for_K}, with failure probability bounded by $C_{\ref{LD_outcontributions}} \ell^3 a^{-2\gamma} t_0 e^{-c_{\ref{LDBoundsFixedTime}}\lambda^{-\alpha}}$. This bound on $P^{\rm{conn}}$ is stretched exponentially small in $\lambda$, and hence negligible compared to $a$ as $\lambda\to 0$.
\bigskip

\noindent {\bf Acknowledgements:} AL was partially supported by the CONICYT-PCHA/Doctorado nacional/2014-21141160 scholarship, grant GrHyDy ANR-20-CE40-0002, and by %the 
FONDECYT grant 11221346. PM was partially supported by DFG project 444092244 ``Condensation in random geometric graphs" within the priority programme SPP~2265.

%\end{document}

\bigskip%

%\vspace{2cm}
%\pagebreak[3]

{\footnotesize
\noindent
{\bf Emmanuel Jacob}, Ecole Normale Sup\'erieure de Lyon, Unit\'e de Math\'ematiques Pures et Appliqu\'ees,  UMR CNRS 5669 , 46, All\'ee d'Italie, 69364 Lyon Cedex 07, France.}

\medskip%

{\footnotesize
	\noindent
	{\bf Amitai Linker}, Departamento de Matem\'aticas, Facultad de Ciencias Exactas, Universidad Andr\'es Bello, \linebreak Sazi\'e 2212, Santiago, Chile.}

\medskip%

{\footnotesize
\noindent
{\bf Peter M\"orters}, Universit\"at zu K\"oln,  Mathematisches Institut, Weyertal 86--90, 50931~K\"oln, Germany.}

\end{document}